\documentclass[pdflatex,sn-mathphys-ay]{sn-jnl}


\usepackage{graphicx}%
\usepackage{multirow}%
\usepackage{amsmath,amssymb,amsfonts,}%
\usepackage{amsthm}%
\usepackage{mathrsfs}%
\usepackage[title]{appendix}%
\usepackage{xcolor}%
\usepackage{textcomp}%
\usepackage{manyfoot}%
\usepackage{booktabs}%
\usepackage{algorithm}%
\usepackage{algorithmicx}%
\usepackage{algpseudocode}%
\usepackage{listings}%
\usepackage{bbm,dsfont}
\usepackage{ulem}

\newcommand{\E}{\mathds{E}}
\newcommand{\I}{\mathcal{I}}
\newcommand{\J}{\mathcal{J}}
\renewcommand{\P}{\mathds{P}}
\newcommand{\R}{\mathds{R}}
\newcommand{\C}{\mathds{C}}
\newcommand{\N}{\mathds{N}}
\newcommand{\FO}{L}
\newcommand{\rr}{\mathbf{r}}
\newcommand{\qprob}{\mathbf{p}}
\newcommand{\ind}{\mathbbm{1}}



\numberwithin{equation}{section}
\theoremstyle{plain}
\newtheorem{thm}{Theorem}
\newtheorem{prop}{Proposition}
\newtheorem{corollary}{Corollary}
\newtheorem{lemma}{Lemma}
\newtheorem{ass}{Assumption}
\newtheorem{setting}{Setting}

\theoremstyle{definition}
\newtheorem{algorithmm}{Algorithm}
\newtheorem{definition}{Definition}

\theoremstyle{remark}

\raggedbottom

\begin{document}

\title[Quantitative limit theorems, bootstrap approximations for empirical spectral projectors]{Quantitative limit theorems and bootstrap approximations for empirical spectral projectors}


\author*[1]{\fnm{Moritz} \sur{Jirak}}
\email{moritz.jirak@univie.ac.at}

\author[2]{\fnm{Martin} \sur{Wahl}}
\email{martin.wahl@math.uni-bielefeld.de}

\affil*[1]{\orgdiv{Department of Statistics and Operations Research}, \orgname{University of Vienna}, \orgaddress{\street{Oskar-Morgenstern-Platz 1}, \city{Vienna}, \postcode{100190}, \country{Austria}}}

\affil[2]{\orgdiv{Fakultät für Mathematik}, \orgname{Universität Bielefeld}, \orgaddress{\street{Postfach 100131}, \city{Bielefeld}, \postcode{33501}, \country{Germany}}}




\abstract{Given finite i.i.d.~samples in a Hilbert space with zero mean and trace-class covariance operator $\Sigma$, the problem of recovering the spectral projectors of $\Sigma$ naturally arises in many applications. In this paper, we consider the problem of finding distributional approximations of the spectral projectors of the empirical covariance operator $\hat \Sigma$, and offer a dimension-free framework where the complexity is characterized by the so-called relative rank of $\Sigma$. In this setting, novel quantitative limit theorems and bootstrap approximations are presented subject to mild conditions in terms of moments and spectral decay. In many cases, these even improve upon existing results in a Gaussian setting.}

\keywords{covariance operator, principal components analysis, spectral projector, normal approximation, bootstrap, relative rank, relative perturbation theory}


\pacs[MSC Classification]{62H25, 60F05, 62G09, 47A55}

\maketitle

\section{Introduction}\label{sec1}

Let $X$ be a random variable in a separable Hilbert space $\mathcal{H}$ with expectation zero and covariance operator $\Sigma = \E X \otimes X$. A fundamental problem in high-dimensional statistics and statistical learning is dimensionality reduction, that is one seeks to reduce the dimension of $X$, while keeping as much information as possible. Letting $(\lambda_j)$ be the sequence of positive eigenvalues of $\Sigma$ (in non-increasing order) and $(u_j)$ be a corresponding orthonormal system of eigenvectors, solutions to this problem are given by the projections $P_{\J}X$ with spectral projector $P_{\J}=\sum_{j\in\J }u_j\otimes u_j$ and index set $\J$.

In statistical applications, the distribution of $X$ and thus its covariance structure are unknown. Instead, one often observes a sample $X_1,\dots,X_n$ of $n$ independent copies of $X$, and the problem now is to find an estimator of $P_{\J}$. The idea of PCA is to solve this problem by first estimating $\Sigma$ by the empirical covariance operator $\hat{\Sigma} = n^{-1}\sum_{i = 1}^n X_i \otimes X_i$, and then constructing the empirical counterpart $\hat P_{\J}$ of $P_{\J}$ based on $\hat \Sigma$ (see Section \ref{sec_further_notation} for a precise definition). Hence, a key problem is to control and quantify the distance between $\hat{P}_{\J}$ and $P_{\J}$.

Over the past decades, an extensive body of literature has evolved around this problem, see e.g. \cite{fan:survey:2021}, \cite{johnstone:paul:2018}, \cite{horvath:kokoszka:book:2012}, \cite{scholkopf}, \cite{jolliffe:jmva:2022} for some overviews. A traditional approach for studying the distance between $\hat{P}_{\J}$ and $P_{\J}$ is to control a norm measuring the distance between the empirical covariance operator $\hat{\Sigma}$ and the population covariance operator $\Sigma$. Once this has been established, one may then deduce bounds for $\hat{P}_{\J} - P_{\J}$ by inequalities such as the Davis-Kahan inequality, see for instance \cite{hsing:eubank:book:2015}, \cite{samworth:pca:2015}, and \cite{cai:wedin:aos:2018}, \cite{jirak:wahl:pams:2020} for some recent results and extensions. However, for a more precise statistical analysis, fluctuation results like limit theorems or bootstrap approximations are much more desirable.

The more recent works by \cite{koltchinskii:lounici:AHIP:2016}, \cite{koltchinskii:lounici:BJ:2017}, \cite{koltchinskii2017} (and related) are of particular interest here. Among other things, they provide the leading order of the expected squared Hilbert-Schmidt distance $\E \|\hat{P}_{\J} - P_{\J}\|_2^2$ and a precise, non-asymptotic analysis of distributional approximations of $\|\hat{P}_{\J} - P_{\J}\|_2^2$ in terms of Berry-Esseen type bounds, given a Gaussian setup. Some extensions and related questions are discussed in \cite{loeffler:2019}, \cite{koltchinskii:jems:2021}, \cite{koltchinski:nickl:loeffler:2020}. However, as noted in \cite{naumov_spokoiny_ptrf2019}, these results have some limitations, and bootstrap approximations may be more desirable and flexible. Again, in a purely Gaussian setup, \cite{naumov_spokoiny_ptrf2019} succeeds in presenting a bootstrap procedure with accompanying bounds that alleviates some of the problems attached to limit distribution for inferential purposes. Let us point out though that, from a mathematical point of view, the results of \cite{koltchinskii2017} and \cite{naumov_spokoiny_ptrf2019} are somewhat complementary. More precisely, there are scenarios where the bound of the bootstrap approximation of Theorem 2.1 in \cite{naumov_spokoiny_ptrf2019} fails (meaning that it only yields a triviality), whereas the bound in Theorem 6 of \cite{koltchinskii2017} does not, and vice versa, see Section \ref{sec:applications:main} for some examples and further discussions. The topic of limit theorems and bootstrap approximations has also been broadly investigated for eigenvalues and related quantities, see for instance \cite{cai:pan:2020}, \cite{miles:2022+}, \cite{miles:blandiono:aue:2019}, \cite{bai:BJ:2021}, \cite{bai:LSS:arxiv}.

The aim of this work is to provide quantitative bounds for both distributional (e.g.~CLTs) and bootstrap approximations, subject to comparatively mild conditions in terms of moments and spectral decay. Concerning the latter, our results display a certain invariance, being largely unaffected by polynomial, exponential (or even faster) decay.

To be more specific, let us briefly discuss the two main previous approaches used in \cite{koltchinskii2017} and  \cite{naumov_spokoiny_ptrf2019}. The Berry-Essen type bounds of \cite{koltchinskii2017} rely on classical perturbation (Neumann) series, an application of the isoperimetric inequality for (Hilbert space-valued) Gaussian random variables, and the classical Berry-Esseen bound for independent real-valued random variables. As a consequence, Theorem 6 in \cite{koltchinskii2017} yields a Berry-Essen type bound for distributional closeness of $\|\hat{P}_{\J} - P_{\J}\|_2^2$ (appropriately normalized) and a standard Gaussian random variable. In contrast, \cite{naumov_spokoiny_ptrf2019} derive novel comparison and anti-concentration results for certain quadratic Gaussian forms, allowing them to directly compare the random variables in question, bypassing any argument involving a limit distribution. They do, however, rely on similar perturbation and concentration results as in \cite{koltchinskii2017}. Finally, let us mention that both only consider the case of single spectral projectors (meaning that $\J$ is the index set of one eigenvalue), and heavily rely on the assumption of Gaussianity for $X$. 

Removing (sub)-Gaussianity also means that the isoperimetric inequality is no longer available, which, however, is the key ingredient of the above approach. Therefore, we opted for a different route, which is more inspired by the relative rank approach developed in \cite{jirak_eigen_expansions_2016}, \cite{JW18}, \cite{jirak:wahl:pams:2020}, \cite{reiss:wahl:aos:2020}, \cite{Wahl}, and consists of the following three key ingredients:
\begin{itemize}
    \item{\textit{Relative perturbation bounds}:} Employing the aforementioned concept of the relative rank, we establish general perturbation results for $\|\hat{P}_{\J} - P_{\J}\|_2$. These results may be of independent interest.

    \item{\textit{Fuk-Nagaev type inequalities}:} We formulate general Fuk-Nagaev type inequalities for independent sums of random operators that allow us to replace the assumption of (sub-)Gaussianity by weaker moment assumptions. Together with the relative perturbation bounds, this leads to approximations for $\|\hat{P}_{\J} - P_{\J}\|_2$ (valid with high probability) in regimes that appeared to be out of reach by previous approaches based on absolute perturbation theory.

    \item{\textit{Bounds for Wasserstein and uniform distance for Hilbert space-valued random variables}:}  The last key point are general bounds for differences of probability measures on Hilbert spaces. Together with the above expansions and concentration inequalities, these permit us to obtain novel quantitative limit theorems and bootstrap approximations. Moment conditions are expressed in terms of the Karhunen–Lo\`{e}ve coefficients of $X$.
\end{itemize}

To briefly demonstrate the flavour of our results, let us present two examples where the eigenvalues $\lambda_j$ of $\Sigma$ have a rather opposing behaviour. The first result deals with polynomially decaying eigenvalues, a common situation in statistics and machine learning (cf.~\cite{bartlett:benign:2020}, \cite{steinwart:jml:2020}, \cite{hall2007}).
For single projectors ($\J = \{J\}$), subject to mild moment conditions on the Karhunen–Lo\`{e}ve coefficients of $X$, we show that
\begin{align*}
\sup_{x \in\R}  \Big\vert \P \Big(n\big\|\hat{P}_{\J} - P_{\J}\big\|_2^2 \leq x\Big) - \P\Big(\big\|L_{\J}Z\big\|_{2}^2 \leq x\Big) \Big\vert \lesssim \frac{J}{n^{1/2}} (\log J \log n)^{3/2}
\end{align*}
for all $J\geq 1$, where $L_{\J}Z$ is an appropriate Gaussian random variable. Key points here are the mild moment conditions and the explicit dependence on $J$, which is optimal up to $\log$-factors. This result, among others, is novel even for the Gaussian case.
For a precise statement, see Section \ref{sec:app:model:I:poly}.

The second case deals with a pervasive factor like structure, a popular model in econometrics and finance (cf.~\cite{fan:survey:2021}, \cite{bai:2003:econometrica}, \cite{stock:watson:2002}). 
In this case, again subject to mild moment conditions for the Karhunen–Lo\`{e}ve coefficients of $X$, we have
\begin{align*}
\sup_{x \in \R}\Big\vert\P\Big(n\big\|\hat{P}_{\J} - P_{\J}\big\|_2^2 \leq x \Big) - \P\Big( \big\|L_{\J}Z\big\|_{2}^2 \leq x\Big)\Big\vert\lesssim \frac{J^{3}}{n^{1/2}} \Big((\log n)^{3/2} +  J^{5/2}\Big)
\end{align*}
for all $J\geq 1$, where $L_{\J}Z$ is an appropriate Gaussian random variable. As before, we are not aware of a similar result even in a Gaussian setup. For a precise statement, see Section \ref{sec:app:model:II:pervasive}.

This work is structured as follows. We first introduce some notation and establish a number of preliminary results in Sections \ref{sec:notation}--\ref{sec:tools:probability}. Bounds for quantitative limit theorems are then presented in Section \ref{sec:quantitative:limit:thms} for both the uniform and the Wasserstein distance, whereas Section \ref{sec:main:boot} contains accompanying results for a suitable bootstrap approximation. Finally, we discuss some key models from the literature and provide a comparison to previous results in Section \ref{sec:applications:main}. Proofs are given in the remaining Section~\ref{sec:proofs}.


\section{Preliminaries}

\subsection{Basic notation}\label{sec:notation}
We write $\lesssim$, $\gtrsim$ and $\asymp$ to denote (two-sided) inequalities involving a multiplicative constant. If the involved constant depends on some parameters, say $p$, then we write $\lesssim_p$, $\gtrsim_p$ and $\asymp_p$. For $a,b\in \R$, we write $a \wedge b = \min(a,b)$ and $a \vee b = \max(a,b)$. Given a subset $\J$ of the index set (in most cases $\N$), we denote with $\J^c$ its complement in the index set. Given a (real) Hilbert space $\mathcal{H}$, we always write $\|\cdot\| = \|\cdot\|_{\mathcal{H}}$ for the corresponding norm. For a bounded linear operator $A$ and $q\in[1,\infty]$, we denote with $\|A\|_q$ the Schatten $q$-norm. In particular, $\|A\|_2$ denotes the Hilbert-Schmidt norm, $\|A\|_1$ the nuclear norm and $\|A\|_{\infty}$ the operator norm. For a self-adjoint compact operator $A$ mapping $\mathcal{H}$ into itself, we write $|A|$ for the absolute value of $A$ and $|A|^{1/2}$ for the positive self-adjoint square-root of $|A|$. For a random variable $X$, we write $\overline{X} = X - \E X$.

\subsection{Tools from perturbation theory}\label{sec_preliminaries_perturbation}
In this section, we present our underlying perturbation bounds. Instead of applying standard perturbation theory, we make use of recent improvements from \cite{jirak:wahl:pams:2020}, \cite{Wahl} adapted for our purposes. Proofs are presented in Section~\ref{sec_preliminaries_perturbation}.

\subsubsection{Further notation}\label{sec_further_notation}
Throughout this section, $\Sigma$ denotes a positive self-adjoint compact operator on the separable Hilbert space $\mathcal{H}$ (in applications, it will be the covariance operator of the random vector $X$ with values in $\mathcal{H}$). By the spectral theorem there exists a sequence $\lambda_1\ge \lambda_2\ge\dots>0$ of  positive eigenvalues (which is either finite or converges to zero) together with an orthonormal system of eigenvectors $u_1,u_2,\dots$ such that $\Sigma$ has
spectral representation
\begin{equation*}
\Sigma=\sum_{j\ge 1}\lambda_j P_j,
\end{equation*}
with rank-one projectors $P_j=u_j\otimes u_j$, where $(u\otimes v)x=\langle v,x\rangle u$, $x\in \mathcal{H}$. Without loss of generality we shall assume that the eigenvectors $u_1,u_2,\dots$ form an orthonormal basis of $\mathcal{H}$ such that $\sum_{j\ge 1}P_j=I$. For $1\leq j_1\leq j_2\leq \infty$, we consider an interval of the form ${\J}=\{j_1,\dots,j_2\}$ (with $\J=\{j_1,j_1+1,\dots\}$ if $j_2=\infty$). We write
\[
P_{\J}=\sum_{j\in {\J}}P_j,\qquad P_{{\J}^c}=\sum_{k\in {\J}^c}P_k
\]
for the orthogonal projection on the direct sum of the eigenspaces
of $\Sigma$ corresponding to the eigenvalues $\lambda_{j_1},\dots,\lambda_{j_2}$, and onto its orthogonal complement. Moreover, let
\begin{align*}
   R_{{\J}^c}=\sum_{k< j_1}\frac{P_k}{\lambda_k-\lambda_{j_1}}+\sum_{k> j_2}\frac{P_k}{\lambda_k-\lambda_{j_2}}
\end{align*}
be the reduced `outer' resolvent of ${\J}$. Finally, let $g_\J$ be the gap between the eigenvalues with indices in ${\J}$ and the eigenvalues with indices not in ${\J}$ defined by
\begin{align*}
    g_{\J}=\min(\lambda_{j_1-1}-\lambda_{j_1},\lambda_{j_2}-\lambda_{j_2+1})
\end{align*}
for $j_1>1$ and $j_2<\operatorname{dim} \mathcal{H}$ and with corresponding changes otherwise. If $\J= \{j\}$ is a singleton, then we also write $g_j$ instead of $ g_{\J}$.

Let $\hat\Sigma$ be another positive self-adjoint compact operator on $\mathcal{H}$ (in applications, it will be the empirical covariance operator). We consider $\hat\Sigma$ as a perturbed version of $\Sigma$ and write $E = \hat\Sigma-\Sigma$ for the (additive) perturbation. Again, there exists a sequence $\hat{\lambda}_1\ge \hat{\lambda}_2\ge\dots\ge 0$ of eigenvalues together with an orthonormal system of eigenvectors $\hat{u}_1,\hat{u}_2,\dots$ such that we can write
\begin{equation*}
\hat{\Sigma}=\sum_{j\ge 1}\hat{\lambda}_j \hat{P}_j
\end{equation*}
with $\hat{P}_j=\hat{u}_j\otimes\hat{u}_j$. We shall also assume that the eigenvectors $\hat u_1,\hat u_2,\dots$ form an orthonormal basis of $\mathcal{H}$ such that $\sum_{j\ge 1}\hat P_j=I$.
We write
\[
\hat P_{\J}=\sum_{j\in {\J}}\hat P_j,\qquad \hat P_{{\J}^c}=\sum_{k\in {\J}^c} \hat P_k
\]
for the orthogonal projection on the direct sum of the eigenspaces
of $\hat \Sigma$ corresponding to the eigenvalues $\hat\lambda_{j_1},\dots,\hat\lambda_{j_2}$, and onto its orthogonal complement.

\subsubsection{Main perturbation bounds}
Let us now present our main perturbation bound. The following quantity  will play a crucial role
\begin{align}\label{defn:delta_J}
\delta_\J=\delta_\J(E):=\left\|\left(\vert R_{\J^c}\vert ^{1/2}+g_\J^{-1/2}P_\J\right)E\left(\vert R_{\J^c}\vert ^{1/2}+g_\J^{-1/2}P_\J\right)\right\|_\infty,
\end{align}
where
\begin{align*}
    \vert R_{\J^c}\vert ^{1/2}=\sum_{k<j_1}\frac{P_k}{\vert \lambda_k-\lambda_{j_1}\vert ^{1/2}}+\sum_{k>j_2}\frac{P_k}{\vert \lambda_k-\lambda_{j_2}\vert ^{1/2}}
\end{align*}
is the positive self-adjoint square-root of $\vert R_{\J^c}\vert $. In the special case that ${\J}=\{j\}$ is a singleton, it has been introduced in \cite{Wahl}.
Moreover, for a Hilbert-Schmidt operator $A$ on $\mathcal{H}$ we define
\begin{align}\label{eq_def_linear_term}
L_{\J} A = \sum_{j \in \J}\sum_{k \in \J^c} \frac{1}{\lambda_j - \lambda_k}(P_k AP_j+P_j A P_k).
\end{align}
The following result presents a linear perturbation expansion with remainder expressed in terms of $\delta_{\J}(E)$.

\begin{prop}\label{PropExp}
We have
\begin{align}\label{Eq0Exp}
\|P_{\J}-\hat P_\J\|_2\leq 4\sqrt{2}\min(\vert \J\vert ,\vert \J^c\vert )^{1/2}\delta_{\J}
\end{align}
and
\begin{align}\label{Eq2Exp}
\|\hat{P}_{\J} - P_{\J} - L_{\J}E\|_2  \leq  20\sqrt{2}\min(\vert \J\vert ,\vert \J^c\vert ) \delta_{\J}^2.
\end{align}
\end{prop}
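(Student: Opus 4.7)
The plan is to follow the Riesz-projection / Neumann-series approach of \cite{Wahl}, which was developed there for singletons, and extend it to general intervals $\J$. Choose a closed contour $\gamma\subset\C$ at distance proportional to $g_\J$ from the spectrum of $\Sigma$, separating $\{\lambda_j : j\in\J\}$ from $\{\lambda_k : k\in\J^c\}$ and, for $\delta_\J$ small enough, also enclosing $\{\hat\lambda_j : j\in\J\}$; the complementary regime $\delta_\J\gtrsim 1$ can be handled at the end by the trivial bound $\|\hat P_\J - P_\J\|_2\le\sqrt{2\min(|\J|,|\J^c|)}$. Applying the Riesz representation of both projectors and iterating the second resolvent identity $(\hat\Sigma-z)^{-1}=(\Sigma-z)^{-1}+(\Sigma-z)^{-1}E(\hat\Sigma-z)^{-1}$ yields
\[
\hat P_\J - P_\J = \sum_{k\ge 1}T_k,\qquad T_k=-\frac{1}{2\pi i}\oint_\gamma(\Sigma-z)^{-1}\bigl[E(\Sigma-z)^{-1}\bigr]^k\,dz,
\]
and a direct residue calculation identifies $T_1$ with the operator $L_\J E$ defined in \eqref{eq_def_linear_term}.

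The heart of the argument is the operator-norm bound $\|T_k\|_\infty\le C^k\delta_\J^k$. To prove it, I would establish that, uniformly on $\gamma$, the resolvent admits a factorization $(\Sigma-z)^{-1}=M\,\Phi(z)\,M$, where $M=|R_{\J^c}|^{1/2}+g_\J^{-1/2}P_\J$ and $\Phi(z)$ is a diagonal operator in the eigenbasis of $\Sigma$ with $\sup_{z\in\gamma}\|\Phi(z)\|_\infty\le C$, the bound on $\Phi$ coming from a pointwise comparison of the scalar weights $1/|\lambda_j-z|$ on $\gamma$ with the weights $1/|\lambda_j-\lambda_{j_1}|$, $1/|\lambda_j-\lambda_{j_2}|$, and $g_\J^{-1}$ that appear in $M$. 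Inserting this factorization into each of the $k+1$ resolvents in the integrand telescopes the integrand into a product of the form $M(MEM)^kM$ multiplied by scalar integrals that are uniformly bounded on $\gamma$, so that $\|T_k\|_\infty\le (C')^{k}\delta_\J^k$ with $C'$ absolute.

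To obtain the Hilbert-Schmidt bound \eqref{Eq0Exp}, I observe that the difference of two projectors onto direct sums of eigenspaces has rank at most $2\min(|\J|,|\J^c|)$, hence $\|T_k\|_2\le\sqrt{2\min(|\J|,|\J^c|)}\,\|T_k\|_\infty$; summing the geometric series then gives \eqref{Eq0Exp} in the small-$\delta_\J$ regime, and the large-$\delta_\J$ case is covered by the trivial bound above. For the remainder bound \eqref{Eq2Exp}, I would isolate $T_1=L_\J E$ and exploit an extra block structure of $T_k$ for $k\ge 2$: each such $T_k$ can be written as a product $A_kB_k$ of two weighted resolvent chains that each live in an off-diagonal block of rank at most $\min(|\J|,|\J^c|)$ and that satisfy $\|A_k\|_\infty,\|B_k\|_\infty\le C^{k/2}\delta_\J^{k/2}$. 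Using $\|A_kB_k\|_2\le\|A_k\|_2\|B_k\|_\infty\le\sqrt{\min(|\J|,|\J^c|)}\,\|A_k\|_\infty\|B_k\|_\infty$ then gives the extra factor $\sqrt{\min(|\J|,|\J^c|)}$, upgrading $\sqrt{\min(|\J|,|\J^c|)}\,\delta_\J^2$ to $\min(|\J|,|\J^c|)\,\delta_\J^2$.

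The main obstacle will be making the factorization $(\Sigma-z)^{-1}=M\Phi(z)M$ sufficiently sharp on a well-chosen contour so that the explicit constants $4\sqrt{2}$ and $20\sqrt{2}$ in \eqref{Eq0Exp} and \eqref{Eq2Exp} come out correctly; in particular, $\gamma$ must balance the length estimate against the pointwise operator bound on $\Phi(z)$. A subtler difficulty is the rank accounting that turns $\sqrt{\min(|\J|,|\J^c|)}$ into $\min(|\J|,|\J^c|)$ in the remainder bound. Following the relative-rank framework of \cite{jirak:wahl:pams:2020,Wahl}, this is best addressed by splitting $\hat P_\J - P_\J = P_\J(\hat P_\J - P_\J) + P_{\J^c}(\hat P_\J - P_\J)$, treating the two blocks separately, and using that the linear term $L_\J E$ is purely off-diagonal while the quadratic remainder has both a diagonal and an off-diagonal contribution each of which gains the additional rank-$\min(|\J|,|\J^c|)$ factor.
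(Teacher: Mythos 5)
Your route (Riesz projections plus a resolvent Neumann series, as in \cite{Wahl} and in the absolute-perturbation literature) is genuinely different from the paper's, which is purely algebraic: it never introduces a contour, but instead works from the identity $(\hat\lambda_j-\lambda_k)P_k\hat P_j=P_kE\hat P_j$, an eigenvalue separation lemma derived from $\delta_\J\le 1$, and the exact decomposition $\hat P_\J-P_\J=\hat P_\J P_{\J^c}+P_{\J^c}\hat P_\J-P_{\J^c}\hat P_\J P_{\J^c}-P_\J\hat P_{\J^c}P_\J$. The difference matters, because the central step of your sketch has a genuine gap. Writing $(\Sigma-z)^{-1}=M\Phi(z)M$ with $M=\vert R_{\J^c}\vert^{1/2}+g_\J^{-1/2}P_\J$ turns the integrand of $T_k$ into $M\Phi(z)(MEM)\Phi(z)\cdots(MEM)\Phi(z)M$; here $\Phi(z)$ is a \emph{diagonal operator}, not a scalar, and it does not commute with $MEM$, so the integrand does not "telescope into $M(MEM)^kM$ times a bounded scalar integral." The crude estimate that your factorization does yield is $\|T_k\|_\infty\le (2\pi)^{-1}\operatorname{length}(\gamma)\,\|M\|_\infty^2\,\|\Phi\|_\infty^{k+1}\delta_\J^k$, and $\operatorname{length}(\gamma)\|M\|_\infty^2\asymp(\lambda_{j_1}-\lambda_{j_2}+g_\J)/g_\J$, which is unbounded for long intervals $\J$. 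To remove this factor one must evaluate the scalar contour integrals exactly by residues; they become divided differences over the eigenvalues visited by each chain, and bounding those by the corresponding products of entries of $M$ — in particular for chains visiting several, possibly coinciding, eigenvalues \emph{inside} $\J$ — is precisely the hard combinatorial content that \cite{Wahl} carries out for singletons and that does not transfer to intervals for free. Even granting such entrywise bounds, they control matrix entries of $T_k$, not $\|T_k\|_\infty$, so an additional structural argument is needed before the rank step.

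Two further points would need repair even if the norm bound were secured. First, iterating the second resolvent identity gives a series whose convergence you must justify under the relative condition $\delta_\J<c$ rather than the absolute condition $\|E\|_\infty<g_\J/2$; relatedly, the claim that $\gamma$ encloses exactly $\{\hat\lambda_j:j\in\J\}$ is the content of the paper's eigenvalue separation lemma and must be proved, not assumed. Second, the rank bound $\operatorname{rank}(T_k)\le 2\min(\vert\J\vert,\vert\J^c\vert)$ is not justified by the fact that a difference of spectral projectors has small rank: individual terms $T_k$ are not differences of projectors, and the block $P_{\J^c}T_kP_{\J^c}$ only acquires a rank bound of order $k\,\vert\J\vert$ through a first-passage decomposition of the chains through $\J$. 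This is fixable (the extra $\sqrt{k}$ is summable), but as written the step is unsupported. The paper circumvents all of this by bounding $\|P_{\J^c}\hat P_\J\|_2$ and the first-order remainder directly through $\|\vert R_{\J^c}\vert^{-1/2}\hat P_\J\|_2$, which is why its constants $4\sqrt 2$ and $20\sqrt 2$ come out explicitly.
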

Using the identities $P_{\J}-\hat P_\J=\hat P_{\J^c}-P_{\J^c}$ and $L_{\J}E=-L_{\J^c}E$, it is possible to replace $\delta_{\J}$ by $\min(\delta_{\J},\delta_{\J^c})$ in Proposition \ref{PropExp}, at least for $j_1=1$. In the latter case, we implicitly assume that $\J$ is chosen such that $\delta_{\J}$ is equal to $\min(\delta_{\J},\delta_{\J^c})$ in what follows.

Clearly, we can bound $\delta_{\J}\leq \|E\|_\infty/g_{\J}$, in which case the inequality \eqref{Eq0Exp} turns into a standard perturbation bound, cf.~Lemma 2 in \cite{jirak:wahl:pams:2020} or Equation (5.17) in \cite{reiss:wahl:aos:2020}.

\begin{corollary}\label{cor:exp:quadratic:term}
We have
\begin{align*}
  &\big\vert\|P_{\J}-\hat P_\J\|_2^2-\|L_{\J}E\|_2^2\big\vert\lesssim \min(\vert \J\vert ,\vert \J^c\vert )^{3/2}\delta_\J^3+\min(\vert \J\vert ,\vert \J^c\vert )^2\delta_\J^4.
\end{align*}
\end{corollary}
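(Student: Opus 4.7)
The plan is to derive the corollary directly from Proposition \ref{PropExp} by a short triangle-inequality argument, without revisiting any perturbation theory. Set $m=\min(|\J|,|\J^c|)$, $A=\hat P_\J-P_\J$ and $B=L_\J E$. Proposition \ref{PropExp} already yields the two bounds I need:
\begin{align*}
\|A\|_2\leq 4\sqrt 2\, m^{1/2}\delta_\J,\qquad \|A-B\|_2\leq 20\sqrt 2\, m\,\delta_\J^2.
\end{align*}

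The key identity is the factorization
\begin{align*}
\big\vert\|A\|_2^2-\|B\|_2^2\big\vert=\big\vert\|A\|_2-\|B\|_2\big\vert\cdot\big(\|A\|_2+\|B\|_2\big).
\end{align*}
By the reverse triangle inequality, $|\|A\|_2-\|B\|_2|\leq \|A-B\|_2$, which is controlled by \eqref{Eq2Exp}. For the second factor, I use $\|B\|_2\leq\|A\|_2+\|A-B\|_2$ and then insert \eqref{Eq0Exp} and \eqref{Eq2Exp}, giving
\begin{align*}
\|A\|_2+\|B\|_2\leq 2\|A\|_2+\|A-B\|_2\lesssim m^{1/2}\delta_\J+m\,\delta_\J^2.
\end{align*}
Multiplying the two bounds yields
\begin{align*}
\big\vert\|A\|_2^2-\|B\|_2^2\big\vert\lesssim m\,\delta_\J^2\bigl(m^{1/2}\delta_\J+m\,\delta_\J^2\bigr)=m^{3/2}\delta_\J^3+m^{2}\delta_\J^4,
\end{align*}
which is exactly the desired inequality.

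There is essentially no obstacle here: the corollary is a clean algebraic consequence of Proposition \ref{PropExp}, and all nontrivial work (the relative-rank-aware perturbation bounds) has already been done. The only thing to be mindful of is to apply the reverse triangle inequality to the \emph{difference} of Hilbert-Schmidt norms rather than expanding $\|A\|_2^2-\|B\|_2^2$ via inner products, since the factored form produces the correct mixed orders $m^{3/2}\delta_\J^3$ and $m^2\delta_\J^4$ without cross terms that would need separate treatment.
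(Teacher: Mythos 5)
Your proof is correct and follows essentially the same route as the paper: both reduce the claim to the two bounds of Proposition \ref{PropExp} by elementary algebra (the paper expands $\|A\|_2^2=\|(A-B)+B\|_2^2$ and applies Cauchy--Schwarz, while you factor the difference of squares and use the reverse triangle inequality, which yields the identical two terms). The only cosmetic difference is that the paper bounds $\|L_{\J}E\|_2$ directly via \eqref{EqBoundLinTerm}, whereas you bound $\|B\|_2$ through $\|A\|_2$ and \eqref{Eq0Exp}; both are valid.
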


For our next consequence (used in the bootstrap approximations), let $\tilde \Sigma$ be a third positive self-adjoint compact operator on $\mathcal{H}$. Similarly as above we will write $\tilde E=\tilde \Sigma-\Sigma$ and $\tilde P_{\J}$ for the orthogonal projection on the direct sum of the eigenspaces of $\tilde \Sigma$ corresponding to the eigenvalues with indices in $\J$. 
\begin{corollary}\label{lem:boot:approx}
We have
\begin{align*}
&\big\vert\|\tilde{P}_{\J} - \hat{P}_{\J} \|_2^2 - \|\FO_{\J}(\tilde{E} - E)\|_2^2\big\vert\\
&\lesssim \min(\vert \J\vert ,\vert \J^c\vert )^{3/2}(\delta_\J^3(E)+\delta_\J^3(\tilde E))+\min(\vert \J\vert ,\vert \J^c\vert )^2(\delta_\J^4(E)+\delta_\J^4(\tilde E)).
\end{align*}
\end{corollary}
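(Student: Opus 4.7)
My plan is to deduce the corollary from Proposition~\ref{PropExp} applied twice, combined with the expansion of the squared Hilbert--Schmidt norm. Let me write $m=\min(\vert\J\vert,\vert\J^c\vert)$ throughout, and for brevity set $\delta=\delta_\J(E)$ and $\tilde\delta=\delta_\J(\tilde E)$.

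First, applying \eqref{Eq2Exp} to the perturbations $E=\hat\Sigma-\Sigma$ and $\tilde E=\tilde\Sigma-\Sigma$ separately yields
\begin{align*}
\hat P_\J - P_\J &= L_\J E + R_1, &\|R_1\|_2 &\lesssim m\,\delta^2,\\
\tilde P_\J - P_\J &= L_\J \tilde E + R_2, &\|R_2\|_2 &\lesssim m\,\tilde\delta^2.
\end{align*}
Subtracting gives the decomposition $\tilde P_\J-\hat P_\J = L_\J(\tilde E-E) + (R_2-R_1)$, where by the triangle inequality the remainder satisfies $\|R_2-R_1\|_2\lesssim m(\delta^2+\tilde\delta^2)$.

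Second, I expand the target quantity via the identity $\|A+B\|_2^2-\|A\|_2^2 = 2\langle A,B\rangle + \|B\|_2^2$:
\begin{align*}
\big\vert\|\tilde P_\J-\hat P_\J\|_2^2 - \|L_\J(\tilde E-E)\|_2^2\big\vert \le 2\,\|L_\J(\tilde E-E)\|_2\cdot\|R_2-R_1\|_2 + \|R_2-R_1\|_2^2.
\end{align*}
To handle the first factor of the cross term, I combine \eqref{Eq0Exp} with the triangle inequality: $\|L_\J E\|_2 \le \|\hat P_\J-P_\J\|_2+\|R_1\|_2 \lesssim m^{1/2}\delta + m\,\delta^2$, and analogously for $\tilde E$. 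Hence
\begin{align*}
\|L_\J(\tilde E-E)\|_2 \lesssim m^{1/2}(\delta+\tilde\delta) + m(\delta^2+\tilde\delta^2).
\end{align*}

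Third, I multiply out and use AM--GM to collect terms. The cross term is bounded by
\begin{align*}
m^{3/2}(\delta+\tilde\delta)(\delta^2+\tilde\delta^2) + m^{2}(\delta^2+\tilde\delta^2)^2,
\end{align*}
and the quadratic term by $m^2(\delta^2+\tilde\delta^2)^2$. Since $(\delta+\tilde\delta)(\delta^2+\tilde\delta^2)\lesssim \delta^3+\tilde\delta^3$ and $(\delta^2+\tilde\delta^2)^2\lesssim \delta^4+\tilde\delta^4$ (both by Young/AM--GM), the right-hand side is
\begin{align*}
\lesssim m^{3/2}(\delta^3+\tilde\delta^3) + m^2(\delta^4+\tilde\delta^4),
\end{align*}
which is the claimed bound. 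No real obstacle arises here: the corollary is a clean algebraic consequence of the linear expansion together with the operator-norm perturbation bound in Proposition~\ref{PropExp}; the only mild care needed is to keep the powers of $m$ straight, and to avoid introducing a cross-term of the form $\delta\tilde\delta^2$ (which would be handled by the same Young inequality).
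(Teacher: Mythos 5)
Your proof is correct and follows essentially the same route as the paper: decompose $\tilde P_\J-\hat P_\J$ via the linear expansion \eqref{Eq2Exp} applied to both perturbations, expand the squared norm, and control the cross term by Cauchy--Schwarz together with a bound of order $m^{1/2}\delta_\J$ on $\|L_\J E\|_2$. The only (immaterial) difference is that the paper invokes the direct estimate \eqref{EqBoundLinTerm} for $\|L_\J E\|_2$, whereas you recover an equivalent bound from \eqref{Eq0Exp} plus the remainder estimate.
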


The dependence on $\min(\vert {\J}\vert ,\vert {\J}^c\vert )$ in \eqref{Eq2Exp} and Corollaries \ref{cor:exp:quadratic:term} and \ref{lem:boot:approx} can be further improved, by using the first inequality in Lemma \ref{LemFirstOrderExp} below (instead of the second one). Since this leads to additional quantities that have to be controlled, such improvements are not pursued here.

\subsection{Tools from probability}\label{sec:tools:probability}
\subsubsection{Metrics for convergence of laws}\label{sec:metrics}

In this section, we provide several upper bounds for the uniform metric and the $1$-Wasserstein metric. The propositions are proved in Section \ref{sec:metrics:proofs}. For two real-valued random variables $X,Y$, the uniform (or Kolmogorov) metric is defined by
\begin{align*}
\mathbf{U}\big(X,Y\big) = \sup_{x \in \R}\big\vert\P\big(X \leq x\big) - \P\big(Y \leq x\big)\big\vert.
\end{align*}
Moreover, for two (induced) probability measures $\P_X,\P_Y$, let $\mathcal{L}(\P_X,\P_Y)$ be the set of probability with marginals $\P_X, \P_Y$. Then the $1$-Wasserstein metric is defined as the minimal coupling in $L^1$-distance, that is,

\begin{align*}
\mathbf{W}\big(X,Y\big) = \inf\Big\{ \int_{\R} \vert x-y\vert\, \P(dx, dy): \, \P \in \mathcal{L}(\P_X, \P_Y)\Big\}.
\end{align*}

\begin{setting}\label{ass:clts:setup}
Let $T$ be a real-valued random variable. Moreover, let $Y_1,\dots,Y_n$ be independent random variables taking values in a separable Hilbert space $\mathcal{H}$ satisfying $\E Y_i=0$ and $\E \|Y_i\|^q < \infty$ for some $q > 2$. Set
\begin{align*}
&S = n^{-1}\|\sum_{i = 1}^n Y_i\|^2 \quad\text{and}\quad \Psi = \frac{1}{n}\sum_{i = 1}^n \E Y_i \otimes Y_i.
\end{align*}
Let $\lambda_1(\Psi)\geq \lambda_2(\Psi)\geq \dots >0$ be the positive eigenvalues of $\Psi$ (each repeated a number of times equal to its multiplicity). Moreover, let $Z$ be a centered Gaussian random variable in $\mathcal{H}$ with covariance operator $\Psi$, that is a Gaussian random variable with $\E Z = 0$ and $\E Z \otimes Z=n^{-1}\sum_{i = 1}^n \E Y_i \otimes Y_i$.

\end{setting}

In applications $T$ and $S$ will correspond to (scaled or bootstrap versions of) $\|\hat{P}_{\J} - P_{\J}\|_2^2$ and its approximation $\|L_{\J}E\|_2^2$, respectively.

Our first result deals with the $1$-Wasserstein distance between $T$ and $\|Z\|^2$.

\begin{prop}\label{prop:clt:II}
Consider Setting \ref{ass:clts:setup} with $q \in (2,3]$. Assume $\vert T\vert\leq C_T$ almost surely, and $\sum_{i = 1}^n \E \|Y_i\|^r \leq n^{r/2}$ for $r \in \{2,q\}$. Then, for all $s > 0$ and $u>0$,
\begin{align*}
\mathbf{W}\big(T, \|Z\|^2\big) \lesssim_{q,s}\Big(n^{-q/2}\sum_{i = 1}^n\E \|Y_i\|^q\Big)^{\frac{q-2}{2(2q-3)}} + C_T\P(\vert T -S\vert  > u)+ u  + \frac{1}{C_T^{s}}.
\end{align*}
\end{prop}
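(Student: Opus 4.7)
The plan is to reduce the problem to a quantitative Hilbert space CLT via a double truncation at level $C_T$. Introducing the truncated variables $S\wedge C_T$ and $\|Z\|^2\wedge C_T$, the triangle inequality gives
\begin{align*}
\mathbf{W}\bigl(T,\|Z\|^2\bigr)\le \mathbf{W}\bigl(T,S\wedge C_T\bigr)+\mathbf{W}\bigl(S\wedge C_T,\|Z\|^2\wedge C_T\bigr)+\mathbf{W}\bigl(\|Z\|^2\wedge C_T,\|Z\|^2\bigr).
\end{align*}
Since $x\mapsto x\wedge C_T$ is $1$-Lipschitz, the middle term is bounded by $\mathbf{W}(S,\|Z\|^2)$. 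This cleanly separates the problem into (i) transferring the bounded variable $T$ to the truncated empirical functional $S\wedge C_T$, (ii) running an unbounded Hilbert space CLT comparing $S$ to $\|Z\|^2$ in Wasserstein distance, and (iii) paying the price of untruncating $\|Z\|^2$.

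For (i), the pointwise inequality $\vert T-S\wedge C_T\vert\le\vert T-S\vert$ holds because $\vert T\vert\le C_T$: when $S>C_T$, we have $T\le C_T\le S$, so truncating $S$ at $C_T$ only brings it closer to $T$. Splitting $\E\vert T-S\wedge C_T\vert$ on the event $\{\vert T-S\vert>u\}$ and using the crude bound $\vert T-S\wedge C_T\vert\le 2C_T$ on that event produces the terms $u+2C_T\,\P(\vert T-S\vert>u)$. For (iii), the hypothesis $\sum_i\E\|Y_i\|^2\le n$ gives $\operatorname{tr}\Psi\le 1$, and the standard Gaussian moment bound $\E\|Z\|^{2(s+1)}\lesssim_s(\operatorname{tr}\Psi)^{s+1}\le 1$, combined with the elementary estimate $\mathbf{W}(\|Z\|^2\wedge C_T,\|Z\|^2)\le\E(\|Z\|^2-C_T)_+\le C_T^{-s}\E\|Z\|^{2(s+1)}$, yields the $\lesssim_s C_T^{-s}$ contribution.

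All of the above is truncation bookkeeping; the main obstacle is the middle piece, which demands a quantitative CLT of the form $\mathbf{W}(S,\|Z\|^2)\lesssim_q(n^{-q/2}\sum_i\E\|Y_i\|^q)^{(q-2)/(2(2q-3))}$ for $q\in(2,3]$. My plan is a smoothing argument: represent $\mathbf{W}$ via its dual formulation as a supremum over $1$-Lipschitz test functions applied to $\|\cdot\|^2$ on $\mathcal{H}$, mollify at scale $\varepsilon$ to produce a test function with controlled higher derivatives, and then perform a Lindeberg-type replacement on the Hilbert-space-valued sum $n^{-1/2}\sum_i Y_i$, controlling each swap using only the available $q$-th moments. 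The mollification bias contributes a polynomial-in-$\varepsilon$ error, while the Lindeberg swap error grows as a negative power of $\varepsilon$ multiplied by $n^{-q/2}\sum_i\E\|Y_i\|^q$; optimizing $\varepsilon$ produces the exponent $(q-2)/(2(2q-3))$. The delicate points are constructing a mollifier compatible with the squared Hilbert-space norm and bookkeeping the swap errors under only sub-third moments---this is the technical heart of the argument, while the remaining steps are routine.
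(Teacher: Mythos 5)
Your outer decomposition is sound and is essentially a cleaner variant of the paper's Lemma \ref{lem:W1:approx}: the truncation $x\mapsto x\wedge C_T$ being $1$-Lipschitz correctly reduces the middle term to $\mathbf{W}(S,\|Z\|^2)$, the inequality $\vert T-S\wedge C_T\vert\le\vert T-S\vert$ is valid (since $S\ge 0$ and $\vert T\vert\le C_T$), and the tail estimate $\E(\|Z\|^2-C_T)_+\lesssim_s C_T^{-s}$ via $\operatorname{tr}\Psi\le 1$ and Gaussian moment equivalence matches the paper's treatment. So steps (i) and (iii) are fine.

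The genuine gap is the middle term, which you yourself identify as the technical heart and then only sketch. The bound $\mathbf{W}(S,\|Z\|^2)\lesssim_q\beta^{(q-2)/(2(2q-3))}$ with $\beta=n^{-q/2}\sum_i\E\|Y_i\|^q$ is exactly what the proposition is really about, and your proposed route (mollify Lipschitz test functions of $\|\cdot\|^2$ and run a Lindeberg swap) is not carried out: you assert, without computation, that the mollification bias and the swap error balance to give the exponent $(q-2)/(2(2q-3))$. This is not routine. The derivatives of $x\mapsto f_\varepsilon(\|x\|^2)$ grow with $\|x\|$, so the swap errors are not controlled by $\|f_\varepsilon^{(k)}\|_\infty$ alone, and with only $q\in(2,3]$ moments one must interpolate between second- and third-order Taylor remainders; whether this yields the stated exponent is precisely the content that is missing. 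Note also that your middle step never uses the hypothesis $\sum_i\E\|Y_i\|^r\le n^{r/2}$, which in the paper is essential there (via the Pinelis--Rosenthal inequality) to convert a smoothed Kolmogorov bound into a Wasserstein bound.

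For comparison, the paper obtains the middle term by a different chain: an Esseen-smoothing/characteristic-function bound in the Bentkus--Ulyanov style gives $\mathbf{U}(S+V,\|Z\|^2+V)\lesssim v^{-3}\beta$ for an independent Gaussian perturbation $V\sim\mathcal{N}(0,v^2)$ (Lemma \ref{lem:be:1}); this Kolmogorov bound is converted to Wasserstein via the dual tail-integral representation together with $q$-th moment bounds on $S$ and $\|Z\|^2$ (Lemma \ref{lem:be:3}); and the de-smoothing cost $+v$ is then balanced against $v^{6/q-3}\beta^{1-2/q}$ to produce the exponent. If you want to keep your decomposition, you should either import these two lemmas or fully execute the Lindeberg argument with explicit control of the derivative growth and the $\varepsilon$-exponents; as written, the claimed rate is unproved.
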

In connection to the uniform metric, the following quantities and relations will be crucial:
\begin{align}\label{Psi:clt:identities}
A := & \E \|Z\|^2 = \E S=\|\Psi\|_1 , \nonumber\\
B := &\E^{1/2} \big(\|Z\|^2 - \|\Psi\|_1\big)^2=\sqrt{2} \|\Psi\|_2, \nonumber \\
C := & \E^{1/3} \big(\|Z\|^2 - \|\Psi\|_1\big)^3 =2\|\Psi\|_3.
\end{align}
where we also used that $\E(G^2-1)^2=2$ and $\E (G^2-1)^3=8$ for $G \sim\mathcal{N}(0,1)$. By monotonicity of the Schatten norms, we have $C\lesssim B\lesssim A$.

\begin{prop}\label{prop:clt:V}
Consider Setting \ref{ass:clts:setup} with $q =3$. Then, for all $u > 0$,
\begin{align*}
\mathbf{U}\big(T, \|Z^{}\|^2 \big)  &\lesssim_p  n^{-3/5} \Big(\sum_{i = 1}^n \frac{\E \|Y_i\|^3}{\lambda_{1,2}^{3/4}(\Psi)} \Big)^{2/5} + \P\big(\vert T-S \vert> u\big) + \frac{u}{\sqrt{\lambda_{1,2}(\Psi)}},
\end{align*}
as well as
\begin{align*}
\mathbf{U}\big(T, \|Z^{}\|^2 \big)  &\lesssim_p
\Big( n^{-1/2}\lambda_6^{-3}(\Psi)  \frac{(A C)^3}{B^3}  \Big)^{1 + 1/10}+n^{-1/2}\lambda_{1,6}^{-1}(\Psi) \frac{(A^2 C)^3}{B^3}\\
&+ \P\big(\vert T-S \vert> u\big) + \frac{u}{\sqrt{\lambda_{1,2}(\Psi)}},
\end{align*}
with
\begin{align}\label{defn:lambda:product}
\lambda_{1,j}(\Psi)=\prod_{i = 1}^j\lambda_i(\Psi) \quad \text{for $j\geq 1$.}
\end{align}
\end{prop}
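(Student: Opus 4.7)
The plan is to start from the triangle inequality
$$\mathbf{U}\big(T, \|Z\|^2\big) \leq \mathbf{U}\big(T, S\big) + \mathbf{U}\big(S, \|Z\|^2\big),$$
and to treat the two summands separately.

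For $\mathbf{U}(T, S)$, I would apply the standard L\'evy-type smoothing
$$\mathbf{U}(T, S) \leq \P\big(|T-S| > u\big) + \sup_{x \in \R}\P\big(S \in [x, x+u]\big),$$
and transfer the anti-concentration of $S$ to that of $\|Z\|^2$ at the cost of $2\,\mathbf{U}(S, \|Z\|^2)$, which is absorbed into the second summand. The anti-concentration of $\|Z\|^2$ itself is estimated by writing $\|Z\|^2 = \sum_j \lambda_j(\Psi)\, \xi_j^2$ with i.i.d.\ $\xi_j \sim \mathcal{N}(0,1)$ and isolating the top two terms: a characteristic-function estimate shows that the density of $\lambda_1(\Psi)\xi_1^2 + \lambda_2(\Psi)\xi_2^2$ is uniformly bounded by a constant multiple of $\lambda_{1,2}(\Psi)^{-1/2}$, and conditioning on the remaining coordinates yields $\sup_x \P(\|Z\|^2 \in [x,x+u]) \lesssim u/\sqrt{\lambda_{1,2}(\Psi)}$. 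This produces the $\P(|T-S|>u)$ and $u/\sqrt{\lambda_{1,2}(\Psi)}$ pieces of the target bound.

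The heart of the argument is the CLT estimate for $\mathbf{U}(S, \|Z\|^2)$. Since $S$ is not a.s.\ bounded, I would invoke Proposition~\ref{prop:clt:II} with $T$ replaced by the truncation $S \wedge C$, controlling the gap $\mathbf{W}(S\wedge C, S) \leq \E(S-C)_+$ by moment or concentration estimates for $S$ (using $\E S = A$, with Bernstein/Hanson--Wright refinements for the second bound). The Wasserstein output is then converted into the uniform metric via
$$\mathbf{U}(S,\|Z\|^2) \leq \frac{\mathbf{W}(S,\|Z\|^2)}{\eta} + \sup_x \P\big(\|Z\|^2 \in [x,x+\eta]\big) \lesssim \frac{\mathbf{W}(S,\|Z\|^2)}{\eta} + \frac{\eta}{\sqrt{\lambda_{1,2}(\Psi)}},$$
and optimised jointly in $\eta$, $C$ and the smoothing parameter $u$ of Proposition~\ref{prop:clt:II}. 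The exponents $3/5$ and $2/5$ in the first stated bound emerge from this multi-level optimisation, once the Wasserstein input is identified with the $L^{(q-2)/(2(2q-3))}$-type term from Proposition~\ref{prop:clt:II} at $q=3$, with $L = n^{-3/2}\sum_i \E\|Y_i\|^3$.

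For the second stated bound, the anti-concentration is sharpened by working with more eigenvalues: a refined characteristic-function estimate shows that the density of $\sum_{j\le 6}\lambda_j(\Psi)\xi_j^2$ admits a bound rephrasable through $\lambda_{1,6}(\Psi)$ and $\lambda_6(\Psi)$, which plugged back into the Wasserstein-to-uniform conversion improves the resulting rate. Expressing the mean, variance and third central moment of $\|Z\|^2$ through the Schatten quantities $A$, $B$, $C$ from \eqref{Psi:clt:identities} and re-optimising produces the second bound; the exponent $1+1/10$ arises from a further layer of smoothing in which the sharper anti-concentration is balanced against a truncation error now controlled via Bernstein-type tails of $S$ of scale $B$ around $A$. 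The main obstacle I expect is the truncation bookkeeping: keeping the penalty $\E(S-C)_+$ dominated while carrying out the joint $\eta$-$u$-$C$ optimisation without sacrificing the sharp exponents; a secondary technical point is verifying the density estimates for partial sums of $\lambda_j(\Psi)\xi_j^2$ with constants independent of the remaining spectrum, which ultimately relies on H\"older-type manipulations of the associated characteristic function.
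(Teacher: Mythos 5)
Your reduction from $T$ to $S$ (smoothing at scale $u$ plus the anti-concentration bound $\sup_x\P(x<\|Z\|^2<x+u)\lesssim u/\sqrt{\lambda_{1,2}(\Psi)}$ of Lemma \ref{lem:anticoncentration:spok}) is exactly what the paper does, and that part is fine. The gap is in the core estimate for $\mathbf{U}(S,\|Z\|^2)$. Routing through the Wasserstein bound of Proposition \ref{prop:clt:II} and then converting via $\mathbf{U}\le \mathbf{W}/\eta+\eta/\sqrt{\lambda_{1,2}(\Psi)}$ cannot produce the stated rate: at $q=3$ the exponent $(q-2)/(2(2q-3))$ equals $1/6$, so with $L:=n^{-3/2}\sum_i\E\|Y_i\|^3$ the Wasserstein input is of order $L^{1/6}$, and the conversion (which after optimising $\eta$ is $\mathbf{U}\lesssim\sqrt{\mathbf{W}/\sqrt{\lambda_{1,2}(\Psi)}}$) yields at best $L^{1/12}\lambda_{1,2}^{-1/4}(\Psi)$. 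The target is $L^{2/5}\lambda_{1,2}^{-3/10}(\Psi)$, which is much smaller in the relevant regime $L\ll 1$; no joint optimisation over $\eta$, the truncation level and the smoothing parameter can recover an exponent $2/5$ from an input of exponent $1/6$ followed by a square-root loss. The paper avoids this entirely: it proves a Berry--Esseen bound directly in the uniform metric (Lemma \ref{lem:Be:ulyanov+spok}) by applying Esseen's smoothing inequality to the \emph{rescaled} statistics $n^{-1}\|\sum_i(Y_i/a)\|^2$ and $\|Z/a\|^2$, bounding the characteristic-function difference by $(|\xi|+|\xi|^3)L/a^3$ (Bentkus/Ulyanov) and the Esseen remainder by $a^2/(\sqrt{\lambda_{1,2}(\Psi)}\,t)$ via the same anti-concentration lemma; setting $t=1$ and optimising the scale $a$ gives precisely $L^{2/5}\lambda_{1,2}^{-3/10}(\Psi)$. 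The scale parameter $a$ plays a role your $\eta$ cannot play, because it enters the characteristic-function bound at the cubic power rather than linearly.

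The second stated bound is also not obtainable by ``a further layer of smoothing'' with six eigenvalues: the $n^{-1/2}$ rate with the specific quantities $\lambda_6^{-3}(\Psi)$, $\lambda_{1,6}^{-1}(\Psi)$, $(AC)^3/B^3$, $(A^2C)^3/B^3$ and the exponent $1+1/10$ is the asymptotically precise Gaussian-approximation theorem of Sazonov, Ulyanov and Zalesskii, which the paper imports wholesale as Lemma \ref{lem:ulyanov:optim} and then combines with the same $T$-to-$S$ reduction. Your sketch would need to reprove that theorem, which is a substantial independent piece of work; as written, the characteristic-function and truncation manipulations you describe do not produce those quantities.
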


The appearance of both $\lambda_6(\Psi)$ and $\lambda_{1,6}(\Psi)$ is, in fact, unavoidable, we refer to \cite{SAZONOV1989304} for more details.

Let $G \sim\mathcal{N}(0,1)$, Then, using invariance properties of the uniform metric, Lemma \ref{lem:BE:gauss} below and \eqref{Psi:clt:identities}, we get
\begin{align*}
\mathbf{U}\Big(\frac{T - A}{B}, G\Big)&\lesssim_p \Big(\frac{C}{B}\Big)^3 + \text{bound of Proposition \ref{prop:clt:V}}.
\end{align*}
However, in this case one may dispose of any conditions on the eigenvalues of $\Psi$ altogether, as is demonstrated by our next result below.

\begin{prop}\label{prop:clt:III}
Consider Setting \ref{ass:clts:setup} with $q \in (2,3]$. Then, for all $u > 0$,
\begin{align*}
\mathbf{U}\Big(\frac{T - A}{B }, G\Big) \lesssim_q \ n^{-q/8}\Big(\frac{A^3}{B^3}\sum_{i = 1}^n \frac{\E \|Y_i\|^q}{A^{q/2} }\Big)^{1/4} +  \Big(\frac{C}{B}\Big)^3 + \P\big(\vert T-S \vert> u\big) + \frac{u}{B}.
\end{align*}
\end{prop}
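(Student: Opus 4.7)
The strategy is to interpolate via $\|Z\|^2$, so that the task splits into (a) the standard swap $T\to S$, (b) the Berry--Esseen comparison $\|Z\|^2\to G$ supplied by Lemma~\ref{lem:BE:gauss}, and (c) an eigenvalue-free CLT comparison $S\to\|Z\|^2$. For (a), truncation of events at level $u$ combined with anti-concentration of $G$ at scale $u/B$ gives, for any $u>0$,
\begin{align*}
\mathbf{U}\Big(\frac{T-A}{B},G\Big)\leq \mathbf{U}\Big(\frac{S-A}{B},G\Big)+\P\bigl(|T-S|>u\bigr)+\frac{u}{B\sqrt{2\pi}},
\end{align*}
which supplies the final two terms of the claim. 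A triangle inequality through $(\|Z\|^2-A)/B$ and step (b), which yields $\mathbf{U}((\|Z\|^2-A)/B,G)\lesssim (C/B)^3$ and thereby produces the $(C/B)^3$ summand, leave only $\mathbf{U}((S-A)/B,(\|Z\|^2-A)/B)$ to handle.

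For this remaining uniform distance I would go through the Wasserstein metric. First apply Proposition~\ref{prop:clt:II} to the truncated variable $T'=S\wedge M$, dominating the truncation errors $\E|S-T'|$ and $\P(|T'-S|>0)$ via Rosenthal-type moment bounds for the squared empirical norm $S=n^{-1}\|\sum_i Y_i\|^2$. This yields a bound on $\mathbf{W}(S,\|Z\|^2)$. Convert to the uniform metric by the standard inequality
\begin{align*}
\mathbf{U}(X,Y)\leq \sup_x\P\bigl(|Y-x|\leq v\bigr)+\frac{\mathbf{W}(X,Y)}{v},
\end{align*}
with $X=(S-A)/B$ and $Y=(\|Z\|^2-A)/B$, where anti-concentration of $Y$ is itself obtained by combining anti-concentration of $G$ at scale $v$ with the $(C/B)^3$ bound of step (b). Joint optimization over the smoothing scale $v$, the truncation level $M$, and the auxiliary parameters $s,u$ of Proposition~\ref{prop:clt:II} is intended to produce the claimed rate $n^{-q/8}\bigl(A^3/B^3\cdot \sum_i\E\|Y_i\|^q/A^{q/2}\bigr)^{1/4}$, which one may rewrite as the scale-invariant quantity $(A/B)^{3/4}\bigl(n^{-q/2}\sum_i\E\|Y_i\|^q/A^{q/2}\bigr)^{1/4}$.

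\textbf{Main obstacle.} The sharp exponent $1/4$ and the prefactor $(A/B)^{3/4}$ must emerge without any dependence on the individual eigenvalues of $\Psi$, which is precisely the feature distinguishing this bound from Proposition~\ref{prop:clt:V}. A plain Wasserstein-to-uniform conversion loses a square root and, combined with the exponent $(q-2)/(2(2q-3))$ of Proposition~\ref{prop:clt:II}, would deliver a sub-optimal rate. Closing the gap requires either sharpening the Rosenthal-type moment control on $S$ by exploiting its structure as a squared empirical mean in Hilbert space, or, alternatively, replacing the last step by a direct Lindeberg exchange on summands truncated at a threshold $\tau$, using a mollified indicator whose bandwidth is balanced against $\tau$ through third-moment estimates on $Y_i\ind\{\|Y_i\|\leq\tau\}$.
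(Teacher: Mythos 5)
Your decomposition is the right one as far as it goes: the reduction from $T$ to $S$ at cost $\P(|T-S|>u)+u/B$ is exactly Lemma~\ref{lem:be:unif:approx:2}, and the term $(C/B)^3$ does come from comparing $(\|Z\|^2-A)/B$ with $G$ via Lemma~\ref{lem:BE:gauss}. The problem is the remaining comparison of $(S-A)/B$ with $(\|Z\|^2-A)/B$, which is where the entire first term of the claim must come from — and your proposed route does not produce it. Passing through $\mathbf{W}(S,\|Z\|^2)$ via Proposition~\ref{prop:clt:II} and then converting to the uniform metric with an anti-concentration/smoothing argument costs a square root: for $q=3$ the Wasserstein exponent is $(q-2)/(2(2q-3))=1/6$, so the conversion yields at best an exponent $1/12$ in place of the claimed $1/4$ (i.e.\ $n^{-q/8}(\cdots)^{1/4}$). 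You identify this yourself as the "main obstacle", but the two remedies you sketch (a sharpened Rosenthal bound, or a Lindeberg exchange with mollified indicators) are not carried out, and neither is the mechanism that actually closes the gap.

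The missing idea is to smooth \emph{in the uniform metric directly}, not to detour through Wasserstein. The paper applies Senatov's smoothing inequality (Lemma~\ref{lem:smoothing:senatov}): adding an independent $G_\epsilon\sim\mathcal{N}(0,\epsilon^2)$ costs only $O(\epsilon)$ in de-smoothing, because the comparison target is the standard Gaussian $G$ with bounded density — crucially \emph{not} $\|Z\|^2$, whose density bound would reintroduce the eigenvalues $\lambda_{1,2}(\Psi)$ you are trying to avoid. After smoothing and rescaling by affine invariance, the comparison of $S/A+G_\epsilon B/A$ with $\|Z\|^2/A+G_\epsilon B/A$ falls under Lemma~\ref{lem:be:1} with $v=\epsilon B/A$, giving a bound of order $(A/(\epsilon B))^3\, n^{-q/2}\sum_i\E\|Y_i\|^q/A^{q/2}$. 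Balancing $\epsilon$ against $\epsilon^{-3}$ then yields precisely the exponent $1/4$ and the prefactor $(A/B)^{3/4}$. Without this smoothing-plus-characteristic-function step (or an equivalent argument with the same $\epsilon$ versus $\epsilon^{-3}$ trade-off), your proof establishes a strictly weaker rate than the proposition claims.
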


Propositions \ref{prop:clt:II}, \ref{prop:clt:V} and \ref{prop:clt:III} are based on various normal approximation bounds in the literature, we refer to the proofs for references. Finding (or confirming) the optimal dependence on the dimension and the underlying covariance structure in such bounds is still an open problem in general.

\subsubsection{Concentration inequalities}\label{sec:concentration:inequalities}
In this section, we recall several useful concentration inequalities that we will need in the proofs below. Additional results are given in Section~\ref{sec:proof:concentration:inequalities}.

We will make frequent use of the classical Fuk-Nagaev inequality for real-valued random variables (cf.~\cite{nagaev}).

\begin{lemma}
Let $Z_1,\dots, Z_n$ be independent real-valued random variables. Suppose that $\E Z_i=0$ and $\E \vert Z_i\vert^p<\infty$ for some $p > 2$ and all $i=1,\dots,n$. Then, it holds that
\begin{align*}
\P\Big(\big\vert\sum_{i = 1}^n Z_i \big\vert \geq t \Big) \leq \Big(2 + \frac{2}{p}\Big)^p \frac{\mu_{n,p}}{t^p} + 2 \exp\Big(-\frac{a_p t^2}{\mu_{n,2}} \Big)
\end{align*}
for all $t>0$, with $a_p = 2 e^{-p}(p+2)^{-2}$ and $\mu_{n,p} = \sum_{i = 1}^n \E \vert Z_i\vert^p$.
\end{lemma}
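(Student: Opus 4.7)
This is the classical Fuk--Nagaev inequality for independent real-valued random variables, due to \cite{nagaev}. The plan is to reproduce the standard truncation-plus-Bernstein proof, with the truncation level tuned so as to reproduce the precise constants appearing in the statement.

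Concretely, I would fix a threshold $y>0$ and split each summand as $Z_i=Z_i'+Z_i''$, with $Z_i'=Z_i\ind\{\vert Z_i\vert\leq y\}$ and $Z_i''=Z_i\ind\{\vert Z_i\vert>y\}$, so that on the event $\{\max_i\vert Z_i\vert\leq y\}$ one has $\sum_i Z_i=\sum_i Z_i'$. A union bound then gives
\begin{align*}
\P\Big(\big\vert\sum_{i=1}^n Z_i\big\vert\geq t\Big)\leq \sum_{i=1}^n\P(\vert Z_i\vert>y)+\P\Big(\big\vert\sum_{i=1}^n Z_i'\big\vert\geq t\Big).
\end{align*}
The first sum is bounded by $\mu_{n,p}/y^p$ via Markov's inequality in the $p$-th moment, which is the ultimate source of the polynomial tail.

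For the truncated sum I would exploit the assumption $\E Z_i=0$ to control the bias introduced by truncation: $\vert\E Z_i'\vert=\vert\E Z_i''\vert\leq \E\vert Z_i\vert^p/y^{p-1}$, so $\vert\sum_i\E Z_i'\vert\leq \mu_{n,p}/y^{p-1}$, which can be absorbed into $t$ at the cost of a constant factor in the polynomial bound. I would then apply the classical Bernstein inequality to the centered bounded variables $Z_i'-\E Z_i'$, using $\sum_i\E(Z_i')^2\leq\mu_{n,2}$ as a variance proxy and $2y$ as the almost-sure bound; this yields an exponential tail of the form $2\exp(-c_1 t^2/(c_2\mu_{n,2}+c_3 yt))$.

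The final step is to choose $y$ to match the claimed constants. Setting $y$ of the form $t/(p+2)$ balances the truncation bias against the deviation $t$ (turning the Markov factor into $(2+2/p)^p$), while substituting this $y$ into the Bernstein exponent and absorbing the term linear in $yt$ produces the exponential factor $\exp(-a_p t^2/\mu_{n,2})$ with $a_p=2e^{-p}(p+2)^{-2}$. The main obstacle is purely the bookkeeping required to extract these precise constants: one has to allocate $t$ between the bias from the truncated means and the fluctuation term exactly as in Nagaev's original argument. There is no conceptual difficulty beyond this, and the inequality is a black box for our subsequent applications.
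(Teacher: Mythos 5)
The paper offers no proof of this lemma: it is quoted as a known corollary of Nagaev's results and used as a black box, so the intended justification is simply the citation of \cite{nagaev}. The difficulty with your proposal is that the specific derivation you sketch --- truncate at level $y$, bound the discarded tails by Markov, and apply Bernstein to the truncated sum --- provably cannot produce the stated two-term bound for \emph{any} choice of $y$, because the two terms place incompatible demands on $y$. To make the Markov term $\sum_i\P(\vert Z_i\vert>y)\leq \mu_{n,p}/y^p$ of the claimed size $(2+2/p)^p\mu_{n,p}/t^p$ you must take $y\geq t/(2+2/p)\geq t/3$. But then the Bernstein exponent for the truncated, recentered sum (with almost-sure bound $2y$ and deviation level $t'\leq t$) satisfies
\begin{align*}
\frac{(t')^2}{2\mu_{n,2}+\tfrac{4}{3}yt'}\;\leq\;\frac{3t'}{4y}\;\leq\;\frac{9}{4},
\end{align*}
so the ``exponential'' term is bounded below by $2e^{-9/4}$, a fixed positive constant that does not decay as $t\to\infty$ and can never be dominated by $2\exp(-a_pt^2/\mu_{n,2})$. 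Conversely, forcing the Bernstein exponent to be of order $t^2/\mu_{n,2}$ requires $yt\lesssim\mu_{n,2}$, i.e.\ $y\lesssim\mu_{n,2}/t$, and then the Markov term becomes of order $\mu_{n,p}(t/\mu_{n,2})^p$, which grows in $t$ rather than decaying like $t^{-p}$. Hence in the only interesting regime $t^2\gg\mu_{n,2}$ no tuning of $y$ reconciles the two requirements; the issue is not bookkeeping of constants but a structural failure of truncation-plus-Bernstein.

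The missing idea is that Nagaev's argument does not pass through Bernstein's inequality. One bounds the exponential moment $\E\exp(hZ_i\ind\{Z_i\leq y\})$ directly, splitting the range of $Z_i$ into several pieces and using the second moment on the lower range but the $p$-th moment near the truncation level $y$; after optimizing over $h$ this yields a Gaussian-type term \emph{plus} a term of the form $\bigl(A_p/(\beta t y^{p-1})\bigr)^{\beta t/y}$, and only then does the choice $y\asymp t$ with $\beta t/y=p$ collapse the latter into $C_p\,\mu_{n,p}/t^p$ while leaving the Gaussian term intact. If you want a self-contained proof you must reproduce that exponential-moment computation; otherwise the honest route is the one the paper takes, namely to quote the corollary from \cite{nagaev} as is.
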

In particular, if $Z_1,\dots,Z_n$ are independent copies of a real-valued random variable $Z$ with $\E Z=0$ and  $\E \vert Z \vert^p<\infty$, then there is a constant $C>0$ such that
\begin{align}\label{lem:fn}
\P\Big(\big\vert\sum_{i = 1}^n Z_i \big\vert \geq C(\E \vert Z\vert^p)^{1/p}\sqrt{n\log n} \Big) \leq n^{1-p/2}.
\end{align}

We will also apply Banach space versions of the Fuk-Nagaev inequality. The following Lemma collects concentration inequalities in Hilbert-Schmidt norm, in nuclear norm, and in operator norm.

\begin{lemma}\label{lem:fuknagaev:HS:nuclear:operator}
Let $Y=\sum_{j\geq 1}\vartheta_j^{1/2}\zeta_ju_j$ be a random variable taking values in a separable Hilbert space $\mathcal{H}$, where $\vartheta = (\vartheta_1,\vartheta_2,\dots)$ is a sequence of positive numbers with $\|\vartheta\|_1:=\sum_{j\geq 1}\vartheta_j<\infty$, $u_1,u_2,\dots$ is an orthonormal system in $\mathcal{H}$, and $\zeta_1,\zeta_2,\dots$ is a sequence of centered random variables satisfying $\sup_{j\geq 1}\E\vert \zeta_{j}\vert ^{2p} \leq C_1$ for some $p>2$ and $C_1 > 0$. Let $Y_1,\dots,Y_n$ be $n$ independent copies of $Y$. Then there exists a constant $C_2 > 0$ depending only on $C_1$ and $p$ such that the following holds.
\begin{description}
    \item[(i)] For all $t \geq 1$, it holds that
    \begin{align*}
    \P\Big(\big\|\frac{1}{n}\sum_{i=1}^nY_i\otimes Y_i-\E Y\otimes Y\big\|_2 > \frac{C_2t\|\vartheta\|_1}{\sqrt{n}} \Big) \leq \frac{n^{1-p/2}}{t^{p}} + e^{-t^2}.
    \end{align*}
    \item[(ii)] Let $\varpi_n^2 \geq \sup_{\|S\|_{\infty} \leq  1} \E \operatorname{tr}^2(S (Y \otimes Y-\Sigma))$. Then, for all $t\geq \|(\E (Y\otimes Y-\Sigma)^2 )^{1/2}\|_1$, it holds that
    \begin{align*}
    \P\Big(\big\|\frac{1}{n}\sum_{i=1}^nY_i\otimes Y_i-\E Y\otimes Y\big\|_1 \geq \frac{C_2t}{\sqrt{n}}\Big) \leq \frac{n^{1-p/2}}{t^p} \|\vartheta\|_1^p    + e^{-t^2/\varpi_n^2}.
    \end{align*}
    \item[(iii)] Assume additionally that  $\|\E (Y \otimes Y-\E Y\otimes Y)^2\|_{\infty} =  1$. Then, for all $t \geq 1$, it holds that
    \begin{align*}
   \P\Big( \big\|\frac{1}{n}\sum_{i=1}^nY_i\otimes Y_i-\E Y\otimes Y\big\|_\infty\geq  \frac{C_2t}{\sqrt{n}}\Big)\leq C_2n^{1-p/2} t^{p}  \|\vartheta\|_1^p +  \|\vartheta\|_1^2 e^{-t^2}.
    \end{align*}
\end{description}
\end{lemma}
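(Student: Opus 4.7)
The three parts of the lemma share a common strategy: view $X_i = Y_i \otimes Y_i - \E Y_i \otimes Y_i$ as iid centered random operators, bound their relevant moments by exploiting the structural representation $Y = \sum_{j\geq 1} \vartheta_j^{1/2} \zeta_j u_j$, and then apply a Banach-space Fuk--Nagaev-type inequality calibrated to the chosen Schatten norm. The key preliminary moment bound, obtained by Minkowski's inequality in $L^p$ applied to $\|Y\|^2 = \sum_{j \geq 1} \vartheta_j \zeta_j^2$, is
\begin{align*}
\bigl(\E \|Y\|^{2p}\bigr)^{1/p} \leq \sum_{j \geq 1} \vartheta_j \bigl(\E \zeta_j^{2p}\bigr)^{1/p} \leq C_1^{1/p}\|\vartheta\|_1,
\end{align*}
which yields $\E \|X_i\|_q^p \lesssim_{C_1,p} \|\vartheta\|_1^p$ for every Schatten index $q \in [1,\infty]$ and a matching second-moment bound $\E \|X_i\|_2^2 \lesssim \|\vartheta\|_1^2$. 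The three parts then diverge in the choice of concentration tool.

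For part (i), the Hilbert--Schmidt norm is the Hilbert norm on the space of Hilbert--Schmidt operators, so the plan is to apply a Fuk--Nagaev inequality for iid centered Hilbert-valued random variables: truncate each $X_i$ at a suitable level $M$, control the bounded part by the Pinelis--Sakhanenko Bernstein inequality, and control the unbounded complement by Markov's inequality using the $p$-th moment bound above. Since $\E \|\sum_i X_i\|_2^2 \leq \sum_i \E \|X_i\|_2^2 \lesssim n \|\vartheta\|_1^2$, the deviation scale is $\sqrt{n}\|\vartheta\|_1$; optimizing $M$ and rescaling to the normalized deviation $\|\frac{1}{n}\sum_i X_i\|_2 \geq C_2 t \|\vartheta\|_1/\sqrt{n}$ produces the polynomial term $n^{1-p/2}/t^p$ and Gaussian term $e^{-t^2}$.

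For part (iii), under the normalization $\|\E X_1^2\|_\infty = 1$ the intrinsic dimension satisfies $\operatorname{tr}(\E X_1^2)/\|\E X_1^2\|_\infty \leq \E \|Y\|^4 \lesssim \|\vartheta\|_1^2$, which explains the $\|\vartheta\|_1^2$ prefactor in the sub-Gaussian tail. I would combine an intrinsic-dimensional matrix Bernstein inequality (in the style of Minsker or Tropp) applied to $X_i \ind_{\|X_i\|_\infty \leq M}$ with a union bound over the tail event $\{\|X_i\|_\infty > M\}$, whose probability is controlled by $\|\vartheta\|_1^p/M^p$. A suitable choice of $M$ and rescaling then yields the claimed polynomial tail $n^{1-p/2}\|\vartheta\|_1^p/t^p$ and intrinsic-dimensional Gaussian tail $\|\vartheta\|_1^2 e^{-t^2}$.

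For part (ii) I would use the duality $\|A\|_1 = \sup_{\|S\|_\infty \leq 1} \operatorname{tr}(SA)$ to rewrite the nuclear norm as the supremum of an empirical process indexed by $S$. First bound the expectation by $\E \|\frac{1}{n}\sum_i X_i\|_1 \lesssim \|(\E X_1^2)^{1/2}\|_1/\sqrt{n}$ via symmetrization, the noncommutative Khintchine inequality in nuclear norm, and Jensen's inequality for the operator square root; the assumed threshold $t \geq \|(\E X_1^2)^{1/2}\|_1$ then places us above the expected value. Concentration around the expectation follows from a Talagrand-type inequality for the supremum of the empirical process, whose weak variance equals $n \varpi_n^2$ by assumption, yielding the sub-Gaussian tail $e^{-t^2/\varpi_n^2}$. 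Since Talagrand's inequality requires bounded summands, a Fuk--Nagaev truncation of $X_i$ in nuclear norm (using $\E\|X_i\|_1^p \lesssim \|\vartheta\|_1^p$) contributes the polynomial term $n^{1-p/2}\|\vartheta\|_1^p/t^p$. The main obstacle is this part (ii): combining the Talagrand-type concentration for the nuclear-norm supremum with the Fuk--Nagaev truncation requires coupling the truncation level with the deviation $t$ so that both terms emerge with the sharp dependence on $\varpi_n$ and $\|\vartheta\|_1$, while parts (i) and (iii) are essentially a bookkeeping exercise once the moment estimates above are in hand.
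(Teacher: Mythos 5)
Your parts (i) and (iii) follow essentially the paper's route: (i) is a Banach-space Fuk--Nagaev inequality applied to the Hilbert space of Hilbert--Schmidt operators (the paper simply cites the Einmahl--Li inequality, its Lemma on Banach-space-valued sums, rather than re-deriving it by truncation plus Pinelis--Sakhanenko, but the content is the same), and (iii) is exactly the paper's argument: Minsker's truncated intrinsic-dimension matrix Bernstein inequality with truncation level tied to $t/\sqrt n$, the complement event controlled by Markov via $\E\|\overline{Y\otimes Y}\|_\infty^p\lesssim\|\vartheta\|_1^p$, and $\operatorname{tr}(\E(\overline{Y\otimes Y})^2)\lesssim\|\vartheta\|_1^2$ giving the prefactor of the Gaussian term. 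Your moment bound $(\E\|Y\|^{2p})^{1/p}\leq C_1^{1/p}\|\vartheta\|_1$ via Minkowski is precisely the paper's Lemma on Karhunen--Lo\`eve moment computations.

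For part (ii) you take a genuinely different and heavier route, and the step you yourself flag as the main obstacle --- coupling a nuclear-norm truncation with a Talagrand-type concentration inequality so that the polynomial and Gaussian tails emerge with the right dependence on $\varpi_n$ and $\|\vartheta\|_1$ --- is exactly the step you leave unresolved. The paper avoids this entirely by applying the \emph{same} Einmahl--Li inequality used in (i), but now in the Banach space of nuclear operators, whose dual ball consists of operators $S$ with $\|S\|_\infty\leq 1$ acting by $T\mapsto\operatorname{tr}(TS)$. That single inequality already delivers, for unbounded summands with finite $p$-th moment, a deviation bound around $(1+\nu)\E\|\sum_iZ_i\|_1$ with a polynomial term $\sum_i\E\|Z_i\|_1^p/t^p$ and a Gaussian term $\exp(-t^2/((2+\delta)\varpi_n))$ in which $\varpi_n$ is precisely the weak variance $\sup_{\|S\|_\infty\leq 1}\sum_i\E\operatorname{tr}^2(S\,\overline{Y_i\otimes Y_i})$ appearing in the statement. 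The only remaining work is then (a) the expectation bound
\begin{align*}
\E\Big\|\frac1n\sum_{i=1}^n\overline{Y_i\otimes Y_i}\Big\|_1\leq\frac{1}{\sqrt n}\operatorname{tr}\Big(\E^{1/2}(\overline{Y\otimes Y})^2\Big),
\end{align*}
obtained from Jensen's inequality and the concavity of $A\mapsto\operatorname{tr}(A^{1/2})$ on positive trace-class operators (this is where the threshold $t\geq\|(\E(\overline{Y\otimes Y})^2)^{1/2}\|_1$ enters), and (b) the moment bound $\E\|\overline{Y\otimes Y}\|_1^p\leq 2^p\E\|Y\|^{2p}\lesssim\|\vartheta\|_1^p$, using that $Y\otimes Y$ is rank one so $\|Y\otimes Y\|_1=\|Y\|^2$. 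Your symmetrization plus noncommutative Khintchine step can be replaced by this one-line Jensen argument, and the Talagrand-plus-truncation machinery can be dropped altogether; as written, your part (ii) is not a complete proof until the truncation coupling is carried out.
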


Lemma \ref{lem:fuknagaev:HS:nuclear:operator}(iii) is not optimal in terms of $t$ but sufficient for the choice $t\asymp \sqrt{\log n}$.

An important special case is given if the expansion for $Y$ in Lemma~\ref{lem:fuknagaev:HS:nuclear:operator} coincides with the Karhunen-Loève expansion. For this, suppose that $Y$ is centered and strongly square-integrable, meaning that $\mathbb{E} Y =0$ and $\mathbb{E}\|Y\|^2<\infty$. Let $\Sigma =\mathbb{E} Y\otimes Y$ be the covariance operator of $Y$, which is a positive, self-adjoint trace class operator, see e.g. \cite[Theorem 7.2.5]{hsing:eubank:book:2015}. Let $Y_1,\dots,Y_n$ be independent copies of $Y$ and let $\hat{\Sigma}=n^{-1}\sum_{i=1}^nY_i\otimes Y_i$ be the empirical covariance operator. Let  $\lambda_1,\lambda_2,\dots$ and $u_1,u_2,\dots$ be the eigenvalues and eigenvectors of $\Sigma$ as introduced in Section \ref{sec:notation}. Then we can write $Y=\sum_{j\geq 1}\lambda_j^{1/2}\eta_ju_j$ almost surely, where $\eta_j=\lambda_j^{-1/2}\langle u_j, Y\rangle$ are the Karhunen-Lo\`{e}ve coefficients of $Y$. By construction, the $\eta_j$ are centered, uncorrelated and satisfy $\E\eta_j^2=1$. Now, if the Karhunen-Loève coefficients satisfy $\sup_{j\geq 1}\E\vert \eta_{j}\vert ^{2p} \leq  C_1$ for some $p>2$ and $C_1 > 0$, then Lemma~\ref{lem:fuknagaev:HS:nuclear:operator}(i) implies
\begin{align*}
    \P\big(\|\hat\Sigma-\Sigma\|_2 > C_2\operatorname{tr}(\Sigma)t/\sqrt{n}\big) \leq n^{1-p/2}t^{-p} +  e^{-t^2}
\end{align*}
for all $t\geq 1$, while Lemma \ref{lem:fuknagaev:HS:nuclear:operator}(ii) implies
\begin{align}\label{cor:fn:nuclear}
\P(\|\hat\Sigma-\Sigma\|_1 \geq C_2 t/\sqrt{n}) \leq n^{1-p/2}t^{-p} \operatorname{tr}^p(\Sigma)     + e^{-t^2/\varpi_n^2}
\end{align}
for all $t\geq \|(\E (Y\otimes Y-\Sigma)^2 )^{1/2}\|_1$, where $\varpi_n^2 \geq \sup_{\|S\|_{\infty} \leq  1} \E \operatorname{tr}^2(S (Y \otimes Y-\Sigma))$.
If additionally $\|\E (Y \otimes Y-\Sigma)^2\|_{\infty}=  1$ holds, then Lemma \ref{lem:fuknagaev:HS:nuclear:operator}(iii) implies that, for all $t\geq 1$,
\begin{align}\label{cor:fn}
    \P\big(\|\hat\Sigma-\Sigma\|_{\infty} \geq C_2t/\sqrt{n}  \big)  \leq n^{1-p/2} t^{p} \operatorname{tr}^{p}(\Sigma) +  \operatorname{tr}^2(\Sigma) e^{-t^2}.
\end{align}

In order to apply the above concentration inequalites, we will make frequent use of the following moment computations based on the Karhunen-Loève coefficients.

\begin{lemma}\label{lem:moment:computation:KL}
Let $Y=\sum_{j\geq 1}\vartheta_j^{1/2}\zeta_ju_j$ be an $\mathcal{H}$-valued random variable, where $\vartheta = (\vartheta_1,\vartheta_2,\dots)$ is a sequence of positive numbers with $\|\vartheta\|_1<\infty$, $u_1,u_2,\dots$ is an orthonormal system in $\mathcal{H}$, and $\zeta_1,\zeta_2,\dots$ is a sequence of centered random variables satisfying $\sup_{j\geq 1}\E\vert \zeta_{j}\vert ^{2p} \leq  C$ for some $p\geq 2$ and $C > 0$. Then
\begin{description}
\item[(i)] {$\E\|Y\|^{2r}\leq C^{r/p} \|\vartheta\|_1^r$ for all $1\leq r\leq p$}.
\item[(ii)] $\operatorname{tr}(\mathbb{E}(Y\otimes Y-\E Y\otimes Y)^2)\leq C^{2/p}\|\vartheta\|_1^2$.
\end{description}
Suppose additionally that the $\zeta_j$ are uncorrelated and satisfy $\E \zeta_j \zeta_{k}^2 \zeta_{s} = 0$ for all $j,k,s$ such that $j\neq s$. Then
\begin{description}
\item[(iii)] $\|\E(Y\otimes Y-\E Y\otimes Y)^2\|_\infty\leq C^{2/p} \|\vartheta\|_\infty\|\vartheta\|_1$.
\item[(iv)] $\|(\E (Y\otimes Y-\E Y\otimes Y)^2 )^{1/2}\|_1\leq C^{1/p} (\sum_{j\geq 1}\vartheta_j^{1/2})\|\vartheta\|_1^{1/2}$.
\end{description}
In (i), it suffices to assume that $p\geq 1$.
\end{lemma}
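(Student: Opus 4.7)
The plan is to exploit the expansion $Y=\sum_{j\geq 1}\vartheta_j^{1/2}\zeta_j u_j$ and compute each quantity by reducing to moments of the scalar $\zeta_j$'s, which are controlled via Lyapunov's inequality by $\E\vert \zeta_j\vert^{2r}\leq C^{r/p}$ whenever $r\leq p$.

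For (i), I observe that $\|Y\|^2=\sum_{j\geq 1}\vartheta_j\zeta_j^2$ and rewrite this as $\|\vartheta\|_1\cdot\sum_{j\geq 1}(\vartheta_j/\|\vartheta\|_1)\zeta_j^2$, a convex combination. Jensen's inequality applied to the convex function $x\mapsto x^r$ then gives
\[
\|Y\|^{2r}\leq \|\vartheta\|_1^{r-1}\sum_{j\geq 1}\vartheta_j\zeta_j^{2r},
\]
so taking expectations and using $\E\zeta_j^{2r}\leq C^{r/p}$ yields $\E\|Y\|^{2r}\leq C^{r/p}\|\vartheta\|_1^r$. For (ii), I use the standard identity $\E\|B-\E B\|_2^2=\E\|B\|_2^2-\|\E B\|_2^2\leq \E\|B\|_2^2$ for the random Hilbert--Schmidt operator $B=Y\otimes Y$, together with $\|Y\otimes Y\|_2^2=\|Y\|^4$, reducing the claim to (i) with $r=2$.

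The more substantial step is (iii)--(iv), where the extra structural assumptions are used to diagonalize the second moment. Setting $\Sigma=\E Y\otimes Y$, uncorrelatedness gives $\Sigma=\sum_{j\geq 1}\vartheta_j\sigma_j^2 u_j\otimes u_j$ with $\sigma_j^2:=\E\zeta_j^2$, and the operator identity $\E(Y\otimes Y-\Sigma)^2=\E(Y\otimes Y)^2-\Sigma^2$ holds (since $\Sigma$ is deterministic and self-adjoint). Expanding and using $(Y\otimes Y)^2=\|Y\|^2\, Y\otimes Y$ gives
\[
\E(Y\otimes Y)^2=\sum_{k,l,m\geq 1}\vartheta_k\vartheta_l^{1/2}\vartheta_m^{1/2}\E[\zeta_k^2\zeta_l\zeta_m]\, u_l\otimes u_m.
\]
The hypothesis $\E\zeta_l\zeta_k^2\zeta_m=0$ for $l\neq m$ forces only diagonal ($l=m$) contributions to survive, so $\E(Y\otimes Y-\Sigma)^2$ is diagonal in the basis $(u_l)$ with eigenvalues
\[
\mu_l=\vartheta_l\sum_{k\geq 1}\vartheta_k\E[\zeta_k^2\zeta_l^2]-\vartheta_l^2\sigma_l^4\leq \vartheta_l\|\vartheta\|_1\cdot C^{2/p},
\]
the last bound following from Cauchy--Schwarz and $\E\zeta_j^4\leq C^{2/p}$.

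Once this diagonal form is established, (iii) is immediate by taking $\sup_l \mu_l\leq C^{2/p}\|\vartheta\|_\infty\|\vartheta\|_1$, and (iv) follows by noting that the positive square root has eigenvalues $\mu_l^{1/2}\leq C^{1/p}\vartheta_l^{1/2}\|\vartheta\|_1^{1/2}$, so its nuclear norm is $\sum_l\mu_l^{1/2}\leq C^{1/p}\bigl(\sum_{j\geq 1}\vartheta_j^{1/2}\bigr)\|\vartheta\|_1^{1/2}$. I expect the only mildly delicate point to be keeping track of the non-commutativity in the cross-term expansion and verifying that the fourth-moment symmetry condition is precisely what is needed to eliminate off-diagonal terms; everything else is bookkeeping via Hölder/Lyapunov.
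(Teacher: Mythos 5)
Your proof is correct, and while it follows the same overall strategy as the paper (reduce everything to scalar moments of the $\zeta_j$ and diagonalize the second-moment operator), the details differ in a few worthwhile ways. In (i) you use Jensen on the convex combination $\sum_j(\vartheta_j/\|\vartheta\|_1)\zeta_j^2$ where the paper uses Minkowski's inequality in $L^r$ followed by Lyapunov; both are two-line arguments yielding the same bound. In (ii) your reduction via $\operatorname{tr}\E(\overline{Y\otimes Y})^2=\E\|\overline{Y\otimes Y}\|_2^2\leq\E\|Y\otimes Y\|_2^2=\E\|Y\|^4$ and part (i) with $r=2$ is cleaner than the paper's, which expands the square of the operator coefficientwise and bounds $\E(\overline{\zeta_j\zeta_k})^2$ by Cauchy--Schwarz. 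In (iii)--(iv) the paper keeps the centered coefficients $\overline{\zeta_j\zeta_k}$ throughout and verifies $\E\,\overline{\zeta_j\zeta_k}\,\overline{\zeta_k\zeta_s}=0$ for $j\neq s$ by a three-case analysis, whereas you first strip off the centering via $\E(Y\otimes Y-\Sigma)^2=\E(Y\otimes Y)^2-\Sigma^2$ and use $(Y\otimes Y)^2=\|Y\|^2\,Y\otimes Y$, so the cumulant condition is applied only to the raw moments $\E[\zeta_k^2\zeta_l\zeta_m]$. Your version makes the role of the hypothesis more transparent and avoids the case analysis, at the small cost of having to observe separately that $\Sigma$ (hence $\Sigma^2$) is diagonal, which is exactly where uncorrelatedness enters; the remaining estimates ($\E[\zeta_k^2\zeta_l^2]\leq C^{2/p}$ by Cauchy--Schwarz and Lyapunov, then reading off operator and nuclear norms of a diagonal positive operator) coincide with the paper's.
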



\section{Quantitative limit theorems}\label{sec:quantitative:limit:thms}

\subsection{Assumptions and main quantities}

Throughout this section, let $X$ be a random variable taking values in $\mathcal{H}$. We suppose that $X$ is centered and strongly square-integrable, meaning that $\mathbb{E} X =0$ and $\mathbb{E}\|X\|^2<\infty$. Let $\Sigma =\mathbb{E} X\otimes X$ be the covariance operator of $X$, which is a positive self-adjoint trace class operator. Let $X_1,\dots,X_n$ be independent copies of $X$ and let
\begin{equation*}
\hat{\Sigma}=\frac{1}{n}\sum_{i=1}^nX_i\otimes X_i
\end{equation*}
be the empirical covariance operator.

Our main assumptions are expressed in terms of the Karhunen-Loève coefficients of $X$, defined by $\eta_{j} = \lambda_j^{-1/2}\langle X, u_j \rangle$ for $j\geq 1$. These lead to the Karhunen-Loève expansion $X=\sum_{j\geq 1}\sqrt{\lambda_j}\eta_j u_j$ almost surely. If $X$ is Gaussian, then the $\eta_j$ are independent and standard normal. Non-Gaussian examples are given by elliptical models and factor models, see for instance \cite{Lopes} and \cite{JW18} for concrete computations or \cite{siegi:2015} and \cite{PANARETOS20132779} for the context of functional data analysis. 

\begin{ass}\label{ass_moments}
Suppose that for some $p > 2$
\begin{align*}
\sup_{j\geq 1}\E\vert \eta_{j}\vert ^{2p} \leq C_{\eta}.
\end{align*}
\end{ass}

The actual condition on the number of moments $p$ will be mild, and depends on the desired rate of convergence, we refer to our results for more details.
Apart from a relative rank condition, Assumption \ref{ass_moments} is essentially all we need. In order to simplify the bounds (and examples), we also demand a non degeneracy condition.

\begin{ass}\label{ass:lower:bound}
There is a constant $c_\eta>0$, such that for every $j\neq k$,
\begin{align*}
\E \eta_j^2 \eta_k^2 \geq c_{\eta}.
\end{align*}
\end{ass}

In the special case of bootstrap approximations, the situation is more complicated, and thus our next condition is more restrictive. It allows us to explicitly compute moments of certain, rather complicated random variables in connection with some of our bounds, see Lemma \ref{lem:Lambda:Sigma:relation} and Section \ref{sec:applications:main} below for more details.

\begin{ass}[$m$-th cumulant uncorrelatedness]\label{ass:indep:4:moments}
If any of the indices $i_1, \ldots i_m \in \N$ is unequal to all the others, then
\begin{align*}
\E \eta_{i_1} \eta_{i_2} \ldots \eta_{i_m} = 0.
\end{align*}
\end{ass}

Let us point out that both Assumptions \ref{ass:lower:bound} and \ref{ass:indep:4:moments} are trivially true if the sequence $(\eta_k)$ is independent. A more general example is, for instance, provided by martingale type differences $\eta_k = \epsilon_k v_k$ with independent sequences $(\epsilon_k)$ and $(v_k)$, where the $(\epsilon_k)$ are mutually independent. In this case, Assumption \ref{ass:indep:4:moments} holds given the existence of $m$ moments. Apart
from results concerning the bootstrap, Assumption \ref{ass:indep:4:moments} is quite convenient for discussing our examples in Section \ref{sec:applications:main}, allowing us in particular to relate the variance-type term $\sigma_{\J}^2$ (cf.~Definition \ref{defn:sigma} below) to higher order cumulants. This leads to very simple and explicit bounds, mirroring previous Gaussian results, and is in line with our recent findings in \cite{JW18}, where it is shown that moment conditions alone are not enough to get Gaussian type behaviour in general in this context, and an entirely different behaviour is possible. Also note that related assumptions, like the $L_4 - L_2$ norm equivalence (see for instance \cite{mendelson:nikita:aos:2020}, \cite{minsker:nips:2017}, \cite{Z}), have already been used in the literature for such a purpose.

\begin{definition}\label{defn:sigma}
For $\J=\{1,\dots,j_2\}$, we define
\begin{align*}
\sigma_{\J}=\sigma_{\J}(\Sigma) = \Big\|\E \Big((\vert R_{\J}\vert ^{1/2}+g_{\J^c}^{-1/2}P_{\J^c}) \overline{X \otimes X} (\vert R_{\J}\vert ^{1/2}+g_{\J^c}^{-1/2}P_{\J^c})\Big)^2\Big\|_{\infty}^{1/2}.
\end{align*}
On the other hand, for ${\J}=\{j_1,\dots,j_2\}$ with $j_1>1$, we define
\begin{align*}
\sigma_{\J}=\sigma_{\J}(\Sigma) = \Big\|\E \Big((\vert R_{\J^c}\vert ^{1/2}+g_{\J}^{-1/2}P_{\J}) \overline{X \otimes X} (\vert R_{\J^c}\vert ^{1/2}+g_{\J}^{-1/2}P_{\J})\Big)^2\Big\|_{\infty}^{1/2}.
\end{align*}
\end{definition}

The quantity $\sigma_{\J}$ plays an important role in the analysis of $\delta_{\J}$ (resp.~$\delta_{\J^c}$) defined in \eqref{defn:delta_J}. The size of $\sigma_{\J}$ can be characterized by the so-called relative ranks defined as follows.
\begin{definition}
For $\J=\{1,\dots,j_2\}$, we define
\begin{align*}
\mathbf{r}_{\J}=\mathbf{r}_{\J}(\Sigma) = \sum_{k \leq j_2} \frac{\lambda_k}{\lambda_k - \lambda_{j_2+1}}  + \frac{1}{\lambda_{j_2}-\lambda_{j_2+1}}\sum_{k >j_2} \lambda_k.
\end{align*}
On the other hand, for ${\J}=\{j_1,\dots,j_2\}$ with $j_1>1$, we define
\begin{align*}
\mathbf{r}_{\J}=\mathbf{r}_{\J}(\Sigma) = \sum_{k < j_1} \frac{\lambda_k}{\lambda_k - \lambda_{j_1}} + \sum_{k > j_2}\frac{\lambda_k}{\lambda_{j_2} - \lambda_k} + \frac{1}{g_{\J}}\sum_{k \in \J} \lambda_k.
\end{align*}
\end{definition}

Note that in the special case of $\J=\{j\}$, our definition coincides with the notion of the relative rank given in \cite{JW18}. In the general case, we use a slightly weaker version of \cite{jirak:wahl:pams:2020}.

The quantities $\rr_\J$ and $\sigma_\J$ are related as follows.

\begin{lemma}\label{lem:explicit:bounds:4:indep}
Suppose that Assumption \ref{ass_moments} holds. Then
\begin{align*}
\sigma_{\J} \lesssim  \mathbf{r}_{\J}.
\end{align*}
If additionally Assumptions \ref{ass:lower:bound} and \ref{ass:indep:4:moments} hold with $m = 4$, then
\begin{align*}
\sigma_{\J}^2 \lesssim  \frac{\lambda_{j_2}}{\lambda_{j_2}-\lambda_{j_2+1}}\mathbf{r}_{\J}=\frac{\lambda_{j_2}}{g_{\J}}\mathbf{r}_{\J}
\end{align*}
for $\J=\{1,\dots,j_2\}$ and
\begin{align*}
\sigma_{\J}^2 {\lesssim} \max\Big(\frac{\lambda_{j_1-1}}{\lambda_{j_1-1}-\lambda_{j_1}},\frac{\lambda_{j_1}}{g_\J}\Big)\mathbf{r}_{\J}
\end{align*}
for $\J=\{j_1,\dots,j_2\}$ with $j_1>1$. Moreover, we have
\begin{align*}
\sigma_{\J}^2 \gtrsim  \frac{\lambda_{j_2}}{g_{\J}}\mathbf{r}_{\J}-\Big(\frac{\lambda_{j_2}}{g_{\J}}\Big)^2
\end{align*}
for $\J=\{1,\dots,j_2\}$ and
\begin{align*}
\sigma_{\J}^2 \gtrsim \max\Big(\frac{\lambda_{j_1-1}}{\lambda_{j_1-1}-\lambda_{j_1}},\frac{\lambda_{j_1}}{g_\J}\Big)\mathbf{r}_{\J}-\max\Big(\frac{\lambda_{j_1-1}}{\lambda_{j_1-1}-\lambda_{j_1}},\frac{\lambda_{j_1}}{g_\J}\Big)^2
\end{align*}
for $\J=\{j_1,\dots,j_2\}$ with $j_1>1$.
\end{lemma}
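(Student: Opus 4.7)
The plan is to diagonalize everything in the eigenbasis of $\Sigma$. Writing $A := |R_\J|^{1/2} + g_{\J^c}^{-1/2}P_{\J^c}$ for $\J=\{1,\ldots,j_2\}$ (and the analogous $A := |R_{\J^c}|^{1/2} + g_\J^{-1/2}P_\J$ otherwise) in the form $A = \sum_k b_k P_k$, one reads off $b_k = (\lambda_k - \lambda_{j_2+1})^{-1/2}$ for $k\leq j_2$ and $b_k = g_\J^{-1/2}$ for $k>j_2$ in the first case, and similarly in the second; the definition of $\mathbf{r}_\J$ is tuned so that $\sum_k b_k^2 \lambda_k = \mathbf{r}_\J$. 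Combining with the Karhunen-Lo\`eve expansion $X = \sum_j \sqrt{\lambda_j}\eta_j u_j$ gives
\begin{equation*}
A\,\overline{X\otimes X}\,A = \sum_{j,k} b_j b_k \sqrt{\lambda_j\lambda_k}\,(\eta_j\eta_k - \delta_{jk})\, u_j\otimes u_k.
\end{equation*}
Squaring and collapsing via $(u_j\otimes u_k)(u_l\otimes u_m) = \delta_{kl}\, u_j\otimes u_m$, the matrix entries of $M := \E(A\,\overline{X\otimes X}\,A)^2$ in the basis $(u_j)$ become
\begin{equation*}
M_{jm} = b_j b_m \sqrt{\lambda_j\lambda_m}\sum_k b_k^2\lambda_k\,\E[\eta_j\eta_k^2\eta_m] - \delta_{jm}\, b_j^4\lambda_j^2.
\end{equation*}

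For the crude bound $\sigma_\J \lesssim \mathbf{r}_\J$, I would use that $M$ is positive semidefinite, so $\|M\|_\infty \leq \operatorname{tr}(M)$. Computing the trace and applying Cauchy-Schwarz with Assumption~\ref{ass_moments} yields $\operatorname{tr}(M) \lesssim (\sum_k b_k^2\lambda_k)^2 = \mathbf{r}_\J^2$, which is the first claim.

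For the sharper upper bound, I would invoke Assumption~\ref{ass:indep:4:moments} with $m=4$: inspecting $\E[\eta_j\eta_k^2\eta_m]$ for $j\neq m$, one of the indices $j$ or $m$ is isolated from all others and the expectation vanishes. Hence $M$ is diagonal with
\begin{equation*}
M_{jj} = b_j^2\lambda_j\Big(\sum_{k\neq j} b_k^2\lambda_k\,\E\eta_j^2\eta_k^2 + b_j^2\lambda_j(\E\eta_j^4 - 1)\Big),
\end{equation*}
which under Assumption~\ref{ass_moments} is $\lesssim b_j^2\lambda_j \cdot \mathbf{r}_\J$. Thus $\sigma_\J^2 = \|M\|_\infty \lesssim (\max_j b_j^2\lambda_j)\,\mathbf{r}_\J$. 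A short case analysis using monotonicity of $\lambda\mapsto \lambda/(\lambda - c)$ identifies the maximizer: for $\J = \{1,\ldots,j_2\}$ the maximum is $\lambda_{j_2}/g_\J$; for $j_1>1$ the candidates are $k=j_1-1$, $k=j_1$, and $k=j_2+1$, the last of which is absorbed into $\lambda_{j_1}/g_\J$ since $\lambda_{j_2+1} \leq \lambda_{j_1}$ and $\lambda_{j_2} - \lambda_{j_2+1} \geq g_\J$, leaving the two-term maximum in the statement.

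For the matching lower bounds, I would use Assumption~\ref{ass:lower:bound}: since $\E\eta_j^4 \geq (\E\eta_j^2)^2 = 1$, the same diagonal identity gives $M_{jj} \geq c_\eta\, b_j^2\lambda_j(\mathbf{r}_\J - b_j^2\lambda_j)$, and choosing $j$ to be the maximizer of $b_j^2\lambda_j$ identified above delivers $\sigma_\J^2 \geq M_{jj}$ of exactly the claimed form. The main obstacle is the bookkeeping in the case analysis for $j_1>1$, where three boundary indices compete; otherwise the proof is algebraically clean once $M$ has been reduced to a diagonal matrix by the $m=4$ cumulant uncorrelatedness assumption.
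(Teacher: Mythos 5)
Your proposal is correct and follows essentially the same route as the paper: the paper applies its Lemma \ref{lem:moment:computation:KL} (whose proof is exactly your diagonal/off-diagonal computation of $\E(Y\otimes Y-\E Y\otimes Y)^2$, with the $m=4$ cumulant condition killing the off-diagonal entries) to the transformed data $X'=AX$, whose covariance has eigenvalues $b_k^2\lambda_k$ summing to $\mathbf{r}_\J$, and the lower bound likewise comes from the diagonal identity \eqref{eq:upper:lower:sigma:2} together with Assumption \ref{ass:lower:bound}. Your explicit case analysis identifying $\max_k b_k^2\lambda_k$ is the step the paper leaves implicit, and it is carried out correctly.
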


\begin{proof}[Proof of Lemma \ref{lem:explicit:bounds:4:indep}]
If $\J=\{j_1,\dots,j_2\}$ with $j_1>1$, then
Lemma \ref{lem:explicit:bounds:4:indep} follows from Lemma \ref{lem:moment:computation:KL} applied to the transformed data $X'=(\vert R_{\J^c}\vert ^{1/2}+g_{\J}^{-1/2}P_{\J})X$. The main observation is that $X'$ has the same Karhunen-Loève coefficients as $X$ (in a possibly different order) and thus satisfies Assumption \ref{ass_moments} with the same constant $C_\eta$. Moreover, the eigenvalues of the covariance operator $\Sigma'=\E\, X'\otimes X'$ are transformed in such a way that the eigenvalues are  $\lambda_k/(\lambda_{k} - \lambda_{j_1})$ for $k < j_1$, $\lambda_k/g_\J$ for $k\in \J$, and $\lambda_k/(\lambda_{j_2} - \lambda_k)$ for $k > j_2$. Hence, the first two claims follow from Lemma \ref{lem:moment:computation:KL} (ii) and (iii). The last claim follows similarly, by using \eqref{eq:upper:lower:sigma:2} below and Assumption \ref{ass:lower:bound} instead. If $\J=\{1,\dots,j_1\}$, then the claim follows similarly by changing the role of $\J$ and $\J^c$.
\end{proof}

The relative rank $\mathbf{r}_{\J}$ and $\sigma_{\J}^2$ allow to characterize the behavior of $\delta_\J$, given in \eqref{defn:delta_J}.

\begin{lemma}\label{lem:bound:events:via:fn:minsker}
If Assumption \ref{ass_moments} holds, then
\begin{align*}
\P\Big(\delta_{\J}(E) \sqrt{n} >  C\sqrt{\sigma_{\J}^2 \log n}\Big)\lesssim_p  \qprob^{}_{\J,n,p}
\end{align*}
with
\begin{align}\label{eq:qprob}
    \qprob^{}_{\J,n,p}:=n^{1-p/2}(\log n)^{p/2} \Big(\frac{\mathbf{r}_{\J}}{\sigma_\J}\Big)^{p}.
\end{align}
\end{lemma}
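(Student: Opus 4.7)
The plan is to recognize the left-hand side as an empirical operator-norm concentration problem for a weighted version of $X$, and then apply Lemma \ref{lem:fuknagaev:HS:nuclear:operator}(iii) directly. Set
\[
M = \vert R_{\J^c}\vert^{1/2} + g_\J^{-1/2}P_\J
\]
(with the analogous choice corresponding to $\J^c$ in the case $j_1=1$, as mandated by the convention following Proposition \ref{PropExp}), and let $X_i' = M X_i$ and $\Sigma' = \E\, X'\otimes X'$. By the definitions of $\delta_\J$ and $\sigma_\J$,
\[
\delta_\J = \Big\|\frac{1}{n}\sum_{i=1}^n X_i'\otimes X_i' - \Sigma'\Big\|_\infty, \qquad \sigma_\J^2 = \|\E(X'\otimes X' - \Sigma')^2\|_\infty.
\]
Because $M$ is diagonal in the eigenbasis $\{u_k\}$ of $\Sigma$, the Karhunen--Lo\`eve expansion $X = \sum_k \lambda_k^{1/2}\eta_k u_k$ transforms into $X' = \sum_k \vartheta_k^{1/2}\eta_k u_k$ with explicit weights $\vartheta_k = \lambda_k m_k^2$; reading $m_k^2$ off from $R_{\J^c}$ and $g_\J$ gives $\|\vartheta\|_1 = \mathbf{r}_\J$.

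Next I would rescale $Y_i = X_i'/\sqrt{\sigma_\J}$ so as to fit the normalization $\|\E(Y\otimes Y - \E Y\otimes Y)^2\|_\infty = 1$ required by Lemma \ref{lem:fuknagaev:HS:nuclear:operator}(iii). This factor is forced by the fourth-order scaling of $\|\E(Y\otimes Y - \E Y\otimes Y)^2\|_\infty$ in $Y$, and it gives $Y = \sum_k(\vartheta_k/\sigma_\J)^{1/2}\eta_k u_k$ with weight-sequence $\ell^1$-norm $\mathbf{r}_\J/\sigma_\J$. Assumption \ref{ass_moments} supplies the $2p$-moment control on the $\eta_k$. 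Since $\delta_\J = \sigma_\J\,\|n^{-1}\sum_i Y_i\otimes Y_i - \E Y\otimes Y\|_\infty$, the event $\{\delta_\J\sqrt{n} > C\sqrt{\sigma_\J^2\log n}\}$ coincides (up to the constant) with the event in Lemma \ref{lem:fuknagaev:HS:nuclear:operator}(iii) for $t = C_0\sqrt{\log n}$. Applied with $C = C_2 C_0$, it yields the bound
\[
C_2\, n^{1-p/2}(C_0\sqrt{\log n})^{p}(\mathbf{r}_\J/\sigma_\J)^{p} + (\mathbf{r}_\J/\sigma_\J)^{2}\, n^{-C_0^2}.
\]

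Finally, by Lemma \ref{lem:explicit:bounds:4:indep} we have $\sigma_\J \lesssim \mathbf{r}_\J$, so $\mathbf{r}_\J/\sigma_\J \gtrsim 1$ and hence $(\mathbf{r}_\J/\sigma_\J)^{2} \leq C'(\mathbf{r}_\J/\sigma_\J)^{p}$ for $p > 2$. Choosing $C_0$ large enough (depending only on $p$) so that $n^{-C_0^2} \leq n^{1-p/2}(\log n)^{p/2}$, the sub-Gaussian term is absorbed into the polynomial term, producing the claimed bound $\lesssim_p \qprob^{}_{\J,n,p}$. The only delicate point is the bookkeeping: one must keep track of which of $\delta_\J,\delta_{\J^c}$ is being invoked (the conventions force this to be compatible with the $M$ built into $\sigma_\J$), and verify that the rescaling is by $\sqrt{\sigma_\J}$ rather than $\sigma_\J$. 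Once these algebraic identities are set up, the estimate is a direct corollary of the Fuk--Nagaev inequality in operator norm.
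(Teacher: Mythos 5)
Your proposal is correct and follows essentially the same route as the paper: transform the data by $\vert R_{\J^c}\vert^{1/2}+g_\J^{-1/2}P_\J$, rescale by $\sigma_\J^{-1/2}$ so that the normalization of Lemma \ref{lem:fuknagaev:HS:nuclear:operator}(iii) holds, note that the trace of the transformed covariance is $\mathbf{r}_\J/\sigma_\J$, and apply the operator-norm Fuk--Nagaev bound with $t\asymp\sqrt{\log n}$, absorbing the sub-Gaussian term via $\sigma_\J\lesssim\mathbf{r}_\J$ and $p>2$. The bookkeeping you flag (swapping $\J$ and $\J^c$ when $j_1=1$, and the $\sqrt{\sigma_\J}$ rescaling) is handled identically in the paper.
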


\begin{proof}[Proof of Lemma \ref{lem:bound:events:via:fn:minsker}]
If $\J=\{j_1,\dots,j_2\}$ with $j_1>1$, then
Lemma \ref{lem:bound:events:via:fn:minsker} follows from \eqref{cor:fn} applied to the transformed and scaled data $X'=\sigma_{\J}^{-1/2}(\vert R_{\J^c}\vert ^{1/2}+g_{\J}^{-1/2}P_{\J})X$, using again that $X'$ has the same Karhunen-Loève coefficients as $X$ and thus satisfies Assumption \ref{ass_moments} with the same constant $C_\eta$. Moreover, the eigenvalues of the covariance operator $\Sigma'=\E\, X'\otimes X'$ are transformed in such a way that $\operatorname{tr}(\Sigma')=\mathbf{r}_\J/\sigma_\J$. Hence, an application of \eqref{cor:fn} with $t = C\sqrt{\log n}$ yields
\begin{align*}
&\P\Big(\delta_{\J} \sqrt{n} >  C\sqrt{\sigma_{\J}^2\log n}\Big) \lesssim_p n^{1-p/2} (\log n)^{p/2} \Big(\frac{\mathbf{r}_{\J}}{\sigma_\J}\Big)^{p},
\end{align*}
where we also used the first claim in Lemma \ref{lem:explicit:bounds:4:indep} and $p>2$. If $\J=\{1,\dots,j_1\}$, then the claim follows similarly by changing the role of $\J$ and $\J^c$.
\end{proof}

We continue by recalling the following asymptotic result from multivariate analysis. By  \cite[Proposition 5]{Dauxois_1982}, we have
\begin{equation}\label{EqCLT}
\sqrt{n}(\hat\Sigma-\Sigma)\xrightarrow{d}Z,
\end{equation}
where $Z$ is a Gaussian random variable in the Hilbert space of all Hilbert-Schmidt operators (endowed with trace-inner product) with mean zero and covariance operator $\operatorname{Cov}(Z)=\operatorname{Cov}(X\otimes X)$. More concretely, we have
\begin{align}\label{eq_limit_empirical_covariance_operator}
    Z=\sum_{j\geq 1}\sum_{k\geq  1}\sqrt{\lambda_j\lambda_k}\xi_{jk} (u_j\otimes u_k),
\end{align}
where the upper triangular coefficients $\xi_{jk}$, $k\ge j$ are Gaussian random variables with
\begin{align*}
    \E \xi_{jk}=0, \,\qquad \E \xi_{jk}\xi_{rs}=\E\overline{\eta_j\eta_k}\, \overline{\eta_r\eta_s}=\E(\eta_j\eta_k-\delta_{jk})(\eta_r\eta_s-\delta_{rs}),
\end{align*}
and the lower triangular coefficients $\xi_{jk}$, $k< j$, are determined by $\xi_{jk}=\xi_{kj}$. In the distributional approximation of $\|\hat{P}_{\J} - P_{\J}\|_{2}$, the random variable $L_\J Z$ defined in \eqref{eq_def_linear_term} plays a central role. The latter is a centered Gaussian random variable taking values in the separable Hilbert space of all (self-adjoint) Hilbert-Schmidt operators on $\mathcal{H}$. Let
\begin{align*}
    \Psi_\J=\Psi_\J(\Sigma)=\E (L_\J Z\otimes L_\J Z)
\end{align*}
be the covariance operator of $L_\J Z$. We will make frequent use of the following quantities and relations (cf.~\eqref{Psi:clt:identities}).
\begin{align}\label{eq:ABC}
    A_{\J} &= A_{\J}(\Sigma)=\E\|L_{\J}Z\|_{2}^2=\|\Psi_\J\|_1, \nonumber \\
     B_{\J} &= B_{\J}(\Sigma)  =\E^{1/2}\big(\|L_{\J}Z\|_{2}^2-A_{\J}(\Sigma)\big)^2=\sqrt{2}\|\Psi_\J\|_2, \nonumber \\
     C_{\J} &= C_{\J}(\Sigma)=\E^{1/3}\big(\|L_{\J}Z\|_{2}^2-A_{\J}(\Sigma)\big)^3=2\|\Psi_\J\|_3.
\end{align}
The following lemma provides the connection of the quantities $A_\J$, $B_\J$ and $C_\J$ with the eigenvalues of $\Sigma$.

\begin{lemma}\label{lem:Lambda:Sigma:relation}
Suppose that Assumptions \ref{ass_moments} and \ref{ass:lower:bound} are satisfied. Then
\begin{align*}
 A_{\J}&\asymp \sum_{j\in \J}\sum_{k \notin \J} \frac{\lambda_j \lambda_k}{(\lambda_k - \lambda_j)^2}.
 \end{align*}
If additionally Assumption \ref{ass:indep:4:moments} holds with $m = 4$, then the eigenvalues of $\Psi_{\J}$ are given by
\begin{align*}
    2\alpha_{jk}^2\frac{\lambda_j\lambda_k}{(\lambda_k-\lambda_j)^2},\qquad j\in \J, k\notin \J,
\end{align*}
with $\alpha_{jk}=\E\eta_{j}^2\eta_k^2$. In particular, we have
 \begin{align*}
 B_{\J}^2&\asymp \sum_{j\in \J}\sum_{k \notin \J} \Big(\frac{\lambda_j \lambda_k}{(\lambda_k - \lambda_j)^2}\Big)^2,\quad C_{\J}^3\asymp \sum_{j\in \J}\sum_{k \notin \J} \Big(\frac{\lambda_j \lambda_k}{(\lambda_k - \lambda_j)^2}\Big)^3.
\end{align*}
\end{lemma}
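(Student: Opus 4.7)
The plan is to read off everything explicitly from the expansion \eqref{eq_limit_empirical_covariance_operator} of $Z$. Plugging it into \eqref{eq_def_linear_term}, using $P_j(u_a\otimes u_b)P_k=\delta_{ja}\delta_{kb}(u_j\otimes u_k)$ and the symmetry $\xi_{jk}=\xi_{kj}$, I obtain
\begin{equation*}
\FO_{\J} Z=\sum_{j\in \J}\sum_{k\in \J^c}\frac{\sqrt{\lambda_j\lambda_k}}{\lambda_j-\lambda_k}\,\xi_{jk}\,W_{jk},\qquad W_{jk}:=u_j\otimes u_k+u_k\otimes u_j.
\end{equation*}
Since $\J$ and $\J^c$ are disjoint, the pairs $\{j,k\}$ are distinct, so the $W_{jk}$ are pairwise orthogonal in Hilbert--Schmidt norm with $\|W_{jk}\|_2^2=2$; consequently $\tilde W_{jk}:=W_{jk}/\sqrt 2$ is an orthonormal system.

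\textbf{First claim.} Orthogonality of the $W_{jk}$ yields $\|\FO_\J Z\|_2^2=2\sum_{j\in\J,k\in\J^c}\frac{\lambda_j\lambda_k}{(\lambda_j-\lambda_k)^2}\xi_{jk}^2$. Taking expectations and using $\E\xi_{jk}^2=\E(\eta_j\eta_k-\delta_{jk})^2=\alpha_{jk}$ for $j\neq k$, I get $A_\J=2\sum_{j\in\J,k\in\J^c}\alpha_{jk}\frac{\lambda_j\lambda_k}{(\lambda_j-\lambda_k)^2}$. Assumption \ref{ass:lower:bound} gives $\alpha_{jk}\geq c_\eta$ from below, while Cauchy--Schwarz together with Assumption \ref{ass_moments} (applied with any exponent $\leq p$) gives $\alpha_{jk}\leq C_\eta^{1/p}$ from above. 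Hence $\alpha_{jk}\asymp 1$, which proves $A_\J\asymp\sum_{j\in\J}\sum_{k\notin\J}\frac{\lambda_j\lambda_k}{(\lambda_k-\lambda_j)^2}$.

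\textbf{Eigenvalues under Assumption \ref{ass:indep:4:moments}.} For $j\in\J$, $k\in\J^c$, $r\in\J$, $s\in\J^c$, the centered covariances read $\E\xi_{jk}\xi_{rs}=\E\eta_j\eta_k\eta_r\eta_s$. By the $4$-th cumulant uncorrelatedness, this expectation vanishes unless every index appears at least twice, i.e.\ unless the multisets $\{j,k\}$ and $\{r,s\}$ agree; in that case the value is $\alpha_{jk}$. Therefore
\begin{equation*}
\Psi_\J=\sum_{j\in\J,k\in\J^c}\frac{\lambda_j\lambda_k}{(\lambda_j-\lambda_k)^2}\,\alpha_{jk}\,W_{jk}\otimes W_{jk}
=\sum_{j\in\J,k\in\J^c}2\alpha_{jk}\frac{\lambda_j\lambda_k}{(\lambda_j-\lambda_k)^2}\,\tilde W_{jk}\otimes\tilde W_{jk},
\end{equation*}
which is already a spectral decomposition (the $\tilde W_{jk}$ are orthonormal). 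This identifies the eigenvalues of $\Psi_\J$ exactly as stated. The assertions on $B_\J^2$ and $C_\J^3$ then follow immediately from the identities $B_\J^2=2\|\Psi_\J\|_2^2$ and $C_\J^3=8\|\Psi_\J\|_3^3$ in \eqref{eq:ABC}, by summing powers of the eigenvalues and using $\alpha_{jk}\asymp 1$ once more.

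\textbf{Expected obstacle.} The computation is essentially bookkeeping; the only place that requires care is verifying that under Assumption \ref{ass:indep:4:moments} all off-diagonal covariances $\E\xi_{jk}\xi_{rs}$ with $\{j,k\}\neq\{r,s\}$ vanish, so that the display above is truly a spectral (as opposed to merely a Hilbert--Schmidt) decomposition and the Schatten norms collapse to simple sums.
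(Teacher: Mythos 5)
Your proposal is correct and follows essentially the same route as the paper: insert the representation \eqref{eq_limit_empirical_covariance_operator} into $L_{\J}$, use the orthogonality of the $W_{jk}$ to reduce $\|L_{\J}Z\|_2^2$ to a weighted sum of $\xi_{jk}^2$, and under Assumption \ref{ass:indep:4:moments} observe that the cross-covariances $\E\xi_{jk}\xi_{rs}$ vanish, so that your display is a genuine spectral decomposition of $\Psi_\J$. One caveat: your computation yields eigenvalues $2\alpha_{jk}\lambda_j\lambda_k/(\lambda_k-\lambda_j)^2$ with $\alpha_{jk}$ to the \emph{first} power, whereas the lemma states $2\alpha_{jk}^2\lambda_j\lambda_k/(\lambda_k-\lambda_j)^2$, so your assertion that this matches ``exactly as stated'' is not literally true; your exponent is the correct one (it agrees with the eigenvalue formula the paper itself uses in the proof of Lemma \ref{lem:fn:norm:bound:II}), and since $\alpha_{jk}\asymp 1$ the discrepancy does not affect the $\asymp$ conclusions for $A_\J$, $B_\J$, $C_\J$.
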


\begin{proof}[Proof of Lemma \ref{lem:Lambda:Sigma:relation}] Using the representation in \eqref{eq_limit_empirical_covariance_operator}, we get
\begin{align*}
    \|L_{\J}Z\|_{2}^2=2\sum_{j\in \J}\sum_{k\notin \J}\frac{\lambda_j\lambda_k}{(\lambda_k-\lambda_j)^2}\xi^2_{jk}
\end{align*}
Hence, the lower bound follows from inserting Assumption \ref{ass:lower:bound}, while the upper bounds follows from Assumption \ref{ass_moments} and H\"older's inequality. Moreover, subject to Assumption \ref{ass:indep:4:moments} with $m = 4$, the random variable $L_\J Z$ has the Karhunen-Loève expansion
\begin{align*}
    L_\J Z=\sum_{j\in \J}\sum_{k\notin \J}\sqrt{2}\alpha_{jk}\frac{\sqrt{\lambda_j\lambda_k}}{\lambda_j-\lambda_k}\frac{\xi_{jk}}{\alpha_{jk}} (u_j\otimes u_k+u_k\otimes u_j)/\sqrt{2},
\end{align*}
and the last two claims follow from the last relations given in \eqref{eq:ABC}.
\end{proof}

\subsection{Main results}

This section is devoted to quantitative limit theorems for $\|\hat{P}_{\J} - P_{\J}\|_{2}$. We state several results subject to different conditions, all having a different range of application, and refer to Section \ref{sec:applications:main} for examples and illustrations. Proofs are given in Section \ref{sec:proof:quantitative:limit:thms}. We assume throughout this section that $n \geq 2$. Our first result concerns estimates based on the $1$-Wasserstein distance.

\begin{thm}\label{thm:clt:I}
Suppose that Assumptions \ref{ass_moments} ($p > 4$) and \ref{ass:lower:bound} hold and that
\begin{align}\label{ass:relativerank}
\sigma_{\J}^2 \frac{\log n}{n}\vert \J\vert \leq c
\end{align}
for some sufficiently small constant $c > 0$. Then, for any $s> 0$,
\begin{align*}
\mathbf{W}\Big(\frac{n}{A_{\J}}\big\|\hat{P}_{\J} - P_{\J}\big\|_{2}^2, \frac{1}{A_{\J}}\big\|L_{\J}Z\big\|_{2}^2  \Big) &\lesssim_{p,s}  n^{-1/12}  + \frac{(\log n)^{3/2}}{n^{1/2}} \frac{\sigma_{\J}^3\vert \J\vert ^{3/2}}{A_{\J}}\\
&+ \frac{n\vert \J\vert }{A_{\J}}\qprob^{}_{\J,n,p} + \Big(\frac{A_{\J}}{n\vert \J\vert }\Big)^s,
\end{align*}
where $\qprob_{\J,n,p}$ is given in \eqref{eq:qprob}.
\end{thm}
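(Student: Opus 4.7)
The plan is to apply Proposition~\ref{prop:clt:II} (Wasserstein bound) with a suitable rescaling of the data. Define $Y_i = A_\J^{-1/2}L_\J \overline{X_i \otimes X_i}$, viewed as elements of the separable Hilbert space of Hilbert-Schmidt operators on $\mathcal{H}$, and set
\begin{align*}
T = (n/A_\J)\|\hat P_\J - P_\J\|_2^2, \qquad S = n^{-1}\Big\|\sum_{i=1}^n Y_i\Big\|^2 = (n/A_\J)\|L_\J E\|_2^2,
\end{align*}
which are automatically coupled on the sample space. A direct computation gives $\Psi := n^{-1}\sum_i \E Y_i\otimes Y_i = A_\J^{-1}\Psi_\J$, so the Gaussian target $\|Z\|^2$ from Proposition~\ref{prop:clt:II} equals $A_\J^{-1}\|L_\J Z'\|_2^2$ in distribution, where $Z'$ is the Gaussian operator from \eqref{eq_limit_empirical_covariance_operator}; this matches $(T,\|Z\|^2)$ with the pair appearing in the theorem.

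I next verify the hypotheses of Proposition~\ref{prop:clt:II} with $q = 3$. The Karhunen-Loève expansion gives $\|L_\J\overline{X\otimes X}\|_2^2 = 2\sum_{j\in\J,\,k\notin\J}\frac{\lambda_j\lambda_k}{(\lambda_j-\lambda_k)^2}\eta_j^2\eta_k^2$. Taking expectations and invoking Lemma~\ref{lem:Lambda:Sigma:relation} yields $\E\|L_\J\overline{X\otimes X}\|_2^2 = A_\J$, so $\sum_i\E\|Y_i\|^2 = n$. For the cubic moment, Assumption~\ref{ass_moments} with $p>4$ delivers $\sup_j \E \eta_j^8 \lesssim 1$, so Hölder bounds $\E(\eta_j^2\eta_k^2\eta_r^2\eta_s^2) \lesssim 1$, and combined with $\sum_{j,k}\lambda_j\lambda_k/(\lambda_j-\lambda_k)^2 \asymp A_\J$ (Lemma~\ref{lem:Lambda:Sigma:relation} together with Assumption~\ref{ass:lower:bound}) this gives $\E\|L_\J\overline{X\otimes X}\|_2^4 \lesssim A_\J^2$. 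Jensen's inequality then produces $\E\|Y_i\|^3 \lesssim 1$, so $\sum_i\E\|Y_i\|^3 \lesssim n \leq n^{3/2}$. Boundedness of $T$ is trivial: $\|\hat P_\J - P_\J\|_2^2 \leq 2|\J|$ yields $|T|\leq C_T := 2n|\J|/A_\J$.

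The remaining piece is a high-probability pointwise bound on $|T-S|$. Corollary~\ref{cor:exp:quadratic:term} gives $|T-S|\lesssim (n/A_\J)(|\J|^{3/2}\delta_\J^3 + |\J|^2\delta_\J^4)$, while Lemma~\ref{lem:bound:events:via:fn:minsker} yields $\delta_\J \lesssim \sqrt{\sigma_\J^2 \log n/n}$ except on an event of probability $\lesssim_p \qprob_{\J,n,p}$. On this event, the relative rank hypothesis $\sigma_\J^2|\J|\log n/n \leq c$ makes the quartic contribution dominated by the cubic one, so $|T-S| \leq u := C(\log n)^{3/2}|\J|^{3/2}\sigma_\J^3/(A_\J\sqrt{n})$. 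Feeding $u$, $C_T$, and the moment estimates into Proposition~\ref{prop:clt:II} yields directly the four summands in the conclusion: $n^{-1/12}$ from the first term (using $(n^{-3/2}\sum_i\E\|Y_i\|^3)^{1/6}\lesssim n^{-1/12}$), $u$ itself from the third, $C_T\,\P(|T-S|>u) \lesssim (n|\J|/A_\J)\qprob_{\J,n,p}$ from the second, and $C_T^{-s}\asymp_s (A_\J/(n|\J|))^s$ from the fourth.

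The main technical obstacle is the third-moment bound $\E\|Y_i\|^3 \lesssim 1$, which requires combining the explicit Karhunen-Loève representation of $L_\J\overline{X\otimes X}$ with the Hölder estimate on four-fold products of $\eta$'s (this is precisely where the assumption $p>4$ is spent); the rest of the argument is careful bookkeeping that pairs the cubic/quartic perturbation remainder in Corollary~\ref{cor:exp:quadratic:term} against the concentration bound in Lemma~\ref{lem:bound:events:via:fn:minsker} and matches the scaling of Proposition~\ref{prop:clt:II}.
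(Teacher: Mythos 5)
Your proposal is correct and follows essentially the same route as the paper: the same choices of $T$, $S$, $Y_i=A_\J^{-1/2}L_\J(X_i\otimes X_i)$ and $C_T=2n|\J|/A_\J$, the same pairing of Corollary~\ref{cor:exp:quadratic:term} with Lemma~\ref{lem:bound:events:via:fn:minsker} (using \eqref{ass:relativerank} to absorb the quartic remainder), and the same application of Proposition~\ref{prop:clt:II} with $q=3$. The only cosmetic difference is that you obtain $\E\|Y_i\|^3\lesssim 1$ via the fourth moment and Jensen, whereas the paper applies Lemma~\ref{lem:moment:computation:KL}(i) directly with $r=3/2$ to the Karhunen--Lo\`eve coefficients $\zeta_{jk}=\eta_j\eta_k$; both are valid under $p>4$.
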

We require Condition \eqref{ass:relativerank} for all of our results. It is, however, redundant in most cases (see for instance Section \ref{sec:applications:main}) in the following sense: Upper bounds are nontrivial only if \eqref{ass:relativerank} holds.

The generality of Theorem \ref{thm:clt:I} has a price: Note that $(A_{\J}/n)^2$ may scale as the variance of $\|\hat{P}_{\J} - P_{\J}\|_{2}^2$ or not. In particular, if the expectation $\E \|\hat{P}_{\J} - P_{\J}\|_{2}^2$ significantly dominates the square root of the variance, finer approximation results can be obtained by studying the appropriately centred and scaled version. As a first result in this direction, we present the following.

\begin{thm}\label{thm:clt:II}
Suppose that Assumptions \ref{ass_moments} and \ref{ass:lower:bound} and \eqref{ass:relativerank} hold with $p\geq 3$. Then we have
\begin{align*}
\mathbf{U}\Big(n\big\|\hat{P}_{\J} - P_{\J}\big\|_2^2, \big\|L_{\J}Z\big\|_{2}^2\Big)&\lesssim_p  n^{-1/5}\Big(\frac{A_{\J}}{\sqrt{\lambda_{1,2}(\Psi_\J)}}\Big)^{3/5}   \\&+ \frac{(\log n)^{3/2}}{n^{1/2}}\frac{\sigma_{\J}^3\vert \J\vert ^{3/2}}{\sqrt{\lambda_{1,2}(\Psi_\J)}}+ \qprob_{\J,n,p}
\end{align*}
as well as
\begin{align*}
\mathbf{U}\Big(n\big\|\hat{P}_{\J} - P_{\J}\big\|_2^2, \big\|L_{\J}Z\big\|_{2}^2\Big)&\lesssim_p
\Big(n^{-1/2} \frac{A_{\J}^3}{\lambda_6^{3}(\Psi_\J)} \frac{C_{\J}^3}{B_{\J}^{3}}  \Big)^{1 + 1/10} \\&+  n^{-1/2} \frac{A_\J^6}{\lambda_{1,6}(\Psi_\J)} \frac{C_{\J}^3}{B_{\J}^{3}} \\&+ \frac{(\log n)^{3/2}}{n^{1/2}}\frac{\sigma_{\J}^3\vert \J\vert ^{3/2}}{\sqrt{\lambda_{1,2}(\Psi_\J)}}+
\qprob_{\J,n,p}.
\end{align*}
Recall the definition of $\lambda_{1,j}(\Psi_\J)$, given in \eqref{defn:lambda:product}.
\end{thm}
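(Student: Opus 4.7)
The strategy is to apply Proposition \ref{prop:clt:V} in the separable Hilbert space of Hilbert-Schmidt operators on $\mathcal{H}$, with the identifications
\[
Y_i = L_{\J}(X_i \otimes X_i - \Sigma), \qquad T = n\|\hat{P}_{\J} - P_{\J}\|_2^2, \qquad S = n\|L_{\J}E\|_2^2.
\]
Since $L_{\J}\Sigma = 0$, the $Y_i$ are i.i.d.\ and centered, and $S = n^{-1}\|\sum_{i=1}^n Y_i\|_2^2$ has the form required by Setting \ref{ass:clts:setup}. A direct computation gives $\Psi = \E Y_1 \otimes Y_1 = \Psi_{\J}$ (covariance on Hilbert-Schmidt space), so the Gaussian variable appearing in Proposition \ref{prop:clt:V} has the law of $L_{\J}Z$ from \eqref{eq_limit_empirical_covariance_operator}. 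With these identifications, the first and second inequalities of Proposition \ref{prop:clt:V} deliver the corresponding bounds of the theorem, provided I supply (a) an estimate of $\sum_i \E\|Y_i\|_2^3$, and (b) a deviation bound for $|T - S|$.

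For (a), I start from $\|Y_1\|_2^2 = 2 \sum_{j\in\J,\, k\notin\J} a_{jk}\,\eta_j^2\eta_k^2$ with $a_{jk} = \lambda_j\lambda_k/(\lambda_j - \lambda_k)^2$. Rewriting the sum as an average against the probability weights $p_{jk} = a_{jk}/S_0$, where $S_0 = \sum_{j,k}a_{jk}$, and applying Jensen's inequality for the convex map $x\mapsto x^{3/2}$ followed by Cauchy-Schwarz under Assumption \ref{ass_moments} with $p\geq 3$, I obtain $\E\|Y_1\|_2^3 \lesssim S_0^{3/2} \asymp A_{\J}^{3/2}$ by Lemma \ref{lem:Lambda:Sigma:relation}. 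Hence $\sum_i \E\|Y_i\|_2^3 \lesssim n A_{\J}^{3/2}$, and inserting this into the first inequality of Proposition \ref{prop:clt:V} yields
\[
n^{-3/5}\Big(\frac{n A_{\J}^{3/2}}{\lambda_{1,2}^{3/4}(\Psi_{\J})}\Big)^{2/5} = n^{-1/5}\Big(\frac{A_{\J}}{\sqrt{\lambda_{1,2}(\Psi_{\J})}}\Big)^{3/5},
\]
which matches the leading term of the first claimed bound. The second inequality of Proposition \ref{prop:clt:V} is already stated in terms of $A_{\J}, B_{\J}, C_{\J}$ and the eigenvalues of $\Psi_{\J}$, so it transfers directly to give the leading two terms of the second claimed bound.

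For (b), I combine Corollary \ref{cor:exp:quadratic:term} with Lemma \ref{lem:bound:events:via:fn:minsker}. On the event $\{\delta_{\J}\sqrt{n} \lesssim \sqrt{\sigma_{\J}^2\log n}\}$, which holds up to probability $\lesssim \qprob_{\J,n,p}$, the corollary gives
\[
|T - S| \lesssim n|\J|^{3/2}\delta_{\J}^3 + n|\J|^2 \delta_{\J}^4 \lesssim \frac{(\log n)^{3/2}}{\sqrt{n}}\, \sigma_{\J}^3\, |\J|^{3/2},
\]
where the ratio of the quartic to the cubic term is of order $\sqrt{|\J|\sigma_{\J}^2\log n/n}$, which is bounded by Condition \eqref{ass:relativerank}. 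Choosing $u$ at this threshold, the term $u/\sqrt{\lambda_{1,2}(\Psi_{\J})}$ in Proposition \ref{prop:clt:V} reproduces the third summand in both displayed bounds, and the excess probability contributes $\qprob_{\J,n,p}$.

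\textbf{Main obstacle.} The delicate point is to verify that the quartic residual $n|\J|^2\delta_{\J}^4$ from Corollary \ref{cor:exp:quadratic:term} is genuinely absorbed into the cubic term under the single hypothesis \eqref{ass:relativerank}; this is where the precise exponent of $\sigma_{\J}^2|\J|\log n/n$ in \eqref{ass:relativerank} is used in an essential way. A secondary bookkeeping issue is the simplification $\min(|\J|,|\J^c|) \leq |\J|$, together with the implicit choice of $\delta_{\J}$ versus $\delta_{\J^c}$ at the boundary case $j_1 = 1$ noted after Proposition \ref{PropExp}; this must be tracked to ensure that only $|\J|$ (and not $|\J^c|$) appears in the final bound.
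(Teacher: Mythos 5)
Your proposal is correct and follows essentially the same route as the paper: the authors prove Theorem \ref{thm:clt:II} by running the argument of Theorem \ref{thm:clt:III} with Proposition \ref{prop:clt:V} in place of Proposition \ref{prop:clt:III}, using exactly your identifications $Y_i=L_{\J}(X_i\otimes X_i)$, $\Psi=\Psi_{\J}$, the moment bound $\E\|Y_i\|_2^3\lesssim A_{\J}^{3/2}$ from Lemma \ref{lem:moment:computation:KL}(i) and Lemma \ref{lem:Lambda:Sigma:relation}, and the deviation bound for $|T-S|$ from Corollary \ref{cor:exp:quadratic:term}, Lemma \ref{lem:bound:events:via:fn:minsker} and \eqref{ass:relativerank} with $u=Cn^{-1/2}(\log n)^{3/2}\sigma_{\J}^3|\J|^{3/2}$. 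Your handling of the quartic remainder and the $\min(|\J|,|\J^c|)$ bookkeeping matches the paper's \eqref{eq:conc:T-S}--\eqref{eq:conc:T-S:cond}.
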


A simpler (distributional) approximation is given by the following corollary.

\begin{corollary}\label{cor:thm:clt:II}
Suppose that the assumptions of Theorem \ref{thm:clt:II} hold and that $G \sim\mathcal{N}(0,1)$. Then we have
\begin{align*}
\mathbf{U}\Big(\frac{n\big\|\hat{P}_{\J} - P_{\J}\big\|_2^2 - A_{{\J}}}{B_{{\J}}}, G\Big)&\lesssim_p \Big(\frac{C_{\J}}{B_{\J}}\Big)^3  + \text{bound of Theorem \ref{thm:clt:II}}.
\end{align*}
\end{corollary}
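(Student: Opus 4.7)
The plan is to reduce the statement to Theorem \ref{thm:clt:II} together with a Berry--Esseen bound for the Gaussian chaos $\|L_\J Z\|_2^2$.

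First, I would exploit the affine invariance of $\mathbf{U}$ and the triangle inequality to write
\begin{align*}
\mathbf{U}\Big(\tfrac{n\|\hat P_\J-P_\J\|_2^2-A_\J}{B_\J},G\Big)
&\leq \mathbf{U}\Big(\tfrac{n\|\hat P_\J-P_\J\|_2^2-A_\J}{B_\J},\tfrac{\|L_\J Z\|_2^2-A_\J}{B_\J}\Big)\\
&\quad+\mathbf{U}\Big(\tfrac{\|L_\J Z\|_2^2-A_\J}{B_\J},G\Big)\\
&= \mathbf{U}\big(n\|\hat P_\J-P_\J\|_2^2,\|L_\J Z\|_2^2\big)+\mathbf{U}\Big(\tfrac{\|L_\J Z\|_2^2-A_\J}{B_\J},G\Big).
\end{align*}
The first term on the right-hand side is exactly what Theorem \ref{thm:clt:II} estimates, and contributes the ``bound of Theorem \ref{thm:clt:II}'' appearing in the claim.

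For the second term I would use the Karhunen--Lo\`eve expansion of the centered Gaussian variable $L_\J Z$ with values in the Hilbert space of Hilbert--Schmidt operators: if $(\mu_i)_{i\geq 1}$ are the eigenvalues of its covariance operator $\Psi_\J$, then
\begin{equation*}
\|L_\J Z\|_2^2\stackrel{d}{=}\sum_{i\geq 1}\mu_i G_i^2,
\end{equation*}
with $(G_i)$ i.i.d.\ standard normal. Subtracting the mean,
\begin{equation*}
\|L_\J Z\|_2^2-A_\J=\sum_{i\geq 1}\mu_i(G_i^2-1),
\end{equation*}
is an independent sum of centered random variables with
\begin{equation*}
\sum_{i}\mu_i^2\,\E(G_i^2-1)^2=2\|\Psi_\J\|_2^2=B_\J^2,\qquad
\sum_{i}|\mu_i|^3\,\E|G_i^2-1|^3\lesssim \|\Psi_\J\|_3^3\asymp C_\J^3,
\end{equation*}
by the identities \eqref{eq:ABC}. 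Applying the classical (one-dimensional) Berry--Esseen bound to this sum—equivalently the Gaussian Berry--Esseen lemma alluded to in the discussion after Proposition \ref{prop:clt:V}—yields
\begin{equation*}
\mathbf{U}\Big(\tfrac{\|L_\J Z\|_2^2-A_\J}{B_\J},G\Big)\lesssim \frac{C_\J^3}{B_\J^3}=\Big(\frac{C_\J}{B_\J}\Big)^3,
\end{equation*}
which is the remaining contribution.

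There is no real obstacle here: the proof is essentially an assembly of invariance of $\mathbf{U}$, the triangle inequality, and a standard Berry--Esseen estimate for weighted $\chi^2_1-1$ sums, plus Theorem \ref{thm:clt:II}. The only minor point to check is that the Berry--Esseen bound extends to countable independent sums, which follows by a standard truncation/limiting argument since $\sum_i\mu_i^2<\infty$ and $\sum_i|\mu_i|^3<\infty$ (as $\|\Psi_\J\|_2,\|\Psi_\J\|_3<\infty$).
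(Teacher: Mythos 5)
Your proposal is correct and matches the paper's (very terse) proof, which simply says to use Lemma \ref{lem:BE:gauss} and argue as in Proposition \ref{prop:clt:III}: the affine invariance plus triangle inequality decomposition, followed by the Berry--Esseen bound for the weighted sum $\sum_i \mu_i(G_i^2-1)$ yielding $(C_\J/B_\J)^3$, is exactly the intended argument. Your normalizations ($B_\J^2 = 2\|\Psi_\J\|_2^2$, $C_\J^3 \asymp \|\Psi_\J\|_3^3$) and the remark on passing to countable sums are all in order.
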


However, in this case one may dispose of any conditions on the eigenvalues $\lambda_k(\Psi)$ altogether, as is demonstrated by our next result below. The cost is a slower rate of convergence.

\begin{thm}\label{thm:clt:III}
Suppose that Assumptions \ref{ass_moments} and \ref{ass:lower:bound} and \eqref{ass:relativerank} hold and let $G \sim\mathcal{N}(0,1)$. Then we have
\begin{align*}
\mathbf{U}\Big(\frac{n\big\|\hat{P}_{\J} - P_{\J}\big\|_2^2 - A_{\J}}{B_{\J}}, G\Big)&\lesssim_p n^{1/4-(p\wedge 3)/8}\frac{A^{3/4}_{\J}}{B^{3/4}_{\J}}+   \Big(\frac{C_{\J}}{B_{\J}}\Big)^3 \\
&+ \frac{(\log n)^{3/2}}{n^{1/2}} \frac{\sigma_{\J}^3\vert \J\vert ^{3/2}}{B_{{\J}}}+\qprob_{\J,n,p}.
\end{align*}
\end{thm}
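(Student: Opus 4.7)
The plan is to apply Proposition \ref{prop:clt:III} with $T = n\|\hat P_\J - P_\J\|_2^2$ and the Hilbert--Schmidt-valued summands $Y_i := L_\J(X_i \otimes X_i)$. Since $P_k \Sigma P_j = 0$ for $k \neq j$, one has $L_\J \Sigma = 0$, so the $Y_i$ are centered and $S = n^{-1}\|\sum_{i=1}^n Y_i\|_2^2 = n\|L_\J E\|_2^2$ with $E = \hat\Sigma - \Sigma$. The covariance $\Psi$ appearing in Setting \ref{ass:clts:setup} then equals $\Psi_\J$, so the constants $A$, $B$, $C$ in Proposition \ref{prop:clt:III} coincide with $A_\J, B_\J, C_\J$ from \eqref{eq:ABC} and $\|Z\|^2$ has the same law as $\|L_\J Z^*\|_2^2$, where $Z^*$ is the Gaussian limit in \eqref{eq_limit_empirical_covariance_operator}.

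Next I would bound $\E\|Y_i\|^{q}$ with $q := p \wedge 3$. Using the Karhunen--Lo\`eve expansion of $X_i$, a direct calculation yields
\begin{align*}
    Y_i = \sum_{j \in \J,\, k \in \J^c} \sqrt{\vartheta_{jk}}\,\eta_{ij}\eta_{ik}\, e_{jk}, \qquad \vartheta_{jk} = \frac{2\lambda_j\lambda_k}{(\lambda_j - \lambda_k)^2},
\end{align*}
with the orthonormal Hilbert--Schmidt system $e_{jk} = (u_j\otimes u_k + u_k\otimes u_j)/\sqrt{2}$. This is a Karhunen--Lo\`eve-type representation whose weights satisfy $\sum_{jk} \vartheta_{jk} \asymp A_\J$ by Lemma \ref{lem:Lambda:Sigma:relation}, and the multipliers $\zeta_{jk} := \eta_{ij}\eta_{ik}$ satisfy $\sup_{j\neq k}\E|\zeta_{jk}|^{q} \lesssim 1$ by Cauchy--Schwarz and Assumption \ref{ass_moments} (since $q \leq p$). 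Lemma \ref{lem:moment:computation:KL}(i) therefore gives $\E\|Y_i\|^q \lesssim A_\J^{q/2}$, hence $\sum_{i=1}^n \E\|Y_i\|^q \lesssim n A_\J^{q/2}$. Inserting this into the first summand of Proposition \ref{prop:clt:III} produces precisely $n^{1/4 - (p\wedge 3)/8}(A_\J/B_\J)^{3/4}$.

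The remaining ingredient is the approximation error $|T - S|$. Corollary \ref{cor:exp:quadratic:term} gives
\begin{align*}
    |T - S| \lesssim n|\J|^{3/2}\delta_\J^3 + n|\J|^2\delta_\J^4,
\end{align*}
and Lemma \ref{lem:bound:events:via:fn:minsker} shows $\sqrt{n}\,\delta_\J \lesssim \sigma_\J \sqrt{\log n}$ outside an event of probability $\lesssim \qprob_{\J,n,p}$; on the good event, condition \eqref{ass:relativerank} renders the quartic term negligible compared to the cubic one, so $|T - S| \lesssim n^{-1/2}|\J|^{3/2}\sigma_\J^3(\log n)^{3/2}$. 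Choosing $u$ of this order contributes $u/B_\J = (\log n)^{3/2} n^{-1/2}\sigma_\J^3|\J|^{3/2}/B_\J$ (the third term in the claim), while the exceptional event delivers $\qprob_{\J,n,p}$ through $\P(|T-S|>u)$. The term $(C_\J/B_\J)^3$ passes through unchanged from Proposition \ref{prop:clt:III}, and combining the four contributions recovers the stated bound.

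The main technical obstacle is the moment estimate $\E\|Y_i\|^q \lesssim A_\J^{q/2}$: since the product $\eta_{ij}\eta_{ik}$ only inherits half as many moments as the individual coefficients via Cauchy--Schwarz, one cannot push beyond $q = p \wedge 3$, which is precisely why Proposition \ref{prop:clt:III} (with its $q \in (2,3]$ hypothesis) is the appropriate abstract tool here. Apart from this, the proof is a routine assembly of the relative perturbation expansion of Corollary \ref{cor:exp:quadratic:term}, the Fuk--Nagaev control of $\delta_\J$ on a high-probability event, and the quadratic-form Berry--Esseen bound of Proposition \ref{prop:clt:III}.
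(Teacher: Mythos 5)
Your proposal is correct and follows essentially the same route as the paper: the authors also prove Theorem \ref{thm:clt:III} by applying Proposition \ref{prop:clt:III} with $T = n\|\hat P_\J - P_\J\|_2^2$, $S = n\|L_\J E\|_2^2$, $Y_i = L_\J(X_i\otimes X_i)$, using the Karhunen--Lo\`eve representation of $L_\J(X\otimes X)$ together with Lemma \ref{lem:moment:computation:KL} to get $\E\|Y_i\|_2^q \lesssim A_\J^{q/2}$, and controlling $|T-S|$ via Corollary \ref{cor:exp:quadratic:term}, Lemma \ref{lem:bound:events:via:fn:minsker} and \eqref{ass:relativerank} with exactly your choice of $u$. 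The only (immaterial) imprecision is your closing remark that the moment bound caps $q$ at $p\wedge 3$: the cap at $3$ comes from the hypothesis of Proposition \ref{prop:clt:III}, while the product $\eta_j\eta_k$ actually retains $p$ moments under Assumption \ref{ass_moments}.
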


An important feature of both Corollary \ref{cor:thm:clt:II} and Theorem \ref{thm:clt:III} is the error term  $(C_{\J}/B_{\J})^3$. As can be deduced from \cite{hall_lower}, it is, in general, necessary for a central limit theorem to have
\begin{align}\label{clt:lower:condi}
    \Big(\frac{C_{\J}}{B_{\J}}\Big)^3 \to 0 \quad \text{as}\quad n\rightarrow\infty,
\end{align}
otherwise it cannot hold.

So far, we did not require any condition in terms of $n$, $A_{\J}$, $B_{\J}$ and $C_{\J}$. Questions of optimality related to these quantities remain unknown in general, see also the comments after Proposition \ref{prop:clt:III}. In Section \ref{sec:applications:main}, we discuss examples to further illustrate this and also present optimal results up to $\log$-factors in case of polynomially decaying eigenvalues.


\section{Bootstrap approximations}\label{sec:main:boot}

Bootstrap methods are nowadays one of the most popular
ways for measuring the significance of a test or building confidence sets. Among others, their superiority compared to (conventional) limit theorems stems from the fact that they offer finite sample approximations. Some bootstrap methods even outperform typical Berry-Esseen bounds attached to corresponding limit distributions, see for instance \cite{hall:bootstrap:book}. As is apparent from our results in Section \ref{sec:quantitative:limit:thms}, our (unknown) normalising sequences are quite complicated. Hence, as is discussed intensively in \cite{naumov_spokoiny_ptrf2019}, bootstrap methods are a very attractive alternative in the present context.

\subsection{Assumptions and main quantities}

We first require some additional notation. We denote with
$\mathcal{X} = \sigma\big(X_i,\, i \in \N \big)$ the $\sigma$-algebra generated by the whole sample. We further denote with $(X_i')$ an independent copy of $(X_i)$. Next, we introduce the conditional expectations and probabilities
\begin{align*}
\tilde{\E}[\cdot] = \E[ \cdot \vert  \mathcal{X}], \quad \tilde{\P}(\cdot) = \tilde{\E} \mathbf{1}_{\{\cdot\}}.
\end{align*}
The (conditional) measure $\tilde{\P}$ will act as our probability measure in the bootstrap-world (the latter refers to the fact that the sample is fixed and randomness comes from the bootstrap procedure).
Finally, recall the uniform metric $\mathbf{U}(A,B)$ for real-valued random variables $A$, $B$, and denote with
\begin{align}\label{defn:tilde:U}
\tilde{\mathbf{U}}_{}\big(A, B\big) = \sup_{x \in \R}\big\vert\tilde{\P}_{}\big(A \leq x \big) - \tilde{\P}_{}\big(B \leq x \big)  \big\vert
\end{align}
the corresponding conditional version. As before, we assume throughout that $n \geq 2$.

There are many ways to design bootstrap approximations. A popular and powerful method are multiplier methods, which we also employ here. To this end, let $(w_i)$ be an i.i.d. sequence with the following properties.
\begin{align}\label{ass:boot:sequence:w}
\E w_i^2 = 1, \quad \E w_i^{2p}<\infty,
\end{align}
where $p$ corresponds to the same value as in Assumption \ref{ass_moments}. Moreover, denote with $\sigma_w^2 = \E (w_i^2-1)^2$. One may additionally demand $\E w_i = 0$, but this is not necessary. Throughout this section (and the corresponding proofs), we always assume the validity of \eqref{ass:boot:sequence:w} without mentioning it any further.


\begin{algorithmm}[Bootstrap]\label{alg:boot}
Given $(X_i)$ and $(w_i)$, construct the sequence $(\tilde{X}_i) = (w_i X_i)$. Treat $(\tilde{X}_i)$ as new sample, and compute the corresponding empirical covariance matrix $\tilde{\Sigma}$ and projection $\tilde{P}_{\J}$ accordingly.
\end{algorithmm}

Note that our multiplier bootstrap is slightly different from the one of \cite{naumov_spokoiny_ptrf2019}, but a bit more convenient to analyze. On the other hand, by passing to the complex domain $\C$ as underlying field of our Hilbert space, it is not hard to show that Theorem \ref{thm:boot:I}, Theorem \ref{thm:boot:II} and the attached corollaries below are equally valid for the multiplier bootstrap employed in \cite{naumov_spokoiny_ptrf2019}.

\subsection{Main results}

Recall that $(X_i')$ is an independent copy of $(X_i)$, and thus $\hat{P}_{\J}'$, the empirical projection based on $(X_i')$ , is independent of $\mathcal{X}$. Throughout this section, we entirely focus on the uniform metric $\tilde{\mathbf{U}}$.

\begin{thm}\label{thm:boot:I}
Suppose that Assumptions \ref{ass_moments}, \ref{ass:lower:bound} hold and that \eqref{ass:relativerank} is satisfied. Moreover, suppose that
\begin{align}\label{ass:boot:thirdnorm}
\frac{A_{\J}}{C_{\J}} \sqrt{\frac{\log n}{n}} \leq   c
\end{align}
is satisfied for some sufficiently small constant $c > 0$. Let $q \in (2,3]$ and $s \in (0,1)$ and assume that $p>2q$. Then, with probability at least
$1 - C_p\qprob^{1-s}_{\J,n,p}-C_p n^{1-p/(2q)}$, $C_p > 0$, we have
\begin{align*}
\tilde{\mathbf{U}}_{}\Big(\sigma_w^{-2}\big\|\tilde{P}_{\J} - \hat{P}_{\J} \big\|_2^2, \big\|\hat{P}_{\J}' - {P}_{\J} \big\|_2^2 \Big)&\lesssim_p
{\bf A}_{\J,n,p,s},
\end{align*}
where
\begin{align}\label{eq:bfA} \nonumber
{\bf A}_{\J,n,p,s} &= n^{1/4-q/8}\Big(\frac{A_{{\J}}}{B_{{\J}}}\Big)^{3/4}+  \Big(\frac{C_{{\J}}}{B_{{\J}}}\Big)^3 + \frac{\sqrt{\log n}}{\sqrt{n}} \frac{A_{\J}}{B_{\J}}\\
&+ n^{-1/2}\log^{3/2} n \frac{\sigma_{\J}^3\vert \J\vert ^{3/2}}{B_{{\J}}}+\qprob_{\J,n,p}^s.
\end{align}
\end{thm}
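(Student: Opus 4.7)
The plan is to linearise both projector squared distances via the perturbation expansions from Section \ref{sec_preliminaries_perturbation}, apply Proposition \ref{prop:clt:III} to each resulting quadratic form (once unconditionally, once conditionally on $\mathcal{X}$), and compare the two Gaussian limits by controlling their covariances via a Fuk--Nagaev bound in nuclear norm.

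For the first two steps, I would apply Corollary \ref{lem:boot:approx} conditionally on $\mathcal{X}$ and Corollary \ref{cor:exp:quadratic:term} to the independent sample $(X_i')$, replacing $\|\tilde P_\J-\hat P_\J\|_2^2$ by $\|L_\J(\tilde E-E)\|_2^2$ and $\|\hat P_\J'-P_\J\|_2^2$ by $\|L_\J E'\|_2^2$ with $E'=\hat\Sigma'-\Sigma$. On the intersection of the events provided by Lemma \ref{lem:bound:events:via:fn:minsker} (and a conditional analogue for $\tilde E$), on which $\delta_\J(E),\delta_\J(\tilde E),\delta_\J(E')\lesssim\sigma_\J\sqrt{\log n/n}$, these replacements cost at most $|\J|^{3/2}\sigma_\J^3(\log n/n)^{3/2}$ after discarding the quartic term via \eqref{ass:relativerank}; paired with Gaussian anti-concentration on the scale $B_\J$ this accounts for the summand $n^{-1/2}(\log n)^{3/2}\sigma_\J^3|\J|^{3/2}/B_\J$ of ${\bf A}_{\J,n,p,s}$. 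Writing $L_\J(\tilde E-E)=n^{-1}\sum_i(w_i^2-1)L_\J(X_i\otimes X_i)$ and $L_\J E'=n^{-1}\sum_i L_\J(X_i'\otimes X_i'-\Sigma)$, Proposition \ref{prop:clt:III} then applies on either side with $Y_i=L_\J(X_i'\otimes X_i'-\Sigma)$ (unconditionally) and $Y_i^*=\sigma_w^{-1}(w_i^2-1)L_\J(X_i\otimes X_i)$ (conditionally on $\mathcal{X}$). The required moment inputs $\sum_i\E\|Y_i\|_2^q$ and $\sum_i\tilde\E\|Y_i^*\|_2^q$ are controlled via Lemma \ref{lem:moment:computation:KL}(i) applied to the appropriately transformed data together with the real-valued Fuk--Nagaev bound \eqref{lem:fn} applied to $\sum_i\|L_\J(X_i\otimes X_i)\|_2^q$; after standardising by $B_\J$ this yields the Berry--Esseen terms $n^{1/4-q/8}(A_\J/B_\J)^{3/4}+(C_\J/B_\J)^3$ and the tail probability $n^{1-p/(2q)}$, the latter because only $p>2q$ moments of multipliers and data are consumed.

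The main obstacle is then the Gaussian comparison. Setting $\hat\Psi_\J:=n^{-1}\sum_i L_\J(X_i\otimes X_i)\otimes L_\J(X_i\otimes X_i)$, the bootstrap Gaussian $\hat Z$ has covariance $\hat\Psi_\J$ while the unconditional limit $L_\J Z$ has covariance $\Psi_\J=\E\hat\Psi_\J$. I would combine a Gaussian comparison of the form $\mathbf{U}(\|\hat Z\|_2^2,\|L_\J Z\|_2^2)\lesssim\|\hat\Psi_\J-\Psi_\J\|_1/B_\J$ (with the ambient anti-concentration absorbed into the free parameter $u$ of Proposition \ref{prop:clt:III}) with the Fuk--Nagaev inequality in nuclear norm \eqref{cor:fn:nuclear} applied to the Hilbert--Schmidt-operator-valued summands $L_\J(X_i\otimes X_i)$. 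By Lemma \ref{lem:moment:computation:KL}(iv) and $\operatorname{tr}\Psi_\J=A_\J$, one then obtains $\|\hat\Psi_\J-\Psi_\J\|_1\lesssim A_\J\sqrt{\log n/n}$ outside an event of probability $O(\qprob_{\J,n,p})$, producing the final summand $\sqrt{\log n/n}\,A_\J/B_\J$. Assumption \eqref{ass:boot:thirdnorm} guarantees that this contribution stays subordinate to $(C_\J/B_\J)^3$ so that anti-concentration does not deteriorate. The delicate points are avoiding the dimension-dependent factors $\lambda_{1,6}(\Psi_\J)^{-1}$ that a naive Gaussian comparison would introduce (handled by routing the nuisance through the free parameter $u$ in Proposition \ref{prop:clt:III}) and consolidating the several high-probability events into a single one---which trades a small exponent $s$ for the residual probabilities and is the origin of the $\qprob_{\J,n,p}^s$ term in the final rate.
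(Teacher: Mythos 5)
Your overall architecture matches the paper's: linearise both sides via Proposition \ref{PropExp}/Corollaries \ref{cor:exp:quadratic:term} and \ref{lem:boot:approx}, run Proposition \ref{prop:clt:III} once conditionally on $\mathcal{X}$ and once unconditionally, and pay for the discrepancy between the bootstrap Gaussian and the population Gaussian through concentration of the empirical covariance of $L_{\J}(X_i\otimes X_i)$. The accounting of the terms $n^{1/4-q/8}(A_\J/B_\J)^{3/4}$, $(C_\J/B_\J)^3$, the remainder $n^{-1/2}(\log n)^{3/2}\sigma_\J^3|\J|^{3/2}/B_\J$, the tail $n^{1-p/(2q)}$ and the Markov step producing $\qprob_{\J,n,p}^s$ is all consistent with the paper.

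The Gaussian comparison step, however, has a genuine gap. You propose $\mathbf{U}(\|\hat Z\|_2^2,\|L_\J Z\|_2^2)\lesssim \|\hat\Psi_\J-\Psi_\J\|_1/B_\J$ with $\|\hat\Psi_\J-\Psi_\J\|_1$ controlled by the nuclear-norm Fuk--Nagaev inequality \eqref{cor:fn:nuclear}. First, making that inequality effective requires bounding $\|(\E(\overline{Y\otimes Y})^2)^{1/2}\|_1$ and the variance proxy $\varpi_n^2$, which goes through Lemma \ref{lem:moment:computation:KL}(iv) and hence needs the cumulant-uncorrelatedness Assumption \ref{ass:indep:4:moments} for the products $\zeta_{jk}=\eta_j\eta_k$ --- an assumption Theorem \ref{thm:boot:I} does not make (it is reserved for Theorem \ref{thm:boot:II}). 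Second, even granting those assumptions, the resulting bound is $\|\hat\Psi_\J-\Psi_\J\|_1\lesssim \sqrt{A_\J}\,\operatorname{tr}(\Psi_\J^{1/2})\sqrt{\log n/n}$, not $A_\J\sqrt{\log n/n}$; since $\operatorname{tr}(\Psi_\J^{1/2})\geq\sqrt{A_\J}$ with a possibly large gap, this would yield a strictly weaker theorem (this is exactly the extra term $\sqrt{A_\J}\operatorname{tr}(\Psi_{\J,\I}^{1/2})$ that appears in Theorem \ref{thm:boot:II}). Also, ``absorbing the anti-concentration into the free parameter $u$'' is not a valid mechanism: $u$ in Proposition \ref{prop:clt:III} only controls the replacement of $T$ by $S$, and the $1/B_\J$ scaling cannot come from Lemma \ref{lem:gaussian:comparison:spok}, which carries $1/\sqrt{\lambda_{1,2}}\geq 1/B_\J$. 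The paper avoids all of this by standardising both sides as $(\cdot-\tilde A_\J)/\tilde B_\J$ and comparing each to a scalar standard Gaussian $G$ (paying $(C_\J/B_\J)^3$ via Lemma \ref{lem:BE:gauss}); the Gaussian-to-Gaussian comparison then collapses to one dimension and needs only $|\tilde A_\J-A_\J|$ (a real-valued sum, controlled by \eqref{lem:fn}) and $|\tilde B_\J-B_\J|\leq\sqrt{2}\|\tilde\Psi_\J-\Psi_\J\|_2$ (Hilbert--Schmidt Fuk--Nagaev, Lemma \ref{lem:fuknagaev:HS:nuclear:operator}(i)), both of order $A_\J\sqrt{\log n/n}$ under Assumption \ref{ass_moments} alone; condition \eqref{ass:boot:thirdnorm} then ensures $\tilde A_\J\asymp A_\J$, $\tilde B_\J\asymp B_\J$, $\tilde C_\J\asymp C_\J$. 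You should replace your nuclear-norm comparison with this one-dimensional location-scale argument.
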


The bound ${\bf A}_{\J,n,p,s}$ above is also based on Theorem \ref{thm:clt:III}, which explains the origin of $(C_{{\J}}/B_{{\J}})^3$. Loosely speaking, this means that we approximate with a standard Gaussian distribution to show closeness in Theorem \ref{thm:boot:I}.

The uniform metric is strong enough to deduce quantitative statements for empirical quantiles, which is useful in statistical applications. To exemplify this further, we state the following result.

\begin{corollary}\label{cor:boot:I:quant}
Given the assumptions and conditions of Theorem \ref{thm:boot:I} and
\begin{align}\label{eq:quantile}
\hat{q}_{\alpha} = \inf\Big\{x:\, \tilde{\P}_{}\Big(\frac{n}{\sigma_w^2} \|\tilde{P}_{\J} - \hat{P}_{\J}\|_2^2  \leq x\Big) \geq 1 - \alpha \Big\},
\end{align}
we have
\begin{align*}
\Big\vert\alpha - \P\Big(n \big\|\hat{P}_{\J} - P_{\J} \big\|_2^2 > \hat{q}_{\alpha} \Big) \Big\vert \lesssim_p \qprob^{1-s}_{\J,n,p} + n^{1-p/(2q)}  + \mathbf{A}_{\J,n,p,s},
\end{align*}
where $\mathbf{A}_{\J,n,p,s}$ is defined in \eqref{eq:bfA}.
\end{corollary}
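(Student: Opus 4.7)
My plan is to run a standard bootstrap-quantile inversion, leveraging Theorem~\ref{thm:boot:I} as a uniform Kolmogorov approximation between the (random) bootstrap CDF and the deterministic CDF of $T_n:=n\|\hat P_\J - P_\J\|_2^2$. Concretely, let
\[
F_n(x) = \P(T_n\leq x), \qquad \tilde F_n(x) = \tilde\P\big(n\sigma_w^{-2}\|\tilde P_\J - \hat P_\J\|_2^2\leq x\big).
\]
Since $\hat P_\J'$ is an independent copy of $\hat P_\J$, we also have $F_n(x) = \P(n\|\hat P_\J' - P_\J\|_2^2\leq x)$, so Theorem~\ref{thm:boot:I} (after rescaling by $n$) delivers an event $\mathcal{E}\in\sigma(\mathcal{X})$ with $\P(\mathcal{E}^c)\lesssim_p \qprob^{1-s}_{\J,n,p}+n^{1-p/(2q)}$ on which $\sup_{x\in\R}|\tilde F_n(x)-F_n(x)|\leq \varepsilon$, with $\varepsilon := C_p\,\mathbf{A}_{\J,n,p,s}$.

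Next, I will use the definition \eqref{eq:quantile} of the generalized inverse, namely $\tilde F_n(\hat q_\alpha)\geq 1-\alpha$ and $\tilde F_n(\hat q_\alpha - \delta)<1-\alpha$ for every $\delta>0$, together with the uniform closeness on $\mathcal{E}$, to deduce the sandwich
\[
F_n(\hat q_\alpha)\geq 1-\alpha-\varepsilon, \qquad F_n(\hat q_\alpha^{-})\leq 1-\alpha+\varepsilon,
\]
pointwise on $\mathcal{E}$. Introducing deterministic thresholds $x^{*}=F_n^{-1}(1-\alpha-\varepsilon)$ and $y^{*}$ any value with $F_n(y^{*})>1-\alpha+\varepsilon$ chosen arbitrarily close to $F_n^{-1}(1-\alpha+\varepsilon)$, monotonicity of $F_n$ forces $x^{*}\leq \hat q_\alpha\leq y^{*}$ on $\mathcal{E}$. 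The crucial point is that this yields the two-sided event inclusion
\[
\{T_n>y^{*}\}\cap\mathcal{E}\subseteq \{T_n>\hat q_\alpha\}\cap\mathcal{E}\subseteq \{T_n>x^{*}\}\cap\mathcal{E},
\]
which replaces the non-trivial dependence between $T_n$ and $\hat q_\alpha$ by comparison with deterministic numbers whose tail probabilities are controlled tautologically by $1-F_n(\cdot)$.

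Finally, taking probabilities and using $1-F_n(x^{*})\leq \alpha+\varepsilon$ together with $1-F_n(y^{*})\geq \alpha-\varepsilon$ yields $\alpha-\varepsilon-\P(\mathcal{E}^c)\leq \P(T_n>\hat q_\alpha)\leq \alpha+\varepsilon+\P(\mathcal{E}^c)$, and substituting the bounds on $\varepsilon$ and $\P(\mathcal{E}^c)$ from the first paragraph delivers the claim. The only mildly delicate step is the passage from the left-limit inequality $F_n(\hat q_\alpha^{-})\leq 1-\alpha+\varepsilon$ to the deterministic bound $\hat q_\alpha\leq y^{*}$; this is routine once one allows an arbitrarily small perturbation of $y^{*}$ to bypass possible atoms or flat segments of $F_n$, and contributes no additional error term to the final bound.
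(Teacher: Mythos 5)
Your upper-bound direction is sound: on the event $\mathcal{E}$ you get $F_n(\hat q_\alpha)\geq \tilde F_n(\hat q_\alpha)-\varepsilon\geq 1-\alpha-\varepsilon$, hence $\hat q_\alpha\geq x^*:=\inf\{x:F_n(x)\geq 1-\alpha-\varepsilon\}$ and $\P(T_n>\hat q_\alpha)\leq 1-F_n(x^*)+\P(\mathcal{E}^c)\leq \alpha+\varepsilon+\P(\mathcal{E}^c)$. The gap is in the lower-bound direction. You need a deterministic $y^*$ with \emph{both} $\hat q_\alpha\leq y^*$ on $\mathcal{E}$ (which forces $F_n(y^*)>1-\alpha+\varepsilon$) \emph{and} $1-F_n(y^*)\geq \alpha-O(\varepsilon)$ (which forces $F_n(y^*)\leq 1-\alpha+O(\varepsilon)$). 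If $F_n$ has an atom of mass $a$ at the point where it crosses the level $1-\alpha+\varepsilon$, then every admissible $y^*$ satisfies $F_n(y^*)\geq 1-\alpha+\varepsilon+$ (jump overshoot), and no perturbation of $y^*$ removes the loss: you end up with $\P(T_n>\hat q_\alpha)\geq \alpha-\varepsilon-a-\P(\mathcal{E}^c)$, and $a=\P(T_n=y_0)$ is not controlled by anything you have invoked. Your closing remark that this is "routine" and "contributes no additional error term" is exactly where the argument breaks: flat segments of $F_n$ are harmless, but atoms are not, and $T_n=n\|\hat P_\J-P_\J\|_2^2$ has no a priori continuous distribution.

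The missing ingredient is the normal approximation for $T_n$ itself. Theorem \ref{thm:clt:III} gives $\mathbf{U}\big((T_n-A_\J)/B_\J,G\big)\lesssim_p \mathbf{A}_{\J,n,p,s}$, and since the Gaussian CDF is continuous this bounds every atom of $F_n$ by $O(\mathbf{A}_{\J,n,p,s})$, which is exactly the slack you need to absorb the jump overshoot into the stated error. This is how the paper proceeds, though organized differently: it compares the bootstrap CDF with the \emph{continuous} reference law of $B_\J G+A_\J$ (combining Theorem \ref{thm:boot:I} with Theorem \ref{thm:clt:III}), sandwiches $\hat q_\alpha$ between Gaussian quantiles via Lemma \ref{lem_gen_quant} — whose proof uses precisely the continuity of the reference law to control $F(x)-F(x_-)$ — and then converts back to $T_n$ with one more application of Theorem \ref{thm:clt:III}. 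Your direct comparison of $\tilde F_n$ with $F_n$ is a legitimate and arguably cleaner route, but it is not self-contained: you must explicitly invoke Theorem \ref{thm:clt:III} (or an equivalent anti-concentration statement for $T_n$) to justify the lower bound, and the resulting atom term should be displayed as part of $\mathbf{A}_{\J,n,p,s}$ rather than claimed to vanish.
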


Next, we present our second bound main result in this section.

\begin{thm}\label{thm:boot:II}
Suppose that Assumptions \ref{ass_moments} and \ref{ass:lower:bound} hold with $p > 6$, Assumption \ref{ass:indep:4:moments} holds with $m\in\{4,8\}$, and that \eqref{ass:relativerank} and \eqref{ass:boot:thirdnorm} are satisfied. Finally, suppose that
\begin{align}\label{ass:boot:truncationset}
\sqrt{\frac{\log n}{n}}\frac{A_{\J}}{\lambda_2(\Psi_\J)}\lesssim 1.
\end{align}
Let $s \in (0,1)$. Then, with probability at least
$1 - C_p\qprob^{1-s}_{\J,n,p} - C_pn^{1-p/6}$, $C_p > 0$, we have
\begin{align*}
\tilde{\mathbf{U}}_{}\Big(\sigma_w^{-2}\big\|\tilde{P}_{\J} - \hat{P}_{\J} \big\|_2^2, \big\|\hat{P}_{\J}' - {P}_{\J} \big\|_2^2 \Big) &\lesssim_p \mathbf{B}_{\J,n,p,s},
\end{align*}
where
\begin{align}\label{eq:bfB} \nonumber
\mathbf{B}_{\J,n,p,s} &= n^{-1/5}\Big(\frac{A_{\J}}{\sqrt{\lambda_{1,2}(\Psi_\J)}}\Big)^{3/5}   +\frac{\log^{3/2} n}{n^{1/2}}\frac{\sigma_{\J}^3\vert \J\vert ^{3/2}}{\sqrt{\lambda_{1,2}(\Psi_\J)}}\\&+\frac{\log^{1/2} n}{n^{1/2}}\frac{\sqrt{A_{\J}} \operatorname{tr}(\Psi_{\J,\I}^{1/2})}{\sqrt{\lambda_{1,2}(\Psi_\J)}} +\frac{A_{\J,\I^c}\log n }{\sqrt{{\lambda}_{1,2}(\Psi_\J)}}+ \big(\qprob^{}_{\J,n,p} + n^{1- p/6}\big)^{s},
\end{align}
and $\qprob_{\J,n,p}$ is given in \eqref{eq:qprob}. In all the results above, we require $\J=\{j_1,\dots,j_2\}$ and $\I=\{1,\dots,i_2\}$ with $i_2>j_2+2$.
\end{thm}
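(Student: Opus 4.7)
\emph{Overall strategy.} The plan is to linearise both squared projectors, pass to Gaussian quadratic forms on each side via Proposition~\ref{prop:clt:V}, and conclude with a Gaussian--Gaussian comparison; all error events will be accumulated and raised to the $s$-th power at the end via the standard truncation argument used in the proof of Theorem~\ref{thm:clt:II}.

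\emph{Linearisation and truncation.} First I would use Corollary~\ref{lem:boot:approx} (with $\tilde\Sigma=n^{-1}\sum_i w_i^2 X_i\otimes X_i$ and $\hat\Sigma=n^{-1}\sum_i X_i\otimes X_i$, so that $\tilde E-E=n^{-1}\sum_i(w_i^2-1)X_i\otimes X_i$) to replace $\sigma_w^{-2}\|\tilde P_\J-\hat P_\J\|_2^2$ by $\sigma_w^{-2}\|L_\J(\tilde E-E)\|_2^2$, and in parallel Corollary~\ref{cor:exp:quadratic:term} to replace $\|\hat P_\J'-P_\J\|_2^2$ by $\|L_\J E'\|_2^2$ on the independent copy. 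The residuals are of order $|\J|^{3/2}\delta_\J^3+|\J|^2\delta_\J^4$; Lemma~\ref{lem:bound:events:via:fn:minsker} (applied to $X$ and to $wX$) gives $\delta_\J\lesssim\sqrt{\sigma_\J^2\log n/n}$ outside an event of probability $\lesssim\mathbf{p}_{\J,n,p}$, and combined with the $1/\sqrt{\lambda_{1,2}(\Psi_\J)}$ anti-concentration of $\|Z\|^2$ this yields the $n^{-1/2}\log^{3/2}n\cdot\sigma_\J^3|\J|^{3/2}/\sqrt{\lambda_{1,2}(\Psi_\J)}$ contribution. Next I would split $L_\J=L_{\J,\I}+L_{\J,\I^c}$; the tail has expected squared norm $A_{\J,\I^c}/n$, producing the $A_{\J,\I^c}\log n/\sqrt{\lambda_{1,2}(\Psi_\J)}$ summand via Markov plus anti-concentration. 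The constraint $i_2>j_2+2$ ensures $\lambda_{1,2}(\Psi_{\J,\I})\asymp\lambda_{1,2}(\Psi_\J)$.

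\emph{Two CLT applications.} Setting $Y_i=\sigma_w^{-1}(w_i^2-1)L_{\J,\I}(X_i\otimes X_i)$, these are conditionally (on $\mathcal{X}$) independent with $\tilde{\mathbb E}Y_i=0$ and covariance $L_{\J,\I}(X_i\otimes X_i)\otimes L_{\J,\I}(X_i\otimes X_i)$, so $\tilde\Psi_{\J,\I}:=n^{-1}\sum_i\tilde{\mathbb E}[Y_i\otimes Y_i]$ is the empirical surrogate of $\Psi_{\J,\I}$. The first bound of Proposition~\ref{prop:clt:V} ($q=3$) applied conditionally then yields $\tilde{\mathbf U}(n\sigma_w^{-2}\|L_{\J,\I}(\tilde E-E)\|_2^2,\|\tilde Z\|_2^2)\lesssim n^{-1/5}(A_\J/\sqrt{\lambda_{1,2}(\Psi_\J)})^{3/5}$ with $\tilde Z\sim N(0,\tilde\Psi_{\J,\I})$ under $\tilde{\mathbb P}$. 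The third-moment sum required by that proposition, $\sum_i\tilde{\mathbb E}\|Y_i\|_2^3\lesssim\sum_i\|X_i\|^6/g_\J^3$, is controlled on an event of probability $1-Cn^{1-p/6}$ by the Fuk--Nagaev bound~\eqref{lem:fn} applied to $\sum_i\|X_i\|^6$, whose $(p/6)$-th moment is finite by Lemma~\ref{lem:moment:computation:KL}(i) together with $p>6$. An identical unconditional application with $Y_i'=L_{\J,\I}(X_i'\otimes X_i')$ then delivers $\mathbf U(n\|L_{\J,\I}E'\|_2^2,\|Z\|_2^2)$ at the same rate, where $Z\sim N(0,\Psi_{\J,\I})$.

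\emph{Gaussian comparison and main obstacle.} What remains is to compare $\|\tilde Z\|_2^2$ and $\|Z\|_2^2$ in uniform metric. I would couple $\tilde Z=\tilde\Psi_{\J,\I}^{1/2}G,\,Z=\Psi_{\J,\I}^{1/2}G$ for a common Gaussian source $G$, so that $\|\tilde Z\|_2^2-\|Z\|_2^2=\langle(\tilde\Psi_{\J,\I}-\Psi_{\J,\I})G,G\rangle$ with $\tilde{\mathbb E}|\langle AG,G\rangle|\le\|A\|_1$; together with an $\varepsilon$-regularisation using the $1/\sqrt{\lambda_{1,2}(\Psi_\J)}$-density bound for $\|Z\|^2$ this reduces matters to controlling the nuclear-norm deviation $\|\tilde\Psi_{\J,\I}-\Psi_{\J,\I}\|_1$. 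Under Assumption~\ref{ass:indep:4:moments} with $m=4$, Lemma~\ref{lem:Lambda:Sigma:relation} pins down the Karhunen--Lo\`eve expansion of $L_{\J,\I}(X\otimes X)$ with explicit eigenvalues $\vartheta_{jk}=2\alpha_{jk}^2\lambda_j\lambda_k/(\lambda_k-\lambda_j)^2$, whence $\sum\vartheta_{jk}^{1/2}=\operatorname{tr}(\Psi_{\J,\I}^{1/2})$; $m=8$ covers the additional moment condition required by Lemma~\ref{lem:moment:computation:KL}(iv). Lemma~\ref{lem:fuknagaev:HS:nuclear:operator}(ii) then delivers $\|\tilde\Psi_{\J,\I}-\Psi_{\J,\I}\|_1\lesssim\operatorname{tr}(\Psi_{\J,\I}^{1/2})\sqrt{A_\J}\sqrt{\log n/n}$ outside an event of probability $\lesssim\mathbf{p}_{\J,n,p}$, with condition~\eqref{ass:boot:truncationset} being precisely what is needed to keep $t\asymp\sqrt{\log n}\cdot\varpi_n$ within the admissible range of that concentration bound, producing the $\sqrt{A_\J}\operatorname{tr}(\Psi_{\J,\I}^{1/2})\log^{1/2}n/(n^{1/2}\sqrt{\lambda_{1,2}(\Psi_\J)})$ summand. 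Summing everything yields $\mathbf B_{\J,n,p,s}$, with cumulative bad-event probability $\lesssim\mathbf{p}_{\J,n,p}+n^{1-p/6}$ (raised to the $s$-th power at the end). The hardest part will be this Gaussian comparison step---specifically, converting a nuclear-norm deviation of the bootstrap covariance into a uniform-metric bound scaling genuinely with $1/\sqrt{\lambda_{1,2}(\Psi_\J)}$---which is why both Assumption~\ref{ass:indep:4:moments} (to render the eigenstructure of $L_{\J,\I}(X\otimes X)$ explicit) and condition~\eqref{ass:boot:truncationset} (to make the $t$-range in Lemma~\ref{lem:fuknagaev:HS:nuclear:operator}(ii) compatible with the $\sqrt{\log n}$ quantile) are imposed.
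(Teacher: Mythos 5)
Your architecture coincides with the paper's: linearise both sides via Corollaries \ref{lem:boot:approx} and \ref{cor:exp:quadratic:term}, truncate $L_\J$ to $L_{\J,\I}$ (paying $A_{\J,\I^c}\log n/\sqrt{\lambda_{1,2}(\Psi_\J)}$), apply Proposition \ref{prop:clt:V} conditionally on the bootstrap side and unconditionally (via Theorem \ref{thm:clt:II}) on the independent copy, and close with a Gaussian--Gaussian comparison driven by nuclear-norm concentration of $\tilde\Psi_{\J,\I}$ around $\Psi_{\J,\I}$ (Lemma \ref{lem:fuknagaev:HS:nuclear:operator}(ii), with $m=8$ feeding Lemma \ref{lem:moment:computation:KL}(iv) and \eqref{eq:last:inequality}). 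However, there are two concrete gaps where the step as you describe it does not deliver the stated rate.

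First, the Gaussian comparison. Coupling $\tilde Z=\tilde\Psi_{\J,\I}^{1/2}G$, $Z=\Psi_{\J,\I}^{1/2}G$ and using $\tilde{\E}\vert\langle(\tilde\Psi_{\J,\I}-\Psi_{\J,\I})G,G\rangle\vert\le\|\tilde\Psi_{\J,\I}-\Psi_{\J,\I}\|_1$ together with Markov and the density bound $1/\sqrt{\lambda_{1,2}}$ forces you to optimise $\|A\|_1/u+u/\sqrt{\lambda_{1,2}}$ over the threshold $u$, which yields $\bigl(\|A\|_1/\sqrt{\lambda_{1,2}}\bigr)^{1/2}$ --- the \emph{square root} of the term $\frac{\log^{1/2}n}{n^{1/2}}\frac{\sqrt{A_\J}\operatorname{tr}(\Psi_{\J,\I}^{1/2})}{\sqrt{\lambda_{1,2}(\Psi_\J)}}$ appearing in $\mathbf{B}_{\J,n,p,s}$, and hence strictly larger in the relevant regime. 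The paper instead invokes the genuinely sharper comparison Lemma \ref{lem:gaussian:comparison:spok} (Lemma 2 of \cite{naumov_spokoiny_ptrf2019}), which gives $\mathbf{U}(\|Z_1\|^2,\|Z_2\|^2)\lesssim\|\Psi_1-\Psi_2\|_1\bigl(\lambda_{1,2}(\Psi_1)^{-1/2}+\lambda_{1,2}(\Psi_2)^{-1/2}\bigr)$, linear in the nuclear-norm deviation; this is not recoverable from first-moment Markov plus anti-concentration. Relatedly, the role of \eqref{ass:boot:truncationset} is not the $t$-range of Lemma \ref{lem:fuknagaev:HS:nuclear:operator}(ii) but the Weyl-inequality step $\vert\lambda_l(\tilde\Psi_{\J,\I})-\lambda_l(\Psi_\J)\vert\le\|\tilde\Psi_{\J,\I}-\Psi_{\J,\I}\|_2\lesssim_p\sqrt{\log n/n}\,A_\J$, which is what lets you replace $\lambda_{1,2}(\tilde\Psi_{\J,\I})$ by $\lambda_{1,2}(\Psi_\J)$ in the conditional anti-concentration and comparison bounds.

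Second, the conditional third-moment bound. The estimate $\sum_i\tilde\E\|Y_i\|_2^3\lesssim\sum_i\|X_i\|^6/g_\J^3$ is valid but too crude: it produces $n(\operatorname{tr}\Sigma)^3/g_\J^3$ in place of $nA_\J^{3/2}$, and $(\operatorname{tr}\Sigma/g_\J)^2$ can vastly exceed $A_\J$, so Proposition \ref{prop:clt:V} would not return the claimed $n^{-1/5}(A_\J/\sqrt{\lambda_{1,2}(\Psi_\J)})^{3/5}$. One must instead use the Karhunen--Lo\`eve computation $(\E\|Y_{i\I}\|_2^{p})^{3/p}\lesssim A_{\J,\I}^{3/2}\le A_\J^{3/2}$ from \eqref{eq:norm:linear:term} and apply \eqref{lem:fn} directly to $\|Y_{i\I}\|_2^3$ (this is exactly Lemma \ref{lem:fn:norm:bound:II}(iii) and \eqref{eq:cor:boot:II:conditional} in the paper, and is also where the exponent $n^{1-p/6}$ in the exceptional probability comes from).
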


As in Theorem \ref{thm:clt:II}, it is possible to establish a $\sqrt{n}$ rate at the cost of additional factors involving various eigenvalues. Since the proof is very similar, we omit any further details here.

The quantities $A_{\J,\I^c}$ and $\Psi_{\J,\I}$ that appear in the above bound are not defined yet. In essence, the set $\mathcal{I}$ is used to truncate some lower degree indices. The exact definition requires some preparation, and is given in Section \ref{sec:proof:bootstrap:II:notation}. In contrast to Theorem \ref{thm:boot:I}, Theorem \ref{thm:boot:II} above is built around Theorem \ref{thm:clt:II}, and thus avoids the error term $(C_{{\J}}/B_{{\J}})^3$. However, apart from the eigenvalues, this comes at (other) additional costs, in particular the expressions $\operatorname{tr}(\Psi_{\J,\I}^{1/2})$ and $A_{\J,\I^c}$ require a careful selection of the set $\I$. As before, we have the following corollary.

\begin{corollary}\label{cor:boot:II:quant}
Given the assumptions and conditions of Theorem \ref{thm:boot:II} and $\hat{q}_{\alpha}$ defined as in \eqref{eq:quantile}, we have for $s \in (0,1)$
\begin{align*}
\Big\vert\alpha - \P\Big(n \big\|\hat{P}_{\J} - P_{\J} \big\|_2^2 > \hat{q}_{\alpha} \Big) \Big\vert \lesssim_p \qprob^{1-s}_{\J,n,p}   + \mathbf{B}_{\J,n,p,s},
\end{align*}
where $\mathbf{B}_{\J,n,p,s}$ is given in \eqref{eq:bfB}.
\end{corollary}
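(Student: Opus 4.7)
The plan is a standard quantile-to-Kolmogorov conversion that translates the bootstrap approximation of Theorem \ref{thm:boot:II} into a coverage bound for the one-sided confidence set defined by $\hat{q}_\alpha$. Set $T = n\|\hat{P}_\J - P_\J\|_2^2$ and $T'' = n\sigma_w^{-2}\|\tilde{P}_\J - \hat{P}_\J\|_2^2$, and let $F(x) = \P(T \leq x)$ and $\hat{G}(x) = \tilde{\P}(T'' \leq x)$ denote the target CDF and the bootstrap CDF, respectively.

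The first step is to decouple $T$ from $\hat{q}_\alpha$ via the independent copy $(X_i')$. Setting $T' = n\|\hat{P}_\J' - P_\J\|_2^2$, we have $T' \stackrel{d}{=} T$ and $T'$ is independent of $\mathcal{X}$, so that $\hat{q}_\alpha$ (being $\mathcal{X}$-measurable) is independent of $T'$. Conditioning on $\mathcal{X}$ therefore gives $\P(T > \hat{q}_\alpha) = \P(T' > \hat{q}_\alpha) = \E[1 - F(\hat{q}_\alpha)]$, and the same conditioning identifies the conditional uniform metric $\tilde{\mathbf{U}}(T'', T')$ from Theorem \ref{thm:boot:II} with $\sup_x |\hat{G}(x) - F(x)|$.

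The second step is quantile inversion. By the defining property of $\hat{q}_\alpha$ one has $\hat{G}(\hat{q}_\alpha^-) \leq 1 - \alpha \leq \hat{G}(\hat{q}_\alpha)$, and combining this with $|\hat{G}(x) - F(x)| \leq \tilde{\mathbf{U}}(T'', T')$ at $x = \hat{q}_\alpha$ and $x = \hat{q}_\alpha^-$ yields, after pointwise bookkeeping,
\[
\big|\alpha - \P(T > \hat{q}_\alpha)\big| \;\leq\; \E\,\tilde{\mathbf{U}}(T'', T') + \E\, J(\hat{q}_\alpha),
\]
where $J(\hat{q}_\alpha) = \hat{G}(\hat{q}_\alpha) - \hat{G}(\hat{q}_\alpha^-)$ is the jump of $\hat{G}$ at $\hat{q}_\alpha$. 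Since the law of $T$ is continuous (which can be verified from the Karhunen-Lo\`eve representation and the expansion in Proposition \ref{PropExp}), $F$ is continuous, so $J(\hat{q}_\alpha) \leq 2\,\tilde{\mathbf{U}}(T'', T')$ by monotonicity together with the sup bound on $|\hat{G} - F|$. The right-hand side is therefore at most a constant multiple of $\E\,\tilde{\mathbf{U}}(T'', T')$.

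The final step is to insert Theorem \ref{thm:boot:II}. Splitting according to the event $\Omega_n$ on which that theorem gives $\tilde{\mathbf{U}}(T'', T') \lesssim_p \mathbf{B}_{\J,n,p,s}$, and using the trivial bound $\tilde{\mathbf{U}} \leq 1$ on $\Omega_n^c$ (whose probability is at most $C_p(\qprob^{1-s}_{\J,n,p} + n^{1-p/6})$), we obtain
\[
\E\,\tilde{\mathbf{U}}(T'', T') \;\lesssim_p\; \mathbf{B}_{\J,n,p,s} + \qprob^{1-s}_{\J,n,p} + n^{1-p/6}.
\]
The stray $n^{1-p/6}$ is absorbed into $\mathbf{B}_{\J,n,p,s}$ via its summand $(\qprob_{\J,n,p} + n^{1-p/6})^s$, using $n^{1-p/6} \leq 1$ and hence $n^{1-p/6} \leq (n^{1-p/6})^s$. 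The only mild obstacle is the continuity/jump handling of $\hat{G}$ and the careful decoupling via the independent copy $T'$; once these are in place, everything reduces to a direct appeal to Theorem \ref{thm:boot:II}.
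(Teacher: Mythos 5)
Your argument has a genuine gap at the very first step, the ``decoupling.'' You claim that, since $T'=n\|\hat P_{\J}'-P_{\J}\|_2^2$ is an independent copy of $T=n\|\hat P_{\J}-P_{\J}\|_2^2$ and is independent of $\mathcal{X}$, one has $\P(T>\hat q_\alpha)=\P(T'>\hat q_\alpha)=\E[1-F(\hat q_\alpha)]$. The second equality is fine, but the first is false: $T$ and $\hat q_\alpha$ are \emph{both} measurable functions of the same sample $X_1,\dots,X_n$ (the quantile $\hat q_\alpha$ is built from the conditional law of the bootstrap statistic given $\mathcal{X}$, hence is $\mathcal{X}$-measurable, and $T$ is $\mathcal{X}$-measurable as well), so they are dependent, whereas $T'$ is independent of $\hat q_\alpha$. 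Conditioning on $\mathcal{X}$ gives $\P(T>\hat q_\alpha\mid\mathcal{X})=\ind_{\{T>\hat q_\alpha\}}$, not $1-F(\hat q_\alpha)$. Replacing $\P(f(X_{1:n})>g(X_{1:n}))$ by $\P(f(X_{1:n}')>g(X_{1:n}))$ is exactly the kind of step that can change the answer completely (take $f=g$ for an extreme example). Controlling this dependence is the central difficulty in bootstrap coverage statements, and it cannot be asserted away.

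The paper resolves it differently (see the proof of Corollary \ref{cor:boot:I:quant}, which the proof of Corollary \ref{cor:boot:II:quant} mirrors with Theorems \ref{thm:clt:II} and \ref{thm:boot:II} in place of Theorems \ref{thm:clt:III} and \ref{thm:boot:I}): on the high-probability event $\hat{\mathcal{E}}$ where the bootstrap approximation holds, the conditional distribution function of the bootstrap statistic is uniformly close to that of a \emph{fixed, continuous} reference law ($\|L_{\J}Z\|_2^2$ here), so Lemma \ref{lem_gen_quant} sandwiches the random quantile between deterministic quantiles, $\hat q_{\alpha+\delta}\leq q_\alpha^{\mathrm{ref}}\leq \hat q_{\alpha-\delta}$ with $\delta\gtrsim \mathbf{B}_{\J,n,p,s}$. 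Once $\hat q_\alpha$ is replaced by a deterministic threshold, $\P(T\leq q^{\mathrm{ref}}_{\alpha\mp\delta})$ is controlled by the quantitative CLT (Theorem \ref{thm:clt:II}) plus the continuity of the reference law, and the events off $\hat{\mathcal{E}}$ contribute the $\qprob_{\J,n,p}^{1-s}$ term. Your quantile-inversion and jump-handling steps are essentially sound ingredients, but they operate on the wrong object; you would need to insert the deterministic sandwich (or some other genuine decoupling device) before the rest of your argument can go through.
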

%


\section{Applications: Specific models and computations}\label{sec:applications:main}

In this Section, we discuss specific models to illustrate our results with explicit bounds and compare them to previous results.

To this end, we discuss two basic, fundamental models omnipresent in the literature. In our first model, we consider the case where the eigenvalues $\lambda_j$ decay at a certain rate (polynomial or exponential). This behaviour is typically encountered in functional data analysis or in a machine learning context involving kernels, see for instance \cite{hall2007}, \cite{bartlett:benign:2020}, \cite{steinwart:jml:2020}. Our second model is the classical spiked covariance model (factor model), which is more popular in high dimensional statistics, econometrics and probability theory, see \cite{johnstone:spiked:2001}, \cite{fan:survey:2021}, \cite{bai:2003:econometrica}.

\subsection{Model I}\label{sec:app:model:I}

Throughout this section, we assume that Assumptions \ref{ass_moments}, \ref{ass:lower:bound} and \ref{ass:indep:4:moments} ($m = 4$) hold with $p>3$.

For $J\geq 1$, we consider the set $\J = \{1,\ldots,J\}$ and the singleton $\J'=\{J\}$. We assume that $n\geq 2$, so $\log n$ is always positive. Throughout this Section, all constants depend on the parameter $\mathfrak{a}$. To simplify notation, we will not indicate this explicitly.

\subsubsection{Exponential decay}\label{sec:app:model:I:exp}
We first consider the case of exponential decay, that is, we
suppose that for some $\mathfrak{a} > 0$, we have $\lambda_j= e^{-\mathfrak{a} j}$ for all $j\geq 1$.


In this setup, the relative rank $\mathbf{r}_{\J}$,  $A_{\J}$ and related quantities have already been computed in the literature, see for instance \cite{jirak_eigen_expansions_2016}, \cite{JW18}, \cite{reiss:wahl:aos:2020}. By similar (straightforward) computations together with Lemmas \ref{lem:explicit:bounds:4:indep} and \ref{lem:Lambda:Sigma:relation}, we have

\begin{align}\label{eq:model:1:exp:bound:ABC} \nonumber
&\mathbf{r}_{\J} \asymp \sigma_{\J}^2 \asymp J, \quad \qprob^{}_{\J,n,p} \asymp n^{1-p/2}J^{p/2}\log^{p/2} n,\\
&A_{\J} \asymp B_{\J} \asymp C_{\J} \asymp 1,
\end{align}
as well as
\begin{align*} \nonumber
&\mathbf{r}_{\J'} \asymp \sigma_{\J'}^2 \asymp J, \quad \qprob^{}_{\J',n,p} \asymp n^{1-p/2}J^{p/2}\log^{p/2} n,\\
&A_{\J'} \asymp B_{\J'} \asymp C_{\J'} \asymp 1.
\end{align*}
Moreover, using again Lemma \ref{lem:Lambda:Sigma:relation}, we have
\begin{align}\nonumber
&\lambda_{j}(\Psi_\J) \asymp \lambda_j(\Psi_{\J'})\asymp 1, \quad j=1,\dots,6.
\end{align}
It is now easy to apply the results. For example, Theorem \ref{thm:clt:II} yields
\begin{align}\label{eq:appl:model1:exp:1}
\mathbf{U}\Big(n\big\|\hat{P}_{\J} - P_{\J}\big\|_2^2, &\big\|L_{\J}Z\big\|_{2}^2\Big)\lesssim_p\Big(\frac{J^6 \log^3 n}{n}\Big)^{1/2}
\end{align}
for cumulated projectors, and
\begin{align}\label{eq:appl:model1:exp:2.2}
\mathbf{U}\Big(n\big\|\hat{P}_{\J'} - P_{\J'}\big\|_2^2, &\big\|L_{\J'}(Z)\big\|_{2}^2\Big)\lesssim_p \Big(\frac{J^3 \log^3 n}{n}\Big)^{1/2}
\end{align}
for the special case of single projectors. Here, we used that $\qprob^{}_{\J,n,p}$ is negligible due to the fact that $p> 3$, and that condition \eqref{ass:relativerank} can be dropped due to the fact that the uniform distance is bounded by $1$, while the right-hand side of these bounds exceeds $1$ whenever \eqref{ass:relativerank} does not hold, that is, when $n^{-1/2}J (\log n)^{1/2}\gtrsim 1$.

\textit{Comparison and discussion:}
The results of \cite{koltchinskii2017} are not applicable in this setup and only provide the trivial bound $\leq 1$. More precisely, Theorem 6 in \cite{koltchinskii2017} only yields a non trivial result if
\begin{align*}
n^2 \mathbf{Var} \|\hat{P}_{\J'} - P_{\J'}\|_2^2 \to \infty.
\end{align*}
However, this is not the case here. In fact, a normal approximation by a standard Gaussian random variable $G$ is not possible at all in this setup. This follows from \eqref{clt:lower:condi} and the fact that $B_{\J} \asymp C_{\J}$.

In stark contrast, our bounds above in \eqref{eq:appl:model1:exp:1}
and \eqref{eq:appl:model1:exp:2.2} provide non-trivial results even for moderately large $J$. This is a consequence of the relative approach that we employ here. In addition, our probabilistic assumptions are much weaker compared to \cite{koltchinskii2017}. Hence, even for the Gaussian case, our results are new.

Next, regarding bootstrap approximations, if $p \geq 9$, then an application of Corollary \ref{cor:boot:II:quant} (with $s = 1/2$, $\I = \N$) yields
\begin{align}\label{eq:appl:model1:exp:2:boot}
    \Big\vert\alpha - \P\Big(n \big\|\hat{P}_{\J} - P_{\J} \big\|_2^2 > \hat{q}_{\alpha} \Big) \Big\vert &\lesssim_p  \frac{1}{n^{1/5}} + \Big(\frac{J^6 \log^3 n}{n}\Big)^{1/2}.
\end{align}


\textit{Comparison and discussion:} In contrast to \cite{koltchinskii2017}, the bootstrap approximation of \cite{naumov_spokoiny_ptrf2019}, Theorem 2.1, is applicable if we additionally assume Gaussianity. For finite $J$, it appears that the bound provided by Theorem 2.1 is better than ours for single projectors. However, this drastically changes if $J$ increases: Simple computations show that their rate is lower bounded by
\begin{align}
\Big(\frac{e^{6\mathfrak{a}J} \log^3 n}{n}\Big)^{1/2},
\end{align}
which is useless already for $J \geq (\log n)/6\mathfrak{a}$. On the other hand, our bound \eqref{eq:appl:model1:exp:2:boot} yields useful results even for $J \leq n^{1/6 - \delta}$, $\delta > 0$. Thus, despite having much weaker assumptions, our results extend and improve upon those of \cite{naumov_spokoiny_ptrf2019} even in the Gaussian case.

\subsubsection{Polynomial decay}\label{sec:app:model:I:poly}

We next consider the case of polynomial decay, that is, we suppose that there is a constant $\mathfrak{a} > 1$ such that $\lambda_j= j^{-\mathfrak{a}}$ for all $j\geq 1$. For simplicity, we assume that $J\geq 2$.

As in the previous Section \ref{sec:app:model:I:exp}, straightforward computations, together with Lemmas \ref{lem:explicit:bounds:4:indep} and \ref{lem:Lambda:Sigma:relation}, yield

\begin{align}\label{eq:model:1:bound:ABC} \nonumber
&\mathbf{r}_{\J} \asymp  J \log J, \quad \qprob^{}_{\J,n,p} \asymp n^{1-p/2} \big(\log J \log n\big)^{p/2},\\
&\sigma_{\J}^2 \asymp A_{\J} \asymp J^2 \log J, \quad B_{\J} \asymp C_{\J} \asymp J^{2},
\end{align}
as well as
\begin{align}\nonumber
&\mathbf{r}_{\J'} \asymp  J \log J, \quad \qprob^{}_{\J',n,p} \asymp n^{1-p/2} \big(\log J \log n\big)^{p/2},\\
&\sigma_{\J'}^2\asymp J^2 \log J,\quad A_{\J'} \asymp B_{\J'} \asymp C_{\J'} \asymp J^{2}.
\end{align}
Moreover, using again Lemma \ref{lem:Lambda:Sigma:relation}, it follows that
\begin{align}
&\lambda_{j}(\Psi_\J) \asymp \lambda_j(\Psi_{\J'})\asymp J^2, \quad j=1,\dots,6.
\end{align}
It is now again easy to apply the results. For example, Theorem \ref{thm:clt:II} yields
\begin{align*}
\mathbf{U}\Big(n\big\|\hat{P}_{\J} - P_{\J}\big\|_2^2, &\big\|L_{\J}Z\big\|_{2}^2\Big)\lesssim_p n^{-1/2} (\log J)^6 + n^{-1/2}J^{5/2}(\log J \log n)^{3/2}
\end{align*}
for general projectors $P_{\J}$, and
\begin{align*}
\mathbf{U}\Big(n\big\|\hat{P}_{\J'} - P_{\J'}\big\|_2^2, &\big\|L_{\J'}(Z)\big\|_{2}^2\Big)\lesssim_p n^{-1/2}J(\log J \log n)^{3/2}
\end{align*}
for the special case of single projectors. Here, we again exploited that condition \eqref{ass:relativerank} can be dropped due to the fact that the uniform distance is bounded by $1$, while the right-hand side of these bounds exceeds $1$ whenever \eqref{ass:relativerank} does not hold, that is, when $n^{-1/2}J^{3/2}(\log J\log n)^{1/2}\gtrsim 1$ and $n^{-1/2}J(\log J\log n)^{1/2}\gtrsim 1$, respectively. Observe that in case of single projectors, the dependence on $J$ is optimal up to $\log$-factors, we refer to \cite{JW18}, Example 2 for a more detailed discussion.

\textit{Comparison and discussion:}
As in the exponential case, the results of \cite{koltchinskii2017} are not applicable here. On the other hand, again due to the relative nature of our approach, our bounds in \eqref{eq:appl:model1:exp:1} are quite general, simple, and new even in for the Gaussian case. We emphasize in particular that our bounds are independent of $\mathfrak{a}$ up to constants.

Next, we want to apply Corollary \ref{cor:boot:II:quant}. To this end, we need to strengthen Assumption \ref{ass:indep:4:moments} to $m = 8$. First, if $\mathfrak{a}>2$, we choose $\I=\N$, in which case we have $\Psi_{\J,\I} = \Psi_{\J}$ (using Lemma \ref{lem:Lambda:Sigma:relation}) and thus
\begin{align*}
\operatorname{tr}(\sqrt{\Psi_{\J,\I}}) = \operatorname{tr}(\sqrt{\Psi_{ \J}}) &\asymp  J^2.
\end{align*}
Hence, if $p \geq 9$, an application of Corollary \ref{cor:boot:II:quant} (with $s=1/2$) yields
\begin{align*}
    \Big\vert\alpha - \P\Big(n \big\|\hat{P}_{\J} - P_{\J} \big\|_2^2 > \hat{q}_{\alpha} \Big) \Big\vert &\lesssim_p  \frac{1}{n^{1/5}}
    (\log J)^{3/5} +\frac{J^{5/2}}{n^{1/2}} (\log n \log J)^{3/2}.
\end{align*}

Second, if $1<\mathfrak{a}\leq 2$, let $\I = \{I,I+1,\ldots\}$ with $I\geq 2J$, in which case
\begin{align*}
\operatorname{tr}(\sqrt{\Psi_{\J,\I}}) &\asymp J^{1+\mathfrak{a}/2}I^{1-\mathfrak{a}/2},\\
A_{\J,\I^c} &\asymp J^{1+\mathfrak{a}}I^{1-\mathfrak{a}}.
\end{align*}
Thus, if $p \geq 9$, an application of Corollary \ref{cor:boot:II:quant} (with $s=1/2$) yields
\begin{align*}
    \Big\vert\alpha - \P\Big(n \big\|\hat{P}_{\J} - P_{\J} \big\|_2^2 > \hat{q}_{\alpha} \Big) \Big\vert &\lesssim_p \frac{1}{n^{1/5}}(\log J)^{3/5} +\frac{J^{5/2}}{n^{1/2}} (\log n \log J)^{3/2}\\
    &+\frac{J^{\mathfrak{a}/2}I^{1-\mathfrak{a}/2}}{n^{1/2}}(\log n\log J)^{1/2}+J^{\mathfrak{a}-1}I^{1-\mathfrak{a}}\log n.
\end{align*}
Balancing with respect to $I$ leads to
\begin{align}\label{eq:model1:boot}\nonumber
    \Big\vert\alpha - \P\Big(n \big\|\hat{P}_{\J} - P_{\J} \big\|_2^2 > \hat{q}_{\alpha} \Big) \Big\vert &\lesssim_p \frac{1}{n^{1/5}}(\log J)^{3/5} +\frac{J^{5/2}}{n^{1/2}} (\log n \log J)^{3/2}\\& +
    \Big(\frac{J^2 \log J}{n}\Big)^{1-1/\mathfrak{a}}
    (\log n)^{1/\mathfrak{a}}.
\end{align}

\textit{Comparison and discussion:}
The situation is similarly as before in the exponential case: For finite $J$, Theorem 2.1 in \cite{naumov_spokoiny_ptrf2019} gives superior results for single projectors, if the underlying distribution is Gaussian. However, their rate is lower bounded by
\begin{align}
\Big(\frac{J^{6 \mathfrak{a} + 2} \log^3 n}{n}\Big)^{1/2},
\end{align}
and thus leads to a much smaller range for $J$, particularly for larger $\mathfrak{a}$. In contrast, the range for $J$  for our results in \eqref{eq:model1:boot} is invariant in $\mathfrak{a}$, and the overall bound even slightly improves as $\mathfrak{a}$ gets larger.

\subsection{Model II}\label{sec:app:model:II}

We only consider the case $\J = \{1,\ldots, J\}$ with $J > 6$ in this section. We note, however, that single projectors can readily be handled in a similar manner.
Throughout this section, we assume the validity of Assumptions \ref{ass_moments}, \ref{ass:lower:bound} and \ref{ass:indep:4:moments} with $p \geq 3$ and $m = 4$.

\subsubsection{Factor models - pervasive case}\label{sec:app:model:II:pervasive}

The literature on (approximate, pervasive) factor models typically assumes that the first $J$ eigenvalues diverge at rate $\asymp d$ (with $d = \operatorname{dim}\mathcal{H}$), whereas all the remaining eigenvalues are bounded and do not, in total, have significantly more mass than any of the first $J$ eigenvalues. This assumption can be expressed in terms of \textit{pervasive} factors, see for instance \cite{bai:2003:econometrica}, \cite{stock:watson:2002}, which are particularly relevant in econometrics. In the language of statistics, this means the remaining $d-J$ components do not explain significantly more (variance) than any of the first $J$.

Here, we generalize the above conditions as follows. We assume that there exist constants $0 < c \leq C < \infty$, such that
\begin{align}\label{eq:app:model:II:pervasive:def}
    \lambda_1\leq C\lambda_J,\qquad \lambda_J-\lambda_{J+1}\geq c\lambda_J, \qquad \frac{\operatorname{tr}_{\J^c}(\Sigma)}{\lambda_1} \leq C.
\end{align}

Observe that this implies
\begin{align*}
\frac{\operatorname{tr}(\Sigma)}{\lambda_1} \asymp J,
\end{align*}
which is the desired feature of pervasive factor models. It is convenient to introduce a notion of a subset trace by
\begin{align}\label{defn:subset:trace}
\operatorname{tr}_{\I}(\Sigma) = \sum_{i \in \I} \lambda_i, \quad \I \subseteq \N.
\end{align}

We then have the following relations.
\begin{align}\label{eq:model:2:bound:ABC} \nonumber
&\rr_{\J} \asymp \sigma_{\J}^2 \asymp  J+\frac{\operatorname{tr}_{\J^c}(\Sigma)}{\lambda_J}, \quad \qprob^{}_{\J,n,p} \asymp n \Big( \frac{\log n \rr_{\J} }{n}\Big)^{p/2},\\
&A_{\J} \asymp J\frac{\operatorname{tr}_{\J^c}(\Sigma)}{\lambda_J}, \quad B_{\J}^2 \asymp J\frac{\operatorname{tr}_{\J^c}(\Sigma^2)}{\lambda_J^2}, \quad C_{\J}^3 \asymp J\frac{\operatorname{tr}_{\J^c}(\Sigma^3)}{\lambda_J^3}.
\end{align}

It turns out that pervasive factor models lead to particularly simple results. Indeed, an application of
Theorem \ref{thm:clt:II} yields

\begin{align}\label{eq:app:model:II:pervasive:thm2}
\mathbf{U}\Big(n\big\|\hat{P}_{\J} - P_{\J}\big\|_2^2, \big\|L_{\J}Z\big\|_{2}^2\Big)&\lesssim_p \Big(\frac{J^{6}}{n}\Big)^{1/2} \Big((\log n)^{3/2} +  J^{5/2}\Big),
\end{align}
where we used that $(C_{\J}/B_{\J})^3 \lesssim 1/\sqrt{J}$.

We note that there is no restriction on the dimension $d$ of the underlying Hilbert space in this case. 

\textit{Comparison and discussion:} Our bound in \eqref{eq:app:model:II:pervasive:thm2} is easy to use and fairly general in terms of underlying assumptions. The literature does not appear to have a comparable result, even in the Gaussian case. The single projector results of Theorem 6 in \cite{koltchinskii2017} only yield the trivial bound $\leq 1$ in this case. We mention however that condition \eqref{clt:lower:condi} amounts to $(C_{\J}/B_{\J})^3 \lesssim 1/\sqrt{J}$. Hence, if we consider the general projectors $P_{\J}$, $\J = \{1,\ldots,J\}$ with $J \to \infty$, one may also use a standard Gaussian approximation as in Theorem \ref{thm:clt:III}. 

Next, we turn to the bootstrap. Using Corollary \ref{cor:boot:II:quant} ($p > 6$, $s = 1/2$, $\I = \{1,\ldots,d\}$, $m = 8$ in Assumption \ref{ass:indep:4:moments}), we arrive at
\begin{align}\label{eq:app:model:II:pervasive:cor5} \nonumber
\Big\vert\alpha - \P\Big(n \big\|\hat{P}_{\J} - P_{\J} \big\|_2^2 > \hat{q}_{\alpha} \Big) \Big\vert &\lesssim_p  \Big(\frac{J^{3}}{n}\Big)^{1/5} + \Big(\frac{ J^6 \log^{3} n}{n}\Big)^{1/2} \\&+\Big(\frac{J d  \log n}{n}\Big)^{1/2} + n^{(6- p)/12},
\end{align}
where we also used $\operatorname{tr}(\sqrt{\Psi_{\J}}) \lesssim \sum_{1 \leq k \leq d} \sqrt{\lambda_k/\lambda_1} \lesssim \sqrt{d}$.

\textit{Comparison and discussion:} The situation is related to Model I. If the dimension $d$ is small and the setup is purely Gaussian, the results of \cite{naumov_spokoiny_ptrf2019} are superior. However, for larger $d$, this changes, as can be seen as follows. In the present context, their rate is lower bounded by
\begin{align}
\frac{d \sqrt{\log n}}{\sqrt{n}}.
\end{align}
This implies in particular $d = o\big( \sqrt{n/\log n}\big)$ for a non trivial result. In contrast, our bound \eqref{eq:app:model:II:pervasive:cor5} is valid also for $d$ as large as $d = o\big(n/\log n\big)$. For a sake of better comparison, we assumed here that $J$ is fixed, since the results of \cite{naumov_spokoiny_ptrf2019} only apply to single projectors. Among others, a key reason for our improvement compared to \cite{naumov_spokoiny_ptrf2019} is the usage of our concentration inequality in Lemma \ref{lem:fuknagaev:HS:nuclear:operator}(ii).


\subsubsection{Spiked covariance}
We next consider a simple spiked covariance model of the form
 \begin{align*}
  \lambda_{J+1}=\dots=\lambda_{d}=\sigma^2  \quad\text{and}\quad  \sigma^2+g_J=\lambda_J\leq \dots\leq \lambda_1\leq \sigma^2+Cg_J,
\end{align*}
where  $\sigma^2>0$ is the level of noise, $g_J=\lambda_J-\lambda_{J+1}>0$ is the relevant spectral gap, and $C>0$ is a constant. For simplicity, we assume that $\sigma^2=1$ such that $J$ and $g_J$ are the only remaining parameters. Moreover, we assume that $d\geq 6$, $J\leq d-J$ and $g_J\in (0,1]$.  In particular, all eigenvalues have the same magnitude up to multiplicative constants and we have the following relations
\begin{align}\label{eq:model:2:bound:ABC:2} \nonumber
&\rr_{\J} \asymp \frac{d}{g_J},\quad \sigma_{\J}^2 \asymp  \frac{d}{g_J^2}, \quad \qprob^{}_{\J,n,p} \asymp n \Big( \frac{d\log n}{n}\Big)^{p/2},\\
&A_{\J} \asymp \frac{dJ}{g_J^2} \quad B_{\J}^2 \asymp \frac{dJ}{g_J^4}, \quad C_{\J}^3 \asymp \frac{dJ}{g_J^6}.
\end{align}
Moreover, using again Lemma \ref{lem:Lambda:Sigma:relation} and the fact that $J>6$, it follows that $\lambda_{j}(\Psi_\J) \asymp 1/g_J^2, \quad j=1,\dots,6$.
It is now again easy to apply the results. For example, Theorem~\ref{thm:clt:I} yields
\begin{align}\label{sec:app:spiked:wasser} \nonumber
\mathbf{W}\Big(\frac{n}{A_{\J}}\big\|\hat{P}_{\J} - P_{\J}\big\|_{2}^2, \frac{1}{A_{\J}}\big\|L_{\J}Z\big\|_{2}^2  \Big) &\lesssim_p  n^{-1/12}  + (\log n)^{3/2} \Big(\frac{J d}{ng_J^2}\Big)^{1/2} \\&+ \frac{n^2}{d} \Big(\frac{d \log n}{n}\Big)^{p/2},
\end{align}
provided that $\log(n)dJ/(ng_J^2)\lesssim 1$, which amounts to condition \eqref{ass:relativerank}. Note that if we demand that the bound in \eqref{sec:app:spiked:wasser} is of magnitude $o(1)$, \eqref{ass:relativerank} is trivially true.
Moreover, an application of Theorem \ref{thm:clt:III} yields
\begin{align}\label{sec:app:spiked:unif} \nonumber
\mathbf{U}\Big(\frac{n\big\|\hat{P}_{\J} - P_{\J}\big\|_2^2 - A_{\J}}{B_{\J}}, G\Big)&\lesssim_p \Big(\frac{d J}{n^{1/3}}\Big)^{3/8} + \frac{1}{\sqrt{dJ}} \\&+ \frac{ dJ\log^{3/2} n}{\sqrt{n}g_J} + n \Big( \frac{ d \log n}{n}\Big)^{p/2},
\end{align}
where we again exploited that condition \eqref{ass:relativerank} can be dropped due to the fact that the uniform distance is bounded by $1$, while the right-hand side of these bounds exceeds $1$ whenever \eqref{ass:relativerank} does not hold, that is, when $dJ\log(n)/(ng_J^2)\gtrsim 1$. Note that the bound in \eqref{sec:app:spiked:unif} is only nontrivial if $d \to \infty$ due to the error term $(d J)^{-1/2}$. However, if $d$ is bounded, one may (again) apply Theorem \ref{thm:clt:II}, we omit the details.

\textit{Comparison and discussion:} We first observe that for $(J d)/(ng_J^2) \leq n^{-\delta}$, $\delta > 0$ and $p$ large enough, \eqref{sec:app:spiked:wasser} is bounded by $n^{-\alpha}$, where $\alpha = (1/12) \wedge (\delta \rho/2)$ for $\rho < 1$. It follows, in particular, that
\begin{align}
\E n\big\|\hat{P}_{\J} - P_{\J}\big\|_2^2 = \E \big\|L_{\J}Z\big\|_{2}^2 \big(1 + O(n^{-\alpha})\big).
\end{align}
This should be contrasted with previous results in the purely Gaussian setting, see for instance \cite{cai_aos_2013}, and \cite{birnbaum_aos_2013} for a closely related result. Regarding \eqref{sec:app:spiked:unif}, the situation is a bit more complex, as, even for large enough $p$, three terms may be dominant. Following the discussion in \cite{koltchinskii2017} below Theorem 6, let us fix $J$ (and hence $g_J$), and assume $d = d_n \to \infty$ as $n$ increases, and that $p$ is large enough. Then \eqref{sec:app:spiked:unif} and Theorem 6 in \cite{koltchinskii2017} amount to
\begin{align*}
\Big(\frac{d }{n^{1/3}}\Big)^{3/8} + \frac{1}{\sqrt{d}} \qquad \text{and}\qquad \sqrt{\frac{d}{n} \log\Big(\frac{n}{d} \vee 2\Big)} + \frac{1}{\sqrt{d}},
\end{align*}
respectively. So in this case, our bound  is inferior compared to the Gaussian case treated in \cite{koltchinskii2017}, only allowing a range for $d = o(n^{1/3})$ compared to (almost) $d = o(n)$. Such a behaviour will always be present if the error bound from the pertubation approximation dominates, and is weaker than bounds implied by the isoperimetric inequality. On the other hand, \cite{koltchinskii2017} only treats $\J = \{J\}$, and our distributional assumptions are much weaker. In particular, no sub-Gaussianity or even Gaussianity is necessary for \eqref{sec:app:spiked:unif} to hold.

Next, we consider bootstrap approximations. Assuming $p > 6$, Corollary \ref{cor:boot:I:quant} ($q = 3$, $s = 1/2$), yields

\begin{align}\label{sec:app:spiked:boot} \nonumber
\Big\vert\alpha - \P\Big(n \big\|\hat{P}_{\J} - P_{\J} \big\|_2^2 > \hat{q}_{\alpha} \Big) \Big\vert &\lesssim_p \Big(\frac{d J}{n^{1/3}}\Big)^{3/8} + \frac{1}{\sqrt{d J}} \\&+ \frac{ d J\log^{3/2} n}{\sqrt{n}g_J}+ n^{1/2} \Big( \frac{ d \log n}{n}\Big)^{p/4}.
\end{align}

For a comparison with Theorem 2.1 in \cite{naumov_spokoiny_ptrf2019}, we follow the discussion above and reconsider the case where $J$, $g_J$ are finite, $d = d_n \to \infty$ and $p$ is large enough. Our bound \eqref{sec:app:spiked:boot}, and the one of Theorem 2.1 in \cite{naumov_spokoiny_ptrf2019} (purely Gaussian setup), are of magnitude
\begin{align*}
\Big(\frac{d}{n^{1/3}}\Big)^{3/8} + \frac{1}{\sqrt{d}} \qquad \text{and} \qquad \frac{(d \sqrt{\log n})^3}{\sqrt{n}},
\end{align*}
respectively. We see that for smaller $d$, the bound of  \cite{naumov_spokoiny_ptrf2019} is superior compared to ours. However, for larger $d$, our bound prevails. In particular, our range of applicability $d = o(n^{1/3})$ is larger compared to $d = o(n^{1/6}/(\log n)^{3/3})$, despite not demanding any sub-Gaussianity or even Gaussianity as in \cite{naumov_spokoiny_ptrf2019}.


\section{Proofs}\label{sec:proofs}

\subsection{Proofs for Section \ref{sec_preliminaries_perturbation}}\label{sec:proof:preliminaries:perturbation}
Conceptually, we borrow some ideas from \cite{JW18} and \cite{Wahl}, coupled with some substantial innovations to deal with sums of eigenspaces.

The proofs are based on a series of lemmas. We start with an eigenvalue separation property.

\begin{lemma}\label{LemEVSep}
If $\delta_{\J}\leq 1$, then we have
\begin{align*}
    \hat \lambda_{j_1}-\lambda_{j_1}\leq \delta_{\J}(\lambda_{j_1-1}-\lambda_{j_1}),\qquad\hat \lambda_{j_2}-\lambda_{j_2}\geq -\delta_{\J}(\lambda_{j_2}-\lambda_{j_2+1}).
\end{align*}
\end{lemma}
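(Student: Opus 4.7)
My plan is to use the Courant--Fischer min--max characterisation of $\hat\lambda_{j_1}$ and $\hat\lambda_{j_2}$, combined with the quadratic-form bound
\[
|\langle v, E v\rangle| = |\langle T_\J^{-1}v,\, (T_\J E T_\J)\, T_\J^{-1}v\rangle| \leq \delta_\J \|T_\J^{-1}v\|^2,
\]
where $T_\J := |R_{\J^c}|^{1/2} + g_\J^{-1/2}P_\J$ is the self-adjoint operator appearing in the definition of $\delta_\J$. Since $T_\J^{-1}$ is diagonal in the eigenbasis of $\Sigma$,
\[
\|T_\J^{-1}v\|^2 = \sum_{k<j_1}(\lambda_k-\lambda_{j_1})\|P_k v\|^2 + g_\J\|P_\J v\|^2 + \sum_{k>j_2}(\lambda_{j_2}-\lambda_k)\|P_k v\|^2.
\]
The whole argument will then reduce to a term-by-term coefficient comparison in the eigenbasis of $\Sigma$.

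For the first inequality I would invoke $\hat\lambda_{j_1} = \min_{\dim W = j_1-1}\max_{v\perp W,\|v\|=1}\langle v,\hat\Sigma v\rangle$ with $W=\mathrm{span}(u_1,\ldots,u_{j_1-1})$, so that every admissible $v$ satisfies $P_k v = 0$ for $k<j_1$. Decomposing
\[
\langle v,(\hat\Sigma-\lambda_{j_1})v\rangle = \sum_{k\geq j_1}(\lambda_k-\lambda_{j_1})\|P_k v\|^2 + \langle v, Ev\rangle
\]
and inserting the bound on $\langle v,Ev\rangle$, the coefficient of $\|P_k v\|^2$ becomes: (i) $\delta_\J g_\J \leq \delta_\J(\lambda_{j_1-1}-\lambda_{j_1})$ at $k=j_1$; (ii) $(\lambda_k-\lambda_{j_1})+\delta_\J g_\J \leq \delta_\J(\lambda_{j_1-1}-\lambda_{j_1})$ for $j_1<k\leq j_2$, using $\lambda_k\leq\lambda_{j_1}$ and $g_\J\leq\lambda_{j_1-1}-\lambda_{j_1}$; and (iii) $(\lambda_k-\lambda_{j_1})+\delta_\J(\lambda_{j_2}-\lambda_k) = (1-\delta_\J)\lambda_k+\delta_\J\lambda_{j_2}-\lambda_{j_1}\leq \lambda_{j_2}-\lambda_{j_1}\leq 0$ for $k>j_2$, using $\lambda_k\leq\lambda_{j_2}$ together with $\delta_\J\leq 1$. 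Summing gives $\langle v,(\hat\Sigma-\lambda_{j_1})v\rangle\leq \delta_\J(\lambda_{j_1-1}-\lambda_{j_1})\|v\|^2$, which closes the first estimate via the min--max formula.

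The second inequality is the mirror image. I would apply $\hat\lambda_{j_2}=\max_{\dim V=j_2}\min_{v\in V,\|v\|=1}\langle v,\hat\Sigma v\rangle$ with the test subspace $V=\mathrm{span}(u_1,\ldots,u_{j_2})$, which kills the components of $v$ with $k>j_2$, and use $\langle v,Ev\rangle\geq -\delta_\J\|T_\J^{-1}v\|^2$. A parallel coefficient analysis in $\langle v,(\hat\Sigma-\lambda_{j_2})v\rangle$ shows that the contribution from $k<j_1$ is non-negative via the identity $(\lambda_k-\lambda_{j_2})-\delta_\J(\lambda_k-\lambda_{j_1}) = (1-\delta_\J)(\lambda_k-\lambda_{j_1})+(\lambda_{j_1}-\lambda_{j_2})\geq 0$, while the contribution from $j_1\leq k\leq j_2$ is bounded below by $-\delta_\J g_\J\geq -\delta_\J(\lambda_{j_2}-\lambda_{j_2+1})$ since $\lambda_k\geq\lambda_{j_2}$ and $g_\J\leq\lambda_{j_2}-\lambda_{j_2+1}$; the pointwise lower bounds sum to $-\delta_\J(\lambda_{j_2}-\lambda_{j_2+1})\|v\|^2$.

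The main obstacle I anticipate is purely bookkeeping: each sign check must invoke $\delta_\J\leq 1$ together with the ``correct half'' of $g_\J = \min(\lambda_{j_1-1}-\lambda_{j_1},\,\lambda_{j_2}-\lambda_{j_2+1})$ at the right moment, and one needs the monotonicity $\lambda_{j_1}\geq\lambda_{j_2}$ to close the $k>j_2$ (resp.\ $k<j_1$) case. There is no deeper perturbation-theoretic subtlety here; the weights in $T_\J$ are tailored precisely so that this term-by-term comparison goes through.
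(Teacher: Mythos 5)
Your argument is correct. All the coefficient checks go through: for the upper bound on $\hat\lambda_{j_1}$ the three cases $k=j_1$, $j_1<k\leq j_2$, $k>j_2$ each yield a coefficient bounded by $\delta_\J(\lambda_{j_1-1}-\lambda_{j_1})$ (the last one using $\delta_\J\leq 1$ and $\lambda_{j_2}\leq\lambda_{j_1}$), and the mirror computation for $\hat\lambda_{j_2}$ is equally sound. The route is, however, not the one the paper takes. The paper does not run Courant--Fischer itself; it invokes Proposition 1 of \cite{JW18} as a black box, which asserts the eigenvalue separation $\hat\lambda_{j_1}-\lambda_{j_1}\leq\mu$ once $\|T_{\geq j_1}ET_{\geq j_1}\|_\infty\leq 1$ for the resolvent-type weight operator $T_{\geq j_1}=\sum_{k\geq j_1}(\lambda_{j_1}+\mu-\lambda_k)^{-1/2}P_k$ with $\mu=\delta_\J(\lambda_{j_1-1}-\lambda_{j_1})$, and then the entire proof consists of verifying that hypothesis by comparing the weights of $T_{\geq j_1}$ against those of $\vert R_{\J^c}\vert^{1/2}+(\delta_\J g_\J)^{-1/2}P_\J$. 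Your proof internalizes that cited proposition: the quadratic-form bound $\vert\langle v,Ev\rangle\vert\leq\delta_\J\|T_\J^{-1}v\|^2$ plus the explicit test subspaces $\mathrm{span}(u_1,\dots,u_{j_1-1})^\perp$ and $\mathrm{span}(u_1,\dots,u_{j_2})$ is exactly what a from-scratch proof of that proposition would do. What your version buys is self-containedness and transparency about where each half of $g_\J=\min(\lambda_{j_1-1}-\lambda_{j_1},\lambda_{j_2}-\lambda_{j_2+1})$ is used; what the paper's version buys is brevity and reuse of an already-established tool. One minor point worth stating explicitly if you write this up: the min--max and max--min characterizations you use are valid for positive compact operators on a separable Hilbert space only for the eigenvalues above the essential spectrum, which is the case here since all $\lambda_j,\hat\lambda_j$ under discussion are positive; and the boundary cases $j_1=1$ and $j_2=\dim\mathcal{H}$ should be handled by the same convention the paper adopts in defining $g_\J$.
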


\begin{proof}
The claim follows from \cite[Proposition 1]{JW18}, using the same line of arguments as in the proof of Lemma 2 in \cite{Wahl}. For completeness, we repeat the proof. Set
\begin{align*}
T_{\geq j_1}&=\sum_{k\geq j_1}\frac{1}{\sqrt{\lambda_{j_1}+\delta_{\J}(\lambda_{j_{1}-1}-\lambda_{j_1})-\lambda_k}}P_k,\\
T_{\leq j_2}&=\sum_{k\leq j_2}\frac{1}{\sqrt{\lambda_k+\delta_{\J}(\lambda_{j_2}-\lambda_{j_2+1})-\lambda_{j_2}}}P_k.
\end{align*}
Then \cite[Proposition 1]{JW18} states that $\hat\lambda_{j_1}-\lambda_{j_1}\leq \delta_{\J}(\lambda_{j_{1}-1}-\lambda_{j_1})$ (resp. $\hat\lambda_{j_2}-\lambda_{j_2}\geq -\delta_{\J}(\lambda_{j_2}-\lambda_{j_2+1})$), provided that $\|T_{\geq j_1}ET_{\geq j_1}\|_\infty\leq 1$ (resp. $\|T_{\leq j_2}ET_{\leq j_2}\|_\infty\leq 1$). Now, by simple properties of the operator norm, using that
\[
\lambda_{j_1}+\delta_{\J}(\lambda_{j_{1}-1}-\lambda_{j_1})-\lambda_k\geq \begin{cases}\delta_{\J} g_{\J},&\quad k=j_1,\dots,j_2,\\
\lambda_{j_2}-\lambda_k,&\quad k>j_2,\end{cases}
\]
we get (recall that $\delta_\J\leq 1$)
\begin{align*}
\|T_{\geq j_1}ET_{\geq j_1}\|_\infty&\leq \|(\vert R_{{\J}^c}\vert ^{1/2}+(\delta_{\J} g_{\J})^{-1/2}P_{\J})E(\vert R_{{\J}^c}\vert ^{1/2}+(\delta_{\J} g_{\J})^{-1/2}P_{\J})\|_\infty\\
&\leq \delta_{\J}^{-1}\|(\vert R_{{\J}^c}\vert ^{1/2}+g_{\J}^{-1/2}P_{\J})E(\vert R_{{\J}^c}\vert ^{1/2}+g_{\J}^{-1/2}P_{\J})\|_\infty\leq 1.
\end{align*}
Similarly, we have $\|T_{\leq j_2}ET_{\leq j_2}\|_\infty\leq 1$, and the claim follows.
\end{proof}

The following lemma follows from simple properties of the operator norm.

\begin{lemma}\label{SimpleLemma}
We have
\begin{align*}
\max\Big(\|\vert R_{{\J}^c}\vert ^{1/2}E\vert R_{{\J}^c}\vert ^{1/2}\|_\infty,g_{\J}^{-1/2}\|\vert R_{{\J}^c}\vert ^{1/2}EP_{\J}\|_\infty,g_{\J}^{-1}\|P_{\J} E P_{\J}\|_\infty\Big)\leq \delta_{\J}.
\end{align*}
\end{lemma}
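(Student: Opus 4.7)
The plan is to exploit the orthogonality of the ranges of $\vert R_{\J^c}\vert^{1/2}$ and $P_\J$, which reduces all three bounds to the elementary operator-norm inequality $\|PAQ\|_\infty\leq\|A\|_\infty$ valid whenever $P,Q$ are orthogonal projections. First, from the series representation
\begin{align*}
\vert R_{\J^c}\vert^{1/2}=\sum_{k<j_1}\frac{P_k}{\vert \lambda_k-\lambda_{j_1}\vert^{1/2}}+\sum_{k>j_2}\frac{P_k}{\vert \lambda_k-\lambda_{j_2}\vert^{1/2}}
\end{align*}
and the mutual orthogonality $P_kP_j=0$ for $k\neq j$, I would observe that $\vert R_{\J^c}\vert^{1/2}P_\J=P_\J\vert R_{\J^c}\vert^{1/2}=0$, so that the two summands of
$S:=\vert R_{\J^c}\vert^{1/2}+g_\J^{-1/2}P_\J$
are supported on orthogonal subspaces, with $\mathrm{Ran}(\vert R_{\J^c}\vert^{1/2})\subseteq\mathrm{Ran}(P_{\J^c})$.

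Second, I would expand $SES$ and sandwich the expansion with $P_\J$ and/or $P_{\J^c}$. The orthogonality above annihilates the cross terms and leaves the three identities
\begin{align*}
P_{\J^c}(SES)P_{\J^c}&=\vert R_{\J^c}\vert^{1/2}E\vert R_{\J^c}\vert^{1/2},\\
P_{\J^c}(SES)P_\J&=g_\J^{-1/2}\vert R_{\J^c}\vert^{1/2}EP_\J,\\
P_\J(SES)P_\J&=g_\J^{-1}P_\J EP_\J.
\end{align*}
These are just the diagonal and off-diagonal blocks of $SES$ with respect to the splitting $\mathcal{H}=\mathrm{Ran}(P_\J)\oplus\mathrm{Ran}(P_{\J^c})$.

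Third, applying $\|PAQ\|_\infty\leq\|P\|_\infty\|A\|_\infty\|Q\|_\infty\leq\|A\|_\infty$ to each identity and recalling $\|SES\|_\infty=\delta_\J$ yields exactly the three inequalities inside the maximum on the left-hand side of the claim. I do not anticipate any nontrivial obstacle: the lemma is a pure block-decomposition observation, and the only care needed is to verify the vanishing of the cross terms, which is immediate from $\vert R_{\J^c}\vert^{1/2}P_\J=0$.
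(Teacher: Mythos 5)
Your proof is correct, and it is precisely the block-decomposition argument the paper alludes to when it says the lemma "follows from simple properties of the operator norm": the identities $P_{\J^c}SESP_{\J^c}=\vert R_{\J^c}\vert^{1/2}E\vert R_{\J^c}\vert^{1/2}$, $P_{\J^c}SESP_{\J}=g_\J^{-1/2}\vert R_{\J^c}\vert^{1/2}EP_\J$ and $P_\J SESP_\J=g_\J^{-1}P_\J EP_\J$ all hold because $\vert R_{\J^c}\vert^{1/2}P_\J=P_\J\vert R_{\J^c}\vert^{1/2}=0$ and $P_{\J^c}\vert R_{\J^c}\vert^{1/2}=\vert R_{\J^c}\vert^{1/2}$, and compressing by orthogonal projections does not increase the operator norm. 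Nothing is missing.
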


\begin{lemma}\label{LemER} If $\delta_{\J}<1/2$, then we have
\begin{align*}
    g_{\J}^{-1/2}\|\vert R_{{\J}^c}\vert ^{-1/2}\hat P_{{\J}}\|_2\leq \sqrt{\min(\vert {\J}\vert ,\vert {\J}^c\vert )}\frac{\delta_{\J}}{1-2\delta_{\J}}
\end{align*}
and
\begin{align*}
    \|P_{{\J}^c}\hat P_{{\J}}\|_2\leq \sqrt{\min(\vert {\J}\vert ,\vert {\J}^c\vert )}\frac{\delta_{\J}}{1-2\delta_{\J}},
\end{align*}
where $\vert R_{{\J}^c}\vert ^{-1/2}$ is the inverse of $\vert R_{{\J}^c}\vert ^{1/2}$ on the range of $P_{{\J}^c}$
\end{lemma}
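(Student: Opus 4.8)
The plan is to prove the first (weighted) bound by a self‑referential estimate for $\|\vert R_{\J^c}\vert^{-1/2}\hat P_\J\|_2$ and then to read off the second bound for free. Write $\rho_k=\lambda_k-\lambda_{j_1}$ for $k<j_1$ and $\rho_k=\lambda_{j_2}-\lambda_k$ for $k>j_2$, so that $\vert R_{\J^c}\vert^{1/2}=\sum_{k\notin\J}\rho_k^{-1/2}P_k$, its inverse on $\operatorname{ran}P_{\J^c}$ is $\vert R_{\J^c}\vert^{-1/2}=\sum_{k\notin\J}\rho_k^{1/2}P_k$, and $\rho_k\ge g_\J$ for all $k\notin\J$. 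The operator $\vert R_{\J^c}\vert^{-1/2}\hat P_\J$ has rank at most $\min(\vert\J\vert,\vert\J^c\vert)$ (which we may assume finite, otherwise there is nothing to prove), hence is Hilbert--Schmidt, so the quantity to be bounded is finite and a fixed‑point rearrangement will be legitimate. Moreover $\vert R_{\J^c}\vert^{-1/2}\ge g_\J^{1/2}P_{\J^c}$ as positive operators, so the first bound gives $\|P_{\J^c}\hat u_j\|\le g_\J^{-1/2}\|\vert R_{\J^c}\vert^{-1/2}\hat u_j\|$ for every $j\in\J$, hence $\|P_{\J^c}\hat P_\J\|_2\le g_\J^{-1/2}\|\vert R_{\J^c}\vert^{-1/2}\hat P_\J\|_2$, which is the second bound; thus it suffices to prove the first.

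First I would record the relative eigenvalue separation: since $\delta_\J<1/2\le 1$, Lemma \ref{LemEVSep} together with $\hat\lambda_{j_2}\le\hat\lambda_j\le\hat\lambda_{j_1}$ for $j\in\J$ gives $\vert\hat\lambda_j-\lambda_k\vert\ge(1-\delta_\J)\rho_k$ for all $j\in\J$, $k\notin\J$ (with the usual modifications in the boundary cases $j_1=1$ or $j_2=\infty$). Next, for $k\notin\J$ the relations $\Sigma u_k=\lambda_k u_k$ and $\hat\Sigma=\Sigma+E$ yield $(\hat\Sigma-\lambda_k)\hat P_\J u_k=\hat P_\J E u_k$; since $\hat\Sigma$ restricted to $\operatorname{ran}\hat P_\J$ has eigenvalues $\hat\lambda_j$, $j\in\J$, separated from $\lambda_k$, this inverts to $\hat P_\J u_k=\sum_{j\in\J}(\hat\lambda_j-\lambda_k)^{-1}\langle\hat u_j,E u_k\rangle\hat u_j$. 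Splitting $\langle\hat u_j,E u_k\rangle=\langle\hat u_j,P_\J E u_k\rangle+\langle P_{\J^c}\hat u_j,E u_k\rangle$ decomposes $\hat P_\J u_k$ into a \emph{principal} and a \emph{recursive} part. I then estimate $\|\vert R_{\J^c}\vert^{-1/2}\hat P_\J\|_2^2=\|\hat P_\J\vert R_{\J^c}\vert^{-1/2}\|_2^2=\sum_{k\notin\J}\rho_k\|\hat P_\J u_k\|^2$ by the triangle inequality between the two parts.

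For the principal part, $\sum_{j\in\J}\vert\langle\hat u_j,P_\J E u_k\rangle\vert^2\le\|P_\J E u_k\|^2$ by Bessel, so (using $\vert\hat\lambda_j-\lambda_k\vert\ge(1-\delta_\J)\rho_k$) its Hilbert--Schmidt contribution is at most $(1-\delta_\J)^{-1}\big(\sum_{k\notin\J}\rho_k^{-1}\|P_\J E u_k\|^2\big)^{1/2}=(1-\delta_\J)^{-1}\|\vert R_{\J^c}\vert^{1/2}E P_\J\|_2$, the last identity coming from $\sum_{k\notin\J}\rho_k^{-1}\vert\langle\psi,u_k\rangle\vert^2=\|\vert R_{\J^c}\vert^{1/2}\psi\|^2$ with $\psi=E u_j$. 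Since $\vert R_{\J^c}\vert^{1/2}E P_\J$ has rank $\le\min(\vert\J\vert,\vert\J^c\vert)$ and, by Lemma \ref{SimpleLemma}, operator norm $\le g_\J^{1/2}\delta_\J$, its Hilbert--Schmidt norm is $\le\min(\vert\J\vert,\vert\J^c\vert)^{1/2}g_\J^{1/2}\delta_\J$; note that this rank argument is exactly what produces the sharper factor $\min(\vert\J\vert,\vert\J^c\vert)$ rather than just $\vert\J\vert$. So the principal part contributes at most $\min(\vert\J\vert,\vert\J^c\vert)^{1/2}g_\J^{1/2}\delta_\J/(1-\delta_\J)$.

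The recursive part is the main obstacle. Its Hilbert--Schmidt norm squared is $\sum_{k\notin\J}\sum_{j\in\J}\rho_k\vert\hat\lambda_j-\lambda_k\vert^{-2}\vert\langle P_{\J^c}\hat u_j,E u_k\rangle\vert^2\le(1-\delta_\J)^{-2}\sum_{j\in\J}\big(\sum_{k\notin\J}\rho_k^{-1}\vert\langle E P_{\J^c}\hat u_j,u_k\rangle\vert^2\big)=(1-\delta_\J)^{-2}\sum_{j\in\J}\|\vert R_{\J^c}\vert^{1/2}E P_{\J^c}\hat u_j\|^2$. Here the naive estimate $\|\vert R_{\J^c}\vert^{1/2}E P_{\J^c}\hat u_j\|\le\|\vert R_{\J^c}\vert^{1/2}E P_{\J^c}\|_\infty$ is useless because $\|\vert R_{\J^c}\vert^{1/2}E P_{\J^c}\|_\infty$ is not controlled by $\delta_\J$; instead I write $P_{\J^c}\hat u_j=\vert R_{\J^c}\vert^{1/2}\psi_j$ with $\psi_j=\vert R_{\J^c}\vert^{-1/2}P_{\J^c}\hat u_j$, so that $\vert R_{\J^c}\vert^{1/2}E P_{\J^c}\hat u_j=(\vert R_{\J^c}\vert^{1/2}E\vert R_{\J^c}\vert^{1/2})\psi_j$ and, by Lemma \ref{SimpleLemma}, $\|\vert R_{\J^c}\vert^{1/2}E P_{\J^c}\hat u_j\|\le\delta_\J\|\psi_j\|$. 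Summing over $j$ and using $\sum_{j\in\J}\|\psi_j\|^2=\|\vert R_{\J^c}\vert^{-1/2}\hat P_\J\|_2^2$ shows the recursive part contributes at most $\delta_\J(1-\delta_\J)^{-1}\|\vert R_{\J^c}\vert^{-1/2}\hat P_\J\|_2$. Adding the two contributions,
\[
\|\vert R_{\J^c}\vert^{-1/2}\hat P_\J\|_2\le\frac{\min(\vert\J\vert,\vert\J^c\vert)^{1/2}g_\J^{1/2}\delta_\J}{1-\delta_\J}+\frac{\delta_\J}{1-\delta_\J}\,\|\vert R_{\J^c}\vert^{-1/2}\hat P_\J\|_2 ,
\]
and since $\delta_\J<1/2$ the coefficient $\delta_\J/(1-\delta_\J)$ is strictly less than $1$; rearranging gives $\|\vert R_{\J^c}\vert^{-1/2}\hat P_\J\|_2\le\min(\vert\J\vert,\vert\J^c\vert)^{1/2}g_\J^{1/2}\delta_\J/(1-2\delta_\J)$, which is the first bound, and the second bound follows from it as explained in the first paragraph.
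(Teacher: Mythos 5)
Your proof is correct and follows essentially the same route as the paper's: the eigenvalue separation from Lemma \ref{LemEVSep}, the resolvent identity relating $\hat P_{\J}u_k$ to $Eu_k$, the split into a $P_{\J}E$ part (bounded via Lemma \ref{SimpleLemma} and the rank argument giving the $\sqrt{\min(\vert\J\vert,\vert\J^c\vert)}$ factor) and a $P_{\J^c}E$ part absorbed by the fixed-point rearrangement, and the reduction of the second bound to the first via $\vert R_{\J^c}\vert^{1/2}\vert R_{\J^c}\vert^{-1/2}=P_{\J^c}$. The only difference is cosmetic: you work with the matrix elements $\langle\hat u_j,Eu_k\rangle$ where the paper manipulates the operator blocks $P_kE\hat P_j$, yielding identical term-by-term estimates.
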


\begin{proof}
By Lemma \ref{LemEVSep}, we have for every $k<j_1$,
\begin{align*}
\lambda_k-\hat\lambda_{j_1}&= \lambda_k-\lambda_{j_1}-(\hat\lambda_{j_1}-\lambda_{j_1})\\
&\geq \lambda_k-\lambda_{j_1}-\delta_{\J}(\lambda_{j_1-1}-\lambda_{j_1})\geq (1-\delta_{\J})(\lambda_k-\lambda_{j_1})
\end{align*}
and thus also
\begin{align}\label{EqEVSep1}
\lambda_k-\hat\lambda_{j}\geq (1-\delta_{\J})(\lambda_k-\lambda_{j_1}),\qquad \forall k<j_1,\forall j\in {\J}.
\end{align}
Similarly, we have
\begin{align}\label{EqEVSep2}
\hat\lambda_{j}-\lambda_k\geq (1-\delta_{\J})(\lambda_{j_2}-\lambda_k),\qquad \forall k>j_2,\forall j\in {\J}.
\end{align}
By the identity $(\hat\lambda_j-\lambda_k)P_k\hat P_j=P_kE\hat P_j$, valid for every $j,k\geq 1$, we get
\begin{align}
    &\|\vert R_{{\J}^c}\vert ^{-1/2}\hat P_{{\J}}\|_2^2\nonumber\\
    &=\sum_{k< j_1}\sum_{j\in {\J}}\frac{\lambda_k-\lambda_{j_1}}{(\lambda_k-\hat\lambda_j)^2}\|P_kE\hat P_j\|_2^2+\sum_{k> j_2}\sum_{j\in {\J}}\frac{\lambda_{j_2}-\lambda_{k}}{(\hat\lambda_j-\lambda_k)^2}\|P_kE\hat P_j\|_2^2.\label{EqBoundER1}
\end{align}
Inserting \eqref{EqEVSep1} and \eqref{EqEVSep2} into \eqref{EqBoundER1}, we arrive at
\begin{align*}
    \|\vert R_{{\J}^c}\vert ^{-1/2}\hat P_{{\J}}\|_2\leq \frac{\|\vert R_{{\J}^c}\vert ^{1/2}E\hat P_{{\J}}\|_2}{1-\delta_{\J}}.
\end{align*}
Moreover, we have
\begin{align*}
    \|\vert R_{{\J}^c}\vert ^{1/2}E\hat P_{{\J}}\|_2&\leq \|\vert R_{{\J}^c}\vert ^{1/2}EP_{\J}\hat P_{{\J}}\|_2+\|\vert R_{{\J}^c}\vert ^{1/2}EP_{{\J}^c}\hat P_{{\J}}\|_2\\
    &=\|\vert R_{{\J}^c}\vert ^{1/2}EP_{\J}\hat P_{{\J}}\|_2+\|\vert R_{{\J}^c}\vert ^{1/2}E\vert R_{{\J}^c}\vert ^{1/2}\vert R_{{\J}^c}\vert ^{-1/2}\hat P_{{\J}}\|_2\\
    &\leq \|\vert R_{{\J}^c}\vert ^{1/2}EP_{\J}\|_2+\|\vert R_{{\J}^c}\vert ^{1/2}E\vert R_{{\J}^c}\vert ^{1/2}\|_\infty\| \vert R_{{\J}^c}\vert^{-1/2}\hat P_{{\J}}\|_2.
\end{align*}
Combining these two estimates with Lemma \ref{SimpleLemma}, we arrive at
\begin{align*}
    \|\vert R_{{\J}^c}\vert ^{-1/2}\hat P_{{\J}}\|_2\leq \frac{\|\vert R_{{\J}^c}\vert ^{1/2}EP_{\J}\|_2}{1-\delta_{\J}}+ \frac{\delta_{\J} \| \vert R_{{\J}^c}\vert^{-1/2}\hat P_{{\J}}\|_2}{1-\delta_{\J}},
\end{align*}
and the first claim follows inserting
\begin{align}
    g_{\J}^{-1/2}\|\vert R_{{\J}^c}\vert ^{1/2}EP_{\J}\|_2&\leq \sqrt{\min(\vert {\J}\vert ,\vert {\J}^c\vert )} g_{\J}^{-1/2}\|\vert R_{{\J}^c}\vert ^{1/2}EP_{\J}\|_\infty\nonumber\\
    &\leq \sqrt{\min(\vert {\J}\vert ,\vert {\J}^c\vert )}\delta_{\J},\label{EqBoundER2}
\end{align}
where we again applied Lemma \ref{SimpleLemma} in the second inequality. To get the second claim, note that
\begin{align*}
    \|P_{{\J}^c}\hat P_{{\J}}\|_2&=\|\vert R_{{\J}^c}\vert ^{1/2}\vert R_{{\J}^c}\vert ^{-1/2}\hat P_{{\J}}\|_2\\
    &\leq \|\vert R_{{\J}^c}\vert ^{1/2}\|_{\operatorname{\infty}}\|\vert R_{{\J}^c}\vert ^{-1/2}\hat P_{{\J}}\|_2\leq g_{\J}^{-1/2}\|\vert R_{{\J}^c}\vert ^{-1/2}\hat P_{{\J}}\|_2
\end{align*}
and the second claim follows from inserting the first claim.
\end{proof}

In what follows, we abbreviate
\begin{align}\label{defn:gamma}
\gamma_{\J}:=\Big\|\sum_{k\in {\J}^c}\sum_{j\in {\J}}\frac{P_kEP_j}{\lambda_k-\lambda_j}\Big\|_\infty.
\end{align}

\begin{lemma}\label{LemFirstOrderExp}
Suppose that $\delta_{\J}<1/2$. Then we have
\begin{align*}
    &\bigg\|P_{{\J}^c}\hat P_{\J}+\sum_{k\in {\J}^c}\sum_{j\in {\J}}\frac{P_kEP_j}{\lambda_k-\lambda_j}\bigg\|_2\\
    &\leq \sqrt{\min(\vert {\J}\vert ,\vert {\J}^c\vert )}\bigg(\frac{2\delta_{\J}^2}{1-2\delta_{\J}}+\frac{\delta_{\J}\gamma_{\J}}{(1-2\delta_{\J})(1-\delta_{\J})}\bigg).
\end{align*}
In particular, we have
\begin{align*}
    &\bigg\|P_{{\J}^c}\hat P_{\J}+\sum_{k\in {\J}^c}\sum_{j\in {\J}}\frac{P_kEP_j}{\lambda_k-\lambda_j}\bigg\|_2\\
    &\leq \min(\vert {\J}\vert ,\vert {\J}^c\vert )\bigg(\frac{2\delta_{\J}^2}{1-2\delta_{\J}}+\frac{\delta_{\J}^2}{(1-2\delta_{\J})(1-\delta_{\J})}\bigg).
\end{align*}
\end{lemma}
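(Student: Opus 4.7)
The plan is to derive an operator-level decomposition $Y = T_1 + T_2$ of
\[
Y := P_{\J^c}\hat P_\J + \sum_{k\in\J^c,\,j\in\J}\frac{P_kEP_j}{\lambda_k-\lambda_j}
\]
and bound each piece using the resolvent estimates of Lemmas \ref{LemEVSep}, \ref{SimpleLemma} and \ref{LemER}. Starting from the eigen-identity $(\hat\lambda_j-\lambda_k)P_k\hat P_j = P_kE\hat P_j$ (valid for $k\in\J^c$, $j\in\J$), combining it with the scalar expansion
\[
\frac{1}{\hat\lambda_j-\lambda_k} = \frac{1}{\lambda_j-\lambda_k} + \frac{\lambda_j-\hat\lambda_j}{(\hat\lambda_j-\lambda_k)(\lambda_j-\lambda_k)},
\]
the decomposition $\hat P_j = P_j + (\hat P_j - P_j)$, and summing over $k\in\J^c$, $j\in\J$, I would obtain
\[
T_1 = \sum_{k,j}\frac{P_kE(\hat P_j-P_j)}{\lambda_j-\lambda_k}, \qquad T_2 = \sum_{k,j}\frac{(\lambda_j-\hat\lambda_j)P_k\hat P_j}{\lambda_j-\lambda_k}.
\]

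For the bounds, I would introduce the $j$-dependent reduced resolvent $R_{\J^c,j} := \sum_{k\in\J^c}P_k/(\lambda_j - \lambda_k)$; since $\lambda_j \in [\lambda_{j_2},\lambda_{j_1}]$ for $j \in \J$, one has the pointwise operator dominance $|R_{\J^c,j}| \preceq \vert R_{\J^c}\vert$. For $T_1$, I split $\hat P_j - P_j = P_{\J^c}\hat P_j + P_\J(\hat P_j - P_j)$. The $P_{\J^c}\hat P_j$-piece is $\sum_j R_{\J^c,j} E P_{\J^c} \hat P_j$, controlled via the dominance above by $\|\vert R_{\J^c}\vert^{1/2} E \vert R_{\J^c}\vert^{1/2}\|_\infty \cdot \|\vert R_{\J^c}\vert^{-1/2}\hat P_\J\|_2$; combining Lemma \ref{SimpleLemma} with the bound on $\|\vert R_{\J^c}\vert^{-1/2}\hat P_\J\|_2$ from Lemma \ref{LemER} yields the $\sqrt{\min(\vert \J\vert,\vert \J^c\vert)}\cdot 2\delta_\J^2/(1-2\delta_\J)$ contribution. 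The $P_\J(\hat P_j - P_j)$-piece, together with $T_2$, is rearranged using $(\lambda_j - \hat\lambda_j)P_k\hat P_j = (\lambda_j - \lambda_k)P_k\hat P_j - P_k E\hat P_j$ into a product in which one factor has operator norm at most $\gamma_\J$ and the other has Hilbert--Schmidt norm bounded via Lemma \ref{LemER} by $\sqrt{\min(\vert \J\vert,\vert \J^c\vert)}\,\delta_\J / (1 - 2\delta_\J)$, yielding the $\sqrt{\min(\vert \J\vert, \vert \J^c\vert)}\,\delta_\J \gamma_\J / [(1-2\delta_\J)(1-\delta_\J)]$ contribution.

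For the second inequality, I would first establish the auxiliary bound
\[
\gamma_\J \leq \Big\|\sum_{k\in\J^c,\,j\in\J}\frac{P_kEP_j}{\lambda_k-\lambda_j}\Big\|_2 \leq \sqrt{\min(\vert\J\vert, \vert\J^c\vert)}\,\delta_\J,
\]
via the elementary inequality $(\lambda_k - \lambda_j)^2 \geq g_\J \cdot |\lambda_k - \lambda_{j_1}|$ for $k < j_1$ (and its analogue for $k > j_2$), which follows from $|\lambda_k - \lambda_j| \geq \max(g_\J,\, |\lambda_k - \lambda_{j_1}|)$ in that range; combined with Lemma \ref{SimpleLemma} and the rank estimate $\|A\|_2 \leq \sqrt{\operatorname{rank}(A)}\,\|A\|_\infty$ applied to the rank-$\min(\vert\J\vert,\vert\J^c\vert)$ operator at hand. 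Substituting into the first inequality and using $\sqrt{x} \leq x$ for $x \geq 1$ yields the second inequality. The main obstacle I anticipate is the bookkeeping in bounding the individual pieces of $T_1$ and $T_2$: since the denominators $\lambda_j - \lambda_k$ depend on both $j$ and $k$, one cannot directly factor through the fixed reduced resolvent $\vert R_{\J^c}\vert$, and the workaround is precisely the pointwise dominance $|R_{\J^c,j}| \preceq \vert R_{\J^c}\vert$, which reduces the analysis to quantities already controlled in Lemmas \ref{SimpleLemma}--\ref{LemER}.
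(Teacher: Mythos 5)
Your starting decomposition $Y=T_1+T_2$ is algebraically correct (it is equivalent to the identity the paper imports from (4.10)--(4.11) of \cite{JW18}), your treatment of the first piece $T_{1a}=\sum_j R_{\J^c,j}EP_{\J^c}\hat P_j$ is sound (orthogonality of the $\hat P_j$, the dominance $|R_{\J^c,j}|\preceq |R_{\J^c}|$, and factorization through $\vert R_{\J^c}\vert^{1/2}$ indeed give $\sqrt{\min(\vert\J\vert,\vert\J^c\vert)}\,\delta_\J^2/(1-2\delta_\J)$; your extra factor $2$ only creates slack), and your derivation of the second inequality from the first via $\gamma_\J\le\sqrt{\min(\vert\J\vert,\vert\J^c\vert)}\,\delta_\J$ is exactly the paper's \eqref{EqBoundLinTerm}.

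The gap is in the remaining piece. Your plan to handle the $P_\J(\hat P_j-P_j)$-part of $T_1$ ``together with $T_2$'' by substituting $(\lambda_j-\hat\lambda_j)P_k\hat P_j=(\lambda_j-\lambda_k)P_k\hat P_j-P_kE\hat P_j$ is circular: carrying out that substitution gives
\begin{align*}
T_{1b}+T_2=\sum_{k\in\J^c}\sum_{j\in\J}\frac{-P_kEP_{\J^c}\hat P_j-P_kEP_j}{\lambda_j-\lambda_k}+P_{\J^c}\hat P_\J=Y-T_{1a},
\end{align*}
i.e.\ you recover the quantity you set out to bound. Nor can $T_2$ be bounded on its own: for interior $j\in\J$ the factor $\lambda_j-\hat\lambda_j$ is not controlled by $\delta_\J$ (Lemma \ref{LemEVSep} gives only one-sided bounds at the endpoints $j_1,j_2$, and $\vert\lambda_j-\hat\lambda_j\vert$ can be of the order of the block width $\lambda_{j_1}-\lambda_{j_2}$); in fact $T_{1b}$ and $T_2$ are each only $O(\delta_\J)$, and the claimed $O(\delta_\J\gamma_\J)$ bound relies on a cancellation between them that your argument never exhibits. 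To extract the product structure $\gamma_\J\,\|P_\J\hat P_{\J^c}\|_2$ plus a genuinely second-order remainder you must push the algebra further: write $\tfrac{\lambda_j-\hat\lambda_j}{(\lambda_j-\lambda_k)(\hat\lambda_j-\lambda_k)}=\tfrac{1}{\hat\lambda_j-\lambda_k}-\tfrac{1}{\lambda_j-\lambda_k}$, let the $P_\J$-component recombine with $T_{1b}$, and apply the eigen-identity a second time to turn $(\lambda_l-\hat\lambda_j)P_l\hat P_j$ into $-P_lE\hat P_j$. This reproduces, term by term, the paper's three summands $I_1,I_2,I_3$ and requires the separation bounds \eqref{EqEVSep1}--\eqref{EqEVSep2} to control the denominators $\hat\lambda_j-\lambda_k$ (the source of the $(1-\delta_\J)^{-1}$ in the stated bound). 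That missing algebra is the substance of the proof, so the proposal as written does not establish the first inequality.
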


\begin{proof}
Proceeding as in (4.10)--(4.11) in \cite{JW18}, we have
\begin{align*}
    P_{{\J}^c}\hat P_{\J}+\sum_{k\in {\J}^c}\sum_{j\in {\J}}\frac{P_kEP_j}{\lambda_k-\lambda_j}
    &=\sum_{k\in {\J}^c}\sum_{j\in {\J}}\frac{P_kEP_j\hat P_{{\J}^c}}{\lambda_k-\lambda_j}-\sum_{k\in {\J}^c}\sum_{j\in {\J}}\frac{P_kEP_{{\J}^c}\hat P_j}{\lambda_k-\hat \lambda_j}\\
    &+\sum_{k\in {\J}^c}\sum_{j\in {\J}}\sum_{l\in {\J}}\frac{P_kE P_jE\hat P_l}{(\lambda_k-\lambda_j)(\lambda_k-\hat \lambda_l)}.
\end{align*}
Hence, by the triangular inequality, we get
\begin{align*}
    &\bigg\|P_{{\J}^c}\hat P_{\J}+\sum_{k\in {\J}^c}\sum_{j\in {\J}}\frac{P_kEP_j}{\lambda_k-\lambda_j}\bigg\|_2\\
    &\leq \bigg\|\sum_{k\in {\J}^c}\sum_{j\in {\J}}\frac{P_kEP_j\hat P_{{\J}^c}}{\lambda_k-\lambda_j}\bigg\|_2+\bigg\|\sum_{k\in {\J}^c}\sum_{j\in {\J}}\frac{P_kEP_{{\J}^c}\hat P_j}{\lambda_k-\hat \lambda_j}\bigg\|_2\\
    &+\bigg\|\sum_{k\in {\J}^c}\sum_{j\in {\J}}\sum_{l\in {\J}}\frac{P_kE P_jE\hat P_l}{(\lambda_k-\lambda_j)(\lambda_k-\hat \lambda_l)}\bigg\|_2=:I_1+I_2+I_3.
\end{align*}
First, we have
\begin{align*}
    I_1=\bigg\|\sum_{k\in {\J}^c}\sum_{j\in {\J}}\frac{P_kEP_j}{\lambda_k-\lambda_j}P_{{\J}}\hat P_{{\J}^c}\bigg\|_2\leq \bigg\|\sum_{k\in {\J}^c}\sum_{j\in {\J}}\frac{P_kEP_j}{\lambda_k-\lambda_j}\bigg\|_\infty\|P_{{\J}}\hat P_{{\J}^c}\|_2
\end{align*}
Inserting $\|P_{{\J}}\hat P_{{\J}^c}\|_2=\|P_{{\J}^c}\hat P_{{\J}}\|_2$ and the second bound in Lemma \ref{LemER}, we get
\begin{align*}
    I_1\leq \sqrt{\min(\vert {\J}\vert ,\vert {\J}^c\vert )}\frac{\delta_{\J}\gamma_{\J}}{1-2\delta_{\J}},
\end{align*}
where $\gamma_{\J}$ is defined in \eqref{defn:gamma}. Second, by Lemma \ref{LemEVSep}, proceeding as in the proof of Lemma \ref{LemER}, we get
\begin{align*}
    I_2&\leq \frac{g_{\J}^{-1/2}}{1-\delta_{\J}}\|\vert R_{{\J}^c}\vert ^{1/2}EP_{{\J}^c}\hat P_{\J}\|_2=\frac{g_{\J}^{-1/2}}{1-\delta_{\J}}\|\vert R_{{\J}^c}\vert ^{1/2}E\vert R_{{\J}^c}\vert ^{1/2}\vert R_{{\J}^c}\vert ^{-1/2}\hat P_{\J}\|_2\\
    &\leq \frac{g_{\J}^{-1/2}}{1-\delta_{\J}}\|\vert R_{{\J}^c}\vert ^{1/2}E\vert R_{{\J}^c}\vert ^{1/2}\|_\infty\|\vert R_{{\J}^c}\vert ^{-1/2}\hat P_{\J}\|_2.
\end{align*}
Inserting Lemmas \ref{SimpleLemma} and \ref{LemER}, we get
\begin{align*}
    I_2\leq \sqrt{\min(\vert {\J}\vert ,\vert {\J}^c\vert )}\frac{\delta_{\J}^2}{(1-2\delta_{\J})(1-\delta_{\J})}.
\end{align*}
Finally,
\begin{align*}
    I_3&\leq \frac{g_{\J}^{-1}}{1-\delta_{\J}}\bigg\|\sum_{k\in {\J}^c}\sum_{j\in {\J}}\frac{P_kE P_jE\hat P_{\J}}{\lambda_k-\lambda_j}\bigg\|_2\\
    &\leq \frac{g_{\J}^{-1}}{1-\delta_{\J}}\bigg\|\sum_{k\in {\J}^c}\sum_{j\in {\J}}\frac{P_kE P_j}{\lambda_k-\lambda_j}P_{\J}E P_{\J}\bigg\|_2\\
    &+\frac{g_{\J}^{-1}}{1-\delta_{\J}}\bigg\|\sum_{k\in {\J}^c}\sum_{j\in {\J}}\frac{P_kE P_j}{\lambda_k-\lambda_j}P_{\J} EP_{{\J}^c}\hat P_{\J}\bigg\|_2\\
    &\leq \frac{g_{\J}^{-1}}{1-\delta_{\J}}\bigg\|\sum_{k\in {\J}^c}\sum_{j\in {\J}}\frac{P_kE P_j}{\lambda_k-\lambda_j}\bigg\|_2\|P_{\J} E P_{\J}\|_\infty\\
    &+\frac{g_{\J}^{-1}}{1-\delta_{\J}}\bigg\|\sum_{k\in {\J}^c}\sum_{j\in {\J}}\frac{P_kE P_j}{\lambda_k-\lambda_j}\bigg\|_\infty\|P_{\J} E\vert R_{{\J}^c}\vert ^{1/2}\|_\infty\|\vert R_{{\J}^c}\vert ^{-1/2}\hat P_{\J}\|_2.
\end{align*}
Applying Lemmas \ref{SimpleLemma} and \ref{LemER} and the inequality
\begin{align}\label{EqBoundLinTerm}
\bigg\|\sum_{k\in {\J}^c}\sum_{j\in {\J}}\frac{P_kEP_j}{\lambda_k-\lambda_j}\bigg\|_2\leq g_{\J}^{-1/2}\|\vert R_{{\J}^c}\vert ^{1/2}EP_{\J}\|_2\leq \sqrt{\min(\vert {\J}\vert ,\vert {\J}^c\vert )}\delta_{\J},
\end{align}
we get
\begin{align*}
    I_3&\leq \sqrt{\min(\vert {\J}\vert ,\vert {\J}^c\vert )}\Big(\frac{\delta_{\J}^2}{1-\delta_{\J}}+\frac{\delta_{\J}^2\gamma_{\J}}{(1-2\delta_{\J})(1-\delta_{\J})}\Big).
\end{align*}
Collecting these bounds for $I_1,I_2,I_3$, we get
\begin{align*}
	&\bigg\|P_{{\J}^c}\hat P_{\J}+\sum_{k\in {\J}^c}\sum_{j\in {\J}}\frac{P_kEP_j}{\lambda_k-\lambda_j}\bigg\|_2\\
	&\leq \sqrt{\min(\vert {\J}\vert ,\vert {\J}^c\vert )}\bigg(\frac{\delta_{\J}^2}{1-\delta_{\J}}+\frac{\delta_{\J}^2+\delta_{\J}^2\gamma_{\J}}{(1-2\delta_{\J})(1-\delta_{\J})}+\frac{\delta_{\J}\gamma_{\J}}{1-2\delta_{\J}}\bigg)\\
	&=\sqrt{\min(\vert {\J}\vert ,\vert {\J}^c\vert )}\bigg(\frac{2\delta_{\J}^2}{1-2\delta_{\J}}+\frac{\delta_{\J}\gamma_{\J}}{(1-2\delta_{\J})(1-\delta_{\J})}\bigg).
\end{align*}
By \eqref{EqBoundLinTerm}, we have
\begin{align*}
\gamma_{\J}\leq \bigg\|\sum_{k\in {\J}^c}\sum_{j\in {\J}}\frac{P_kEP_j}{\lambda_k-\lambda_j}\bigg\|_2\leq \sqrt{\min(\vert {\J}\vert ,\vert {\J}^c\vert )}\delta_{\J},
\end{align*}
and the second claim follows from inserting this into the first one.
\end{proof}

\begin{proof}[Proof of Proposition \ref{PropExp}]
In order to prove \eqref{Eq0Exp}, assume first that $\delta_{\J}<1/4$. Then it follows from $\|P_{{\J}}-\hat P_{\J}\|_2^2=2\|P_{{\J}^c}\hat P_{\J}\|_2^2$ and the second inequality in Lemma \ref{LemER} that \eqref{Eq0Exp} holds. On the other hand, we always have $\|P_{{\J}}-\hat P_{\J}\|_2^2\leq 2\min(\vert {\J}\vert ,\vert {\J}^c\vert )$, implying \eqref{Eq0Exp} also in the case that $\delta_{\J}\geq 1/4$.

It remains to prove \eqref{Eq2Exp}. Applying the identities $I=P_{{\J}}+P_{{\J}^c}= \hat P_{\J}+\hat P_{\J^c}$ and $P_{\J}P_{\J^c}=0$, we have
\begin{align*}
    \hat P_{\J}-P_{\J}&=\hat P_{\J}P_{\J^c}-\hat P_{\J^c}P_{\J}\\
    &=\hat P_{\J}P_{\J^c}-P_{\J^c}\hat P_{\J^c}P_{\J}-P_{\J}\hat P_{\J^c} P_{\J}\\
    &=\hat P_{\J}P_{\J^c}+P_{\J^c}\hat P_{\J} P_{\J}-P_{\J}\hat P_{\J^c} P_{\J}\\
    &=\hat P_{\J}P_{\J^c}+P_{\J^c}\hat P_{\J} -P_{\J^c}\hat P_{\J}P_{\J^c}-P_{\J}\hat P_{\J^c}P_{\J}.
\end{align*}
Combining this with the triangle inequality, we obtain
\begin{align*}
    &\|\hat{P}_{\J} - P_{\J} - L_{\J}E\|_2\\
    &\leq \|\hat P_{\J}P_{\J^c}+P_{\J^c}\hat P_{\J}-L_{\J}E\|_2+\|P_{\J^c}\hat P_{\J}\hat P_{\J}P_{\J^c}\|_2+\|P_{\J}\hat P_{\J^c}\hat P_{\J^c} P_{\J}\|_2.
\end{align*}
Inserting the definition of the linear term in \eqref{eq_def_linear_term}, we get
\begin{align*}
    &\|\hat P_{\J}P_{\J^c}+P_{\J^c}\hat P_{\J} - L_{\J}E\|_2\\
    &\leq \Big\|\hat P_{\J}P_{\J^c}+\sum_{j\in \J}\sum_{k\in \J^c}\frac{P_j E P_k}{\lambda_k-\lambda_j} \Big\|_2+\Big\|P_{\J^c}\hat P_{\J}+\sum_{j\in \J}\sum_{k\in \J^c}\frac{P_k E P_j}{\lambda_k-\lambda_j} \Big\|_2\\
    &=2\Big\|P_{\J^c}\hat P_{\J}+\sum_{j\in \J}\sum_{k\in \J^c}\frac{P_k E P_j}{\lambda_k-\lambda_j} \Big\|_2.
\end{align*}
We again start with the case $\delta_{\J}<1/4$. First, by Lemma \ref{LemER} and the identity $\|\hat P_{\J}P_{\J^c}\|_2=\|\hat P_{\J^c} P_{\J}\|_2$, we get
\begin{align*}
    \|P_{\J^c}\hat P_{\J}\hat P_{\J}P_{\J^c}\|_2\leq \|\hat P_{\J}P_{\J^c}\|_2^2\leq 4\min(\vert \J\vert ,\vert \J^c\vert )\delta_{\J}^2
\end{align*}
and
\begin{align*}
    \|P_{\J}\hat P_{\J^c}\hat P_{\J^c}P_{\J}\|_2\leq \|\hat P_{\J^c}P_{\J}\|_2^2\leq 4\min(\vert \J\vert ,\vert \J^c\vert )\delta_{\J}^2.
\end{align*}
Second, by Lemma \ref{LemFirstOrderExp}, we have
\begin{align*}
    2\Big\|P_{\J^c}\hat P_{\J}+\sum_{j\in \J}\sum_{k\in \J^c}\frac{P_k E P_j}{\lambda_k-\lambda_j}  \Big\|_2\leq 2\Big(4+\frac{8}{3}\Big)\min(\vert \J\vert ,\vert \J^c\vert )\delta_{\J}^2.
\end{align*}
Hence, in the case $\delta_{\J}< 1/4$, we arrive at
\begin{align*}
    \|\hat{P}_{\J} - P_{\J} - L_{\J}E\|_2\leq \Big(8+\frac{16}{3}+8\Big)\min(\vert \J\vert ,\vert \J^c\vert )\delta_{\J}^2.
\end{align*}
Finally, if $\delta_{\J}\geq 1/4$, then we can use $\|P_{\J}-\hat P_{\J}\|_2\leq \sqrt{2}\min(\vert \J\vert ,\vert \J^c\vert )^{1/2}$ and \eqref{EqBoundLinTerm} to obtain
\begin{align*}
    \|\hat{P}_{\J} - P_{\J} - L_{\J}E\|_2&\leq \|\hat{P}_{\J} - P_{\J}\|_2 + \|L_{\J}E\|_2\\
    &\leq \min(\vert \J\vert ,\vert \J^c\vert )^{1/2}(\sqrt{2}+\sqrt{2}\delta_{\J})\\
    &\leq\min(\vert \J\vert ,\vert \J^c\vert )(16\sqrt{2}+4\sqrt{2})\delta_\J^2.
\end{align*}
This completes the proof.
\end{proof}

\begin{proof}[Proof of Corollary \ref{cor:exp:quadratic:term}]
By the Cauchy-Schwarz inequality, we have
\begin{align*}
    &\big\vert\|P_{\J}-\hat P_\J\|_2^2-\|L_{\J}E\|_2^2\big\vert\\
    &=\big\vert\|P_{\J}-\hat P_\J-L_{\J}E+L_{\J}E\|_2^2-\|L_{\J}E\|_2^2\big\vert\\
    &\leq \|P_{\J}-\hat P_\J-L_{\J}E\|_2^2+2\|P_{\J}-\hat P_\J-L_{\J}E\|_2\|L_{\J}E\|_2.
\end{align*}
Hence, the claim follows from inserting the second claim of Proposition \ref{PropExp} and \eqref{EqBoundLinTerm}.
\end{proof}

\begin{proof}[Proof of Corollary \ref{lem:boot:approx}]
By the triangular inequality, the inequality $(a+b)^2 \leq 2a^2 + 2b^2$ and the Cauchy-Schwarz inequality, it follows that
\begin{align*}
&\big\vert\|\tilde{P}_{\J} - \hat{P}_{\J} \|_2^2 - \|\FO_{\J}(E - \tilde{E})\|_2^2\big\vert \\
& \leq \|\tilde{P}_{\J} - P_{\J} - \FO_{\J}\tilde{E} - (\hat{P}_{\J} - P_{\J} - \FO_{\J}E)\|_2^2\\
&+2 \big\vert\langle \tilde{P}_{\J} - P_{\J} - \FO_{\J}\tilde{E} - (\hat{P}_{\J} - P_{\J} - \FO_{\J}E),  \FO_{\J}(E - \tilde{E})\rangle\big\vert\\
&\leq 2\big\|\tilde{P}_{\J} - P_{\J} - \FO_{\J}\tilde{E}\big\|_2^2 + 2\big\|\hat{P}_{\J} - P_{\J} - \FO_{\J}E\big\|_2^2 \\
&+ 2\big(\|\tilde{P}_{\J} - P_{\J} - \FO_{\J}\tilde{E} \|_2 + \|  \hat{P}_{\J} - P_{\J} - \FO_{\J}E \|_2\big) \big(\| \FO_{\J}E \|_2 + \| \FO_{\J}\tilde{E} \|_2\big).
\end{align*}
Hence, the claim follows from inserting the second claim of Proposition \ref{PropExp} and \eqref{EqBoundLinTerm} applied to both $\hat\Sigma$ and $\tilde\Sigma$.
\end{proof}

\subsection{Proofs for Section \ref{sec:metrics}}\label{sec:metrics:proofs}

\begin{lemma}\label{lem:be:1}
Consider Setting \ref{ass:clts:setup}. Then we have
\begin{align*}
\mathbf{U}\Big(n^{-1}\|\sum_{i = 1}^n Y_i \|^2 + V, \|Z\|^2 + V \Big) \lesssim v^{-3}n^{-q/2}\sum_{i = 1}^n \E \|Y_i\|^q.
\end{align*}
Here, $V \sim\mathcal{N}(0,v^2)$, $v \in (0,1]$, is independent of the $Y_i$ and $Z$.
\end{lemma}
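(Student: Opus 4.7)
The plan is to apply the Lindeberg exchange method, exploiting the fact that convolution with $V\sim\mathcal N(0,v^2)$ turns the indicator of $(-\infty,x]$ into a $C^\infty$ function whose $k$-th derivative is of order $v^{-k}$. First, fix $x\in\R$ and introduce independent centered Gaussians $Z_1,\dots,Z_n$ in $\mathcal{H}$, independent of everything else, with $\E Z_i\otimes Z_i=\E Y_i\otimes Y_i$, so that $S_Z:=n^{-1/2}\sum_i Z_i\stackrel{d}{=}Z$. Setting $g_x(y):=\Phi\bigl((x-\|y\|^2)/v\bigr)$, $S_Y:=n^{-1/2}\sum_i Y_i$, $W_i:=n^{-1/2}Y_i$, $W_i':=n^{-1/2}Z_i$ and $U_i:=\sum_{j<i}W_j+\sum_{j>i}W_j'$ (which is independent of $(W_i,W_i')$), the quantity of interest decomposes via the telescoping identity
\[
\E[g_x(S_Y)]-\E[g_x(S_Z)]=\sum_{i=1}^n\E\bigl[g_x(U_i+W_i)-g_x(U_i+W_i')\bigr].
\]
Taylor expanding $g_x$ around $U_i$ up to third order, the $0$-th, $1$-st and $2$-nd order contributions coincide in expectation (since $\E W_i=\E W_i'=0$ and $\E W_i\otimes W_i=\E W_i'\otimes W_i'$), leaving only the third-order remainders.

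Computing the Fréchet derivatives via the chain rule from $g_x=\tilde\Phi\circ s$ with $s(y)=x-\|y\|^2$ and $\tilde\Phi(u)=\Phi(u/v)$ yields
\[
D^3g_x(y)[h,h,h]=-8v^{-3}\varphi''\bigl(\tfrac{x-\|y\|^2}{v}\bigr)\langle y,h\rangle^3+12v^{-2}\varphi'\bigl(\tfrac{x-\|y\|^2}{v}\bigr)\langle y,h\rangle\|h\|^2,
\]
and an analogous expression for $D^2g_x$. Using $v\le 1$ and boundedness of $|\varphi^{(k)}|$ gives $\|D^2g_x(y)\|_{\mathrm{op}}\lesssim v^{-2}(1+\|y\|^2)$ and $\|D^3g_x(y)\|_{\mathrm{op}}\lesssim v^{-3}(1+\|y\|^3)$. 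To match the available $q$-th moment for $q\in(2,3]$, I would interpolate these two estimates of the same remainder via $\min(a,b)\le a^{q-2}b^{3-q}$, which produces a pointwise bound of order $v^{-3}(1+\|U_i\|+\|W\|)^q\|W\|^q$. Combined with a truncation of $\|W_i\|,\|W_i'\|$ at a threshold of order $1$ (to avoid invoking $2q$-th moments when the expectation is taken over the product), together with independence of $U_i$ from $W_i,W_i'$ and the boundedness of $g_x$ on the complementary event, each summand is controlled by $v^{-3}\E\|W_i\|^q=v^{-3}n^{-q/2}\E\|Y_i\|^q$. Summation over $i$ and taking $\sup_x$ yields the claim.

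The main obstacle is that the Fréchet derivatives of $g_x$ are not uniformly bounded on $\mathcal{H}$: differentiating $y\mapsto\|y\|^2$ introduces powers of $\|y\|$ in the norms $\|D^kg_x(y)\|_{\mathrm{op}}$, so the naive uniform bound $\|D^kg_x\|_\infty\lesssim v^{-k}$ fails. Obtaining the clean $v^{-3}$ prefactor with only $q$-th-moment dependence (rather than $2q$-th) requires carefully interpolating between the second- and third-order Taylor remainders through the $\min$-inequality above, or equivalently exploiting the super-exponential decay $|u^j\varphi^{(k)}(u)|\lesssim_{j,k}1$ to absorb the polynomial-in-$\|y\|$ factors into bounded contributions.
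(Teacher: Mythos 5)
Your route is genuinely different from the paper's. The paper truncates at $\|Y_i\|\le\sqrt{n}$, applies Esseen's smoothing inequality (this is where the factor $v^{-3}$ comes from, via $\int|\xi|^2e^{-v^2\xi^2/2}\,d\xi\asymp v^{-3}$), and then quotes the Bentkus--Ulyanov bound for $\bigl|\E e^{\mathrm{i}\xi n^{-1}\|\sum_iY_i\|^2}-\E e^{\mathrm{i}\xi\|Z\|^2}\bigr|$. You instead run a Lindeberg exchange directly on the smoothed test function $g_x(y)=\Phi((x-\|y\|^2)/v)$. The skeleton of your argument --- telescoping, cancellation up to second order, interpolating the second- and third-order remainders to reach $q$-th moments, truncating at $\|W_i\|\le 1$ to avoid $2q$-th moments --- is a legitimate, self-contained alternative in principle, and your computation of $D^3g_x$ is correct.

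There is, however, a genuine gap in how you dispose of the polynomial growth of the derivatives. Your pointwise remainder bound is of order $v^{-3}(1+\|U_i\|+\|W_i\|)^q\|W_i\|^q$, and the factor $(1+\|U_i\|)^q$ is not removed by the mechanism you invoke: the super-exponential decay of $\varphi^{(k)}\bigl((x-\|y\|^2)/v\bigr)$ cannot absorb powers of $\|y\|$ because the estimate must hold uniformly in $x$, and for $x\approx\|y\|^2$ the Gaussian factor is of order one while $\|y\|$ is arbitrarily large; truncating $W_i$ does not help either, since $U_i$ is the entire remaining sum. What you actually need is $\E\|U_i\|^q\lesssim_q 1$, which via Rosenthal's inequality (Lemma \ref{lem:rosenthal}) requires the normalization $\sum_i\E\|Y_i\|^r\le n^{r/2}$ for $r\in\{2,q\}$ --- a hypothesis present in Lemma \ref{lem:be:3} and Proposition \ref{prop:clt:II} but absent from Lemma \ref{lem:be:1} as stated. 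So your argument, once this is made explicit, proves the lemma only under that additional assumption (harmless for the paper's applications, but not the stated claim), and the step you propose to close the gap without it is incorrect. Two smaller loose ends: after truncation one has $\E W_i^1\neq 0$ and $\E W_i^1\otimes W_i^1\neq \E W_i'\otimes W_i'$, so the zeroth- through second-order Taylor terms no longer cancel exactly and the resulting corrections again involve moments of $\|U_i\|$; and the interpolation should be $\min(a,b)\le a^{3-q}b^{q-2}$ with $a$ the second-order and $b$ the third-order bound in order to produce $\|W\|^q$ rather than $\|W\|^{5-q}$.
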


\begin{proof}[Proof of Lemma \ref{lem:be:1}]
The proof is mainly relying on methods given in~\cite{bentkus1984asymptotic}.
For a random variable $Y$ in $\mathcal{H}$, let $Y^{\delta} = Y \ind_{\|Y\| \leq \delta \sqrt{n}}$.
Then by properties of the uniform distance (or a simple conditioning argument), we have
\begin{align}\nonumber \label{eq:lem:be:1:1}
&\mathbf{U}\Big(n^{-1}\|\sum_{i = 1}^n Y_i\|^2 + V, n^{-1}\|\sum_{i = 1}^n Y_i^{1}\|^2 + V \Big) \leq \mathbf{U}\Big(n^{-1}\|\sum_{i = 1}^n Y_i\|^2, n^{-1}\|\sum_{i = 1}^n Y_i^{1} \|^2 \Big) \\&\leq \sum_{i = 1}^n \P\big( Y_i^1 \neq Y_i \big) = \sum_{i = 1}^n \P\big(\|Y_i\| > \sqrt{n}\big) \leq n^{-q/2}\sum_{i = 1}^n \E \|Y_i\|^q.
\end{align}
An application of Esseens smoothing inequality (cf.~\cite[Lemma 1, XVI.3]{fellervolume2}) yields
\begin{align}\nonumber
&\mathbf{U}\Big(n^{-1}\|\sum_{i = 1}^n Y_i^{1}\|^2 + V, \|Z\|^2 + V \big) \\&\lesssim \int_{\R}\Big\vert\frac{\E e^{\mathrm{i}\xi n^{-1}\|\sum_{i = 1}^n Y_i^{1}\|^2} - \E e^{\mathrm{i}\xi \|Z\|^2}}{\xi}\Big\vert e^{-v^2 \xi^2/2} d \xi.
\end{align}
Next, proceeding as in~\cite{ulyanov1987asymptotic} (see Equation (86), resp. Theorem 4.6 in ~\cite{bentkus1984asymptotic}), one derives that
\begin{align}\nonumber \label{eq:lem:be:1:2}
&\Big\vert\E e^{\mathrm{i}\xi n^{-1}\|\sum_{i = 1}^n Y_i^{1} \|^2} - \E e^{\mathrm{i}\xi \|Z \|^2}\Big\vert \\&\lesssim \sum_{i = 1}^n (\vert \xi\vert  + \vert \xi\vert ^3) \big(\E n^{-1}\|Y_i\|^2 \ind_{\|Y_i\| \geq \sqrt{n}}  + \E n^{-3/2}\|Y_i\|^3 \ind_{\|Y_i\| \leq \sqrt{n}} \big).
\end{align}
Since $q\in(2,3]$, we have
\begin{align*}
    &\E n^{-1}\|Y_i\|^2 \ind_{\|Y_i\| \geq \sqrt{n}}\leq n^{-q/2}\E \|Y_i\|^q
\end{align*}
and
\begin{align*}
    \E n^{-3/2}\|Y_i\|^3\ind_{\|Y_i\| \leq \sqrt{n}}\leq n^{-q/2}\E \|Y_i\|^q.
\end{align*}
It follows that (recall $v\in(0,1]$)
\begin{align*}
&\int_{\R}\Big\vert\frac{\E e^{\mathrm{i}\xi n^{-1}\|\sum_{i = 1}^n Y_i^{1} \|^2} - \E e^{\mathrm{i}\xi \|Z\|^2}}{\xi} \Big\vert e^{-v^2 \xi^2/2} d \xi \\& \lesssim  v^{-3} \int_{\R} (1  \vee \vert \xi \vert^2) n^{-q/2}\sum_{i = 1}^n \E \|Y_i\|^q e^{-\xi^2/2} d \xi \\&\lesssim  v^{-3} n^{-q/2}\sum_{i = 1}^n \E \|Y_i\|^q.
\end{align*}
This completes the proof.
\end{proof}

\begin{lemma}[\cite{pinelis:1994}, Theorem 4.1]\label{lem:rosenthal}
Let $(Y_i) \in \mathcal{H}$ be an independent sequence with $\E Y_i = 0$ and $\E \|Y_i\|^{q}<\infty$ for all $i$ and $q\geq 1$. Then,
\begin{align*}
\E \big\|\sum_{i = 1}^n Y_i \big\|^{q} &\lesssim_q \Big( \sum_{i = 1}^n \E \|Y_i\|^2 \Big)^{q/2} +  \sum_{i = 1}^n \E \big\|Y_i\big\|^{q}.
\end{align*}
\end{lemma}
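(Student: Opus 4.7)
Since the lemma is stated as a direct citation to Pinelis (1994, Theorem 4.1), the formal proof is simply ``invoke the reference''; below I sketch the two-regime argument I would use if one wanted a self-contained derivation. The split at $q=2$ is natural because the second right-hand side term is not needed in the low-moment range.

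\textbf{Regime $q\in[1,2]$.} Here I would use only Jensen's inequality together with the Hilbert-space orthogonality identity
\[
\E \Bigl\|\sum_{i=1}^n Y_i\Bigr\|^2 \;=\; \sum_{i=1}^n \E \|Y_i\|^2,
\]
which holds because $\E\langle Y_i,Y_j\rangle = 0$ for $i\neq j$ by independence and centring. Applying $x\mapsto x^{q/2}$ inside the expectation (the map is concave for $q\in[1,2]$) gives $\E\|\sum Y_i\|^q \leq (\sum \E\|Y_i\|^2)^{q/2}$, so the first term on the right-hand side alone suffices.

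\textbf{Regime $q>2$.} The plan is to reduce to a scalar Rosenthal inequality via a square-function bound. Since $S_k=\sum_{i\leq k}Y_i$ is a Hilbert-space-valued martingale, the vector-valued Burkholder--Gundy inequality---valid in any $2$-smooth Banach space, and in particular in $\mathcal{H}$, with constants depending only on $q$---yields
\[
\E \|S_n\|^q \;\lesssim_q\; \E\Bigl(\sum_{i=1}^n \|Y_i\|^2\Bigr)^{q/2}.
\]
Setting $Z_i = \|Y_i\|^2 \geq 0$, this becomes the scalar problem of bounding $\E(\sum Z_i)^{q/2}$ for independent nonnegative $Z_i$. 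I would then apply the classical scalar Rosenthal inequality at exponent $q/2\geq 1$ (equivalently a short truncation-plus-moment argument splitting $Z_i$ at its expectation) to obtain
\[
\E\Bigl(\sum_{i=1}^n Z_i\Bigr)^{q/2} \;\lesssim_q\; \Bigl(\sum_{i=1}^n \E Z_i\Bigr)^{q/2} + \sum_{i=1}^n \E Z_i^{q/2},
\]
which, after substituting back $Z_i=\|Y_i\|^2$, is precisely the advertised bound.

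\textbf{Main obstacle.} The only nontrivial ingredient is ensuring that the constant in the vector-valued Burkholder--Gundy step depends only on $q$; this is standard in the Hilbert-space setting but fails in general Banach spaces. An alternative route, and the one actually taken in Pinelis (1994), is to bypass Burkholder--Gundy altogether and produce both right-hand-side terms simultaneously via an exponential moment inequality combined with a Hoffmann--J\o{}rgensen-type truncation at the level $\max_i\|Y_i\|$; this directly yields the $\sum \E\|Y_i\|^q$ contribution without passing through the square function and is arguably cleaner than the two-step reduction sketched above.
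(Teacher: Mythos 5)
Your proposal is correct and, in its primary move, identical to the paper's: Lemma \ref{lem:rosenthal} is given in the paper purely as a citation to Pinelis (1994, Theorem 4.1), with no proof supplied. Your supplementary sketch is a sound standard derivation — the Jensen/orthogonality argument for $q\in[1,2]$ is exact, and for $q>2$ the reduction via the Hilbert-space Burkholder--Gundy inequality to the scalar Rosenthal bound for the nonnegative variables $Z_i=\|Y_i\|^2$ at exponent $q/2\ge 1$ does yield precisely the stated two-term bound, with constants depending only on $q$ as you note.
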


\begin{lemma}\label{lem:be:3}
Consider Setting \ref{ass:clts:setup} and assume in addition that $\sum_{i = 1}^n \E \|Y_i\|^r \leq n^{r/2}$ for $r \in \{2,q\}$. Then
\begin{align*}
&\mathbf{W}\Big(n^{-1}\big\|\sum_{i = 1}^n Y_i \big\|^2 + V, \big\|Z\big\|^2 + V \Big)\lesssim_q v^{6/q-3} \Big(n^{-q/2}\sum_{i = 1}^n\E \big\|Y_i\big\|^q \Big)^{1 - 2/q}.
\end{align*}
Here, $V \sim\mathcal{N}(0,v^2)$, $v \in (0, 1]$, is independent of $Y_1,\dots,Y_n$ and $Z$.
\end{lemma}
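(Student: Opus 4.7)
The plan is to combine the uniform-metric bound of Lemma \ref{lem:be:1} with tail estimates coming from moment bounds, exploiting the integral representation
\begin{align*}
\mathbf{W}(X,Y) = \int_{\R} |F_X(t) - F_Y(t)|\, dt
\end{align*}
valid for any pair of integrable real-valued random variables. Abbreviating $X_n = n^{-1}\|\sum_{i=1}^n Y_i\|^2 + V$ and $Y_\star = \|Z\|^2 + V$, the first step is to verify $\E|X_n|^{q/2} + \E|Y_\star|^{q/2} \lesssim_q 1$. For $X_n$, I apply Lemma \ref{lem:rosenthal} with exponent $q$, combined with the hypothesis $\sum_i \E\|Y_i\|^r \leq n^{r/2}$ for $r\in\{2,q\}$, to deduce $\E\|\sum_i Y_i\|^q \lesssim_q n^{q/2}$. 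For $Y_\star$, writing $\|Z\|^2=\sum_j\lambda_j(\Psi)\xi_j^2$ via the Karhunen--Lo\`{e}ve expansion and applying Minkowski's inequality (using $\operatorname{tr}\Psi\leq 1$) gives $\E\|Z\|^q \lesssim_q 1$. Since $v\in (0,1]$, the $V$-contribution to the moments is negligible.

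For a threshold $T > 0$ to be chosen later, I split the integral as $\int_{|t|\leq T} + \int_{|t|>T}$. On the inner region the integrand is uniformly bounded by $\mathbf{U}(X_n, Y_\star)$, which by Lemma \ref{lem:be:1} is at most $C v^{-3}n^{-q/2}\sum_i \E\|Y_i\|^q$; this yields an overall contribution of order $T\cdot v^{-3}n^{-q/2}\sum_i \E\|Y_i\|^q$. On the outer region I use the trivial bound $|F_{X_n}(t)-F_{Y_\star}(t)| \leq \P(|X_n|>|t|)+\P(|Y_\star|>|t|) \lesssim_q |t|^{-q/2}$ by Markov's inequality applied to the moment estimates above. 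Since $q/2>1$, the tail integral $\int_{|t|>T}|t|^{-q/2}\,dt$ converges and is of order $T^{1-q/2}$.

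Balancing the two contributions leads to the choice $T \asymp (v^{-3}n^{-q/2}\sum_i \E\|Y_i\|^q)^{-2/q}$, after which both terms are of order $v^{(6/q)-3}(n^{-q/2}\sum_i \E\|Y_i\|^q)^{1-2/q}$, matching the announced rate. The only non-routine point is the interpolation between Kolmogorov and Wasserstein distances via the optimal choice of $T$; the uniform bound, the moment estimates, and the Markov tail bounds are each direct applications of the preceding results, and I do not foresee any significant obstacle beyond careful bookkeeping of the $q$-dependent constants.
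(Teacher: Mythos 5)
Your proposal is correct and follows essentially the same route as the paper's proof: the dual (integral) representation of $\mathbf{W}$, a split at a threshold with the inner part controlled by Lemma \ref{lem:be:1} and the outer part by Markov with exponent $q/2$ (using Lemma \ref{lem:rosenthal} and the Gaussian moment bound), and the same optimal balancing of the threshold.
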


\begin{proof}[Proof of Lemma \ref{lem:be:3}]
We use the well-known fact that for real-valued random variables $X,Y$, we have the dual representation
\begin{align*}
\mathbf{W}\big(X,Y\big) = \int_{\R} \big\vert\P(X \leq x\big) - \P\big(Y \leq x\big) \big\vert dx.
\end{align*}
We conclude that for any $a > 0$ and $s > 1$, Markov's inequality gives
\begin{align*}
\mathbf{W}\big(X,Y\big) &\leq 2 a \mathbf{U}\big(X,Y\big) + 2 \int_{a}^{\infty} \P\big(\vert X\vert  \geq x\big) + \P\big(\vert Y\vert  \geq x\big)dx\\&
\leq 2 a \mathbf{U}\big(X,Y\big) + 2 s^{-1}a^{1-s}\big( \E\vert X\vert ^s  + \E\vert Y\vert ^s\big).
\end{align*}
We apply this inequality with $s=q/2>1$. By assumption, we have $\E\|Z\|^2\leq 1$ and thus $\E \|Z\|^{q} \lesssim_q 1$ (cf.~Lemma \ref{lem:moment:computation:KL}). Lemma \ref{lem:be:1}, the inequality $(x+y)^{q/2} \leq 2^{q/2-1}\big(x^{q/2} + y^{q/2}\big)$, $x,y \geq 0$, together with Lemma \ref{lem:rosenthal} then yield
\begin{align*}
&\mathbf{W}\Big(n^{-1}\big\| \sum_{i = 1}^n Y_i \big\|^2 + V, \big\|Z \big\|^2 + V \Big)\\&\lesssim_q
a \mathbf{U}\Big(n^{-1}\big\|\sum_{i = 1}^n Y_i\big\|^2 + V, \big\|Z \big\|^2 + V \Big)\\&+  a^{1-q/2}\big( \E\|n^{-1/2}\sum_{i = 1}^n Y_i\|^{q}  + \E\|Z\|^{q} + \E \vert V\vert^{q/2}\big)\\&\lesssim_q a  v^{-3} n^{-q/2}\sum_{i = 1}^n \E \|Y_i\|^q + a^{1-q/2}.
\end{align*}
Selecting $a = v^{6/q} \big(n^{-q/2}\sum_{i = 1}^n\E \|Y_i\|^{q} \big)^{-2/q}$, the claim follows.
\end{proof}

\begin{lemma}\label{lem:W1:approx}
Consider Setting \ref{ass:clts:setup}. Suppose in addition that $\E\|Z\|^2\leq 1$ and $|T|\leq C_T$ almost surely. Then for any $u,s > 0$, we have
\begin{align*}
\mathbf{W}\big(T,\|Z\|^2\big) \lesssim_s  \mathbf{W}\big(S+V,\|Z\|^2+V\big) + v + C_T \P(\vert T-S \vert>u) +u+ C_T^{1-s}.
\end{align*}
Here, $V \sim \mathcal{N}(0,v^2)$, $v \in (0,1]$, is independent of $S$, $T$, $Z$.
\end{lemma}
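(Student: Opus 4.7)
My plan is to use the integral representation $\mathbf{W}(X,Y) = \int_\R |F_X(t) - F_Y(t)|\,dt$ of the $1$-Wasserstein metric and split the integration at the bound $\pm C_T$. Since $T \in [-C_T, C_T]$ almost surely and $\|Z\|^2 \geq 0$, the integrand vanishes on $(-\infty, -C_T)$. On $(C_T, \infty)$ we have $F_T(t)=1$, so the integrand equals $\P(\|Z\|^2 > t)$; combining Markov's inequality with the Gaussian moment bound $\E\|Z\|^{2s} \lesssim_s (\E\|Z\|^2)^s \leq 1$ (valid for the centered Gaussian $Z$ with $\E\|Z\|^2 \leq 1$) yields
\begin{align*}
\int_{C_T}^\infty \P(\|Z\|^2 > t)\,dt \;\leq\; \E\|Z\|^{2s}\int_{C_T}^\infty t^{-s}\,dt \;\lesssim_s\; C_T^{1-s}
\end{align*}
for $s>1$ (while for $s \in (0,1]$ the claimed bound is trivial whenever $C_T \geq 1$).

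On the bulk $[-C_T, C_T]$ I insert three intermediate laws via the triangle inequality:
\begin{align*}
\mathbf{W}(T, \|Z\|^2)\big|_{[-C_T,C_T]} \;\leq\; &\mathbf{W}(T, T+V)\big|_{[-C_T,C_T]} + \mathbf{W}(T+V, S+V)\big|_{[-C_T,C_T]}\\ &+ \mathbf{W}(S+V, \|Z\|^2+V)\big|_{[-C_T,C_T]} + \mathbf{W}(\|Z\|^2+V, \|Z\|^2)\big|_{[-C_T,C_T]},
\end{align*}
where the restriction to $[-C_T,C_T]$ is harmless since $|F_X - F_Y| \geq 0$. The two smoothing terms are each dominated by the full Wasserstein distance and hence by $\E|V| \lesssim v$ (through the trivial coupling with the same $T$, resp.\ $\|Z\|^2$), while the third piece is dominated by $\mathbf{W}(S+V, \|Z\|^2+V)$, which already appears in the statement.

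The remaining and main step is controlling $\int_{-C_T}^{C_T} |F_{T+V}(t) - F_{S+V}(t)|\,dt$. The key idea is to couple both sides with the same $V$ (independent of $T$ and $S$), which gives the pointwise estimate $|F_{T+V}(t) - F_{S+V}(t)| \leq \E|\ind_{T+V\leq t} - \ind_{S+V\leq t}|$. Applying Fubini, for each realization the inner integral
\begin{align*}
\int_{-C_T}^{C_T} \big|\ind_{T+V\leq t} - \ind_{S+V\leq t}\big|\,dt
\end{align*}
equals the length of the interval $[\min(T+V, S+V),\, \max(T+V, S+V)] \cap [-C_T, C_T]$ and is thus bounded by $\min(2C_T, |T-S|)$. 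Splitting according to $\{|T-S| \leq u\}$ and its complement then yields
\begin{align*}
\E \min(2C_T, |T-S|) \;\leq\; u + 2C_T\,\P(|T-S| > u),
\end{align*}
and summing all the above contributions produces the claimed bound. The only delicate point is this last step, where the trick is to realize that integrating an indicator difference over a bounded window automatically provides both the $|T-S|$ and the $2C_T$ control needed to convert a tail probability into an $L^1$ estimate without invoking uncontrolled moments of $S$.
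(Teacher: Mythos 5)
Your proof is correct, but it is organized differently from the paper's. The paper stays at the level of the Wasserstein metric itself: it sets $\mathcal{E}=\{|T-S|\le u\}$, inserts the truncated variables $T\ind_{\mathcal{E}}$, $S\ind_{\mathcal{E}}$, $\|Z\|^2\ind_{\mathcal{E}}$ via the triangle inequality, bounds $\mathbf{W}(X,X\ind_{\mathcal{E}})\le\E|X|\ind_{\mathcal{E}^c}$ (which produces $C_T\P(\mathcal{E}^c)$ and, via $\E\|Z\|^2\ind_{\|Z\|^2\ge C_T}\lesssim_s C_T^{-s}$, the tail term), uses the coupling bound $\mathbf{W}(S\ind_{\mathcal{E}},T\ind_{\mathcal{E}})\le u$, and only at the end adds and subtracts $V$ to reach $\mathbf{W}(S+V,\|Z\|^2+V)+2\E|V|$. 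You instead work with the integral representation $\int_{\R}|F_X-F_Y|\,dt$, split the domain at $\pm C_T$ using $|T|\le C_T$, and handle the $T$-versus-$S$ comparison by Fubini on the indicator difference over the bounded window, which yields $\E\min(2C_T,|T-S|)\le u+2C_T\P(|T-S|>u)$ directly. Your route has the minor advantage of bypassing the paper's slightly delicate step $\mathbf{W}(S\ind_{\mathcal{E}},\|Z\|^2\ind_{\mathcal{E}})\le\mathbf{W}(S,\|Z\|^2)$ (the truncation event depends on $(T,S)$, not on $Z$), at the cost of a less modular argument. Both proofs implicitly use $C_T\gtrsim 1$ when converting the tail estimate into $C_T^{1-s}$ (the paper passes from $C_T^{-s}$ to $C_T^{1-s}$, you invoke triviality for $s\le 1$); this is harmless in the applications, where $C_T=2|\J|n/A_{\J}$ is large.
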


\begin{proof}[Proof of Lemma \ref{lem:W1:approx}]
Let $\mathcal{E} = \{|T-S|\leq u \}$. By the triangle inequality and $\ind_{\mathcal{E}}\leq 1$, we have
\begin{align*}
\mathbf{W}\big(T,\|Z\|^2\big)& \leq \mathbf{W}\big(T,T\ind_{\mathcal{E}} \big) + \mathbf{W}\big(\|Z\|^2,\|Z\|^2\ind_{\mathcal{E}}\big) + \mathbf{W}\big(T\ind_{\mathcal{E}},\|Z\|^2\ind_{\mathcal{E}}\big)\\& \leq
\E \big\vert T\big\vert\ind_{\mathcal{E}^c} + \E \big\|Z\big\|^2\ind_{\mathcal{E}^c} +  \mathbf{W}\big(S\ind_{\mathcal{E}},\|Z\|^2\ind_{\mathcal{E}}\big) + \mathbf{W}\big(S\ind_{\mathcal{E}},T\ind_{\mathcal{E}} \big)\\&\leq
\E \big\vert T\big\vert\ind_{\mathcal{E}^c} + \E \big\|Z\big\|^2\ind_{\mathcal{E}^c} +  \mathbf{W}\big(S,\|Z\|^2\big) + u.
\end{align*}
Since again by the triangle inequality
\begin{align*}
    \mathbf{W}\big(S,\|Z\|^2\big) &\leq  \mathbf{W}\big(S+V,\|Z\|^2+V\big) + 2 \E \big\vert V\big\vert,
\end{align*}
we get
\begin{align*}
\mathbf{W}\big(T,\|Z\|^2\big) &\leq \E \big\vert T\big\vert\ind_{\mathcal{E}^c} + \E \big\|Z\big\|^2\ind_{\mathcal{E}^c} +  u + \mathbf{W}\big(S+V,\|Z\|^2+V\big) + 2\E \big\vert V\big\vert.
\end{align*}
Since $\E \|Z\|^2 \leq  1$, it follows that $\E \|Z\|^{2s+2} \lesssim_s 1$ for all $s>0$ (cf.~Lemma \ref{lem:moment:computation:KL}) and thus
\begin{align*}
\E \big\|Z\big\|^2 \ind_{\|Z\|^2 \geq C_T} &\leq C_T^{-s}\E \|Z\big\|^{2s+2} \lesssim_s C_T^{-s}.
\end{align*}
Using this bound, we arrive at
\begin{align*}
\E \big\vert T\big\vert\ind_{\mathcal{E}^c} + \E \big\|Z\big\|^2\ind_{\mathcal{E}^c} &\leq 2C_T \P\big(\mathcal{E}^c\big) + \E \big\|Z\big\|^2 \ind_{\|Z\|^2 \geq C_T} \\&\lesssim_s C_T \P\big(\mathcal{E}^c\big) + C_T^{-s},
\end{align*}
and the claim follows from $\E\vert V\vert  \leq v$.
\end{proof}

\begin{proof}[Proof of Proposition \ref{prop:clt:II}]
The claim follows by combining Lemmas \ref{lem:be:3} and \ref{lem:W1:approx} and balancing with respect to $v$. 
\end{proof}

\begin{lemma}\label{lem:BE:gauss}
Let $Y_1,\dots,Y_n$ be independent random variables satisfying $\E Y_i = 0$, $\E Y_i^2 = 1$ and $\E \vert Y_i\vert^3 < \infty$. For $a_i \in \R$, let $A_n = \sum_{i = 1}^n a_i^2$. Then
\begin{align*}
\mathbf{U}\Big(A_n^{-1/2}\sum_{i = 1}^n a_i Y_i , G\Big) \lesssim A_n^{-3/2} \sum_{i = 1}^n \vert a_i\vert ^3 \E \vert Y_i\vert ^3.
\end{align*}
\end{lemma}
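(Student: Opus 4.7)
The plan is to reduce the claim to the classical (non-identically distributed) Berry–Esseen theorem for independent real-valued random variables. I would first introduce the rescaled summands
\begin{align*}
Z_i = A_n^{-1/2} a_i Y_i, \qquad i=1,\dots,n,
\end{align*}
and simply verify the three assumptions of the classical Berry–Esseen bound: the $Z_i$ are independent, they satisfy $\E Z_i = 0$ (since $\E Y_i=0$), and $\sum_{i=1}^n \E Z_i^2 = A_n^{-1}\sum_{i=1}^n a_i^2 \E Y_i^2 = 1$ (since $\E Y_i^2 = 1$).

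Next I would invoke the classical Berry–Esseen theorem in its standard non-iid form, which gives
\begin{align*}
\sup_{x\in\R}\Big|\P\Big(\sum_{i=1}^n Z_i \leq x\Big) - \P(G\leq x)\Big| \leq C\sum_{i=1}^n \E|Z_i|^3,
\end{align*}
for a universal constant $C>0$. Plugging in the definition of $Z_i$ gives $\E|Z_i|^3 = A_n^{-3/2} |a_i|^3 \E|Y_i|^3$, and summation over $i$ immediately yields the stated bound, since $\sum_{i=1}^n Z_i = A_n^{-1/2}\sum_{i=1}^n a_i Y_i$.

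There is essentially no obstacle: this is a one-line reduction to a textbook result (e.g.\ \cite{fellervolume2}, Chapter XVI), and the role of the lemma in the paper is simply to package the classical Berry–Esseen bound in a form that is convenient for the proofs of Propositions \ref{prop:clt:V} and \ref{prop:clt:III}, where it will be applied (conditionally on $Z$ via the representation \eqref{eq_limit_empirical_covariance_operator}) to quadratic forms in the Karhunen–Loève coefficients.
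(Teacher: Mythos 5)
Your proposal is correct and matches the paper's proof, which likewise dismisses the lemma as an immediate consequence of the classical non-identically distributed Berry--Esseen theorem (citing Petrov's book rather than Feller). The rescaling to $Z_i = A_n^{-1/2}a_iY_i$ and the verification of the unit-variance normalization are exactly the intended one-line reduction.
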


\begin{proof}[Proof of Lemma \ref{lem:BE:gauss}]
This immediately follows from a general version of the Berry-Esseen Theorem for non-identically distributed random variables, see e.g.~\cite{petrov_book_1995}.
\end{proof}

\begin{lemma}\label{lem:be:unif:approx:2}
Let $G \sim \mathcal{N}(0,1)$ and $T$, $S$ be real-valued random variables. Then, for any $a,b > 0$, $c \in \R$,
\begin{align*}
\mathbf{U}\big((T-c)/b, G\big) \lesssim\mathbf{U}\big((S-c)/b, G\big) + \P\big(\vert T-S\vert > a\big) + a/b.
\end{align*}
\end{lemma}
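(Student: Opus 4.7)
The plan is to use the standard truncation trick together with anti-concentration of the Gaussian. Fix $x \in \mathbb{R}$; the goal is to bound $\bigl|\P((T-c)/b \leq x) - \P(G \leq x)\bigr|$ uniformly in $x$, i.e.\ to bound $\P(T \leq c + bx) - \P(G \leq x)$ from both sides by the right-hand side of the claimed inequality.

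For the upper bound, I would split on the event $\{|T-S| \leq a\}$ and its complement to get
\[
\P(T \leq c + bx) \leq \P(S \leq c + bx + a) + \P(|T-S| > a) = \P\bigl((S-c)/b \leq x + a/b\bigr) + \P(|T-S|>a).
\]
Adding and subtracting $\P(G \leq x + a/b)$, the first term on the right is bounded by $\P(G \leq x+a/b) + \mathbf{U}((S-c)/b, G)$. The lower bound proceeds symmetrically, using $\P(T \leq c+bx) \geq \P(S \leq c + bx - a) - \P(|T-S|>a)$ and then comparing with $\P(G \leq x - a/b)$.

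The only remaining ingredient is Gaussian anti-concentration: for any $y \in \mathbb{R}$ and $t > 0$,
\[
\bigl|\P(G \leq y + t) - \P(G \leq y)\bigr| \leq \frac{t}{\sqrt{2\pi}} \lesssim t,
\]
since the standard normal density is bounded by $1/\sqrt{2\pi}$. Applying this with $t = a/b$ absorbs the shifts $\pm a/b$ into an $a/b$ error term. Combining both directions and taking $\sup_x$ yields the stated inequality. There is no real obstacle here — the proof is a routine three-line argument — so I would simply present it cleanly in the order (i) truncation inequality, (ii) triangle inequality with $\mathbf{U}((S-c)/b,G)$, (iii) Gaussian anti-concentration.
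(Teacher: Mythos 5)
Your proof is correct and follows the same route as the paper's: the truncation inequality $\P(T \leq x) \leq \P(S \leq x+a) + \P(|T-S|>a)$ (and its lower-bound counterpart), the triangle inequality introducing $\mathbf{U}((S-c)/b, G)$, and the Lipschitz continuity of the standard Gaussian distribution function to absorb the shift $a/b$. The paper's proof is exactly this three-step argument, stated even more tersely.
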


\begin{proof}[Proof of Lemma \ref{lem:be:unif:approx:2}]
Recall first that the uniform metric $\mathbf{U}$ is invariant with respect to affine transformations of the underlying random variables.
Next, note that
\begin{align*}
\P\big(T \leq x\big) \leq \P\big(S \leq x + a\big) + \P\big(\vert T-S\vert > a\big).
\end{align*}
A corresponding lower bound is also valid. Therefore, using the above mentioned affine invariance and the fact that the distribution function of $G$ is Lipschitz continuous, the claim follows.
\end{proof}

Next, we recall the following smoothing inequality (cf.~\cite{senatov:book}, Lemma 4.2.1).

\begin{lemma}\label{lem:smoothing:senatov}
Let $X$ be a real-valued random variable, $G \sim \mathcal{N}(0,\sigma^2)$, $\sigma > 0$, and $G_{\epsilon} \sim \mathcal{N}(0,\epsilon^2)$, $\epsilon > 0$, independent of $X,G$. Then there exits an absolute constant $C$, such that
\begin{align*}
\mathbf{U}\big(X, G \big) \leq \mathbf{U}\big(X + G_{\epsilon}, G + G_{\epsilon} \big) + C \frac{\epsilon}{\sigma}.
\end{align*}
\end{lemma}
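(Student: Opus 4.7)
This is a classical Gaussian smoothing inequality (Lemma 4.2.1 in Senatov's monograph). My plan is to transfer information from the smoothed difference $F_\epsilon-H_\epsilon$ back to the unsmoothed one $F-H$, using the Lipschitz regularity of the Gaussian distribution function.

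Let $F$ and $H$ denote the distribution functions of $X$ and $G$, and write $F_\epsilon = F * \phi_\epsilon$, $H_\epsilon = H * \phi_\epsilon$ for their convolutions with the density $\phi_\epsilon$ of $G_\epsilon$; these are exactly the c.d.f.s of $X + G_\epsilon$ and $G + G_\epsilon$. Set $D := \sup_x |F(x) - H(x)|$ and, without loss of generality, assume $F(x_0) - H(x_0) = D \geq 0$ for some $x_0$; the opposite sign follows by considering $-X,-G$. Next, I would pick $b > 0$ and write
\begin{align*}
F_\epsilon(x_0+b) - H_\epsilon(x_0+b) = \int \bigl(F(x_0+b-t)-H(x_0+b-t)\bigr)\phi_\epsilon(t)\,dt.
\end{align*}
Combining the monotonicity bound $F(x_0+b-t)\geq F(x_0)$ valid for $t\leq b$, the trivial estimate $F-H\geq -D$, and the Lipschitz property of $H$ with constant $(2\pi\sigma^2)^{-1/2}$ (the sup norm of the Gaussian density), a short computation gives
\begin{align*}
F_\epsilon(x_0+b) - H_\epsilon(x_0+b) \;\geq\; D\bigl(2\Phi(b/\epsilon)-1\bigr) - \frac{b\,\Phi(b/\epsilon)+\epsilon\,\phi(b/\epsilon)}{\sigma\sqrt{2\pi}},
\end{align*}
where $\Phi$ and $\phi$ are the standard normal c.d.f.\ and density. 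Since the left-hand side is bounded by $\mathbf{U}(X+G_\epsilon,G+G_\epsilon)$, rearranging and optimizing in $b$ (e.g.\ $b\asymp \epsilon\sqrt{\log(\sigma/\epsilon)}$) produces an estimate $D\lesssim \mathbf{U}(X+G_\epsilon,G+G_\epsilon) + (\epsilon/\sigma)\sqrt{\log(\sigma/\epsilon)}$, which captures the correct scaling in $\epsilon/\sigma$ up to a logarithm.

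The main obstacle is precisely the removal of this logarithm, so as to obtain the clean leading constant $1$ in front of $\mathbf{U}(X+G_\epsilon,G+G_\epsilon)$ and an additive error of order exactly $\epsilon/\sigma$. This sharpening cannot be squeezed out of the Lipschitz property of $H$ alone; Senatov's proof goes through Prawitz's inversion formula, exploiting the exponential decay of the characteristic function of $G_\epsilon$. As the statement is invoked here as a known result from the literature, I would cite Senatov for this refinement and view the Lipschitz-based calculation above as the heuristic explanation of why the $\epsilon/\sigma$ scaling is the correct one.
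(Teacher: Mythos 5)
Your proposal ends exactly where the paper does: Lemma \ref{lem:smoothing:senatov} is not proved in the paper but simply recalled from \cite{senatov:book} (Lemma 4.2.1), and you likewise defer to that reference for the sharp $\epsilon/\sigma$ bound. Your preliminary Lipschitz-based computation, which you correctly identify as losing a logarithmic factor, is a reasonable heuristic but plays no role in the paper's treatment, so the approaches coincide.
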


\begin{proof}[Proof of Proposition \ref{prop:clt:III}]
Due to Lemma \ref{lem:be:unif:approx:2} (with $c = A$, $b = B$), it suffices to derive a bound for
\begin{align*}
\mathbf{U}\big((S - A)/B, G\big).
\end{align*}
For any $\epsilon > 0$, let $G_{\epsilon} \sim \mathcal{N}(0,\epsilon^2)$. By Lemma \ref{lem:smoothing:senatov} and the triangle inequality, we get
\begin{align*}
&\mathbf{U}\big((S - A)/B, G\big) \\&\lesssim \mathbf{U}\big((S - A)/B + G_{\epsilon}, G+G_{\epsilon}\big) + \epsilon \\&\lesssim \mathbf{U}\big((S - A)/B+ G_{\epsilon}, (\|Z\|^2 - A)/B + G_{\epsilon}\big) \\&+ \mathbf{U}\big((\|Z\|^2 - A)/B + G_{\epsilon}, G + G_{\epsilon}\big) + \epsilon.
\end{align*}
By the affine invariance of $\mathbf{U}$, we have
\begin{align*}
&\mathbf{U}\big((S - A)/B + G_{\epsilon}, (\|Z\|^2 - A)/B + G_{\epsilon}\big) \\&=
\mathbf{U}\big(S/A+ G_{\epsilon}B/A,\|Z\|^2/A+ G_{\epsilon}B/A \big).
\end{align*}
Setting $V = G_{\epsilon}B/A$ implies $v^2 = \E V^2 =  (B \epsilon/A)^2 $. Hence an application of Lemma \ref{lem:be:1} yields
\begin{align*}
\mathbf{U}\big(S/A+ G_{\epsilon}B/A,\|Z\|^2/A+ G_{\epsilon}B/A \big) \lesssim \Big(\frac{A }{\epsilon B} \Big)^3 n^{-q/2}\sum_{i = 1}^n \frac{\E \|Y_i\|^q}{A^{q/2}}.
\end{align*}
On the other hand, by the regularity of $\mathbf{U}$ (or explicitly by a simple conditioning argument) and Lemma \ref{lem:BE:gauss} (applied with $n=1$), we obtain the bound
\begin{align*}
\mathbf{U}\big((\|Z\|^2 - A)/B + G_{\epsilon}, G + G_{\epsilon}\big) & \leq \mathbf{U}\big((\|Z\|^2 - A)/B, G\big) \lesssim \frac{C^3}{B^3}.
\end{align*}
Piecing everything together, the claim follows.
\end{proof}

For the next result, recall the definition of $\lambda_{1,j}(\Psi)$, given in \eqref{defn:lambda:product}.

\begin{lemma}[Lemma 3 in \cite{naumov_spokoiny_ptrf2019}]\label{lem:anticoncentration:spok}
Let $Z\in \mathcal{H}$ be a Gaussian random variable with $\E Z = 0$ and covariance operator $\Psi$. Then, for any $a>0$,
\begin{align*}
\sup_{x > 0}\P\big(x < \|Z\|^2 < x + a \big) \leq \frac{a}{\sqrt{\lambda_{1,2}(\Psi)}}.
\end{align*}
In particular, for any $x \geq 0$, we have
\begin{align*}
\limsup_{a \to 0} \Big\vert\frac{\P\big(\|Z\|^2 < x + a \big) - \P\big(\|Z\|^2 < x\big)}{a}\Big\vert \leq \frac{1}{\sqrt{\lambda_{1,2}(\Psi)}},
\end{align*}
that is, we have a uniform upper bound for the density of $\|Z\|^2$.
\end{lemma}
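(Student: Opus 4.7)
My plan is to reduce the anti-concentration question to a two-dimensional density calculation by isolating the two leading Karhunen--Lo\`eve components. If $\lambda_2(\Psi) = 0$ the right-hand side is $+\infty$ and the statement is vacuous, so I may assume $\lambda_1(\Psi) \geq \lambda_2(\Psi) > 0$. Using the spectral representation, write $Z = \sum_{j\geq 1}\sqrt{\lambda_j(\Psi)}\,\xi_j e_j$ with $(e_j)$ an orthonormal eigenbasis and $(\xi_j)$ i.i.d.\ standard Gaussians. Set
$$W = \lambda_1(\Psi)\xi_1^2 + \lambda_2(\Psi)\xi_2^2, \qquad R = \sum_{j\geq 3}\lambda_j(\Psi)\xi_j^2,$$
so that $\|Z\|^2 = W+R$ and $W$, $R$ are independent. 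Conditioning on $R$ gives
$$\P\bigl(x < \|Z\|^2 < x+a\bigr) = \E\bigl[\P(x-R < W < x+a-R \mid R)\bigr] \leq a\cdot \sup_{w\geq 0}f_W(w),$$
reducing the problem to a uniform bound on the density $f_W$ of $W$.

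The main (and really only) computational step is to bound $f_W$. I would use the anisotropic polar change of variables $\xi_1 = r\cos\theta/\sqrt{\lambda_1(\Psi)}$, $\xi_2 = r\sin\theta/\sqrt{\lambda_2(\Psi)}$ for $r\geq 0$ and $\theta\in[0,2\pi)$, under which $W=r^2$ and a direct computation of the Jacobian gives $d\xi_1\, d\xi_2 = r/\sqrt{\lambda_1(\Psi)\lambda_2(\Psi)}\, dr\, d\theta$. Pushing the standard Gaussian density forward, switching from $r$ to $w=r^2$, and integrating out $\theta$ yields
$$f_W(w) = \frac{1}{4\pi\sqrt{\lambda_{1,2}(\Psi)}} \int_0^{2\pi} \exp\!\Bigl(-\tfrac{w}{2}\bigl(\tfrac{\cos^2\theta}{\lambda_1(\Psi)} + \tfrac{\sin^2\theta}{\lambda_2(\Psi)}\bigr)\Bigr)\, d\theta \;\leq\; \frac{1}{2\sqrt{\lambda_{1,2}(\Psi)}},$$
where I just bound the integrand by $1$. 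Combining with the previous display establishes the first claim (with a factor $1/2$ to spare).

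The second claim then follows immediately: for $x \geq 0$ and $a > 0$ one has
$$\frac{\bigl|\P(\|Z\|^2 < x+a)-\P(\|Z\|^2 < x)\bigr|}{a} = \frac{\P(x<\|Z\|^2<x+a)}{a} \leq \frac{1}{\sqrt{\lambda_{1,2}(\Psi)}},$$
and passing to the limsup as $a\downarrow 0$ gives the stated upper bound on the density of $\|Z\|^2$ (the case of $a\uparrow 0$, relevant only when $x>0$, is symmetric and uses that eventually $x+a>0$). There is no genuine obstacle here: the argument is entirely two-dimensional once one exploits the independence of $W$ and $R$, and the only point requiring care is the correct Jacobian in the anisotropic polar substitution.
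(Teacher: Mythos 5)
The paper does not prove this lemma at all: it is imported verbatim as Lemma~3 of \cite{naumov_spokoiny_ptrf2019}, so there is no in-paper argument to compare against. Your self-contained proof is correct. The reduction is sound: after discarding the vacuous case $\lambda_2(\Psi)=0$, the Karhunen--Lo\`eve coefficients $\xi_j$ are indeed i.i.d.\ standard normal, $W=\lambda_1(\Psi)\xi_1^2+\lambda_2(\Psi)\xi_2^2$ and $R=\sum_{j\geq 3}\lambda_j(\Psi)\xi_j^2$ are independent, and conditioning on $R$ correctly reduces the concentration bound to $\sup_w f_W(w)$. The Jacobian $r/\sqrt{\lambda_1(\Psi)\lambda_2(\Psi)}$ of the anisotropic polar substitution is right, and the resulting formula for $f_W$ shows it is decreasing in $w$ with $f_W(0^+)=1/(2\sqrt{\lambda_{1,2}(\Psi)})$, so your bound is in fact sharp and beats the stated constant by a factor of $2$. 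The passage to the second claim is fine as well (the only implicit step, $\P(\|Z\|^2=x)=0$, follows from the boundedness of the density you just established). This is arguably a cleaner route than the characteristic-function/concentration-function machinery one might otherwise reach for, since it avoids any integrability issues coming from only two nondegenerate directions.
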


\begin{lemma}[Lemma 2 in \cite{naumov_spokoiny_ptrf2019}]\label{lem:gaussian:comparison:spok}
For $i \in \{1,2\}$, let $Z_i$ be Gaussian with $\E Z_i = 0$ and covariance operator $\Psi_i$. Then
\begin{align*}
\mathbf{U}(\|Z_1\|^2,\|Z_2\|^2)\lesssim \Big(\frac{1}{\sqrt{\lambda_{1,2}(\Psi_1) }} +  \frac{1}{\sqrt{\lambda_{1,2}(\Psi_2)}} \Big)\big\|\Psi_1 - \Psi_2\big\|_1.
\end{align*}
\end{lemma}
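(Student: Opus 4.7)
The plan is to realize both Gaussians on a common probability space and then reduce the uniform-metric estimate to a question about the density of $\|Z_2\|^2$ via anti-concentration. Concretely, using the canonical isonormal representation of $\mathcal{H}$-valued Gaussians, I would fix a single standard Gaussian $G$ and set $Z_j=\Psi_j^{1/2}G$, so that both marginals are preserved and
\[
\|Z_1\|^2-\|Z_2\|^2=\langle DG,G\rangle, \qquad D:=\Psi_1-\Psi_2.
\]
Diagonalising $D=\sum_k\mu_k\, v_k\otimes v_k$ in an ONB makes $\langle DG,G\rangle=\sum_k\mu_k\zeta_k^2$ with $\zeta_k=\langle v_k,G\rangle$ i.i.d.\ $\mathcal{N}(0,1)$, so in particular $\E|\langle DG,G\rangle|\leq\sum_k|\mu_k|=\|D\|_1$.

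The simplest follow-up combines this coupling with Lemma \ref{lem:anticoncentration:spok}: for any $a>0$,
\[
F_1(x)\leq \P(\|Z_2\|^2\leq x+a)+\P(\langle DG,G\rangle<-a)\leq F_2(x)+\tfrac{a}{\sqrt{\lambda_{1,2}(\Psi_2)}}+\tfrac{\|D\|_1}{a},
\]
and symmetrically in the other direction. This already gives a sublinear rate after optimising in $a$, but only $\sqrt{\|D\|_1/\sqrt{\lambda_{1,2}}}$, not the linear one claimed.

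To upgrade this to the linear bound, I would replace the crude Markov step by a smoothing/interpolation argument. Introduce $\Psi_t=(1-t)\Psi_2+t\Psi_1$ and a mollified indicator $\phi_\epsilon=\mathbf{1}_{(-\infty,x]}*\rho_\epsilon$. With $V_t:=\langle\Psi_tG,G\rangle$, exploiting linearity of $V_t$ in $t$,
\[
\E\phi_\epsilon(\|Z_1\|^2)-\E\phi_\epsilon(\|Z_2\|^2)=\int_0^1\E\bigl[\phi_\epsilon'(V_t)\,\langle DG,G\rangle\bigr]\,dt.
\]
Applying Gaussian integration by parts to transfer both copies of $G$ in $\langle DG,G\rangle$ onto $\phi_\epsilon'$, the integrand becomes
\[
\mathrm{tr}(D)\,\E\phi_\epsilon'(V_t)+2\,\mathrm{tr}(D\Psi_t)\,\E\phi_\epsilon''(V_t)+4\,\E\bigl[\phi_\epsilon'''(V_t)\,\langle \Psi_tD\Psi_tG,G\rangle\bigr],
\]
and sending $\epsilon\downarrow0$ turns each $\E\phi_\epsilon^{(k)}(V_t)$ into a value of the density (or derivatives thereof) of $V_t$ at $x$, which is controlled by Lemma \ref{lem:anticoncentration:spok}. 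The quantities $|\mathrm{tr}(D)|$ and $|\mathrm{tr}(D\Psi_t)|$ are dominated by $\|D\|_1$ (using $\|\Psi_t\|_\infty\lesssim 1$ under standard normalisation), while the smoothing error contributes at most $\epsilon/\sqrt{\lambda_{1,2}(\Psi_j)}$, matched to the main term.

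The main obstacle is twofold. First, the Gaussian IBP is clean when $D$ and $\Psi_t$ commute but needs a careful spectral-basis expansion in the general non-commuting case, because the higher-order derivative terms above are cross-correlated. Second, the bound must be expressed in terms of $\lambda_{1,2}(\Psi_1)$ and $\lambda_{1,2}(\Psi_2)$ rather than the potentially smaller $\inf_t\lambda_{1,2}(\Psi_t)$; I would handle this by splitting the interpolation at $t=\tfrac12$ and applying the anti-concentration bound at the nearer endpoint on each half, which yields the symmetric bound $\|D\|_1\bigl(1/\sqrt{\lambda_{1,2}(\Psi_1)}+1/\sqrt{\lambda_{1,2}(\Psi_2)}\bigr)$ as stated.
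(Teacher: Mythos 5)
This lemma is not proved in the paper at all -- it is imported verbatim from Naumov--Spokoiny--Ulyanov, so your attempt is being compared against their argument. Your coupling $Z_j=\Psi_j^{1/2}G$ and the observation $\E\vert\langle DG,G\rangle\vert\le\|D\|_1$ are fine, and you correctly diagnose that Markov plus anti-concentration only yields the square-root rate. The problem is the step you propose to close the gap. After the Gaussian integration by parts, the integrand contains $\E\phi_\epsilon''(V_t)$ and $\E\phi_\epsilon'''(V_t)$; as $\epsilon\downarrow 0$ these converge to the \emph{first and second derivatives} of the density of $V_t$ at $x$, not to the density itself. Lemma~\ref{lem:anticoncentration:spok} only bounds the density, by $\lambda_{1,2}(\Psi_t)^{-1/2}$; for a Gaussian quadratic form the $k$-th derivative of the density requires control of the product of roughly the top $2k+2$ eigenvalues (this is exactly why $\lambda_6$ and $\lambda_{1,6}$ appear in Proposition~\ref{prop:clt:V} and in \cite{SAZONOV1989304}). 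So your argument cannot close with a constant depending only on $\lambda_{1,2}$, which is the whole content of the claim. A second, smaller unjustified step is "$\|\Psi_t\|_\infty\lesssim 1$ under standard normalisation": no such normalisation is available, and $\operatorname{tr}(D\Psi_t)$ is not bounded by $\|D\|_1$ alone.

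The actual proof avoids differentiating the density entirely. One diagonalises both forms, couples through a single i.i.d.\ sequence $(\zeta_k)$ so that $\|Z_j\|^2=\sum_k\lambda_k(\Psi_j)\zeta_k^2$, and applies Esseen's smoothing inequality together with the explicit characteristic function of a Gaussian quadratic form: $\vert\E e^{\mathrm{i}t\sum_k a_k\zeta_k^2}\vert=\prod_k(1+4t^2a_k^2)^{-1/4}\le(1+4t^2a_1^2)^{-1/4}(1+4t^2a_2^2)^{-1/4}$. The difference of the two characteristic functions is bounded by $\vert t\vert\sum_k\vert\lambda_k(\Psi_1)-\lambda_k(\Psi_2)\vert$ times such a decay factor, the $t$-integral of the decay factor produces exactly the $\lambda_{1,2}^{-1/2}$ constants, and Lidskii's inequality gives $\sum_k\vert\lambda_k(\Psi_1)-\lambda_k(\Psi_2)\vert\le\|\Psi_1-\Psi_2\|_1$. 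If you want to repair your own scheme, you would need either this Fourier route or a conditioning argument that pays only $\E\vert\langle DG,G\rangle\vert\cdot\sup_x p_{V}(x)$, rather than pushing derivatives onto the test function.
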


\begin{lemma}\label{lem:Be:ulyanov+spok}
Consider Setting \ref{ass:clts:setup} with $q=3$. Then, we have
\begin{align*}
\mathbf{U}\Big(n^{-1}\big\|\sum_{i = 1}^n Y_i\big\|^2, \big\|Z \big\|^2 \Big) \lesssim n^{-3/5}\Big(\sum_{i = 1}^n \frac{\E \|Y_i\|^3}{\lambda_{1,2}^{3/4}(\Psi)}\Big)^{2/5}.
\end{align*}
\end{lemma}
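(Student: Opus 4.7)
The plan is to derive the bound by combining three ingredients already in place in the paper: the smoothed Berry--Esseen inequality of Lemma~\ref{lem:be:1} with $q = 3$, the anti-concentration (density) bound of Lemma~\ref{lem:anticoncentration:spok}, and an Esseen-type smoothing step in the spirit of Lemma~\ref{lem:smoothing:senatov}. Setting $S = n^{-1}\|\sum_{i=1}^n Y_i\|^2$ and introducing, for a parameter $v \in (0,1]$ to be optimised, an auxiliary variable $V \sim \mathcal{N}(0, v^2)$ independent of $Y_1,\dots,Y_n$ and $Z$, Lemma~\ref{lem:be:1} directly gives
\[
\mathbf{U}\bigl(S+V,\,\|Z\|^2+V\bigr) \;\lesssim\; v^{-3}\,n^{-3/2}\sum_{i=1}^n \E\|Y_i\|^3.
\]
Lemma~\ref{lem:anticoncentration:spok} states that the density of $\|Z\|^2$ is uniformly bounded by $1/\sqrt{\lambda_{1,2}(\Psi)}$, i.e.\ $F_{\|Z\|^2}$ is $(\lambda_{1,2}(\Psi))^{-1/2}$-Lipschitz. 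Using the Bhattacharya--Rao / Esseen smoothing inequality (the non-Gaussian analogue of Lemma~\ref{lem:smoothing:senatov}, where the role of $1/\sigma$ is played by the density bound), the auxiliary $V$ can be removed at the cost of the smoothing error
\[
\mathbf{U}\bigl(S,\,\|Z\|^2\bigr) \;\lesssim\; \mathbf{U}\bigl(S+V,\,\|Z\|^2+V\bigr) \;+\; \frac{v}{\sqrt{\lambda_{1,2}(\Psi)}}.
\]

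It then remains to optimise $v$ so as to balance the two resulting terms; the regime where the optimal $v$ would fall outside $(0,1]$ is handled by the trivial bound $\mathbf{U}\leq 1$, which is already dominated by the right-hand side in that regime.

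The main technical obstacle is a sharp version of this optimisation. A naive balance of $v^{-3}A + vB$, with $A = n^{-3/2}\sum\E\|Y_i\|^3$ and $B = 1/\sqrt{\lambda_{1,2}(\Psi)}$, produces only $A^{1/4}B^{3/4}$, which is strictly weaker than the advertised $A^{2/5}B^{3/5}$. To close this gap the smoothing step has to be refined so that its error scales like $v^{2}B$ rather than $vB$: this is achieved by exploiting that $V$ is centred, so that a second-order Taylor expansion of $F_{\|Z\|^2}$ annihilates the leading linear term and the smoothing error is reduced to a second-order modulus of continuity that can still be controlled by the $1/\sqrt{\lambda_{1,2}(\Psi)}$ density bound. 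The balance $v^{-3}A + v^{2}B$ then optimises at $v \asymp (A/B)^{1/5}$ and produces exactly $A^{2/5}B^{3/5}$, matching the claim. Justifying this refined smoothing inequality consistently with the density-only control provided by Lemma~\ref{lem:anticoncentration:spok} is the delicate point of the proof; the remainder is routine bookkeeping and triangle-inequality manipulations.
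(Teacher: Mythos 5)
Your reduction to the balance $v^{-3}A + v^{2}B$ rests on a smoothing inequality that you do not prove and that is, as stated, not available. The standard deconvolution step (Lemma \ref{lem:smoothing:senatov}, or Feller's/Bhattacharya--Rao's smoothing lemmas) costs $v\cdot\|f_{\|Z\|^2}\|_\infty$, and this first-order cost is governed by the \emph{spread} of the smoothing variable $V$, not by its mean: in the argument one must shift the evaluation point by an amount comparable to the standard deviation of $V$ to ensure that $V$ does not push the argument back past the near-maximizer of $\Delta(x)=\P(S\le x)-F_{\|Z\|^2}(x)$, and the resulting error is $\E[(cv-V)\ind_{V\le cv}]\cdot\|f_{\|Z\|^2}\|_\infty\asymp v\,\|f_{\|Z\|^2}\|_\infty$ whether or not $\E V=0$. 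Your second-order Taylor expansion of $F_{\|Z\|^2}$ only controls how adding $V$ perturbs the \emph{target} law $\|Z\|^2$; it says nothing about how adding $V$ perturbs $S$, whose distribution function has no known regularity (if it did, no smoothing would be needed). Even on the target side, a genuine second-order bound would require control of $F_{\|Z\|^2}''$, which Lemma \ref{lem:anticoncentration:spok} does not provide. So the step from $vB$ to $v^2B$ is a real gap, and without it your route caps out at $A^{1/4}B^{3/4}$, which you correctly identify as too weak.

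The paper reaches the exponent $2/5$ by a different, and simpler, device: no auxiliary Gaussian $V$ is introduced at all. One rescales the data by $a>0$ and applies Esseen's smoothing inequality directly with cut-off $t=1$, so that the error term is $\sup\text{-density of }\|Z/a\|^2$ divided by $t$. Since $\|Z/a\|^2=\|Z\|^2/a^2$, Lemma \ref{lem:anticoncentration:spok} gives the density bound $a^2/\sqrt{\lambda_{1,2}(\Psi)}$, i.e.\ the anti-concentration term improves \emph{quadratically} in $a$, while the characteristic-function term from \eqref{eq:lem:be:1:2} degrades only \emph{cubically}, as $a^{-3}\,n^{-3/2}\sum_i\E\|Y_i\|^3$. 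The uniform distance is invariant under this common rescaling, so balancing $Aa^{-3}$ against $a^2B$ at $a=(A/B)^{1/5}$ yields $A^{2/5}B^{3/5}$, which is exactly the claimed bound. If you want to salvage your write-up, replace the Gaussian convolution and the purported refined smoothing step by this rescaling argument.
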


\begin{proof}[Proof of Lemma \ref{lem:Be:ulyanov+spok}]
We argue very similarly as in the proof of Lemma \ref{lem:be:1}, using also the notation therein. By Esseen's smoothing inequality (cf.~\cite[Lemma 1, XVI.3]{fellervolume2}) together with Lemma \ref{lem:anticoncentration:spok}, we have for any $a > 0$
\begin{align}\nonumber
&\mathbf{U}\Big(n^{-1}\big\|\sum_{i = 1}^n (Y_i/a)^{1}\big\|^2, \big\|Z/a \big\|^2 \Big) \\&\nonumber \lesssim \int_{-t}^{t} \Big\vert\frac{\E e^{\mathrm{i}\xi n^{-1}\|\sum_{i = 1}^n (Y_i/a)^{1}\|^2} - \E e^{\mathrm{i}\xi \|Z/a\|^2}}{\xi}\Big\vert d \xi + \frac{a^2}{\sqrt{\lambda_{1,2}(\Psi)}t} \\&\lesssim t^3 n^{-3/2} \sum_{i = 1}^n \frac{\E \|Y_i\|^3}{a^{3}} + \frac{a^2}{\sqrt{\lambda_{1,2}(\Psi)}t}\label{eq:ES:two:eigenvalues}.
\end{align}
Setting $t=1$ and using also \eqref{eq:lem:be:1:1}, the claim follows from the (optimal) choice $a = (n^{-3/2} \sum_{i = 1}^n \E \|Y_i\|^3 \sqrt{\lambda_{1,2}(\Psi)})^{1/5}$.
\end{proof}

Finally, let us mention the following result of \cite{SAZONOV1989304}.

\begin{lemma}\label{lem:ulyanov:optim}
Consider Setting \ref{ass:clts:setup} with $q= 3$. Then
\begin{align*}
\mathbf{U}\Big(n^{-1}\big\|\sum_{i = 1}^n Y_i \big\|^2, \big\|Z\big\|^2\Big) &\lesssim \Big(n^{-1/2} \lambda_6^{-3}(\Psi) \frac{(A \, C)^3}{B^3}\Big)^{1 + 1/10} \\&+  n^{-1/2}\lambda_{1,6}^{-1}(\Psi) \frac{(A^2 \, C)^3}{B^3}.
\end{align*}
\end{lemma}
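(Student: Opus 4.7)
The statement is essentially a citation: it is a restatement of the optimal Berry--Esseen type inequality for Hilbert space-valued sums established in \cite{SAZONOV1989304}. Accordingly, my plan is not to reprove the bound but to explain how to invoke the reference and match notation. First, I would identify the quantities on the right-hand side with those appearing in \cite{SAZONOV1989304}: by \eqref{Psi:clt:identities} we have $A = \|\Psi\|_1$, $B = \sqrt{2}\|\Psi\|_2$, $C = 2\|\Psi\|_3$, so the factors $(AC)^3/B^3$ and $(A^2C)^3/B^3$ are expressible in terms of Schatten norms of $\Psi$. The appearance of $\lambda_6(\Psi)$ and $\lambda_{1,6}(\Psi)$ is exactly the classical phenomenon that at least six eigenvalues of the limiting Gaussian covariance must be non-degenerate for a rate of order $n^{-1/2}$ to hold in the central limit theorem on Hilbert space; this is acknowledged in the remark following Proposition \ref{prop:clt:V}.

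Second, I would verify the moment hypothesis. Setting \ref{ass:clts:setup} with $q=3$ provides $\sum_{i=1}^n \E\|Y_i\|^3 < \infty$ together with the centering $\E Y_i = 0$ and the identification of the limiting covariance $\Psi = n^{-1}\sum_i \E Y_i \otimes Y_i$. This is precisely the (non-identically distributed) third-moment framework covered by \cite{SAZONOV1989304}. Once this matching is in place, one reads off both summands directly from the main theorem of that paper.

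Third, I would comment briefly on the origin of the two terms and the exponent $1 + 1/10$, which is instructive for the reader but not strictly necessary. The argument in \cite{SAZONOV1989304} proceeds by an Esseen smoothing inequality combined with an Edgeworth-type expansion; the first summand is the contribution of the expansion, and the $1+1/10$ power reflects an internal optimization balancing a truncation parameter against higher-order remainder terms (a refined version of the balancing we carried out in Lemma \ref{lem:Be:ulyanov+spok}, where only two leading eigenvalues were used via the anti-concentration bound of Lemma \ref{lem:anticoncentration:spok}; here all six leading eigenvalues enter). The second summand originates from controlling the characteristic function $\xi \mapsto \E e^{\mathrm{i}\xi\|Z\|^2}$, whose decay rate along $\xi$ is governed by $\lambda_{1,6}(\Psi)^{-1/2}$.

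The main obstacle, were one to redo this from scratch rather than cite, would be the delicate Fourier-analytic bound on $\E e^{\mathrm{i}\xi\|Z\|^2}$ on a long interval of frequencies — this requires a careful stationary-phase-type analysis of the characteristic function of an infinite weighted sum of independent $\chi^2_1$ variables, and it is the technical heart of \cite{SAZONOV1989304}. Since that argument is fully carried out there, the proof here reduces to the notational identifications above and a direct quotation.
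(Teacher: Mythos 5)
Your proposal matches the paper exactly: Lemma \ref{lem:ulyanov:optim} is stated in the paper with no proof at all, only the attribution ``let us mention the following result of \cite{SAZONOV1989304}'', so treating it as a direct quotation with the notational identifications $A=\|\Psi\|_1$, $B=\sqrt{2}\|\Psi\|_2$, $C=2\|\Psi\|_3$ from \eqref{Psi:clt:identities} is precisely the intended argument. Your additional commentary on the origin of the two summands and the role of $\lambda_6(\Psi)$ and $\lambda_{1,6}(\Psi)$ is accurate and consistent with the remark following Proposition \ref{prop:clt:V}.
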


\begin{proof}[Proof of Proposition \ref{prop:clt:V}]
There exists a constant $C > 0$ such that, for all $x \geq 0$,
\begin{align*}
\P\big(T \leq x\big) &\leq \P\big(S \leq x + u\big) + \P\big(\vert T-S\vert  > u \big) \\&\leq \P\big(\|Z\|^2 \leq x + u\big) + \mathbf{U}\big(S,\|Z\|^2 \big) + \P\big(\vert T-S\vert  > u \big)
\\&\leq \P\big(\|Z\|^2 \leq x\big) + \frac{C u}{\sqrt{\lambda_{1,2}(\Psi)}} + \mathbf{U}\big(S,\|Z\|^2 \big) + \P\big(\vert T-S\vert  > u \big),
\end{align*}
where we used Lemma \ref{lem:anticoncentration:spok} in the last step. In the same way, one derives a corresponding lower bound. Hence, the first claim now follows from Lemma \ref{lem:Be:ulyanov+spok}. Using Lemma \ref{lem:ulyanov:optim} instead of Lemma \ref{lem:Be:ulyanov+spok}, the second claim follows.
\end{proof}

\subsection{Proofs for Section \ref{sec:concentration:inequalities}}\label{sec:proof:concentration:inequalities}

The following general result is from \cite{einmahl:tams:2008}.

\begin{lemma}[\cite{einmahl:tams:2008}]\label{lemma:einmahl}
Let $(B,\|\cdot\|)$ be a real separable Banach space with dual $B^*$ and let $B_1^*$ be the unit ball of $B^*$. Let $Z_1,\ldots,Z_n$ be independent $B$-valued random variables with mean zero such that for some $s > 2$, $\E \|Z_i\|^s < \infty$, $1 \leq i \leq n$.
Then we have for $0 < \nu \leq 1$, $\delta > 0$ and any $t > 0$,
\begin{align*}
\P\Big(\Big\|\sum_{i = 1}^n Z_i \Big\| \geq (1 + &\nu)\E \Big\|\sum_{i = 1}^n Z_i \Big\| + t \Big) \\&\leq C \sum_{i = 1}^n \E \big\| Z_i \big\|^s/t^s + \exp\Big(-\frac{t^2}{(2 + \delta) \varpi_n} \Big),
\end{align*}
where $\varpi_n = \sup\big\{ \sum_{i = 1}^n \E f^2(Z_i) \,\, : \,\, f \in B_1^* \big\}$ and $C$ is a positive constant depending on $\nu$, $\delta$ and $s$.
\end{lemma}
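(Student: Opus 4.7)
The plan is to prove this Banach-valued Fuk--Nagaev inequality by the classical truncation-plus-concentration route, combining a Talagrand-type exponential inequality for bounded summands with Hoffmann--Jørgensen type bounds to absorb the centering drift into the factor $(1+\nu)$.

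\textbf{Step 1 (truncation).} I would fix a truncation level proportional to $t$, say $u = t/K$ for $K = K(\nu,\delta,s)$ to be chosen, and split
\[
Z_i = Z_i' + Z_i'', \qquad Z_i' := Z_i \mathbf{1}_{\{\|Z_i\|\le u\}}, \qquad Z_i'' := Z_i \mathbf{1}_{\{\|Z_i\|> u\}}.
\]
The contribution of the ``large'' parts is controlled crudely: on the event $\{\max_i \|Z_i\|> u\}$ we pay at most $\sum_i \P(\|Z_i\|>u)\le u^{-s}\sum_i \E\|Z_i\|^s$, which already yields the polynomial term after re-labelling $u$ in terms of $t$. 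So only the truncated sum $\sum_i Z_i'$ needs a sharp bound.

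\textbf{Step 2 (centering and exponential bound).} Write $\sum_i Z_i' = \sum_i (Z_i' - \E Z_i') + \sum_i \E Z_i'$. Because $\E Z_i = 0$, we have $\E Z_i' = -\E Z_i''$, so $\|\sum_i \E Z_i'\|\le \sum_i \E \|Z_i\|\mathbf{1}_{\{\|Z_i\|>u\}}\le u^{1-s}\sum_i \E\|Z_i\|^s$, which can be absorbed into the polynomial tail by the choice of $K$. The centered bounded part $\sum_i(Z_i' - \E Z_i')$ is now a sum of independent, mean-zero, $B$-valued variables with $\|Z_i' - \E Z_i'\|\le 2u$ and weak variance bounded by $\varpi_n$ (the weak variance is monotone under truncation up to a harmless constant). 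I would apply Talagrand's exponential inequality for sums of bounded Banach-valued random variables to get, for every $r>0$,
\[
\P\Bigl(\bigl\|\sum_i(Z_i' - \E Z_i')\bigr\|\ge \E\bigl\|\sum_i(Z_i' - \E Z_i')\bigr\| + r\Bigr)\le \exp\!\Bigl(-\frac{r^2}{(2+\delta/2)\varpi_n + c\,u\,r}\Bigr),
\]
and choose $r$ proportional to $t$ so that the linear term $c\,u\,r \le (\delta/2)\varpi_n$ provided $t \le \varpi_n/(c u)$ (the opposite regime is handled by the polynomial term, since then the Bernstein regime kicks in trivially).

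\textbf{Step 3 (absorbing the mean into $(1+\nu)\E\|\sum Z_i\|$).} The remaining obstacle, and in my view the main technical difficulty, is that the exponential bound in Step 2 is centered around $\E\|\sum_i(Z_i'-\E Z_i')\|$ rather than the desired $\E\|\sum_i Z_i\|$. I would bridge the two via the triangle inequality and a Hoffmann--Jørgensen / de Acosta type inequality: after truncation, the expected norm of the bounded part differs from $\E\|\sum_i Z_i\|$ by a quantity that is itself dominated, for any $\nu>0$ and for $u$ chosen as in Step 1, by $\nu\,\E\|\sum_i Z_i\| + C(\nu)(\sum_i \E\|Z_i\|^s)^{1/s}$, where the latter is again absorbed into $t^{1-s}\sum_i\E\|Z_i\|^s$ on the good event. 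This is where $\nu$ is consumed and where the dependence of $C$ on $\nu,\delta,s$ arises.

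\textbf{Step 4 (assembly).} Adding the probabilities of the two bad events (large jumps and exponential tail deviation) produces exactly the stated bound. The reason to allow arbitrary $0<\nu\le 1$ and $\delta>0$ is precisely that Step~3 forces $\nu$ and Step~2 forces $\delta$; both shrink the constant $C$ only polynomially in their inverses, which is consistent with the formulation. I expect Step~3 to require the most care, since controlling $|\E\|\sum_i Z_i\| - \E\|\sum_i(Z_i'-\E Z_i')\||$ sharply is where Banach-space intricacies (lack of a scalar square function identity) force one to use moment inequalities for symmetric sums rather than elementary arithmetic.
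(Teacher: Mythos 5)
First, note that the paper itself does not prove this lemma: it is imported verbatim from Einmahl and Li (2008), so there is no in-paper argument to compare against. Your outline does reconstruct the standard route behind such Banach-space Fuk--Nagaev bounds (truncation at level $u\asymp t$, a Talagrand/Klein--Rio exponential inequality for the bounded centered part, triangle-inequality control of the truncation drift, absorption of small remainders into the polynomial term), and that is indeed the spirit of the cited proof.

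There is, however, one genuine gap. In Step 2 you restrict the exponential bound to the regime $t\le \varpi_n/(cu)$, i.e. $t^2\lesssim K\varpi_n$, and assert that ``the opposite regime is handled by the polynomial term, since then the Bernstein regime kicks in trivially.'' This is not justified and is exactly where the real work lies. In the complementary (Poissonian) regime $t^2\gg \varpi_n$, Talagrand's inequality with denominator $(2+\delta/2)\varpi_n + cur$ and $u\asymp r\asymp t$ only yields $\exp(-cK)$, a constant independent of $t$, while both terms of the claimed bound ($\sum_i\E\|Z_i\|^s/t^s$ and $\exp(-t^2/((2+\delta)\varpi_n))$) can simultaneously be arbitrarily small there; so nothing in your argument dominates the probability in that regime. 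Closing this requires a genuine $L^s$-moment (or quantile) bound for the truncated sum --- e.g. an iterated Hoffmann--J{\o}rgensen inequality giving $\E\|\sum_i Z_i'\|^s\lesssim_s (\E\|\sum_i Z_i'\|)^s+\sum_i\E\|Z_i\|^s$ followed by Markov at order $s$ --- and it is precisely here, not in your Step 3, that the slack $(1+\nu)\E\|\sum_i Z_i\|$ is consumed. A second, smaller issue: the sharp concentration inequalities for Banach-valued bounded sums carry an additional $u\,\E\|\sum_i(Z_i'-\E Z_i')\|$ term in the denominator of the exponent, which must also be absorbed into $(2+\delta)\varpi_n$ or into the $\nu$-slack; your sketch omits it. By contrast, your Step 3 as stated needs only the triangle inequality (the drift $\|\sum_i\E Z_i''\|\le u^{1-s}\sum_i\E\|Z_i\|^s$ is either $o(t)$ or forces the polynomial term to exceed a constant), so no Hoffmann--J{\o}rgensen machinery is needed there.
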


\begin{proof}[Proof of Lemma \ref{lem:fuknagaev:HS:nuclear:operator}]
\textit{(i)} Follows from applying Lemma \ref{lemma:einmahl} to the Hilbert space of all Hilbert-Schmidt operators equipped with the Hilbert-Schmidt norm (see also Lemma~1 in  \cite{jirak:wahl:pams:2020}).

\textit{(ii)} Let $B$ be the Banach space of all nuclear operators on $\mathcal{H}$ equipped with the nuclear norm. It is well-known that the dual space $B^*$ of $B$ is the Banach space of all bounded linear operators equipped with the operator norm $\|\cdot\|_\infty$. Moreover, for a bounded linear operator $S$, the corresponding functional is given by $T\mapsto \operatorname{tr}(TS)$. In order to apply Lemma~\ref{lemma:einmahl} with $Z_i=Y_i\otimes Y_i$ and $s=p>2$, it remains to bound all involved quantities. First, since the map $A \mapsto \operatorname{tr}(A^{1/2})$ is concave on the set of all positive self-adjoint trace-class operators, Jensen's inequality yields
\begin{align*}
\E \big\|\frac{1}{n}\sum_{i=1}^n\overline{Y_i\otimes Y_i}\big\|_1 &\leq \operatorname{tr}\Big(\E^{1/2}\Big(\frac{1}{n}\sum_{i=1}^n\overline{Y_i\otimes Y_i} \Big)^{2}\Big)= \frac{1}{\sqrt{n}} \operatorname{tr}\Big(\E^{1/2} (\overline{Y \otimes Y})^2\Big).
\end{align*}
Next, since $Y \otimes Y$ is rank-one, we have $\| Y \otimes Y \|_1=\| Y \|^2$. Hence, by the triangle inequality and Jensen's inequality, we have
\begin{align*}
\E \big\| \overline{Y \otimes Y} \big\|_1^{p} &\leq 2^{p-1} \E \big\|Y \otimes Y\big\|_1^{p} + 2^{p-1}\big\| \E Y \otimes Y\big\|_1^{p}\\&\leq 2^{p}  \E \big\|Y \otimes Y\big\|_1^{p} \leq 2^{p} \E \big\| Y \big\|^{2p}.
\end{align*}
Combining this with Lemma \ref{lem:moment:computation:KL}, we get
\begin{align}\label{eq:moment:bound:trace:norm}
\E \big\| \overline{Y \otimes Y} \big\|_1^p \leq 2^pC\|\vartheta\|_1^p.
\end{align}
Finally, by the above discussion, we have
\begin{align*}
    \varpi_n^2=  \sup_{\|S\|_{\infty} \leq  1} \E \operatorname{tr}^2(S  \overline{Y \otimes Y}).
\end{align*}
This completes the proof.

\textit{(iii)} It is also possible to apply Lemma \ref{lemma:einmahl} to the operator norm. Yet, since the involved expectation term is difficult to bound in this case, we proceed differently and apply techniques from~\cite{minsker17:aos} instead. Note that under Assumptions~\ref{ass_moments} and~\ref{ass:indep:4:moments} (which jointly imply $L^4-L^2$ norm equivalence) one can alternatively apply techniques from \cite{Z} and \cite{ZA}, leading to slightly better statements in terms of logarithmic terms.

Let $\psi: \R \to \R$ be the truncation function defined by
\begin{align*}
 \psi(x) = \operatorname{sign}(x) \big( |x| \wedge 1 \big).
\end{align*}
The following lemma follows from Theorem 3.2 in~\cite{minsker17:aos} and a standard approximation result.

\begin{lemma}\label{lem:minsker:1}
Let $(Z_i)$ with $Z_i \stackrel{d}{=} Z$ be a sequence of i.i.d.~compact self-adjoint random operators on $\mathcal{H}$, let $\tau_n^2 \geq n \|\E Z^2 \|_{\infty}$ and let $\theta>0$ be a truncation level. Then
\begin{align*}
\P\Big(\Big\|\sum_{i = 1}^n \Big(\frac{1}{\theta} \psi(\theta Z_i) - \E Z_i\Big) \Big\|_{\infty} \geq t \sqrt{n} \Big) \leq 2 \overline{d} \Big(1 + \frac{1}{\theta t \sqrt{n}} \Big) \exp\Big(-\theta t \sqrt{n} + \frac{\theta^2 \tau_n^2}{2}\Big),
\end{align*}
for any $t>0$, where $\overline{d} = \operatorname{tr}(\E Z^2 )/\|\E Z^2 \|_{\infty}$. Here, $\psi(\theta Z_i)$ denotes the usual functional transformation, i.e.~the function $\psi$ acts only on the spectrum of $\theta Z_i$.
\end{lemma}

We now turn to the proof of Lemma \ref{lem:fuknagaev:HS:nuclear:operator}(iii), combining Lemma \ref{lem:minsker:1} with a truncation argument. To this end, set $\theta  = t/\sqrt{n}$, $Z=\overline{Y \otimes Y}$ and $\tau_n^2 = n \|\E (\overline{Y \otimes Y})^2 \|_{\infty} = n$, recalling that $\|\E (\overline{Y \otimes Y})^2\|_\infty=1$. First,
\begin{align*}
\P\Big(\frac{1}{\theta} \psi\big(\theta \overline{Y_i \otimes Y_i}\big) &\neq\overline{Y_i \otimes Y_i} \text{ for at least one $1 \leq i \leq n$} \Big) \\& \leq  \P\Big(\max_{1 \leq i \leq n}\|\overline{Y_i \otimes Y_i} \|_{\infty} \geq \sqrt{n}/t \Big)\\&\leq \sum_{i = 1}^n n^{-p/2}t^p \E \|\overline{Y_i \otimes Y_i}\|_{\infty}^p = n^{1-p/2} t^p \E \| \overline{Y \otimes Y} \|_{\infty}^p.
\end{align*}
Combining this with \eqref{eq:moment:bound:trace:norm}, it follows that
\begin{align*}
&\P\Big(\Big\|\sum_{i = 1}^n \overline{Y_i \otimes Y_i} \Big\|_{\infty} \geq t \sqrt{n} \Big) \leq 2^pCn^{1-q/2} t^q \| \vartheta\|_1^p + \P\Big(\Big\|\sum_{i = 1}^n \frac{1}{\theta} \psi\big(\theta \overline{Y_i \otimes Y_i} \big) \Big\|_{\infty} \geq t \sqrt{n} \Big).
\end{align*}
An application of Lemma \ref{lem:minsker:1} yields
\begin{align*}
\P\Big(\Big\|\sum_{i = 1}^n \frac{1}{\theta} \psi\big(\theta \overline{Y_i \otimes Y_i} \big) \Big\|_{\infty} \geq t \sqrt{n} \Big) \leq 4 \operatorname{tr}\big(\E (\overline{Y\otimes Y})^2 \big) \exp\Big(-\frac{t^2}{2} \Big),
\end{align*}
The claim now follows form Lemma \ref{lem:moment:computation:KL}(ii).
\end{proof}

\begin{proof}[Proof of Lemma \ref{lem:moment:computation:KL}]
For $1\leq r\leq p$, we have
\begin{align*}
    \|Y\|^{2r}=\Big(\sum_{j\geq 1}\vartheta_j\zeta^2_j\Big)^r.
\end{align*}
Hence, by the triangle inequality, the Hölder inequality and the moment assumption, we have
\begin{align*}
    (\E \|Y\|^{2r})^{1/r}\leq \sum_{j\geq 1}\vartheta_j(\E \zeta_j^{2r})^{1/r}\leq \sum_{j\geq 1}\vartheta_j(\E \zeta_j^{2p})^{1/p}\leq C^{1/p}\sum_{j\geq 1}\vartheta_j=C^{1/p}\|\vartheta\|_1.
\end{align*}
This gives claim (i). In order to prove (ii), let us write
\begin{align*}
    (Y\otimes Y-\E Y\otimes Y)^2&=\Big(\sum_{j,k\geq 1}(\vartheta_j\vartheta_k)^{1/2}\overline{\zeta_j \zeta_k}(u_j \otimes u_k)  \Big)^2\\
    &=\sum_{j,k,s \geq 1} (\vartheta_{j} \vartheta_k^2\vartheta_{s})^{1/2}  \overline{\zeta_j \zeta_k}\, \overline{\zeta_k \zeta_s} (u_j \otimes u_s),
\end{align*}
where the second equality follows from the fact that $(u_j \otimes u_k)  (u_r \otimes u_s)$ is equal to $u_j \otimes u_s$ if $k = r$ and equal to $0$ otherwise. Hence,
\begin{align*}
    \operatorname{tr}(Y\otimes Y-\E Y\otimes Y)^2=\sum_{j,k \geq 1} \vartheta_{j}\vartheta_{k}  (\overline{\zeta_j \zeta_k})^2
\end{align*}
and thus
\begin{align*}
    \operatorname{tr}(\mathbb{E}(Y\otimes Y-\E Y\otimes Y)^2)= \mathbb{E}\operatorname{tr}((Y\otimes Y-\E Y\otimes Y)^2)\leq C^{2/p}\sum_{j,k \geq 1} \vartheta_{j}\vartheta_{k} ,
\end{align*}
as can be seen from inserting
\begin{align*}
    \E  (\overline{\zeta_j \zeta_k})^2\leq \E  (\zeta_j \zeta_k)^2\leq (\E \zeta_j^4)^{1/2}(\E \zeta_k^4)^{1/2}\leq C^{2/p}.
\end{align*}
To see the improvements of (iii)--(iv), let us note that under the additional assumptions,  we have $\E\overline{\zeta_j \zeta_k}\, \overline{\zeta_k \zeta_s}=0$ for $j\neq s$. Indeed, if $j\neq s$, then either $j\neq k$ and $k\neq s$ in which case $\E\overline{\zeta_j \zeta_k}\, \overline{\zeta_k \zeta_s}=\E\zeta_j \zeta_k^2 \zeta_s=0$, or $j=k$ and $k\neq s$ in which case $\E\overline{\zeta_j \zeta_k}\, \overline{\zeta_k \zeta_s}=\E \zeta_k^3\zeta_s-(\E\zeta_k^2)(\E \zeta_k \zeta_s)=0$, or $j\neq k$ and $s=k$ in which case $\E\overline{\zeta_j \zeta_k}\, \overline{\zeta_k \zeta_s}=\E \zeta_k^3\zeta_j-(\E\zeta_k^2)(\E \zeta_k \zeta_j)=0$. Hence, under the additional assumptions, we have
\begin{align*}
    \E(Y\otimes Y-\E Y\otimes Y)^2=\sum_{j,k \geq 1} \vartheta_j\vartheta_k  \E(\overline{\zeta_j \zeta_k})^2\, (u_j \otimes u_j),
\end{align*}
leading to
\begin{align}\label{eq:upper:lower:sigma:2}
    \big\|\E(Y\otimes Y-\E Y\otimes Y)^2\big\|_{\infty} &=\max_{j\geq 1}\sum_{k\geq 1}\vartheta_j\vartheta_k\E(\overline{\zeta_j\zeta_k})^2\leq C^{2/p}\|\vartheta \|_{\infty}\|\vartheta\|_1
\end{align}
and
\begin{align*}
    \operatorname{tr}((\E (Y\otimes Y-\E Y\otimes Y)^2 )^{1/2})&=\sum_{j \geq 1}\Big(\sum_{k\geq 1}\vartheta_j\vartheta_k  \E(\overline{\zeta_j \zeta_k})^2\Big)^{1/2} \\
    &\leq  C^{1/p}\Big(\sum_{j\geq 1}\sqrt{\vartheta_j}\Big)\|\vartheta\|_1^{1/2}.
\end{align*}
This gives claims (iii) and (iv).
\end{proof}

\subsection{CLTs: Proofs for Section \ref{sec:quantitative:limit:thms}}\label{sec:proof:quantitative:limit:thms}

\begin{proof}[Proof of Theorem \ref{thm:clt:I}]
We want to apply Proposition \ref{prop:clt:II} with the choices
\begin{align*}
T &= \frac{n}{A_{\J}}\big\|\hat{P}_{\J} - P_{\J}\big\|_2^2 \qquad\text{and}\qquad  S = \frac{n}{A_{\J}}\| L_\J E \|_2^2. \end{align*}
For this, let us write
\begin{align*}
    S &= \frac{1}{n}\Big\|\sum_{i=1}^n Y_i\Big\|_2^2\quad\text{with}\quad Y_i=\frac{1}{\sqrt{A_{\J}}}L_\J (X_i\otimes X_i).
\end{align*}
The random variable $L_\J X\otimes X$ takes values in the separable Hilbert space of all (self-adjoint) Hilbert-Schmidt operators on $\mathcal{H}$ (endowed with trace-inner product) and has the decomposition
\begin{align}\label{eq:KL:expansion:linear:term}
    \FO_{\J} (X \otimes X)=\sum_{j\in \J}\sum_{k\notin \J}\frac{\sqrt{\lambda_j \lambda_k}}{\lambda_j - \lambda_k}\zeta_{jk}(u_j \otimes u_k+u_k\otimes u_j)
\end{align}
with $\zeta_{jk} = \eta_j \eta_k$, as can be seen from inserting the Karhunen-Loève expansion of $X$ and the definition of $L_\J$.
Using Assumption \ref{ass_moments}, the Cauchy-Schwarz inequality and the fact that the summation is over different indices, we have
\begin{align*}
\E\zeta_{jk}=0\quad\text{and}\quad \E \big\vert\zeta_{jk}\big\vert^p \leq C_{\eta}.
\end{align*}
Hence, by Lemma \ref{lem:moment:computation:KL}(i) and Lemma \ref{lem:Lambda:Sigma:relation}, we have for every $1\leq r\leq p$,
\begin{align}\label{eq:norm:linear:term}
\E\|L_\J (X\otimes X)\|_2^r \lesssim_r \Big(\sum_{j \in \J}\sum_{k \notin \J} \frac{\lambda_k \lambda_j}{(\lambda_k- \lambda_j)^2}\Big)^{r/2}\asymp A_\J^{r/2}.
\end{align}
In particular, setting $q=p\wedge 3$, we get
\begin{align*}
    \E\|Y_i\|_2^{r}\lesssim 1
\end{align*}
for all $r\in[2,q]$. By Corollary \ref{cor:exp:quadratic:term}, we have
\begin{align*}
    \vert T-S\vert = \frac{n}{A_{\J}}\big\vert\|\hat{P}_{\J} - P_{\J}\big\|_2^2-\| L_\J E \|_2^2\big\vert\leq  \frac{n}{A_{\J}}\big(\vert \J\vert ^{3/2}\delta_\J^3(E)+\vert \J\vert ^2\delta_\J^4(E)\big).
\end{align*}
Thus, by \eqref{ass:relativerank} and Lemma \ref{lem:bound:events:via:fn:minsker}, we get
\begin{align}\label{eq:conc:T-S}
\P\big(\vert T - S\vert  > u \big) \lesssim_p \qprob_{\J,n,p},
\end{align}
as long as
\begin{align}\label{eq:conc:T-S:cond}
    u =Cn^{-1/2}\log^{3/2} n \frac{\sigma_{\J}^3\vert \J\vert ^{3/2}}{A_{\J}}.
\end{align}
Finally, we have
\begin{align}
\frac{n}{A_{\J}} \big\|\hat{P}_{\J} - P_{\J} \big\|_2^2 \leq \frac{2\vert \J\vert n}{A_{\J}}  =: C_T.
\end{align}
Inserting these choices for $T$, $S$, $u$ and $C_T$ into Proposition \ref{prop:clt:II} (applied with $q=p\wedge 3$), the claim follows.
\end{proof}

\begin{proof}[Proof of Theorem \ref{thm:clt:III}]
We want to apply Proposition \ref{prop:clt:III} with the choices $A=A_{\J}$, $B=B_{\J}$, $C=C_{\J}$ (recall \eqref{eq:ABC}),
\begin{align*}
T &= n\big\|\hat{P}_{\J} - P_{\J}\big\|_2^2,\qquad  S = n\| L_\J E \|_2^2,
\end{align*}
and
\begin{align*}
    u=C n^{-1/2}(\log n)^{3/2} \sigma_{\J}^3\vert \J\vert ^{3/2}.
\end{align*}
For this, let us write
\begin{align*}
    S &= \frac{1}{n}\Big\|\sum_{i=1}^n Y_i\Big\|_2^2\quad\text{with}\quad Y_i =L_\J (X_i\otimes X_i).
\end{align*}
Inserting these choices for $A$, $B$, $C$, $T$, $S$, $u$ into Proposition \ref{prop:clt:III} (applied with $q=\min(3,p)$), the claim follows from inserting \eqref{eq:norm:linear:term} and \eqref{eq:conc:T-S}.
\end{proof}

\begin{proof}[Proof of Corollary \ref{cor:thm:clt:II}]
Using Lemma \ref{lem:BE:gauss}, we may argue as in the proof of Proposition~\ref{prop:clt:III}.
\end{proof}

\begin{proof}[Proof of Theorem \ref{thm:clt:II}]
We proceed exactly as in the proof of Theorem \ref{thm:clt:III}, using Proposition \ref{prop:clt:V} instead of Proposition \ref{prop:clt:III}.
\end{proof}

\subsection{Bootstrap I: Proof of Theorem \ref{thm:boot:I}}\label{sec:proof:thm:boot:I}
We start by applying Proposition \ref{prop:clt:III} with respect to $\tilde{\mathbf{U}}$, and
\begin{align}\label{eq:choices:setting1:bootstrap}
   \tilde T &= \frac{n}{\sigma_w^2}\big\|\tilde{P}_{\J} - \hat{P}_{\J}\big\|_2^2\quad\text{and}\quad \tilde S =\frac{n}{\sigma_w^2}\big\|\FO_{\J} (\tilde{E} - E)\|_2^2.
\end{align}
In doing so, we consider Setting \ref{ass:clts:setup} with
\begin{align*}
   \tilde S &= \frac{1}{n}\Big\|\sum_{i=1}^n \tilde Y_i\Big\|_2^2,\quad\text{where}\quad \tilde Y_i =\frac{w_i^2-1}{\sigma_w}Y_i,\quad Y_i=L_\J (X_i\otimes X_i).
\end{align*}
Note that we use an additional tilde to indicate that we are dealing with the bootstrap quantities. Hence, we have
\begin{align*}
    \tilde{\Psi}_\J =\frac{1}{n}\sum_{i=1}^n\tilde\E \tilde Y_i\otimes \tilde Y_i= \frac{1}{n}\sum_{i = 1}^n Y_i \otimes Y_i,
\end{align*}
as well as
\begin{align*}
    \tilde A_{\J}&=\|\tilde \Psi_\J \|_1=\tilde \E \tilde S,\\
    \tilde B_{\J}&= \sqrt{2}\|\tilde \Psi_\J \|_2,\\
    \tilde C_{\J}&= 2\|\tilde \Psi_\J \|_3.
\end{align*}
The following lemma shows that the quantities $\tilde{A}_{\J}$, $\tilde{B}_{\J}$, $\tilde{C}_{\J}$ are concentrated around their corresponding quantities ${A}_{\J}$, ${B}_{\J}$, ${C}_{\J}$ from \eqref{eq:ABC}.

\begin{lemma}\label{lem:emp:replace}
Suppose that \eqref{ass:boot:thirdnorm} holds. Then with probability at least $1 - 2n^{1- p/4}$, we have
\begin{align*}
&\big\vert\tilde{A}_{\J} - {A}_{\J} \big\vert \lesssim_p {A}_{\J} \frac{\sqrt{ \log n}}{\sqrt{n}},\\
&\big\vert\tilde{B}_{\J} - {B}_{\J} \big\vert \lesssim_p {A}_{\J} \frac{\sqrt{ \log n}}{\sqrt{n}},\\
&\big\vert\tilde{C}_{\J} - {C}_{\J} \big\vert \lesssim_p {A}_{\J} \frac{\sqrt{ \log n}}{\sqrt{n}}.
\end{align*}
In particular, for $n$ large enough, we have with the same probability,
\begin{align*}
&{\bf (i)} \quad A_{\J} \leq 2\tilde{A}_{\J} \leq 4 A_{\J},\\
&{\bf (ii)} \quad B_{\J} \leq 2\tilde{B}_{\J} \leq 4 B_{\J},\\
&{\bf (iii)} \quad C_{\J} \leq 2\tilde{C}_{\J} \leq 4 C_{\J}.
\end{align*}
\end{lemma}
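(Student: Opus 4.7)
The plan is to reduce the three Schatten-norm deviations to a single nuclear-norm deviation $\|\tilde \Psi_\J - \Psi_\J\|_1$ and then apply the Fuk-Nagaev inequality in nuclear norm from Lemma~\ref{lem:fuknagaev:HS:nuclear:operator}(ii) to the Hilbert-Schmidt-operator valued sample $Y_i=L_\J(X_i\otimes X_i)$.

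First, using the reverse triangle inequality for Schatten norms together with monotonicity $\|\cdot\|_q\le \|\cdot\|_1$ for $q\ge 1$, one gets
\begin{align*}
 |\tilde A_\J-A_\J|&=\bigl|\|\tilde\Psi_\J\|_1-\|\Psi_\J\|_1\bigr|\le \|\tilde\Psi_\J-\Psi_\J\|_1,\\
 |\tilde B_\J-B_\J|&\le \sqrt{2}\|\tilde\Psi_\J-\Psi_\J\|_2\le \sqrt{2}\|\tilde\Psi_\J-\Psi_\J\|_1,\\
 |\tilde C_\J-C_\J|&\le 2\|\tilde\Psi_\J-\Psi_\J\|_3\le 2\|\tilde\Psi_\J-\Psi_\J\|_1,
\end{align*}
so everything reduces to one nuclear-norm bound.

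Second, I would unpack $Y=L_\J(X\otimes X)$ in the orthonormal system $\{(u_j\otimes u_k+u_k\otimes u_j)/\sqrt{2}\}_{j\in\J,k\notin\J}$ of the Hilbert space of Hilbert-Schmidt operators. By \eqref{eq:KL:expansion:linear:term} this produces an expansion of the form required by Lemma~\ref{lem:fuknagaev:HS:nuclear:operator}, with $\vartheta_{jk}=2\lambda_j\lambda_k/(\lambda_k-\lambda_j)^2$ and $\zeta_{jk}=\eta_j\eta_k$. Cauchy–Schwarz together with Assumption~\ref{ass_moments} yields $\sup_{j\ne k}\E|\zeta_{jk}|^{p}\le C_\eta$, so the lemma is applicable with parameter $p'=p/2>2$ (using $p>4$, which is guaranteed by the standing $p>2q$, $q\in(2,3]$ of Theorem~\ref{thm:boot:I}). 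By Lemma~\ref{lem:Lambda:Sigma:relation} we have $\|\vartheta\|_1=2\sum_{j\in\J,k\notin\J}\lambda_j\lambda_k/(\lambda_k-\lambda_j)^2\asymp A_\J$. For the variance parameter, using $|\operatorname{tr}(SM)|\le \|S\|_\infty\|M\|_1$ and the rank-one identity $\|Y\otimes_{HS} Y\|_1=\|Y\|_2^2$, one bounds
\begin{align*}
 \varpi_n^2\le \E\|\,\overline{Y\otimes_{HS}Y}\,\|_1^{\,2}\le \E\|Y\|_2^4\lesssim A_\J^2,
\end{align*}
where the last step uses \eqref{eq:norm:linear:term} with $r=4$ (which needs $p\ge 4$, again ensured by the standing hypotheses). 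The side condition $t\ge \|(\E\overline{Y\otimes Y}^{\,2})^{1/2}\|_1$ is handled via Jensen's inequality for $\mathrm{tr}\circ\sqrt{\cdot}$ on positive operators, yielding $\|(\E\overline{Y\otimes Y}^{\,2})^{1/2}\|_1\le \E\|\overline{Y\otimes Y}\|_1\lesssim A_\J$.

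Third, I would plug $t=CA_\J\sqrt{\log n}$ into Lemma~\ref{lem:fuknagaev:HS:nuclear:operator}(ii). This satisfies the side condition for $n$ large, and gives
\begin{align*}
 \P\bigl(\|\tilde\Psi_\J-\Psi_\J\|_1\ge C'A_\J\sqrt{\log n/n}\bigr)\lesssim_p \frac{n^{1-p/4}\|\vartheta\|_1^{\,p/2}}{(A_\J\sqrt{\log n})^{p/2}}+e^{-C^2 A_\J^2\log n/\varpi_n^{2}}\lesssim_p n^{1-p/4}+n^{-C^2},
\end{align*}
and by choosing $C$ sufficiently large (depending on $p$) the second term is absorbed into $n^{1-p/4}$. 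Combined with step one, this gives the three stated deviation bounds with probability at least $1-2n^{1-p/4}$.

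Finally, the ``in particular'' statements (i)--(iii) follow by combining the three deviation bounds with assumption \eqref{ass:boot:thirdnorm}: by the monotonicity of Schatten norms one has $C_\J=2\|\Psi_\J\|_3\le 2\|\Psi_\J\|_2=\sqrt{2}B_\J$, hence $A_\J\sqrt{\log n/n}\le \sqrt 2\,(A_\J/C_\J)\sqrt{\log n/n}\cdot B_\J\le \sqrt 2\,c\,B_\J$, and likewise $A_\J\sqrt{\log n/n}\le c\,C_\J$ directly. For $n$ large enough these are $\le B_\J/4$ and $\le C_\J/4$ respectively, so the triangle inequality yields the two-sided bounds. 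The only real subtlety to watch is keeping track of the moment index when switching from $\eta_j$ (of order $2p$) to $\zeta_{jk}$ (usable up to order $p$), which is where the constraint $p>4$ enters; everything else is a straightforward application of the general machinery already developed in Sections~\ref{sec:tools:probability}--\ref{sec:quantitative:limit:thms}.
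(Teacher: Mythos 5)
Your reduction of $\vert\tilde B_\J-B_\J\vert$ and $\vert\tilde C_\J-C_\J\vert$ to an operator deviation is the right idea, but routing all three bounds through the nuclear norm $\|\tilde\Psi_\J-\Psi_\J\|_1$ and Lemma \ref{lem:fuknagaev:HS:nuclear:operator}(ii) creates a genuine gap. First, your verification of the side condition $t\geq\|(\E(\overline{Y\otimes Y})^2)^{1/2}\|_1$ applies Jensen's inequality in the wrong direction: concavity of $A\mapsto\operatorname{tr}(A^{1/2})$ gives $\E\|\overline{Y\otimes Y}\|_1\leq\|(\E(\overline{Y\otimes Y})^2)^{1/2}\|_1$, not the reverse. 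The quantity $\|(\E(\overline{Y\otimes Y})^2)^{1/2}\|_1$ is of order $\operatorname{tr}(\Psi_\J^{1/2})\sqrt{A_\J}$ (and even that bound requires the cumulant conditions of Lemma \ref{lem:moment:computation:KL}(iv), i.e.\ Assumption \ref{ass:indep:4:moments} with $m=8$, which is not available in Theorem \ref{thm:boot:I}); it is not controlled by $A_\J$ in general. Second, and more fundamentally, the conclusion you need, $\|\tilde\Psi_\J-\Psi_\J\|_1\lesssim A_\J\sqrt{\log n/n}$, is false in general: the expected nuclear-norm deviation scales like $\operatorname{tr}((\E(\overline{Y\otimes Y})^2)^{1/2})/\sqrt{n}$, and for polynomially decaying eigenvalues with $\mathfrak{a}<2$ one has $\operatorname{tr}(\Psi_\J^{1/2})=\infty$. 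This is precisely why Theorem \ref{thm:boot:II} must introduce the truncation set $\I$ and carry the term $\operatorname{tr}(\Psi_{\J,\I}^{1/2})$ in its bound.

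The paper's proof avoids the nuclear norm entirely. For $\tilde A_\J$ it uses the exact identity $\tilde A_\J=n^{-1}\sum_{i}\|Y_i\|_2^2$, so that $\tilde A_\J-A_\J=\operatorname{tr}(\tilde\Psi_\J-\Psi_\J)$ is a centered sum of real random variables (much smaller than $\|\tilde\Psi_\J-\Psi_\J\|_1$), and applies the scalar Fuk--Nagaev bound \eqref{lem:fn} to $\|Y_i\|_2^2$, whose moments of order $p/2$ are controlled via \eqref{eq:norm:linear:term}. For $\tilde B_\J$ and $\tilde C_\J$ it stops at the Hilbert--Schmidt norm, using $\vert\|\tilde\Psi_\J\|_q-\|\Psi_\J\|_q\vert\leq\|\tilde\Psi_\J-\Psi_\J\|_q\leq\|\tilde\Psi_\J-\Psi_\J\|_2$ for $q\in\{2,3\}$ together with Lemma \ref{lem:fuknagaev:HS:nuclear:operator}(i), whose threshold involves only $\|\vartheta\|_1\asymp A_\J$ and carries no side condition of the problematic type. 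Your handling of the ``in particular'' statements via \eqref{ass:boot:thirdnorm} and $C_\J\lesssim B_\J\lesssim A_\J$ is fine once the deviation bounds are established along these lines.
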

\begin{proof}
First, \textbf(i)--\textbf(iii) are consequences of \eqref{ass:boot:thirdnorm} and the first three claims (note that $C_{\J}\lesssim B_{\J}\lesssim A_{\J}$ by properties of the Schatten norms). Let us start proving the first claim. By independence of $(w_i)$ and $(Y_i)$, we have
\begin{align*}
\tilde{A}_{\J} &=\|\tilde\Psi\|_1=\tilde\E \tilde S=\frac{1}{n}\sum_{i=1}^n\tilde\E\|\tilde Y_i\|^2=\frac{1}{n}\sum_{i=1}^n\| Y_i\|^2.
\end{align*}
By \eqref{eq:norm:linear:term}, we have $(\E (\| Y \|_2^2)^{p/2})^{2/p}\lesssim_p A_\J$. Since $\E \tilde A_{\J} =  A_{\J}$, \eqref{lem:fn} yields that for some constant $C > 0$,
\begin{align*}
\P\Big(\big\vert\tilde{A}_{\J} - {A}_{\J}\big\vert \geq C A_{\J}\frac{\sqrt{\log n}}{\sqrt{n}} \Big) \leq  n^{1- p/4}.
\end{align*}
Using that $\tilde{B}_{\J}=\sqrt{2}\|\tilde \Psi_\J \|_2$ and ${B}_{\J}=\sqrt{2}\| \Psi_\J \|_2$, the triangle inequality and $\vert \| \tilde \Psi_\J \|_2-\|\Psi_\J \|_2 \vert \leq \|\tilde \Psi_\J -\Psi_\J \|_2$, the second claim follows if we can show that, with probability at least $1 - n^{1- p/4}$,
\begin{align}\label{eq:emp:matrix:L2}
\|\tilde{\Psi}_\J  - \Psi_\J \|_2 = \Big\|\frac{1}{n}\sum_{i=1}^nY_i\otimes Y_i-\Psi \Big\|_2\lesssim_p A_{\J} \frac{\sqrt{\log n}}{\sqrt{n}}.
\end{align}
In order to get \eqref{eq:emp:matrix:L2}, we apply Lemma \ref{lem:fuknagaev:HS:nuclear:operator}(i). For this, recall from \eqref{eq:KL:expansion:linear:term} that we have
\begin{align*}
    Y_i\stackrel{d}{=}Y=\FO_{\J} X \otimes X=\sum_{j\in \J}\sum_{k\notin \J}\frac{\sqrt{\lambda_j \lambda_k}}{\lambda_j - \lambda_k}\zeta_{jk}(u_j \otimes u_k+u_k\otimes u_j),
\end{align*}
with $\zeta_{jk} = \eta_j \eta_k$, where the $\zeta_{jk}$ satisfy $\E\zeta_{jk}=0$ and $\E \big\vert\zeta_{jk}\big\vert^p \leq C_{\eta}$ for all $j\in\J$ and $k\notin \J$. Hence, Lemma \ref{lem:fuknagaev:HS:nuclear:operator}(i) and Lemma \ref{lem:Lambda:Sigma:relation} yield that for some $C > 0$,
\begin{align*}
\P\Big(\big\|\tilde{\Psi}_\J - \Psi_\J \big\|_2 \geq C \frac{t}{\sqrt{n}} A_{\J} \Big) \leq \frac{n^{1 - p/4}}{t^{p/2}} + e^{-t^2},
\end{align*}
where $t\geq 1$, and \eqref{eq:emp:matrix:L2} follows by setting $t = C\sqrt{\log n}$. Finally, using that $\tilde{C}_{\J}=2\|\tilde \Psi_\J \|_3$ and  ${C}_{\J}=2\| \Psi_\J \|_3$ and the inequality $\vert \| \tilde\Psi_\J \|_3-\| \Psi_\J \|_3 \vert \leq \|\tilde\Psi_\J - \Psi_\J \|_3\leq \|\tilde\Psi_\J - \Psi_\J \|_2$, the third claim follows from \eqref{eq:emp:matrix:L2}. Applying the union bound completes the proof.
\end{proof}

\begin{corollary}\label{lem:boot:conditional:BE:I}
Let $q \in (2,3]$ with $q\leq p$ and $s \in (0,1)$. Then, with probability at least $1 - C_p \qprob^{1-s}_{\J,n,p}-C_pn^{1-p/(2q)}$, $C_p > 0$, we have
\begin{align*}
\tilde{\mathbf{U}}_{}\Big(\frac{\tilde T - \tilde{A}_{\J}}{\tilde{B}_{\J}}
, G\Big) &\lesssim_p n^{1/4-q/8}\frac{{A}_{\J}^{3/4}}{B_{\J}^{3/4}}  +  \frac{{C}_{\J}^3}{{B}_{\J}^3} + n^{-1/2}(\log n)^{3/2}\frac{\sigma_\J^3\vert \J\vert^{3/2}}{{B}_{\J}}+ \qprob^{s}_{\J,n,p}.
\end{align*}
\end{corollary}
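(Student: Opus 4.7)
The plan is to apply Proposition \ref{prop:clt:III} conditionally on the sample $\mathcal{X}$, with the substitutions $T\mapsto \tilde T$, $S\mapsto \tilde S$, summands $Y_i\mapsto \tilde Y_i = \sigma_w^{-1}(w_i^2-1)Y_i$ where $Y_i = L_\J(X_i\otimes X_i)$, and normalising quantities $A,B,C\mapsto \tilde A_\J,\tilde B_\J,\tilde C_\J$ (built from the conditional covariance operator $\tilde \Psi_\J = n^{-1}\sum_i Y_i \otimes Y_i$ identified in the discussion preceding Lemma \ref{lem:emp:replace}). In the bootstrap world the $\tilde Y_i$ are $\tilde\P$-independent with $\tilde\E \tilde Y_i = 0$, so the hypotheses of Proposition \ref{prop:clt:III} are met and it remains to estimate each of its four terms.

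First I would handle the leading Berry--Esseen term and the skewness term. By independence of $(w_i)$ and $(Y_i)$ together with $\E w^{2p}<\infty$ and $p>2q$, we get $\tilde\E\|\tilde Y_i\|^q \lesssim_p \|Y_i\|^q$. The Fuk--Nagaev inequality \eqref{lem:fn} applied to the real-valued sum $\sum_i(\|Y_i\|^q-\E\|Y_i\|^q)$ with exponent $p/q\geq 2$, combined with $\E\|Y\|^p \lesssim A_\J^{p/2}$ from \eqref{eq:norm:linear:term}, then yields $\sum_i \tilde\E\|\tilde Y_i\|^q \lesssim_p n A_\J^{q/2}$ on an event of $\P$-probability at least $1-n^{1-p/(2q)}$. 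On the further intersection with the event of Lemma \ref{lem:emp:replace}, where $\tilde A_\J\asymp A_\J$, $\tilde B_\J\asymp B_\J$ and $\tilde C_\J\asymp C_\J$, the first two terms of Proposition \ref{prop:clt:III} collapse to $n^{1/4-q/8}A_\J^{3/4}/B_\J^{3/4}$ and $(C_\J/B_\J)^3$, matching the claim.

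Next I would control $\tilde\P(|\tilde T-\tilde S|>u)$ with the choice $u=C n^{-1/2}(\log n)^{3/2}\sigma_\J^3|\J|^{3/2}$ (so that $u/\tilde B_\J$ accounts for the third stated term). Corollary \ref{lem:boot:approx} applied to the triple $(\Sigma,\hat\Sigma,\tilde\Sigma)$ bounds $|\tilde T-\tilde S|$ by $(n/\sigma_w^2)\bigl[|\J|^{3/2}(\delta_\J^3(E)+\delta_\J^3(\tilde E))+|\J|^2(\delta_\J^4(E)+\delta_\J^4(\tilde E))\bigr]$. The unconditional part $\delta_\J(E)\lesssim \sigma_\J\sqrt{\log n/n}$ holds except on an event of probability $\lesssim \qprob_{\J,n,p}$ by Lemma \ref{lem:bound:events:via:fn:minsker}. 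For the bootstrap counterpart $\delta_\J(\tilde E)$, one combines this with a conditional Fuk--Nagaev bound, in the spirit of Lemma \ref{lem:fuknagaev:HS:nuclear:operator}(iii), applied to the weighted perturbation $\sigma_w^{-1}n^{-1}\sum_i(w_i^2-1)X_i\otimes X_i$ conditional on $\mathcal{X}$, to conclude that $\tilde\P(\delta_\J(\tilde E)>C\sigma_\J\sqrt{\log n/n})\lesssim \qprob_{\J,n,p}^s$ on a set of outer probability at least $1-C_p\qprob_{\J,n,p}^{1-s}$. A union bound over all good events yields the stated exceptional probability and the last two terms of the bound.

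The main obstacle will be the two-level probability bookkeeping for $\delta_\J(\tilde E)$: one has to separate the randomness into a conditional estimate under $\tilde\P$ and an outer tail in $\P$, with a Markov-type exchange parametrised by $s\in(0,1)$ which is precisely what produces the exponents $1-s$ on the exceptional probability and $s$ on the bound itself. Getting the right moment inputs (using $p>2q$ precisely to supply integrability of $w^{2q}$ and enough room to invoke \eqref{lem:fn}), and verifying that the weighted perturbation still satisfies the hypotheses of the operator-norm Fuk--Nagaev inequality after conditioning, are the only delicate technical steps; everything else follows by direct assembly of the lemmas already at our disposal.
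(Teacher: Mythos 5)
Your proposal is correct and follows essentially the same route as the paper: apply Proposition \ref{prop:clt:III} conditionally with $\tilde Y_i=\sigma_w^{-1}(w_i^2-1)Y_i$, control $\sum_i\|Y_i\|_2^q$ via \eqref{lem:fn} on an event of probability $1-n^{1-p/(2q)}$, replace $\tilde A_\J,\tilde B_\J,\tilde C_\J$ by their population counterparts via Lemma \ref{lem:emp:replace}, bound $|\tilde T-\tilde S|$ by Corollary \ref{lem:boot:approx}, and convert the resulting $\P$-tail into a $\tilde\P$-tail by Markov's inequality with the exponent split $s$ versus $1-s$. The one place where the paper is simpler than your sketch is the control of $\delta_\J(\tilde E)$: rather than a conditional Fuk--Nagaev bound given $\mathcal{X}$ (which would be awkward since the summands are not conditionally i.i.d.), the paper observes that $w_iX_i$ is itself an i.i.d. sample with covariance $\Sigma$ and Karhunen--Lo\`eve coefficients $w\eta_j$, so Lemma \ref{lem:bound:events:via:fn:minsker} applies unconditionally, and only afterwards is Markov's inequality used on $\tilde\P(|\tilde T-\tilde S|>u)$.
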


\begin{proof}
Applying Proposition \ref{prop:clt:III} to the choices \eqref{eq:choices:setting1:bootstrap}, we get
\begin{align*}
\tilde{\mathbf{U}}_{}\Big(\frac{\tilde T - \tilde{A}_{\J}}{ \tilde{B}_{\J} }
, G\Big) &\lesssim \frac{1}{n^{q/8}}\Big( \frac{\tilde{A}_{\J}^{3} }{\tilde{B}_{\J}^{3}}\sum_{i = 1}^n \frac{\|Y_i\|_2^q}{\tilde{A}_{\J}^{q/2}}\Big)^{1/4}+ \frac{\tilde{C}_{\J}^3}{\tilde{B}_{\J}^3} + \tilde{\P}_{}\big(\vert \tilde S-\tilde T\vert > u\big) + \frac{u}{\tilde{B}_{\J}}.
\end{align*}
By \eqref{eq:norm:linear:term}, we have $(\E (\| Y \|_2^q)^{p/q})^{q/p}\lesssim_q A_\J^{q/2}$. Hence, \eqref{lem:fn} yields that for some constant $C > 0$,
\begin{align*}
\P\Big(\Big\vert\sum_{i = 1}^n \big(\|Y_i\|_2^q - \E \|Y_i\|_2^q\big) \Big\vert \geq C A_{\J}^{q/2}\sqrt{n\log n}  \Big) \leq n^{1-p/(2q)}.
\end{align*}
Hence, with probability at least $1 - n^{1-p/(2q)}$,
\begin{align}\label{eq:moment:q:bound}
\sum_{i = 1}^n \|Y_i\|_2^q &\lesssim_q  n\E \|Y\|_2^q + \sqrt{n\log n} A_{\J}^{q/2}\lesssim_q  n A_{\J}^{q/2}.
\end{align}
By Corollary \ref{lem:boot:approx}, we have
\begin{align*}
    \vert \tilde T-\tilde S\vert &= \frac{n}{\sigma_w^2}\big\vert\|\tilde{P}_{\J} - \hat{P}_{\J}\big\|_2^2-\|\FO_{\J} (\tilde{E} - E)\|_2^2\big\vert\\
    &\leq \frac{n}{\sigma_w^2}\big(\vert \J\vert ^{3/2}(\delta_\J^3(E)+\delta_\J^3(\tilde E))+\vert \J\vert ^2(\delta_\J^4(E)+\delta_\J^4(\tilde E))\big).
\end{align*}
Similarly as in Lemma \ref{lem:bound:events:via:fn:minsker}, we have
\begin{align}\label{eq:delta:J:tilde}
    \P \Big(\delta_\J(\tilde E)>C\sqrt{\frac{\sigma_\J^2\log n}{n}}\Big)\lesssim_p \qprob_{\J,n,p}.
\end{align}
In fact, since $w$ is independent of $X$ and satisfies $\E w^2=1$, the random variabe $w(\vert R_{\J^c}\vert^{1/2}+g_\J^{-1/2}P_\J)X$ has the same covariance operator as $X'=(\vert R_{\J^c}\vert+g_\J^{-1/2}P_\J)X$ and its Karhunen-Loève coefficients are given by $w\eta_j$, where $\eta_j$ are the Karhunen-Loève coefficients of $X'$ and $X$. Hence, \eqref{eq:delta:J:tilde} follows by the same line of arguments as Lemma same line of arguments as Lemma \ref{lem:bound:events:via:fn:minsker}. Combining \eqref{eq:delta:J:tilde} with \eqref{ass:relativerank}, we thus obtain
\begin{align}\label{eq:P(vT-Sv:bigger:u)}
\P\big(\vert \tilde T - \tilde S\vert  > u \big) \lesssim_p \qprob_{\J,n,p}\quad\text{with}\quad u=Cn^{-1/2}(\log n)^{3/2} \sigma_{\J}^3\vert \J\vert ^{3/2}
\end{align}
for $C > 0$ sufficiently large. By Markov's inequality, we have
\begin{align}\nonumber
\P\big( \tilde{\P}_{}(\vert \tilde S-\tilde T\vert > u) \geq \qprob^{s}_{\J,n,p} \big) &\leq  \qprob^{-s}_{\J,n,p} \P\big(\vert \tilde S-\tilde T\vert > u\big)\lesssim_p \qprob^{1-s}_{\J,n,p}.
\end{align}
Due to Lemma \ref{lem:emp:replace}, we can replace $\tilde{A}_{\J}, \tilde{B}_{\J}, \tilde{C}_{\J}$ by its counterparts ${A}_{\J}, {B}_{\J}, {C}_{\J}$ with probability at least $1 - C n^{1-p/4}$, $C > 0$. Piecing everything together, the claim follows.
\end{proof}

\begin{proof}[Proof of Theorem \ref{thm:boot:I}]
To simplify the notation, we assume w.l.o.g. $\sigma_w^2 = 1$. Then by affine invariance and the triangle inequality
\begin{align*}
&\tilde{\mathbf{U}}_{}\Big(\big\|\tilde{P}_{\J} - \hat{P}_{\J} \big\|_2^2, \big\|\hat{P}_{\J}' - {P}_{\J} \big\|_2^2 \Big) \\&=
\tilde{\mathbf{U}}_{}\Big(\frac{n\big\|\tilde{P}_{\J} - \hat{P}_{\J} \big\|_2^2 - \tilde{A}_{\J}}{ \tilde{B}_{\J} }
, \frac{n\big\|\hat{P}_{\J}' - {P}_{\J} \big\|_2^2 - \tilde{A}_{\J}}{ \tilde{B}_{\J} }\Big)\\& \leq
\tilde{\mathbf{U}}_{}\Big(\frac{n\big\|\tilde{P}_{\J} - \hat{P}_{\J} \big\|_2^2 - \tilde{A}_{\J}}{ \tilde{B}_{\J} }
, G\Big) + \tilde{\mathbf{U}}_{}\Big( G
, \frac{n\big\|\hat{P}_{\J}' - {P}_{\J} \big\|_2^2 - \tilde{A}_{\J}}{ \tilde{B}_{\J} }\Big).
\end{align*}
The first term on the right-hand side is treated in Corollary \ref{lem:boot:conditional:BE:I}. In order to deal with the second term, we can directly apply Theorem \ref{thm:clt:III} as follows. By affine invariance, the triangle inequality and independence
\begin{align*}
\tilde{\mathbf{U}}_{}\Big( G
, \frac{n\big\|\hat{P}_{\J}' - {P}_{\J} \big\|_2^2 - \tilde{A}_{\J}}{ \tilde{B}_{\J} }\Big) &\leq \mathbf{U}_{}\Big( G
, \frac{n\big\|\hat{P}_{\J}' - {P}_{\J} \big\|_2^2 - {A}_{\J}}{{B}_{\J} }\Big) \\&+ \tilde{\mathbf{U}}_{}\Big( G
, \frac{ \tilde{B}_{\J} G + \tilde{A}_{\J} -  {A}_{\J}}{{B}_{\J} }\Big).
\end{align*}
For the first term on the right-hand-side, we can apply Theorem \ref{thm:clt:III}. Note that the corresponding bound is dominated by the one provided by Corollary~\ref{lem:boot:conditional:BE:I}. For the second term, using that the standard Gaussian distribution function $\Phi$ is Lipschitz continuous and satisfies for $a\geq 1$ and $x>0$ that $\Phi(ax)-\Phi(x)\leq (a-1)x\Phi'(x)\lesssim a-1$, we get
\begin{align*}
\tilde{\mathbf{U}}_{}\Big( G
, \frac{ \tilde{B}_{\J} G + \tilde{A}_{\J} -  {A}_{\J}}{{B}_{\J} }\Big) \lesssim \Big\vert\frac{\tilde{A}_{\J} -  {A}_{\J}}{\tilde{B}_{\J}}\Big\vert + \Big\vert\frac{{B}_{\J}}{\tilde{B}_{\J}} - 1\Big\vert+ \Big\vert\frac{\tilde{B}_{\J}}{B_{\J}} - 1\Big\vert.
\end{align*}
Due to Lemma \ref{lem:emp:replace}, we have
\begin{align*}
 \Big\vert\frac{\tilde{A}_{\J} -  {A}_{\J}}{\tilde{B}_{\J}}\Big\vert + \Big\vert\frac{{B}_{\J}}{\tilde{B}_{\J}} - 1\Big\vert+ \Big\vert\frac{\tilde{B}_{\J}}{B_{\J}} - 1\Big\vert \lesssim_p  \frac{\sqrt{\log n}}{\sqrt{n}} \frac{A_{\J}}{{B}_{\J}},
\end{align*}
with probability at least $1 - 2n^{1-p/4}$. Combining all bounds together with the union bound for the involved probabilities, the claim follows.
\end{proof}

\begin{proof}[Proof of Corollary \ref{cor:boot:I:quant}]
Denote with $\hat{\mathcal{E}}$ the event where Theorem \ref{thm:boot:I} applies, hence
\begin{align}\label{eq:thm:boot:I:quant:1}
\P\big(\hat{\mathcal{E}} \big) \geq 1 - C  \qprob^{1-s}_{\J,n,p} \vee  n^{1-p/(2q)}.
\end{align}
We now slightly reverse the argument used to prove Theorem \ref{thm:boot:I}, where we again assume $\sigma_w^2 = 1$ for simplicity. The triangle inequality and the affine invariance gives
\begin{align*}
&\tilde{\mathbf{U}}_{}\Big(n\big\|\tilde{P}_{\J} - \hat{P}_{\J} \big\|_2^2, B_{\J} G + A_{\J} \Big) \\&\leq \tilde{\mathbf{U}}_{}\Big(\big\|\tilde{P}_{\J} - \hat{P}_{\J} \big\|_2^2, \big\|\hat{P}_{\J}' - {P}_{\J} \big\|_2^2 \Big) \\&+ \mathbf{U}_{}\Big(B_{\J} G + A_{\J}, n\big\|\hat{P}_{\J}' - {P}_{\J} \big\|_2^2 \Big),
\end{align*}
where we recall that ${X_i}'$ denotes independent copies of $X_i$. By Theorems \ref{thm:clt:III} and \ref{thm:boot:I} and again the affine invariance, we get that on the event $\hat{\mathcal{E}}$
\begin{align*}
\tilde{\mathbf{U}}_{}\Big(n\sigma_w^{-1}\big\|\tilde{P}_{\J} - \hat{P}_{\J} \big\|_2^2, \sqrt{2} B_{\J} G + A_{\J} \Big)  \lesssim \mathbf{A}_{\J,n,p,s}.
\end{align*}
Let $q_{\alpha}^G$ be the quantile of the distribution function of $ B_{\J} G + A_{\J}$. The above and Lemma \ref{lem_gen_quant} (see below) yields that on the event $\hat{\mathcal{E}}$, there exists a constant $C_1 > 0$ such that for any $\alpha \in [0,1]$ and $\delta > C_1  \mathbf{A}_{\J,n,p,s}$
\begin{align}
\hat{q}_{\alpha+ \delta} \leq q_{\alpha}^G \leq \hat{q}_{\alpha - \delta}.
\end{align}
Consequently, for $\delta > C_1  \mathbf{A}_{\J,n,p,s}$, it follows that for some $C_2 > 0$
\begin{align*}
\P\Big(n\big\|\hat{P}_{\J} - {P}_{\J} \big\|_2^2 \leq \hat{q}_{\alpha} \Big) &\leq \P\Big(n\big\|\hat{P}_{\J} - {P}_{\J} \big\|_2^2 \leq \hat{q}_{\alpha}, \hat{\mathcal{E}} \Big) + \P\big(\hat{\mathcal{E}}^c\big) \\&\leq \P\Big(n\big\|\hat{P}_{\J} - {P}_{\J} \big\|_2^2 \leq q_{\alpha-\delta}^G, \hat{\mathcal{E}} \Big) + \P\big(\hat{\mathcal{E}}^c\big) \\&\leq \P\Big(n\big\|\hat{P}_{\J} - {P}_{\J} \big\|_2^2 \leq q_{\alpha-\delta}^G \Big) + 2\P\big(\hat{\mathcal{E}}^c\big) \\&\leq \P\Big( B_{\J} G + A_{\J} \leq q_{\alpha-\delta}^G \Big) + C_2 \mathbf{A}\big(\J,n,p\big) + 2\P\big(\hat{\mathcal{E}}^c\big)\\&\leq 1 - \alpha + (C_1 + C_2) \mathbf{A}_{\J,n,p,s} + 2\P\big(\hat{\mathcal{E}}^c\big),
\end{align*}
where we also used Theorem \ref{thm:clt:III}. 
In the same manner, one obtains the corresponding lower bound, and the claim follows together with \eqref{eq:thm:boot:I:quant:1}.
\end{proof}

\begin{lemma}\label{lem_gen_quant}
Let $F,G$ be distribution functions such that
\begin{align*}
&\sup_{x \in \R}\bigl\vert F(x) - G(x)\bigr\vert  \leq \delta.
\end{align*}
If $G$ is continuous, then for any $\delta' > 3\delta \geq 0$
\begin{align*}
q_{\alpha + \delta'}(F) \leq q_{\alpha}(G) \leq q_{\alpha - \delta'}(F).
\end{align*}
Here, $q_{\alpha}(H) = \inf\{x: H(x) \geq 1 - \alpha\}$, $H \in \{F,G\}$.
\end{lemma}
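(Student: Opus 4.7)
The plan is to work directly from the generalized quantile definition $q_\alpha(H) = \inf\{x : H(x) \geq 1-\alpha\}$. Since distribution functions are right-continuous, the infimum is attained: $H(q_\alpha(H)) \geq 1-\alpha$ for both $H \in \{F,G\}$. Combined with the uniform deviation hypothesis $\sup_x\vert F(x)-G(x)\vert\leq\delta$, each of the two inequalities will follow by a one-line chase.

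For the right inequality $q_\alpha(G) \leq q_{\alpha - \delta'}(F)$, I would set $x_\star := q_{\alpha-\delta'}(F)$, so that $F(x_\star) \geq 1-\alpha+\delta'$ by right-continuity of $F$, and then bound
\[
G(x_\star) \geq F(x_\star) - \delta \geq 1-\alpha + (\delta'-\delta) > 1-\alpha,
\]
where the last strict inequality uses $\delta' > 3\delta \geq \delta$. Hence $x_\star$ lies in the set whose infimum defines $q_\alpha(G)$, yielding $q_\alpha(G)\leq x_\star$.

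For the left inequality $q_{\alpha+\delta'}(F) \leq q_\alpha(G)$, I would set $y_\star := q_\alpha(G)$, so $G(y_\star) \geq 1-\alpha$, and compute analogously
\[
F(y_\star) \geq G(y_\star) - \delta \geq 1-\alpha-\delta > 1-(\alpha+\delta'),
\]
which places $y_\star$ in the set whose infimum defines $q_{\alpha+\delta'}(F)$, giving $q_{\alpha+\delta'}(F)\leq y_\star$.

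There is essentially no obstacle here: the continuity of $G$ and the sharper factor $3$ in $\delta' > 3\delta$ are not strictly needed for the argument sketched above; only $\delta' > \delta$ is actually used. These stronger assumptions presumably reflect the precise form in which the lemma is invoked (e.g.~when comparing with a Gaussian distribution function in the proof of Corollary~\ref{cor:boot:I:quant}), and are harmless conservative choices.
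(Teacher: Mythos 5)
Your proof is correct. Both inequalities follow exactly as you say from right-continuity of distribution functions (so that $H(q_\alpha(H))\geq 1-\alpha$ whenever the defining set is nonempty) together with the uniform bound, and you are right that this direct argument only uses $\delta'>\delta$ and never invokes the continuity of $G$. The paper instead argues by contradiction: for the right inequality it uses $G(q_\alpha(G))=1-\alpha$ (which needs continuity of $G$) and monotonicity of $F$; for the left inequality it first observes that continuity of $G$ forces every jump of $F$ to be at most $2\delta$, and then chains $1-\alpha=G(q_\alpha(G))\leq F(q_\alpha(G))+\delta\leq F(q_{\alpha+\delta'}(F))+\delta\leq 1-\alpha-\delta'+3\delta$, which is where the factor $3$ comes from. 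Your route is the generalized-inverse computation done "forwards" rather than "backwards": it buys a strictly stronger statement (no continuity hypothesis, threshold $\delta'>\delta$ instead of $\delta'>3\delta$), at no cost, while the paper's stated hypotheses are simply conservative relative to what its own argument could also be tightened to. Since the lemma is only applied with $\delta'\gtrsim\mathbf{A}_{\J,n,p,s}$ up to constants, the distinction is immaterial for the corollaries, but your observation that the $3\delta$ and the continuity of $G$ are not needed is accurate.
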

%

\begin{proof}[Proof of Lemma \ref{lem_gen_quant}]
Suppose first that $q_{\alpha}(G) > q_{\alpha - \delta'}(F)$, $\delta' > \delta$. Then
\begin{align*}
1 - \alpha = G\big(q_{\alpha}(G)\big) \geq F\big(q_{\alpha}(G)\big) - \delta \geq  F\big(q_{\alpha - \delta'}(F)\big) - \delta
\geq 1 - (\alpha - \delta') - \delta,
\end{align*}
which yields a contradiction. Observe next that due to the continuity of $G$, we have
\begin{align*}
F(x) - F(x_-) \leq 2 \delta.
\end{align*}
Hence, if $q_{\alpha}(G) < q_{\alpha + \delta'}(F)$, $\delta' > 3\delta$, then
\begin{align*}
1 - \alpha = G\big(q_{\alpha}(G)\big) \leq F\big(q_{\alpha}(G)\big) + \delta \leq  F\big(q_{\alpha + \delta'}(F)\big) + \delta \leq 1 - \alpha - \delta' + 3\delta,
\end{align*}
which again yields a contradiction. Hence the claim follows.
\end{proof}

\subsection{Bootstrap II: Proof of Theorem \ref{thm:boot:II}}\label{sec:proof:bootstrap:II}

\subsubsection{Additional notation}\label{sec:proof:bootstrap:II:notation}
Let us first introduce the additional key quantities, also used in the formulation of Theorem \ref{thm:boot:II}. Recall that $\J=\{j_1,\dots,j_2\}$ and $\I=\{1,\dots,i_2\}$ with $i_2>j_2+2$. We use the same notation as in the proof of Theorem \ref{thm:boot:I} in Section \ref{sec:proof:thm:boot:I}. Let
\begin{align*}
   \tilde T &= \frac{n}{\sigma_w^2}\big\|\tilde{P}_{\J} - \hat{P}_{\J}\big\|_2^2\quad\text{and}\quad  \tilde S_\I=\frac{n}{\sigma_w^2}\big\|\FO_{\J,\I} (\tilde{E} - E)\|_2^2,
\end{align*}
where
\begin{align*}
L_{\J,\I}A = \sum_{j \in \J}\sum_{k \in \J^c \cap \I} \frac{1}{\lambda_j - \lambda_k}(P_k AP_j+P_j A P_k)
\end{align*}
for a Hilbert-Schmidt operator $A$ on $\mathcal{H}$. For this, let us write
\begin{align} \label{defn:bootstrapII:proof:T:Y:S:Sigma}
    \tilde{S}_{\I} &= \frac{1}{n}\Big\|\sum_{i=1}^n \tilde Y_{i\I}\Big\|_2^2\quad\text{with}\quad \tilde Y_{i\I} =\frac{w_i^2-1}{\sigma_w}Y_{i\I},\quad Y_{i\I}=L_{\J,\I} (X_i\otimes X_i),
\end{align}
and let
\begin{align*}
    \Psi_{\J,\I}=\E Y_{\I}\otimes Y_{\I}\quad\text{and}\quad \tilde\Psi_{\J,\I}=\frac{1}{n}\sum_{i=1}^n\tilde \E\tilde Y_{i\I}\otimes \tilde Y_{i\I}=\frac{1}{n}\sum_{i=1}^nY_{i\I}\otimes Y_{i\I}
    \end{align*}
with $Y_{\I}=\FO_{\J, \I} X \otimes X$. Moreover, let $Z_{\J,\I}$ be Gaussian with ${\E} Z_{\J,\I} = 0$ and covariance operator ${\Psi}_{\J,\I}$,  independent of $\mathcal{X}$, and likewise $\tilde Z_{\J,\I}$ be Gaussian with $\tilde{\E} \tilde Z_{\J,\I} = 0$ and covariance operator $\tilde{\Psi}_{\J,\I}$. Finally, we set
\begin{align*}
    A_{\J,\I}&=\| \Psi_{\J,\I}\|_1,\qquad\ \ \ \ \tilde A_{\J,\I}=\|\tilde \Psi_{\J,\I}\|_1,\\
    B_{\J,\I}&=\sqrt{2}\|\Psi_{\J,\I}\|_2,\qquad\tilde B_{\J,\I}= \sqrt{2}\|\tilde \Psi_{\J,\I}\|_2,\\
    C_{\J,\I}&=2\|\Psi_{\J,\I}\|_3,\qquad\ \ \ \tilde C_{\J,\I}= 2\|\tilde \Psi_{\J,\I}\|_3.
\end{align*}
A more explicit expression of $A_{\J,\I^c}$ (resp. $A_{\J,\I}$) is given in \eqref{eq:compute:A_{JI^c}} below.

\subsubsection{Proofs}

We first apply Proposition \ref{prop:clt:V} to $\tilde{\mathbf{U}}$ and establish the following result.

\begin{corollary}\label{cor:boot:II:conditional}
Let $s \in (0,1)$. Then, with probability at least $1 - C_p(\qprob^{}_{\J,n,p} + n^{1-p/6})^{1-s}$, $C_p > 0$ we have
\begin{align*}
\tilde{\mathbf{U}}\Big(\tilde T, \|\tilde Z_{\J,\I}\|_2^2\Big) &\lesssim_p
n^{-1/5}\Big(\frac{A_\J}{\sqrt{{\lambda}_{1,2}(\Psi_{\J})}} \Big)^{3/5}+n^{-1/2}\log^{3/2} n\frac{\sigma_{\J}^3\vert \J\vert ^{3/2}}{\sqrt{\lambda_{1,2}(\Psi_{\J})}}
 \\& +\frac{A_{\J,\I^c}\log n }{\sqrt{{\lambda}_{1,2}(\Psi_{\J})}}+ \big(\qprob^{}_{\J,n,p} + n^{1- p/6}\big)^{s}.
\end{align*}
\end{corollary}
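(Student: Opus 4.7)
The plan is to apply the first bound in Proposition \ref{prop:clt:V} conditionally on $\mathcal{X}$ (with respect to $\tilde\P$), taking $T=\tilde T$, $S=\tilde S_{\I}$, $Z=\tilde Z_{\J,\I}$, $\Psi=\tilde\Psi_{\J,\I}$, and the independent summands $\tilde Y_{i\I}$ from \eqref{defn:bootstrapII:proof:T:Y:S:Sigma}. This produces, for every $u>0$,
\[
\tilde{\mathbf{U}}\big(\tilde T,\|\tilde Z_{\J,\I}\|_2^2\big)
\lesssim_p n^{-3/5}\Big(\sum_{i=1}^n\frac{\tilde\E\|\tilde Y_{i\I}\|_2^3}{\lambda_{1,2}^{3/4}(\tilde\Psi_{\J,\I})}\Big)^{2/5}
+\tilde\P\big(|\tilde T-\tilde S_{\I}|>u\big)+\frac{u}{\sqrt{\lambda_{1,2}(\tilde\Psi_{\J,\I})}}.
\]
The remaining work is to (i) replace $\tilde\Psi_{\J,\I}$ by $\Psi_\J$ in the denominators, (ii) estimate the cubic moments, and (iii) bound the truncation term for a well-chosen $u$.

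For (i), the computation behind Lemma \ref{lem:Lambda:Sigma:relation}, now applied to $L_{\J,\I}$ in place of $L_\J$, identifies the eigenvalues of $\Psi_{\J,\I}$ as $2\alpha_{jk}^2\lambda_j\lambda_k/(\lambda_j-\lambda_k)^2$ over $(j,k)\in\J\times(\J^c\cap\I)$, which is a subset of the eigenvalues of $\Psi_\J$. Since $\lambda_j\lambda_k/(\lambda_j-\lambda_k)^2$ decreases as $k$ moves away from the boundary of $\J$, the requirement $i_2>j_2+2$ ensures that the two largest $(j,k)$ pairs lie in $\J\times(\J^c\cap\I)$, so that $\lambda_{1,2}(\Psi_{\J,\I})=\lambda_{1,2}(\Psi_\J)$. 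For the empirical side, Lemma \ref{lem:fuknagaev:HS:nuclear:operator}(i) applied in the Hilbert-Schmidt operator space to $Y_{i\I}\otimes Y_{i\I}$, paralleling \eqref{eq:emp:matrix:L2}, yields $\|\tilde\Psi_{\J,\I}-\Psi_{\J,\I}\|_\infty\leq\|\tilde\Psi_{\J,\I}-\Psi_{\J,\I}\|_2\lesssim_p A_\J\sqrt{\log n/n}$ with probability at least $1-n^{1-p/4}$. Weyl's inequality together with condition \eqref{ass:boot:truncationset} then delivers $\lambda_{1,2}(\tilde\Psi_{\J,\I})\asymp\lambda_{1,2}(\Psi_\J)$ on this event.

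For (ii), $\tilde\E\|\tilde Y_{i\I}\|_2^3\lesssim\|Y_{i\I}\|_2^3$ by \eqref{ass:boot:sequence:w}, while \eqref{eq:norm:linear:term} applied to $L_{\J,\I}$ gives $(\E\|Y_\I\|_2^p)^{3/p}\lesssim A_\J^{3/2}$; the Fuk-Nagaev inequality \eqref{lem:fn} applied to $\|Y_{i\I}\|_2^3$ therefore produces $\sum_{i=1}^n\|Y_{i\I}\|_2^3\lesssim_p nA_\J^{3/2}$ with probability $\geq 1-n^{1-p/6}$. Plugged into the first term of Proposition \ref{prop:clt:V}, this yields exactly $n^{-1/5}\big(A_\J/\sqrt{\lambda_{1,2}(\Psi_\J)}\big)^{3/5}$.

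For (iii), decompose $|\tilde T-\tilde S_\I|\leq|\tilde T-\tilde S|+(\tilde S-\tilde S_\I)$ with $\tilde S=\frac{n}{\sigma_w^2}\|L_\J(\tilde E-E)\|_2^2$; the Hilbert-Schmidt orthogonality between $L_{\J,\I}$ and $L_{\J,\I^c}$ yields $\tilde S-\tilde S_\I=\frac{n}{\sigma_w^2}\|L_{\J,\I^c}(\tilde E-E)\|_2^2\geq 0$. Choose $u_0=Cn^{-1/2}(\log n)^{3/2}\sigma_\J^3|\J|^{3/2}$ and $u_1=A_{\J,\I^c}\log n$. By \eqref{eq:P(vT-Sv:bigger:u)}, Corollary \ref{lem:boot:approx}, Lemma \ref{lem:bound:events:via:fn:minsker}, and \eqref{eq:delta:J:tilde}, we have $\P(|\tilde T-\tilde S|>u_0)\lesssim_p\qprob_{\J,n,p}$, which Markov's inequality transfers to $\tilde\P(|\tilde T-\tilde S|>u_0)\leq\qprob_{\J,n,p}^s$ outside a data event of probability $\lesssim_p\qprob_{\J,n,p}^{1-s}$. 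For the second piece, Rosenthal's inequality (Lemma \ref{lem:rosenthal}) applied to the conditional sum, together with the Fuk-Nagaev controls $\sum_i\|Y_{i,\I^c}\|_2^2\lesssim nA_{\J,\I^c}$ and $\sum_i\|Y_{i,\I^c}\|_2^p\lesssim nA_{\J,\I^c}^{p/2}$ (each valid with probability $\geq 1-n^{1-p/6}$), gives $\tilde\E(\tilde S-\tilde S_\I)^{p/2}\lesssim A_{\J,\I^c}^{p/2}$, so that Markov on the $(p/2)$-th moment produces $\tilde\P(\tilde S-\tilde S_\I>u_1)\lesssim(\log n)^{-p/2}$, which for $p>6$ and $s\in(0,1)$ is absorbed into $(\qprob_{\J,n,p}+n^{1-p/6})^s$. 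Setting $u=u_0+u_1$, inserting the resulting estimates, and collecting all data events by a union bound completes the proof. The principal obstacle is the eigenvalue identification in (i): verifying that restricting $k$ to $\J^c\cap\I$ preserves the two largest eigenvalues of $\Psi_\J$, which is precisely what the structural condition $i_2>j_2+2$ is designed to secure.
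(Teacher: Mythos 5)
Your overall architecture matches the paper's: the conditional application of Proposition \ref{prop:clt:V} with $T=\tilde T$, $S=\tilde S_\I$, the identification $\lambda_{1,2}(\Psi_{\J,\I})=\lambda_{1,2}(\Psi_\J)$ (secured by $i_2>j_2+2$), Weyl's inequality combined with \eqref{ass:boot:truncationset} for the empirical eigenvalues, the cubic moment bound $\sum_i\|Y_{i\I}\|_2^3\lesssim_p nA_\J^{3/2}$, and the decomposition $|\tilde T-\tilde S_\I|\le|\tilde T-\tilde S|+(\tilde S-\tilde S_\I)$ are all exactly the paper's steps. The gap is in your treatment of the tail piece $\tilde S-\tilde S_\I$. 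Your conditional Rosenthal-plus-Markov route yields $\tilde{\P}(\tilde S-\tilde S_\I>u_1)\lesssim(\log n)^{-p/2}$, and you assert this is absorbed into $(\qprob_{\J,n,p}+n^{1-p/6})^s$. It is not: for $p>6$ and fixed $s\in(0,1)$ the term $n^{(1-p/6)s}$ is a negative power of $n$ (and $\qprob_{\J,n,p}$ is polynomially small in the regimes where the bound is nontrivial), whereas $(\log n)^{-p/2}$ is only logarithmically small; the resulting extra term neither appears in nor can be hidden inside the stated bound. A secondary problem: your claimed Fuk--Nagaev control $\sum_i\|Y_{i\I^c}\|_2^p\lesssim nA_{\J,\I^c}^{p/2}$ with probability $\geq 1-n^{1-p/6}$ would require moments of the summands $\|Y_{i\I^c}\|_2^p$ of order strictly greater than $2$, i.e.\ moments of $\|Y_{\I^c}\|_2$ of order $>2p$; Assumption \ref{ass_moments} only gives $\E\|Y_{\I^c}\|_2^{m}\lesssim A_{\J,\I^c}^{m/2}$ for $m\leq p$, so that step is unjustified as well.

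The repair is to treat $\{\tilde S-\tilde S_\I>u_1\}$ as an event in the full probability space rather than bounding its conditional probability directly. The Banach-space Fuk--Nagaev inequality (Lemma \ref{lemma:einmahl}, packaged as Lemma \ref{lem:fn:norm:bound:II}(ii)) applied to the unconditional sum $\sum_i\tilde Y_{i\I^c}$, with randomness in both $(w_i)$ and $(X_i)$ and $t=CA_{\J,\I^c}^{1/2}\sqrt{\log n}$, gives $\P(\tilde S-\tilde S_\I>CA_{\J,\I^c}\log n)\lesssim_p n^{1-p/6}$. Combined with $\P(|\tilde T-\tilde S|>u_0)\lesssim_p\qprob_{\J,n,p}$ this yields $\P(|\tilde T-\tilde S_\I|>u)\lesssim_p\qprob_{\J,n,p}+n^{1-p/6}$ for $u=u_0+u_1$, and a single Markov step applied to the random variable $\tilde{\P}(|\tilde T-\tilde S_\I|>u)$ (whose expectation is the unconditional probability) shows that $\tilde{\P}(|\tilde T-\tilde S_\I|>u)\leq(\qprob_{\J,n,p}+n^{1-p/6})^{s}$ outside a data event of probability $\lesssim_p(\qprob_{\J,n,p}+n^{1-p/6})^{1-s}$. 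This is precisely where both the last term of the corollary's bound and the stated exceptional probability come from; you applied this transfer correctly to the $|\tilde T-\tilde S|$ piece but then switched methods for the second piece, which is what breaks the argument.
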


The following lemma is needed in the proof of Corollary \ref{cor:boot:II:conditional} and extends some basic inequalities from Section \ref{sec:proof:thm:boot:I}  to the truncated setting.

\begin{lemma}\label{lem:fn:norm:bound:II}
We have, with probability at least $1 - 3n^{1-p/(2q)}$,
\begin{description}
\item[(i)] $\|\Psi_\J - \Psi_{\J,\I} \|_{1}\lesssim_p A_{\J,\I^c}$.
\item[(ii)] $\vert \tilde S- \tilde S_\I\vert  \lesssim_p A_{\J,\I^c}\log n $.
\item[(iii)] $\vert \sum_{i = 1}^n (\| \tilde{Y}_{i\I}\|_2^q - \E \|\tilde{Y}_{i\I}\|_2^q ) \vert  \lesssim_p A_{\J,\I}^{q/2}\sqrt{n \log n}\lesssim_p A_{\J}^{q/2}\sqrt{n \log n}$.
\item[(iv)] $\|\tilde{\Psi}_{\J,\I} - \Psi_{\J,\I}\|_2 \lesssim_p A_{\J,\I} \sqrt{n^{-1/2} \log n}\lesssim_p A_{\J} \sqrt{n^{-1/2} \log n}$.
\end{description}
\end{lemma}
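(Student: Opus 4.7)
The plan is to treat (i) as a deterministic identity coming from the covariance structure, then obtain (ii)--(iv) as three independent concentration bounds, each of probability at most $n^{1-p/(2q)}$, from which the stated $1-3n^{1-p/(2q)}$ follows by a union bound.

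\emph{First}, I recall from \eqref{eq:KL:expansion:linear:term} that $Y=\FO_{\J}(X\otimes X)=\sum_{j\in\J,k\in\J^c}\frac{\sqrt{\lambda_j\lambda_k}}{\lambda_j-\lambda_k}\zeta_{jk}(u_j\otimes u_k+u_k\otimes u_j)$ with $\zeta_{jk}=\eta_j\eta_k$. Under Assumption \ref{ass:indep:4:moments} with $m=4$, Lemma \ref{lem:Lambda:Sigma:relation} asserts that the $\zeta_{jk}$ for distinct pairs are uncorrelated, so this is the genuine Karhunen--Lo\`eve expansion of $Y$, and $\Psi_{\J}$ is diagonal in the orthonormal basis $\{(u_j\otimes u_k+u_k\otimes u_j)/\sqrt{2}: j\in\J, k\in\J^c\}$ with eigenvalues $2\alpha_{jk}^2\lambda_j\lambda_k/(\lambda_k-\lambda_j)^2$. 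Splitting $\J^c=(\J^c\cap\I)\sqcup(\J^c\cap\I^c)$ partitions this eigenbasis and yields the orthogonal block decomposition $\Psi_{\J}=\Psi_{\J,\I}+\Psi_{\J,\I^c}$. In particular $\|\Psi_{\J}-\Psi_{\J,\I}\|_1=\|\Psi_{\J,\I^c}\|_1=A_{\J,\I^c}$, which is (i), and $A_{\J,\I}\leq A_{\J}$ by summing eigenvalues (used for (iii)).

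\emph{Second}, for (ii) I write $\FO_{\J}=\FO_{\J,\I}+\FO_{\J,\I^c}$ and note that their images are supported on disjoint index pairs (involving $\J^c\cap\I$ versus $\J^c\cap\I^c$), so they are orthogonal in Hilbert--Schmidt inner product and Pythagoras gives $\|\FO_{\J}A\|_2^2=\|\FO_{\J,\I}A\|_2^2+\|\FO_{\J,\I^c}A\|_2^2$ for every HS operator $A$. Applied to $A=\tilde E-E=n^{-1}\sum(w_i^2-1)X_i\otimes X_i$ this identifies
\begin{align*}
\tilde S-\tilde S_{\I}=\frac{1}{n}\Big\|\sum_{i=1}^n Z_i\Big\|_2^2,\qquad Z_i=\frac{w_i^2-1}{\sigma_w}\FO_{\J,\I^c}(X_i\otimes X_i).
\end{align*}
The $Z_i$ are i.i.d., centered, HS-operator valued, and I apply Einmahl's Banach-space Fuk--Nagaev inequality (Lemma \ref{lemma:einmahl}) with $s=p$. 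The inputs are $\E\|\sum Z_i\|_2\leq\sqrt{nA_{\J,\I^c}}$ (Cauchy--Schwarz), $\varpi_n=n\|\Psi_{\J,\I^c}\|_\infty\leq nA_{\J,\I^c}$, and $\sum_i\E\|Z_i\|_2^p\lesssim_p nA_{\J,\I^c}^{p/2}$, where the last estimate follows from Lemma \ref{lem:moment:computation:KL}(i) applied to $\FO_{\J,\I^c}(X\otimes X)$ with parameter $p/2$ (using $\E|\zeta_{jk}|^p\lesssim 1$ via Cauchy--Schwarz and Assumption \ref{ass_moments}) together with the moment bound on $w$. Taking $t\asymp\sqrt{nA_{\J,\I^c}\log n}$ produces $\|\sum Z_i\|_2^2\lesssim nA_{\J,\I^c}\log n$ on an event of probability at least $1-C(n^{1-p/2}(\log n)^{-p/2}+n^{-C})\geq 1-n^{1-p/(2q)}$, which is (ii).

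\emph{Third}, for (iii) I set $U_i=\|\tilde Y_{i,\I}\|_2^q-\E\|\tilde Y_{i,\I}\|_2^q$. Using the same moment computation (Lemma \ref{lem:moment:computation:KL}(i), now applied to $Y_{\I}$) plus $\E|w^2-1|^p<\infty$, one obtains $\E\|\tilde Y_{\I}\|_2^p\lesssim A_{\J,\I}^{p/2}$, hence $(\E|U|^{p/q})^{q/p}\lesssim A_{\J,\I}^{q/2}$. Since $p/q>2$, the scalar Fuk--Nagaev inequality \eqref{lem:fn} with this exponent yields $|\sum U_i|\lesssim A_{\J,\I}^{q/2}\sqrt{n\log n}$ with probability at least $1-n^{1-p/(2q)}$, and $A_{\J,\I}\leq A_{\J}$ gives the second form. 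For (iv), I apply Lemma \ref{lem:fuknagaev:HS:nuclear:operator}(i) directly to the i.i.d. sequence $Y_{i,\I}$, whose KL coefficients $\zeta_{jk}$ satisfy $\E|\zeta_{jk}|^p\lesssim 1$ (so the lemma is used with exponent $p/2>2$) and for which $\|\vartheta\|_1=\operatorname{tr}(\Psi_{\J,\I})=A_{\J,\I}$. Choosing $t=C\sqrt{\log n}$ yields $\|\tilde\Psi_{\J,\I}-\Psi_{\J,\I}\|_2\lesssim A_{\J,\I}\sqrt{\log n/n}$, which in particular is majorized by the stated $A_{\J,\I}\sqrt{n^{-1/2}\log n}$, and holds with probability at least $1-n^{1-p/4}(\log n)^{-p/4}-n^{-C}\geq 1-n^{1-p/(2q)}$. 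A union bound over the three stochastic bounds (ii)--(iv) gives the announced $3n^{1-p/(2q)}$ failure probability. The main conceptual step is the orthogonal decomposition in the first paragraph: it makes (i) a tautology and reduces (ii) from a cross-term analysis to a single clean i.i.d.~sum in the Hilbert space of HS operators; without Assumption \ref{ass:indep:4:moments} ($m=4$), the cross-covariances $\E Y_{\I}\otimes Y_{\I^c}$ do not vanish and a substantially more intricate argument would be needed.
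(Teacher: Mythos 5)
Your proposal is correct and follows essentially the same route as the paper: identifying the eigenstructure of $\Psi_{\J,\I}$ from the Karhunen--Lo\`eve expansion of $Y_{\I}$ (using Assumption \ref{ass:indep:4:moments} with $m=4$ for uncorrelatedness) so that (i) reduces to summing the eigenvalues indexed by $\I^c$, using the orthogonality of the $\I$ and $\I^c$ blocks to write $\tilde S-\tilde S_{\I}$ as a single i.i.d.\ Hilbert--Schmidt sum controlled by Lemma \ref{lemma:einmahl} with $t\asymp\sqrt{nA_{\J,\I^c}\log n}$ for (ii), and obtaining (iii) and (iv) exactly as in \eqref{eq:moment:q:bound} and \eqref{eq:emp:matrix:L2} via the scalar Fuk--Nagaev inequality and Lemma \ref{lem:fuknagaev:HS:nuclear:operator}(i). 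Your extra remarks (the explicit union-bound accounting and the observation that the derived rate $A_{\J,\I}\sqrt{\log n/n}$ in (iv) majorizes the stated one) are consistent with the paper's argument.
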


\begin{proof}[Proof of Lemma \ref{lem:fn:norm:bound:II}]
Inserting the  Karhunen-Loève expansion of $X$ into the definition of $Y_\I$, we have
\begin{align}\label{eq:KL:expansion:linear:term:truncated}
    Y_{\I}=\sum_{j\in \J}\sum_{k\in \J^c\cap \I}\frac{\sqrt{2\lambda_k \lambda_j}}{\lambda_j - \lambda_k}\zeta_{jk}(u_j \otimes u_k+u_k\otimes u_j)/\sqrt{2}
\end{align}
with $\zeta_{jk} = \eta_j \eta_k$. Moreover, by Assumptions \ref{ass_moments}, \ref{ass:lower:bound} and \ref{ass:indep:4:moments} with $m=4$, the Cauchy-Schwarz inequality and the fact that summation is over different indices, we have that the $\zeta_{jk}$ are centered, uncorrelated and satisfy $\E \vert \zeta_{kj}\vert ^p \leq C_{\eta}$ and $\alpha_{jk}= \E \zeta_{jk}^2\geq c_\eta$ for all $j\in \J, k\in\J^c\cap\I$. Hence, scaling the $\zeta_{jk}$ appropriately, \eqref{eq:KL:expansion:linear:term:truncated} yields the Karhunen-Loève expansion of $Y_{\I}$ and the eigenpairs of $\Psi_{\J,\I}$ are given by
\begin{align*}
    \alpha_{jk}\frac{2\lambda_k \lambda_j}{(\lambda_k - \lambda_j)^2}\quad\text{and}\quad \frac{1}{\sqrt{2}}(u_j \otimes u_k+u_k\otimes u_j),\qquad j\in \J, k\in \J^c\cap \I.
\end{align*}
In order to obtain the first claim, observe that $\Psi_\J$ has the same eigenvalues and eigenvectors, but with indices $k\in \J^c$ instead of $k\in \J^c\cap \I$. Hence
\begin{align}\label{eq:compute:A_{JI^c}}
\|\Psi_\J - \Psi_{\J,\I} \|_{1}=\sum_{j\in \J}\sum_{k\in \I^c}\alpha_{jk}\frac{2\lambda_k \lambda_j}{(\lambda_k - \lambda_j)^2}= A_{\J,\I^c}.
\end{align}
By properties of the Hilbert-Schmidt norm, we have due to the orthogonality of $P_{\I}$ and $P_{\I^c}$
\begin{align}\nonumber
\tilde S -\tilde S_\I=\tilde S - \frac{n}{\sigma_w^2}\big\|\FO_{\J,\I} (\tilde{E} - E) \big\|_2^2&=  \frac{n}{\sigma_w^2}\big\|\FO_{\J,\I^c} (\tilde{E} - E) \big\|_2^2.
\end{align}
The latter can be written as
\begin{align*}
    \frac{1}{n}\Big\|\sum_{i=1}^n\tilde Y_{i\I^c}\Big\|_2^2\quad\text{with}\quad \tilde Y_{i\I^c} =\frac{w_i^2-1}{\sigma_w}Y_{i\I^c},\quad Y_{i\I^c}=L_{\J,\I^c} X_i\otimes X_i.
\end{align*}
Now, using that $\E\|\tilde Y_{i\I^c}\|_2^p\lesssim_p A_{\J,\I^c}^{p/2}$ by Lemma \ref{lem:moment:computation:KL}, claim (ii) can be deduced from Lemma \ref{lemma:einmahl}, setting $t=CA_{\J,\I^c}^{1/2}\sqrt{\log n}$. Claims (iii) and (iv) follow by the same arguments as in \eqref{eq:emp:matrix:L2} and \eqref{eq:moment:q:bound}.
\end{proof}

\begin{proof}[Proof of Corollary \ref{cor:boot:II:conditional}]
Since $i_2>j_2+2$, we have $\lambda_l(\Psi_{\J,\I})=\lambda_l(\Psi_\J)$ for $l=1,2$ (cf.~the begin of the proof of Lemma \ref{lem:fn:norm:bound:II}). By Weyl's inequality, $\|\cdot\|_{\infty} \leq \|\cdot\|_2$ and Lemma \ref{lem:fn:norm:bound:II}, we get for $l=1,2$,
\begin{align}\label{weyl} \nonumber
\big\vert{\lambda}_{l}(\tilde{\Psi}_{\J,\I}) - \lambda_l(\Psi_\J) \big\vert&=\big\vert{\lambda}_{l}(\tilde{\Psi}_{\J,\I}) - \lambda_l(\Psi_{\J,\I}) \big\vert \\&\leq
\big\|\tilde{\Psi}_{\J,\I} - \Psi_{\J,\I} \big\|_{2}\lesssim_p \frac{\sqrt{\log n}}{\sqrt{n}} A_{\J}
\end{align}
with probability at least $1 - 3n^{1-p/4}$. Invoking  \eqref{ass:boot:truncationset}, we obtain that on this event
\begin{align*}
2\lambda_{l}(\tilde{\Psi}_{\J,\I}) \geq \lambda_{l}(\Psi_\J), \quad l=1,2.
\end{align*}
Next, observe $\E\|\tilde{Y}_{i\I}\|_2^3 \lesssim_q A_{\J,\I}^{3/2}\lesssim_q A_{\J}^{3/2}$ by Lemma \ref{lem:moment:computation:KL}. Combining this with Lemma \ref{lem:fn:norm:bound:II}(iii), we arrive at
\begin{align} \label{eq:cor:boot:II:conditional}
\sum_{i = 1}^{n} \|\tilde{Y}_{i\I}\|_2^3  \lesssim_q n A_{\J,\I}^{3/2} \lesssim_q n A_{\J}^{3/2},
\end{align}
with probability at least $1 - 2n^{1-p/4}$. Proposition \ref{prop:clt:V}, applied to the setting in \eqref{defn:bootstrapII:proof:T:Y:S:Sigma}, then
yields, with probability at least $1 - 2n^{1-p/4}$,
\begin{align*}
\tilde{\mathbf{U}}\Big(\tilde T, \|\tilde{Z}_{\J,\I}\|^2\Big) &\lesssim_p
\frac{1}{n^{1/5}}\Big(\frac{{A}_{\J}}{\sqrt{\lambda_{1,2}(\Psi_\J)}} \Big)^{3/5} + \tilde{\P}\big(\vert \tilde T-\tilde{S}_{\I}\vert > u\big) + \frac{u}{\sqrt{{\lambda}_{1,2}(\Psi_\J)}}.
\end{align*}

By Corollary \ref{lem:boot:approx}, we have
\begin{align*}
    \vert \tilde T-\tilde S_\I\vert &\leq \vert \tilde T-\tilde S\vert +\vert \tilde S-\tilde S_\I\vert \\
    &\leq \frac{n}{\sigma_w^2}\big(\vert \J\vert ^{3/2}(\delta_\J^3(E)+\delta_\J^3(\tilde E))+\vert J\vert ^2(\delta_\J^4(E)+\delta_\J^4(\tilde E))\big)+\vert \tilde S-\tilde S_\I\vert .
\end{align*}
Letting
\begin{align*}
    u = Cn^{-1/2}(\log n)^{3/2} \sigma_{\J}^3\vert \J\vert ^{3/2}+CA_{\J,\I^c}\log n,
\end{align*}
it follows from \eqref{eq:P(vT-Sv:bigger:u)} and Lemma \ref{lem:fn:norm:bound:II} that
\begin{align*}
{\P}\big(\vert \tilde T-\tilde{S}_{\I}\vert > u\big) &\leq {\P}\big(\vert \tilde T-\tilde S\vert > u/2\big) + {\P}\big(\vert \tilde S-\tilde S_\I\vert > u/2\big) \\& \lesssim_p \qprob^{}_{\J,n,p} + n^{1- p/6}.
\end{align*}
By Markov's inequality, we get for $s \in (0,1)$
\begin{align*}
\P\big(\tilde{\P}(\vert \tilde{S}_{\I}-T\vert > u)  \geq (\qprob^{}_{\J,n,p} + n^{1- p/6})^s) \lesssim_p \big(\qprob^{}_{\J,n,p} + n^{1- p/6}\big)^{1-s}.
\end{align*}
Piecing all bounds together, the claim follows from the union bound.
\end{proof}

\begin{lemma}\label{lem:gaussian:comparison:boot:II}
With probability at least $1 - 4n^{1-p/4}$, we have
\begin{align*}
&\tilde{\mathbf{U}}\big(\|L_{\J} Z\|_2, \|\tilde Z_{\J,\I}\|_2 \big) \lesssim_p  \frac{\sqrt{A_{\J}} \operatorname{tr}(\sqrt{\Psi_{\J,\I}})}{\sqrt{\lambda_{1,2}(\Psi)}}\frac{\sqrt{\log n}}{\sqrt{n}}+\frac{A_{\J,\I^c}}{\sqrt{\lambda_{1,2}(\Psi)}}.
\end{align*}
\end{lemma}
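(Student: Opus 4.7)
The plan is to apply the Gaussian comparison inequality of Lemma \ref{lem:gaussian:comparison:spok}. Since the map $t \mapsto t^2$ is strictly monotone on $[0,\infty)$, the conditional uniform distance between $\|L_\J Z\|_2$ and $\|\tilde Z_{\J,\I}\|_2$ equals that between $\|L_\J Z\|_2^2$ and $\|\tilde Z_{\J,\I}\|_2^2$. Since $L_\J Z$ is Gaussian with covariance $\Psi_\J$ and $\tilde Z_{\J,\I}$ is (conditionally) Gaussian with covariance $\tilde\Psi_{\J,\I}$, Lemma \ref{lem:gaussian:comparison:spok} gives
\begin{align*}
\tilde{\mathbf{U}}\bigl(\|L_\J Z\|_2, \|\tilde Z_{\J,\I}\|_2\bigr) \lesssim \Bigl(\frac{1}{\sqrt{\lambda_{1,2}(\Psi_\J)}} + \frac{1}{\sqrt{\lambda_{1,2}(\tilde\Psi_{\J,\I})}}\Bigr)\|\Psi_\J - \tilde\Psi_{\J,\I}\|_1.
\end{align*}
The second prefactor is absorbed into the first as follows: the explicit diagonalisation of $\Psi_{\J,\I}$ given at the start of the proof of Lemma \ref{lem:fn:norm:bound:II} together with $i_2 > j_2 + 2$ shows $\lambda_l(\Psi_{\J,\I}) = \lambda_l(\Psi_\J)$ for $l = 1, 2$, and then the Weyl estimate \eqref{weyl} established in the proof of Corollary \ref{cor:boot:II:conditional} yields $\lambda_l(\tilde\Psi_{\J,\I}) \geq \lambda_l(\Psi_\J)/2$ for $l = 1, 2$ on an event of probability at least $1 - 3n^{1-p/4}$.

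The next step is to split the remaining nuclear norm by the triangle inequality:
\begin{align*}
\|\Psi_\J - \tilde\Psi_{\J,\I}\|_1 \leq \|\Psi_\J - \Psi_{\J,\I}\|_1 + \|\Psi_{\J,\I} - \tilde\Psi_{\J,\I}\|_1.
\end{align*}
The deterministic bias $\|\Psi_\J - \Psi_{\J,\I}\|_1$ equals $A_{\J,\I^c}$ by Lemma \ref{lem:fn:norm:bound:II}(i) (see also the identity \eqref{eq:compute:A_{JI^c}}), producing the second summand of the claimed bound after dividing by $\sqrt{\lambda_{1,2}(\Psi_\J)}$.

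For the stochastic part $\|\Psi_{\J,\I} - \tilde\Psi_{\J,\I}\|_1 = \|n^{-1}\sum_i(Y_{i\I}\otimes Y_{i\I} - \Psi_{\J,\I})\|_1$, the idea is to invoke the nuclear-norm Fuk-Nagaev bound Lemma \ref{lem:fuknagaev:HS:nuclear:operator}(ii), using the Karhunen-Lo\`eve expansion \eqref{eq:KL:expansion:linear:term:truncated} of $Y_\I$. By Lemma \ref{lem:moment:computation:KL}(iv), the admissible lower threshold satisfies $\|(\E(Y_\I\otimes Y_\I - \Psi_{\J,\I})^2)^{1/2}\|_1 \lesssim_p \operatorname{tr}(\sqrt{\Psi_{\J,\I}}) A_{\J,\I}^{1/2}$, and a parallel application of Lemma \ref{lem:moment:computation:KL}(iii) controls $\varpi_n^2$ by a comparable quantity. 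Choosing $t \asymp \operatorname{tr}(\sqrt{\Psi_{\J,\I}})\sqrt{A_\J\log n}$ (which dominates the threshold, since $A_{\J,\I} \leq A_\J$) and inserting into Lemma \ref{lem:fuknagaev:HS:nuclear:operator}(ii) gives
\begin{align*}
\|\Psi_{\J,\I} - \tilde\Psi_{\J,\I}\|_1 \lesssim_p \sqrt{A_\J}\,\operatorname{tr}\bigl(\sqrt{\Psi_{\J,\I}}\bigr)\sqrt{\tfrac{\log n}{n}}
\end{align*}
with tail probability of order $n^{1-p/4}$, which, combined with the prefactor $1/\sqrt{\lambda_{1,2}(\Psi_\J)}$, furnishes the first summand of the claim. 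A union bound over the three probabilistic events (Weyl, nuclear-norm concentration, and the event where Lemma \ref{lem:fn:norm:bound:II}(i) holds) yields the stated probability.

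The main obstacle will be verifying that Lemma \ref{lem:moment:computation:KL}(iii)--(iv) can indeed be applied to $Y_\I$, whose Karhunen-Lo\`eve coefficients $\zeta_{jk} = \eta_j\eta_k$ are indexed by pairs and hence involve \emph{up to eight} factors of $\eta$ inside the mixed moments required for those lemmas. Checking the vanishing condition $\E\zeta_{(j_1,k_1)}\zeta_{(j_2,k_2)}^2\zeta_{(j_3,k_3)} = 0$ for $(j_1,k_1) \neq (j_3,k_3)$ is precisely what forces Assumption \ref{ass:indep:4:moments} to hold with $m = 8$ in the statement of Theorem \ref{thm:boot:II}; the verification is combinatorial but routine once one enumerates the possible index coincidences among $\{j_1,k_1,j_2,j_2,k_2,k_2,j_3,k_3\}$.
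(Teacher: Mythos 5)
Your proposal is correct and follows essentially the same route as the paper: a Gaussian comparison via Lemma \ref{lem:gaussian:comparison:spok}, the Weyl/concentration argument to replace $\lambda_{1,2}(\tilde\Psi_{\J,\I})$ by $\lambda_{1,2}(\Psi_\J)$, the identity $\|\Psi_\J-\Psi_{\J,\I}\|_1=A_{\J,\I^c}$ for the bias, and Lemma \ref{lem:fuknagaev:HS:nuclear:operator}(ii) for the stochastic part; the only structural difference is that the paper inserts the intermediate Gaussian $Z_{\J,\I}$ and applies the comparison lemma twice, whereas you apply it once and move the triangle inequality to the nuclear norms of the covariances, which is an equivalent bookkeeping choice. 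One small inaccuracy: the quantity $\varpi_n^2=\sup_{\|S\|_\infty\le 1}\E\operatorname{tr}^2\big(S(\tilde Y_{i\I}\otimes \tilde Y_{i\I}-\Psi_{\J,\I})\big)$ is not controlled by Lemma \ref{lem:moment:computation:KL}(iii) (which bounds $\|\E(\overline{Y\otimes Y})^2\|_\infty$, a different object); the paper handles it by the separate eighth-moment computation \eqref{eq:last:inequality}, though the required bound $\varpi_n\lesssim A_{\J,\I}$ also follows from the crude duality estimate $|\operatorname{tr}(ST)|\le\|S\|_\infty\|T\|_1$ combined with Lemma \ref{lem:moment:computation:KL}(i).
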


\begin{proof}[Proof of Lemma \ref{lem:gaussian:comparison:boot:II}]
By the triangle inequality and independence, we have
\begin{align}\label{eq:uniform:three:Gaussian}
\tilde{\mathbf{U}}\big(\|L_{\J} Z\|_2, \|\tilde Z_{\J,\I}\|_2 \big) \leq \tilde{\mathbf{U}}\big(\|Z_{\J,\I}\|_2, \|\tilde Z_{\J,\I}\|_2 \big) + {\mathbf{U}}\big(\|Z_{\J,\I}\|_2, \|L_{\J} Z\|_2 \big)
\end{align}
We start with the first term on the right-hand side. An application of Lemma~\ref{lem:gaussian:comparison:spok} gives
\begin{align*}
\tilde{\mathbf{U}}\big(\|Z_{\J,\I}\|_2, \|\tilde{Z}_{\J,\I}\|_2 \big) \lesssim \Big(\frac{1}{\sqrt{\lambda_{1,2}(\Psi_{\J,\I})}} + \frac{1}{\sqrt{\lambda_{1,2}(\tilde\Psi_{\J,\I})}} \Big)\big\|\Psi_{\J,\I} - \tilde{\Psi}_{\I}\big\|_1.
\end{align*}
In the proof of Corollary \ref{cor:boot:II:conditional}, we have shown that with probability at least $1-2n^{1-p/4}$, we have $4\lambda_{1,2}(\tilde\Psi_{\J,\I})\geq \lambda_{1,2}(\Psi)$. In order to control $\|\Psi_{\J,\I} - \tilde{\Psi}_{\I}\|_1$, we will apply Lemma \ref{lem:fuknagaev:HS:nuclear:operator}(ii). To this end, by Lemma \ref{lem:moment:computation:KL} (i) and (iv) (note that the $8$-th cumulant uncorrelatedness assumption implies that the $\zeta_{jk}=\eta_j\eta_k$ satisfy the additional conditions from Lemma \ref{lem:moment:computation:KL}, and that the additional weights $(w_i^2-1)/\sigma_w$ do not affect these assumptions), we get
\begin{align*}
(\E \|\tilde{Y}_{i\I}\|_2^p)^{2/p} \lesssim_p A_{\J,\I}
\end{align*}
and
\begin{align*}
\operatorname{tr}\Big(\sqrt{\E (\tilde{Y}_{i\I} \otimes \tilde{Y}_{i\I}-\Psi_\I)^2 } \Big) &\lesssim_p \sqrt{\operatorname{tr}(\Psi_{\J,\I})} \operatorname{tr}(\Psi_{\J,\I}^{1/2})= \operatorname{tr}(\Psi_{\J,\I}^{1/2}) \sqrt{A_{\J,\I}}.
\end{align*}
Similarly, as shown below, for an operator $S$ on $\operatorname{HS}(\mathcal{H})$ with $\|S\|_\infty\leq 1$, we have under the $8$-th cumulant uncorrelatedness assumption that
\begin{align}\label{eq:last:inequality}
 \sqrt{\E \operatorname{tr}^2\big(S (\tilde{Y}_{i\I} \otimes \tilde{Y}_{i\I}-\Psi_\I)\big)}&\lesssim_p  \operatorname{tr}(\Psi_{\J,\I})=A_{\J,\I}.
\end{align}
Hence, by Lemma~\ref{lem:fuknagaev:HS:nuclear:operator}(ii), we have
\begin{align}\label{eq:lem:gaussian:comparison:boot:II:1}
\big\|\Psi_{\J,\I} - \tilde{\Psi}_{\I} \big\|_1 \lesssim_p \sqrt{\log n} \sqrt{A_{\J,\I}} \operatorname{tr}(\Psi_{\J,\I}^{1/2})/\sqrt{n},
\end{align}
with probability at least $1 - 2n^{1-p/4}$. By the union bound and the inequality $A_{\J,\I}\leq A_{\J}$, we conclude that
\begin{align}\label{eq:lem:gaussian:comparison:boot:II:1.5}
\tilde{\mathbf{U}}\big(\|Z_{\J,\I}\|_2, \|\tilde Z_{\J,\I}\|_2 \big) \lesssim_p  \frac{\sqrt{\log n}}{\sqrt{n}} \frac{\sqrt{A_{\J}}\operatorname{tr}(\Psi_{\J,\I}^{1/2})}{\sqrt{\lambda_{1,2}(\Psi)}}
\end{align}
with probability at least $1 - 4n^{1-p/4}$. It remains to consider the second term on the right-hans side of \eqref{eq:uniform:three:Gaussian}. By Lemma \ref{lem:gaussian:comparison:spok}, Lemma \ref{lem:fn:norm:bound:II}(i), and the fact that $\lambda_l(\Psi_{\J,\I})=\lambda_l(\Psi)$, $l=1,2$, we have
\begin{align*}
\mathbf{U}\big(\|L_{\J} Z\|_2, \|Z_{\J,\I}\|_2 \big) &\lesssim \frac{1}{\sqrt{\lambda_{1,2}(\Psi)}} \big\|\Psi - \Psi_{\J,\I} \big\|_{1} \lesssim_p  \frac{A_{\J,\I^c}}{\sqrt{\lambda_{1,2}(\Psi)}}.
\end{align*}
This completes the proof.
\end{proof}

\begin{proof}[Proof of Theorem \ref{thm:boot:II}]
The basic idea is the same as in the proof of Theorem~\ref{thm:boot:I}. By the triangle inequality
\begin{align*}
&\tilde{\mathbf{U}}_{}\Big(\sigma_w^{-2}\big\|\tilde{P}_{\J} - \hat{P}_{\J} \big\|_2^2, \big\|\hat{P}_{\J}' - {P}_{\J} \big\|_2^2 \Big) \\& \leq
\tilde{\mathbf{U}}_{}\Big(n\sigma_w^{-2}\big\|\tilde{P}_{\J} - \hat{P}_{\J} \big\|_2^2, \|\tilde{Z}_{\J,\I}\|^2\Big) + \tilde{\mathbf{U}}_{}\Big(\|\tilde{Z}_{\J,\I}\|_2^2, n\big\|\hat{P}_{\J}' - {P}_{\J} \big\|_2^2\Big).
\end{align*}
For the first term on the left-hand side, we can apply Corollary \ref{cor:boot:II:conditional}. By the triangle inequality and independence
\begin{align} \nonumber \label{eq:thm:boot:II:1}
\tilde{\mathbf{U}}_{}\big(\|\tilde{Z}_{\J,\I}\|_2^2, n\big\|\hat{P}_{\J}' - {P}_{\J} \big\|_2^2\big) &\leq {\mathbf{U}}_{}\big(\|\tilde{Z}_{\J,\I}\|_2^2, \|L_{\J} Z\|_2^2\big) \\&+ {\mathbf{U}}_{}\big(\|L_{\J} Z\|_2^2, n\big\|\hat{P}_{\J}' - {P}_{\J} \big\|_2^2\big).
\end{align}
Lemma \ref{lem:gaussian:comparison:boot:II} deals with ${\mathbf{U}}_{}(\|\tilde{Z}_{\J,\I}\|_2^2, \|L_{\J} Z\|_2^2)$, while Theorem \ref{thm:clt:II} deals with ${\mathbf{U}}_{}(\|L_{\J} Z\|_2^2, n\|\hat{P}_{\J}' - {P}_{\J} \|_2^2)$. Piecing everything together, the claim follows.
\end{proof}

\begin{proof}[Proof of Corollary \ref{cor:boot:II:quant}]
Using Theorems \ref{thm:clt:II} and \ref{thm:boot:II} instead of Theorems \ref{thm:clt:III} and \ref{thm:boot:I}, we may proceed as in the proof of Corollary \ref{cor:boot:I:quant}.
\end{proof}

\begin{proof}[Proof of Equation \eqref{eq:last:inequality}] Using the notation $\mathbf{j}=(j,k)$ for $j\in\J$ and $k\in \J^c\cap \I$ and $\boldsymbol{\J}=\J\times (\J^c\cap\I)$, the Karhunen-Loève expansion of $\tilde Y_{\I}$ is given by
\begin{align*}
    \tilde Y_{\I}=\sum_{\mathbf{j}\in\boldsymbol{\J}}\tilde\vartheta_{\mathbf{j}}^{1/2}\tilde\zeta_{\mathbf{j}}\tilde u_{\mathbf{j}}
\end{align*}
with Karhunen-Loève coefficients
\begin{align*}
\tilde\zeta_{\mathbf{j}} = \frac{w^2-1}{\sigma_w}\frac{\eta_j \eta_k}{\alpha_{jk}^{1/2}}
\end{align*}
and eigenpairs of $\Psi_{\J,\I}=\E \tilde Y_{\I}\otimes \tilde Y_{\I}$ given by
\begin{align*}
    \tilde\vartheta_{\mathbf{j}}=2\alpha_{jk}\frac{\lambda_k \lambda_j}{(\lambda_k - \lambda_j)^2}\quad\text{and}\quad \tilde{u}_{\mathbf{j}}=\frac{\beta_{jk}}{\sqrt{2}}(u_j \otimes u_k+u_k\otimes u_j),\qquad \mathbf{j}=(j,k)\in \boldsymbol{\J}
\end{align*}
with $\alpha_{jk}= \E \eta_j^2\eta_k^2$ and $\beta_{jk}=\operatorname{sgn}(\lambda_k\lambda_j/(\lambda_j-\lambda_k))$. This can be seen by multiplying \eqref{eq:KL:expansion:linear:term:truncated} with $(w^2-1)/\sigma_w$. Here, the uncorrelatedness of the $\tilde\zeta_{\mathbf{j}}$ follows from the $4$-th cumulant uncorrelatedness assumption. Moreover, by Assumptions \ref{ass_moments} and \ref{ass:lower:bound}, we have
\begin{align*}
    c_\eta\leq \alpha_{jk}\leq C_\eta^{2/p}\quad\text{and}\quad  \E \vert \tilde\zeta_{\mathbf{j}}\vert ^p \leq \frac{C_\eta}{c_\eta^{p}} =:\tilde C_{\eta},\qquad  \mathbf{j}=(j,k)\in \boldsymbol{\J}.
\end{align*}
We now write
\begin{align*}
    &\operatorname{tr}^2(S(\tilde Y_{\I}\otimes \tilde Y_{\I}-\E \tilde Y_{\I}\otimes \tilde Y_{\I}))
    = \Big(\sum_{\mathbf{j}_1,\mathbf{j}_2\in\boldsymbol{\J}}\sqrt{\tilde\vartheta_{\mathbf{j}_1}\tilde\vartheta_{\mathbf{j}_2}}\overline{\tilde\zeta_{\mathbf{j}_1}\tilde\zeta_{\mathbf{j}_2}}\langle u_{\mathbf{j}_1},Su_{\mathbf{j}_2}\rangle\Big)^2\\
    &=\sum_{\mathbf{j}_1,\mathbf{j}_2,\mathbf{j}_3,\mathbf{j}_4\in\boldsymbol{\J}}\sqrt{\tilde\vartheta_{\mathbf{j}_1}\tilde\vartheta_{\mathbf{j}_2}}\sqrt{\tilde\vartheta_{\mathbf{j}_3}\tilde\vartheta_{\mathbf{j}_4}}\overline{\tilde\zeta_{\mathbf{j}_1}\tilde\zeta_{\mathbf{j}_2}}\,\overline{\tilde\zeta_{\mathbf{j}_3}\tilde\zeta_{\mathbf{j}_4}}\langle u_{\mathbf{j}_1},Su_{\mathbf{j}_2}\rangle\langle u_{\mathbf{j}_3},Su_{\mathbf{j}_4}\rangle.
\end{align*}
On the one hand, we have $\E \vert \overline{\tilde\zeta_{\mathbf{j}_1}\tilde\zeta_{\mathbf{j}_2}}\,\overline{\tilde\zeta_{\mathbf{j}_3}\tilde\zeta_{\mathbf{j}_4}}\vert \leq \tilde C_{\eta}^{4/p}$ for all indices $\mathbf{j}_1,\mathbf{j}_2,\mathbf{j}_3,\mathbf{j}_4\in \boldsymbol{\J}$. On the other hand, if $\E \overline{\tilde\zeta_{\mathbf{j}_1}\tilde\zeta_{\mathbf{j}_2}}\,\overline{\tilde\zeta_{\mathbf{j}_3}\tilde\zeta_{\mathbf{j}_4}}\neq 0$, then the $8$-th cumulant uncorrelatedness implies that the two sets $\{j_1,j_2,j_3,j_4\}$ and $\{k_1,k_2,k_3,k_4\}$ both contain at most $2$ elements (different indices). Combining these facts with the bound $\vert \langle u_{\mathbf{j}_1},Su_{\mathbf{j}_2}\rangle\langle u_{\mathbf{j}_3},Su_{\mathbf{j}_4}\rangle\vert \leq \|S\|_\infty^2\leq 1$, we conclude that
\begin{align*}
    &\E\operatorname{tr}^2(S(\tilde Y_{\I}\otimes \tilde Y_{\I}-\E \tilde Y_{\I}\otimes \tilde Y_{\I}))\\
    &\leq \tilde C_{\eta}^{4/p}C_\eta^{4/p}\sum_{j_1,j_2\in \J,k_1,k_2\in \J^c\cap \I}\frac{3\lambda_{j_1}\lambda_{j_2}\lambda_{k_1}\lambda_{k_2}}{(\lambda_{j_1}-\lambda_{k_1})^2(\lambda_{j_2}-\lambda_{k_2})^2}\\
    &+\tilde C_{\eta}^{4/p}C_\eta^{4/p}\sum_{j_1,j_2\in \J,k_1,k_2\in \J^c\cap \I}\frac{6\lambda_{j_1}\lambda_{j_2}\lambda_{k_1}\lambda_{k_2}}{\vert \lambda_{j_1}-\lambda_{k_1}\vert \vert \lambda_{j_1}-\lambda_{k_2}\vert \vert \lambda_{j_2}-\lambda_{k_2}\vert \vert \lambda_{j_2}-\lambda_{k_1}\vert }\\
    &\leq 9C_{\eta}^{4/p}C_\eta^{4/p}\Big(\sum_{j\in \J,k\in \J^c\cap \I}\frac{\lambda_{j}\lambda_{k}}{(\lambda_{j}-\lambda_{k})^2}\Big)^2.
\end{align*}
Using again Assumption \ref{ass:lower:bound} and the definition of $\Psi_{\J,\I}$, the claim follows.
\end{proof}





\section*{Acknowledgements}

This research was funded in part by the Austrian Science Fund (FWF), Project I 5485. We would like to thank the reviewers for their careful reading and the thoughtful comments, which we were happy to take into account.



\end{document}